\theoremstyle{definition}
\newtheorem{definition}{Definition}[section]
\newtheorem{theorem}[definition]{Theorem}
\newtheorem{lemma}[definition]{Lemma}
\newtheorem{corollary}[definition]{Corollary}
\newtheorem{proposition}[definition]{Proposition}
\newtheorem{remark}[definition]{Remark}
\newtheorem{bigthm}{Theorem} % Numbered separately, as A, B, etc.
\numberwithin{equation}{section}
\definecolor{OliveGreen}{rgb}{0,0.6,0}
\def\one{{\mathbbm{1}}}
\def\R{\mathbb{R}}
\def\Z{\mathbb{Z}}
\def\T{\mathbb{T}}
\def\cQ{\mathcal{Q}}
\def\N{\mathbb{N}}
\def\norm{\mathcal{N}}
\def\cT{\mathcal{T}}
\def\cR{\mathcal{R}}
\def\cP{\mathcal{P}}
\def\cN{\mathcal{N}}
\def\tb{\mathbf{t}}
\def\pb{\mathbf{p}}
\def\qb{\mathbf{q}}
\newcommand*{\diff}{\mathop{}\!\mathrm{d}}
\newcommand{\flow}{\mathsf{f}}
\newcommand{\flowR}{(\mathsf{f}_t)_{t \in \R}}
\newcommand{\mapg}{\mathsf{g}}
\newcommand{\baseX}{X}
\newcommand{\susp}{X^{\roof}}
\newcommand{\roof}{\Phi}
\newcommand{\Deltat}{\Delta \mathbf{t}}
\newcommand{\lieg}{\mathfrak{g}}
\DeclareMathOperator{\height}{h}
\DeclareMathOperator{\Par}{Par}
\DeclareMathOperator{\QPar}{QPar}
\DeclareMathOperator{\Leb}{Leb}
\DeclareMathOperator{\vol}{vol}
\DeclareMathOperator{\SL}{SL}
\DeclareMathOperator{\diam}{diam}
\DeclareMathOperator{\Tow}{Tow}
\DeclareMathOperator{\dist}{d}
\DeclareMathOperator{\totvar}{TV}
\DeclareMathOperator{\Lip}{Lip}
\DeclareMathOperator{\Erg}{Erg}
\title{Multiple mixing for parabolic systems}
\author{Adam Kanigowski, Davide Ravotti}
\begin{document}

\maketitle

\begin{abstract}
The famous Rokhlin Problem asks whether mixing implies higher order mixing. So far, all the known examples of zero entropy, mixing dynamical systems enjoy a variant of the {\em mixing via shearing} mechanism. In this paper we introduce the notion of locally uniformly shearing systems (LUS) which is a rigorous way of describing the mixing via shearing mechanism. We prove that all LUS flows are mixing of all orders. 
We then show that mixing smooth flows on surfaces and smooth time-changes of unipotent flow are LUS. We also introduce the notion of quantitative LUS. We show that polynomially mixing systems that are polynomially LUS are in fact polynomially mixing of all orders. As a consequence we show that Kochergin flows on $\T^2$ (for a.e. irrational frequency) as well as smooth time-changes of unipotent flows are polynomially mixing of all orders. 
\end{abstract}

%%%%%%%%%%

\section{Introduction}\label{sec:intro}
Mixing is one of the central notions describing randomness of a dynamical system. It claims that two events become asymptotically independent as time goes to $\infty$. More precisely, we say that an invertible dynamical system $(T,X,\mu)$ (or a flow $(T_t,X,\mu)$) is {\em mixing} if for any two measurable sets $A,B\in X$, $\mu(A\cap T^nB)\to \mu(A)\mu(B)$ as $n\to \infty$. There are now many dynamical systems for which mixing was shown to hold both in positive and zero entropy classes. For smooth systems with some form of hyperbolicity, the ultimate argument to show mixing (and in fact also stronger ergodic properties) is the {\em Hopf argument}, first used by Hopf \cite{Ho} to show ergodicity of the geodesic flow on the tangent bundle of a negatively curved surface. This argument has been also used by Sinai to show that hyperbolic systems enjoy the $K$--property. The key feature of the Hopf argument lies in the existence of an invariant foliation (the unstable foliation) which is stretched by the dynamics and is ergodic; hence, it equidistributes in space under the action of the dynamics. As mentioned above whenever the foliation is ergodic, the system is not only mixing but in fact a $K$--system (and so mixing of all orders). This in particular implies a positive answer to the famous \emph{Rokhlin problem}, \cite{Rok}, on whether mixing implies higher order mixing for hyperbolic systems. In fact, it follows from \cite{Thou} that if the Rokhlin problem has a positive answer for zero entropy systems then it has a positive answer in general. Let us recall that  an invertible dynamical system $(T,X,\mu)$ (or a flow $(T_t,X,\mu)$) is mixing of order $k$ (or $k$-mixing) if 
$$
\mu\Big(\bigcap_{i=0}^{k-1} T^{n_i}A_i\Big)\to \prod_{i=0}^{k-1}\mu(A_i),
$$
where the limit is taken over all increasing sequences $\{n_i\}$ with $n_0=0$ satisfying $\min_i|n_{i+1}-n_i|\to \infty$, with analogous definitions for flows. For zero entropy systems, the classical Hopf argument cannot be used, as the unstable foliation is trivial in this case. However, a mechanism which shares similarity with Hopf's argument has been successfully used to show mixing for many classes of zero entropy systems. In vague terms, it is still based on existence of (partial) foliations that equidistribute in space when pushed by the dynamics; however, they are {\em not }invariant under the dynamics. This mechanism, called  {\em mixing via shearing}, allows one to show mixing for zero entropy systems (we will describe it in more details later).  We should point out that there is no formal definition of the mixing via shearing argument (one such possible related definition is the LUS property given in \Cref{def:LUS_flows}).
The prime example here is the horocycle flow and more generally unipotent flows on quotients of semi-simple Lie groups. Geometrically, the renormalization by the geodesic flow implies that short geodesic arcs stretch and become long in the direction of the flow. The mixing via shearing mechanism has been used to show mixing for time-changes of horocycle flows, \cite{FU} and unipotent flows \cite{Rav}, smooth flows on tori and higher genus surfaces \cite{Koch, Kochh, KS, Ulc, Fay, Fay2, FFK, ChW}, time-changes of nilflows \cite{AFU,Ravotti2,AFRU}. A non-smooth variant of this mechanism was also used to show mixing for abstract measure preserving systems \cite{Adams,CS}. In fact, to the best of our knowledge, all known mixing systems enjoy the mixing via shearing mechanism. In vague terms, the main results of this paper are that mixing via shearing implies multiple mixing, and, as a consequence, all known mixing flows are multiple mixing (we will be more precise below). One crucial difference between the Hopf argument and mixing via shearing is that it does not give any information on higher order mixing for the system. As a matter of fact, except some special cases, there is no geometric method to show higher order mixing for zero entropy systems. These special cases are horocycle flows and unipotent flows, for which higher mixing was established in \cite{Marc}, see also \cite{BEG} for quantitative results. The main reason behind this phenomenon is the homogeneity of the space which implies that the local behavior is in fact the same throughout the space. However, for non-algebraic systems, the Rokhlin problem is still open for many classes of mixing via shearing systems. Therefore, to study the Rokhlin problem one usually uses some extra information on the ergodic or spectral features of the system. In this direction, a classical result of Host \cite{Host} states that mixing implies multiple mixing for systems with singular spectrum. This gives a method to establish multiple mixing for mixing system and it was exploited in \cite{Fay3}. Nonetheless, it is generally very hard to prove that a system has singular spectrum, and the examples in \cite{Fay3} were actually tailored to have singular spectrum while maintaining mixing.  Another method to show multiple mixing is through understanding joinings of the system. The main mechanism here is so called {\em Ratner's property} (and its variants) \cite{Thou,FL}, which implies strong restrictions on the possible joinings of the system. It was originally discovered by M.\ Ratner for horocycle flows and has been recently used to show multiple mixing for some important classes of mixing flows, \cite{FK,KKU}.  Let us point out that there are classes of mixing flows, such as those considered in \cite{FK, AFRU, Fay, ChW, KS, Ulc}, for which Ratner's property does not seem to hold and also the spectral type is mysterious. As a consequence multiple mixing for these flows is not known. The other drawback of the above methods is that they are non-quantitative by their very nature. As a consequence, beyond the homogeneous world, the problem of quantitative higher order mixing for systems that are mixing with rates is completely open. The other main result of the paper is establishing polynomial higher order mixing for natural classes of polynomially mixing flows. By our results mixing via shearing implies multiple mixing and it seems to be  the only mechanism which is known to produce mixing in zero entropy dynamical systems; it would be interesting to construct a mixing system which is not mixing via shearing. This would possibly shed a new light on the Rokhlin problem.

We will now pass to a more detailed description of our results. To keep the presentation relatively simple in this paper we will restrict our attention to smooth flows on surfaces and time-changes of unipotent flows. We deal with other mixing via shearing flows in future work.

\section{Main results}
Here we present our main results. 
\subsection{Smooth flows on surfaces}
Surface flows are one of the most natural classes of smooth zero-entropy systems for which mixing was established. In this setting we will consider so called Kochergin flows on the two-dimensional torus (with one singularity)
and Arnol'd flows on arbitrary higher genus surfaces. Surface flows are usually studied through their special representation. As a consequence of our results we answer Question 38 in \cite{FaKr} (see also \cite{FrUl}).

\paragraph{Kochergin flows on the torus.}
Let $\alpha\in \R\setminus \mathbb{Q}$  and let $\Phi\in \mathscr{C}^{2}(\T\setminus\{0\})$, $\Phi>0$ satisfy 
$$
\lim_{x\to 0^{\pm}}\frac{\Phi''(x)}{x^{-2-\gamma}}=N_{\pm},
$$
where $N_+=N_-\neq 0$. Let $(K_t)=(K_t^{\alpha,\gamma})$ be the corresponding special flow over $R_\alpha$ and under $\Phi$ (see \Cref{sec:towers} for the relevant definitions on special flows). We will call these flows Kochergin flows. It was shown in \cite{Koch} that for every irrational $\alpha$ the corresponding Kochergin flow is mixing. In fact mixing was shown in a more general context of $T$ being an arbitrary interval exchange transformation and the roof function $\Phi$ having non-trivial power singularities at the discontinuities of $T$. We should say that the methods we develop here are applicable to the more general case; here we restrict to the case of the torus to reduce the technicalities. The general case will be studied in forthcoming work. Our first main result on Kochergin flows is the following. 
\begin{bigthm}\label{thm:koch1} Every Kochergin flow $(K_t)$ on $\T^2$ with one fixed point is mixing of all orders.
\end{bigthm}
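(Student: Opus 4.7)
The plan is to deduce Theorem~A from the general LUS-implies-multiple-mixing result announced in the introduction. Concretely, I would verify that every Kochergin flow $(K_t)$ on $\T^2$ with a single power-type fixed point satisfies the locally uniformly shearing property (\Cref{def:LUS_flows}), and then invoke the paper's main abstract theorem that LUS flows are mixing of all orders. Since the LUS definition is not displayed in the excerpt, my plan is organized around the classical shearing mechanism of Kochergin, recast in a form appropriate to the LUS framework.

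First I would work in the special flow representation over the irrational rotation $R_\alpha$ on $\T$ under the roof $\Phi$. The natural transversal is a small arc $I\subset\T$ staying away from $0$, parametrized by the base coordinate $x$, and the candidate foliation for the shearing mechanism is the family of base arcs $I\times\{h\}$. The Kochergin shearing identifies $S_N\Phi'(x)=\sum_{k=0}^{N-1}\Phi'(R_\alpha^k x)$ as the infinitesimal vertical displacement accumulated by orbits of length $N$; since $\Phi''$ blows up like $x^{-2-\gamma}$ at $0$, one has $\Phi'(x)\sim \mathrm{const}\cdot x^{-1-\gamma}$ near the singularity, so $S_N\Phi'$ diverges polynomially in $N$ for generic $x$. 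To verify LUS I would show that, off a set of arbitrarily small measure, one can partition $I$ into sub-intervals on which $x\mapsto S_N\Phi'(x)$ is monotone with large derivative and admits bi-Lipschitz type control, so that $I\times\{h\}$ pushed by $K_{T}$ for an appropriate large $T=T(N)$ is a long nearly vertical curve which equidistributes on $\T^2$. This quantitative shearing-and-equidistribution statement is precisely what the LUS definition is designed to encode.

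The main technical step — and what I expect to be the hardest part — is the uniform control of $S_N\Phi'$ and of its derivative $S_N\Phi''$ on a set of near-full measure in $I$. Here I would follow the Kochergin/Fayad/Sinai–Khanin toolbox: decompose the orbit $\{R_\alpha^k x\}_{0\le k<N}$ according to its distance from the singularity $0$ using the continued fraction expansion of $\alpha$, isolate the contribution of the closest return (which gives the dominant order), and apply Denjoy–Koksma-type estimates to the regular part to bound the remainder. The symmetry hypothesis $N_+=N_-\ne 0$ enters here: it ensures that the dominant contributions from the two sides of $0$ do not conspire to cancel, so that $S_N\Phi'(x)$ has definite sign and size on most of $I$. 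The assumption of a single fixed point keeps this analysis one-scale in nature, in contrast with the Arnol'd/multi-singularity case treated elsewhere in the paper.

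Once the LUS property is established for $(K_t)$, the multiple mixing statement
\[
\mu\Big(\bigcap_{i=0}^{k-1} K_{-t_i} A_i\Big)\longrightarrow \prod_{i=0}^{k-1}\mu(A_i)
\]
along sequences with $\min_i|t_{i+1}-t_i|\to\infty$ follows directly from the general LUS theorem of the paper, with no further work specific to Kochergin flows. Since Theorem~A asserts only qualitative $k$-mixing, it is enough to invoke the qualitative version of the abstract result; the polynomial rate refinement promised in the abstract would additionally require verifying the quantitative LUS property, which I would expect to be handled in a separate, companion argument.
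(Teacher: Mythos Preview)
Your plan matches the paper's route: establish that Kochergin flows are LUS (via the intermediary notion of good special flows, \Cref{prop:good_implies_lus}, and the shearing criteria (J1)--(J3) of \Cref{lem:mvs}), then apply \Cref{thm:main_abstract_result} together with Kochergin's original mixing result. The Denjoy--Koksma and continued-fraction analysis you sketch is precisely what is carried out in \S\ref{sec:koch}.

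One point to correct: the symmetry $N_+=N_-$ does \emph{not} prevent cancellation in $S_N\Phi'$---with a power singularity, $\Phi'$ is approximately odd near $0$, so close returns on opposite sides can and do partially cancel. What makes the Kochergin case work is that $\Phi''\sim |x|^{-2-\gamma}$ is positive and non-integrable, so $S_N\Phi''$ is uniformly large and of definite sign. The paper's mechanism (see e.g.\ \eqref{eq:sec}, \eqref{eq:up}) is to locate on each partition interval the minimiser $x_I$ of $|S_N\Phi'|$, excise a tiny neighbourhood of it, and then use the mean-value theorem together with the lower bound on $S_N\Phi''$ to force $|S_N\Phi'|$ to be large on what remains. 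Your ``closest return dominates'' heuristic is correct for $S_N\Phi''$ but should not be applied directly to $S_N\Phi'$; the lower bound on the first-derivative sum is obtained \emph{through} the second-derivative sum, and this convexity argument is what distinguishes the power (Kochergin) case from the logarithmic (Arnol'd) case where genuine asymmetry is needed.
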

It was shown in \cite{FFK} that if $\gamma$ is close to $1$  then $(K_t)$ has (countable) Lebesgue spectrum for a full measure set of frequencies. This makes the above result more interesting, as one can not use the Host criterion to deduce multiple mixing. Our second main result is polynomial mixing of Kochergin flows for a.e. irrational frequency. 
\begin{bigthm}\label{thm:koch2} 
	Let $(q_n)$ be the sequence of denominators of $\alpha$. 
	If $\alpha$ satisfies $q_{n+1}\leq C q_n\log^{2}q_n$ then the corresponding Kochergin flow $(K_t)$ is polynomially mixing of all orders.
\end{bigthm}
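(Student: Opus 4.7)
The plan is to invoke the general reduction proved earlier in the paper: a flow that is both polynomially mixing and polynomially LUS is polynomially mixing of all orders. It therefore suffices to check, for the Kochergin flow $(K_t)$ under the hypothesis $q_{n+1}\le C q_n\log^2 q_n$, that $(K_t)$ enjoys both properties. First I would pass to the special flow representation over $R_\alpha$ and reduce both statements to quantitative control on the first and second derivatives of the Birkhoff sums $\Phi_n'(x)=\sum_{k<n}\Phi'(x+k\alpha)$ and $\Phi_n''(x)=\sum_{k<n}\Phi''(x+k\alpha)$. Because $|\Phi''|\sim x^{-2-\gamma}$ near $0$, these sums blow up only for points whose forward orbit under $R_\alpha$ comes abnormally close to the singularity.

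The core estimate is a Denjoy--Koksma-type bound applied at scales of the continued-fraction denominators $q_n$. Combined with the arithmetic condition $q_{n+1}\le Cq_n\log^2 q_n$, this produces, for each $t$, a good set $G_t\subset\T^2$ whose complement has polynomially small measure and on which $|\Phi_n'|$ and $|\Phi_n''|$ are polynomially bounded with explicit rates. On $G_t$, a horizontal arc of polynomial length $\ell$ is mapped by $K_t$ to a nearly vertical curve: its image lies within an error of order $\ell^2\sup|\Phi_n''|$ of an affine segment of slope $\Phi_n'(x_0)$, which is exactly the content of the polynomial LUS property. Polynomial mixing of order two then follows from the same affine approximation by a standard change of variables in the correlation integral, yielding a quantitative version of the mixing argument of \cite{Koch}.

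The main obstacle will be the simultaneous shearing required for higher-order mixing: one must produce a single good set whose orbit is well-controlled at every time gap $t_{i+1}-t_i$, with total exceptional measure polynomially small in the largest time $t_{k-1}$. Most of this bookkeeping is absorbed by the general reduction theorem, so the task on our side is to verify that the LUS constants from the previous paragraph are uniform enough across scales to feed into that reduction. The logarithmic factor in $q_{n+1}\le Cq_n\log^2 q_n$ is precisely what allows the measures of the exceptional sets, summed across scales $n$, to remain polynomially small, so this Diophantine hypothesis is natural (and essentially sharp) for the argument.
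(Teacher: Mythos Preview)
Your high-level plan matches the paper exactly: verify that $(K_t)$ is QLUS, verify that it is polynomially $2$-mixing, and then invoke the abstract reduction (\Cref{thm:main_abstract_result_quantitative}). The Denjoy--Koksma estimates on $S_n\Phi'$ and $S_n\Phi''$ you describe are also the right engine for the QLUS part, and the paper indeed packages them through conditions (QJ0)--(QJ3) of \Cref{lem:qmvs}. Where your sketch is thinner than the paper is the tower structure: QLUS requires not just that individual arcs shear, but an almost-partition of $\susp$ into Rokhlin towers of controlled height with bases in $\T$, together with refined interval partitions of those bases on which (QP1)--(QP5) hold. The paper spends \S\ref{sec:QLUS_on_surfaces} and the first half of \S\ref{sec:KFlows} constructing these explicitly; your proposal does not mention towers at all, and the ``nearly affine image'' picture alone does not supply them.

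The genuine gap is your treatment of polynomial $2$-mixing. You assert that it ``follows from the same affine approximation by a standard change of variables'', i.e.\ by quantifying Kochergin's mixing argument. The paper does \emph{not} do this, and it is unclear that a direct quantification works uniformly in $\gamma$ under the stated Diophantine condition (the known results of this type, \cite{Fay2} and \cite{FFK}, each carry extra restrictions). Instead, the paper combines two ingredients: the decay of correlations for smooth \emph{coboundaries} from \cite{FFK} (\Cref{thm:FFK}), and an $L^2$ bound on ergodic averages (\Cref{lem:L2_decay_koch}), bootstrapped together in the final proposition of \S\ref{sec:KFlows}. If you want to argue polynomial mixing directly from the shearing estimates, you would need to supply that argument; otherwise you should point to \cite{FFK} and the $L^2$ decay, as the paper does.
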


We remark that the set of $\alpha$ satisfying the above condition is a full measure set. 
\paragraph{Arnol'd flows on surfaces.}
Let $T$ be an ergodic IET with discontinuities $\{\beta_1,\ldots, \beta_d\}$  and let $\Phi\in \mathscr{C}^{2}(\T\setminus\{0\})$, $\Phi>0$ satisfy 
$$
\lim_{x\to \beta_i^{\pm}}\frac{\Phi''(x)}{x^{-2}}=N_i^{\pm},
$$
where $\sum N^+_i\neq \sum N^-_i\neq 0$. These flows represent smooth flows on surfaces with asymmetric logarithmic singularities and are usually called Arnol'd flows; we will denote them by $(A_t)$. It was first shown by Khanin and Sinai \cite{KS} that if $T$ is a rotation by $\alpha$ satisfying $q_{n+1}<C q_n^{1+\delta}$, and $\Phi$  has one asymmetric logarithmic singularity at $0$ then the corresponding Arnol'd flow is mixing (the diophantine condition has full measure). This result was generalized by Ulcigrai \cite{Ulc}, who has shown that under an analogous diophantine condition on $T$ the corresponding Arnol'd flow is mixing provided that the roof function has one asymmetric logarithmic singularity at $0$. Mixing for the case of general singularities for the roof function $\Phi$ was shown by the second author in \cite{Rav}. As a consequence of our techniques we show that all flows considered in \cite{Ulc}, \cite{Rav} are mixing of all orders.

\begin{bigthm}\label{thm:arn} Every mixing Arnol'd flow $(A_t)$ considered in \cite{Ulc}, \cite{Rav} is mixing of all orders.
\end{bigthm}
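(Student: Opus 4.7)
The plan is to derive \Cref{thm:arn} from the general principle, stated informally in the introduction and formalized in \Cref{def:LUS_flows}, that every LUS flow is mixing of all orders. The entire task thus reduces to verifying that every mixing Arnol'd flow $(A_t)$ considered in \cite{Ulc, Rav} enjoys the LUS property. Recall that $(A_t)$ is realized as a special flow over an ergodic IET $T$ with discontinuities $\beta_1,\dots,\beta_d$ under a roof $\Phi\in\mathscr{C}^2(\T\setminus\{\beta_i\})$ with asymmetric logarithmic singularities. The shearing mechanism exploited in \cite{KS, Ulc, Rav} is the linear growth of the Birkhoff sums $S_N\Phi'(x)=\sum_{i=0}^{N-1}\Phi'(T^i x)$: away from a small neighbourhood of the $T$-orbits of the singularities, one has $|S_N\Phi'(x)|\gtrsim N$ with a prescribed sign, provided the Diophantine hypotheses of those papers hold. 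Geometrically, a short vertical segment in the special-flow picture is pushed by $(A_t)$ into an almost horizontal arc whose effective length grows linearly in $t$ and which, by ergodicity, equidistributes.

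First I would isolate from \cite{Ulc, Rav} the precise quantitative form of this shearing: for every $\varepsilon>0$ there exists a compact set $K_\varepsilon\subset X$ with $\mu(K_\varepsilon)>1-\varepsilon$ such that, for all $x\in K_\varepsilon$ and all sufficiently large $t$, any short vertical transversal segment through $x$ is pushed by $A_t$ to an almost-flow arc whose displacement in the flow direction, as a function of the transversal parameter, is monotone, of definite sign, and of size proportional to the initial length times $S_{N(t,x)}\Phi'$, where $N(t,x)$ denotes the number of base returns of the orbit of $x$ up to time $t$. I would then translate this into the language of \Cref{def:LUS_flows}, identifying the LUS transversal direction with the vertical direction of the special flow and the shearing factor with $S_{N(t,x)}\Phi'(x)$. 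The required non-degeneracy of the shear follows from the asymmetry condition $\sum N_i^+\neq\sum N_i^-$, which prevents the two-sided logarithmic contributions from cancelling.

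The hard part is the local uniformity demanded by the LUS definition: the function $\Phi'$ is singular along the $T$-orbits of the $\beta_i$, and two nearby orbit pieces may straddle such a singularity at different times, producing wildly different Birkhoff sums. To control this, at each time scale $t$ I would excise from $X$ a union of short vertical strips around the bad orbit segments, whose total measure tends to zero as $t\to\infty$, and combine the partial-rigidity/Khintchine-type estimates used in \cite{Ulc} with the Rauzy--Veech renormalisation bounds of \cite{Rav} to guarantee, on the complement, that $S_{N(t,x)}\Phi'(x)$ has controlled sign and magnitude at least a uniform, slowly growing function of $t$. Once this uniform local shearing is established on sets of measure tending to one, the LUS property holds and \Cref{thm:arn} is a direct consequence of the general LUS implies multiple mixing theorem.
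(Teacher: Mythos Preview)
Your overall strategy is the same as the paper's: verify that Arnol'd flows are LUS by extracting the shearing estimates on $S_N\Phi'$ from \cite{Ulc,Rav}, then invoke \Cref{thm:main_abstract_result}. What you are missing is the intermediate scaffolding the paper builds to make this verification tractable. The LUS definition (\Cref{def:LUS_flows}) is intricate---it asks for almost measure-preserving maps $\mapg^{\tb}_m$, compatible tower decompositions, and the three conditions (LUS1)--(LUS3)---and translating raw Birkhoff-sum bounds directly into that language is not straightforward. The paper instead introduces the notion of \emph{good special flow} (\Cref{def:good_sf}) and proves once and for all that good special flows are LUS (\Cref{prop:good_implies_lus}); it then reduces goodness to conditions (P1)--(P5) on partitions of the base (\Cref{prop:part1}), and further reduces those to the three elementary shearing conditions (J1)--(J3) of \Cref{lem:mvs}: intervals avoid the singularities, a lower bound on $|S_{N(x,t)}\Phi'|$ times the interval length, and an upper bound on $|S_{N(x,t)}\Phi''|$ relative to $|S_{N(x,t)}\Phi'|$. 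After this chain of reductions, the proof of \Cref{thm:arn} is a one-line citation: (J1)--(J3) are exactly Proposition~5(4) and Proposition~6 of \cite{Ulc} (equivalently Lemma~6.5 and Proposition~6.4 of \cite{Rav}).

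Your sketch of excising bad strips and controlling $S_N\Phi'$ on the complement is morally what (J1)--(J3) encode, so there is no wrong idea here; but the leap from those bounds to the full LUS structure is where the work lies, and you should route through \Cref{def:good_sf}, \Cref{prop:good_implies_lus}, \Cref{prop:part1} and \Cref{lem:mvs} rather than attempt a direct verification.
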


\paragraph{Mixing flows with symmetric singularities.}
Chaika and Wright in \cite{ChW} have shown existence of a class of minimal smooth flows on surfaces of higher genus which have symmetric logarithmic singularities and are mixing. Recall that in such case mixing typically does not hold as shown by Ulcigrai \cite{Ulc}. As a consequence of our methods we also show:
\begin{bigthm}\label{prop:ChW} Every mixing flow considered in \cite{ChW} is mixing of all orders.
\end{bigthm}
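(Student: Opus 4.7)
The plan is to reduce Theorem~\ref{prop:ChW} to the general result of this paper that every locally uniformly shearing flow is mixing of all orders, by verifying that the Chaika--Wright flows satisfy the LUS property of \Cref{def:LUS_flows}. As with the Kochergin and Arnol'd families in Theorems~\ref{thm:koch1} and \ref{thm:arn}, I would work with the special flow representation over an interval exchange transformation with a roof function $\Phi$ having logarithmic singularities at the discontinuities, now in the \emph{symmetric} regime where $\sum_i N_i^+=\sum_i N_i^-$.

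The heart of the verification is to control, along short transversal arcs in the base IET, the Birkhoff sums $S_N(\Phi')(x)$, which govern the shearing between nearby orbits of the suspension flow. The LUS definition requires that on most of a small transversal $I$, and for a suitable sequence of times $t_n\to\infty$, the shearing rate has a definite sign and grows in a quantitatively controlled way, so that the $\mapg$-pushforward of $I$ equidistributes in the flow direction on a Rokhlin tower of large measure. For asymmetric logarithmic singularities this is driven by the single closest visit to a singular point, which dominates $S_N(\Phi')$; the difficulty specific to \cite{ChW} is that in the symmetric case the contributions from the two sides of each singularity cancel at leading order, so the naive stretching estimate is \emph{a priori} insufficient.

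To bypass this cancellation, I would import the main quantitative input from \cite{ChW}: for a positive-measure set of IET parameters, there is a Rauzy--Veech renormalization subsequence $(q_n)$ along which the residual shearing after symmetric cancellation is still of order $\log q_n$ on a collection of transversal subintervals whose total measure tends to one. This is precisely the ingredient they exploit to obtain mixing. Repackaging this estimate, together with the Denjoy--Koksma control on the tail Birkhoff sums of $\Phi'$ outside the set of closest returns, yields the shearing decomposition required by the LUS definition at the times $t_n$ and scales comparable to $1/q_n$. Once LUS is established for the suspension flow, the general LUS $\Rightarrow$ multiple mixing theorem of this paper gives mixing of all orders.

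The main obstacle, as in the proof of the single mixing in \cite{ChW}, is the two-sided cancellation at each symmetric logarithmic singularity: ordinary Denjoy--Koksma bounds do not rule out a total cancellation of the shearing, and only the combinatorially tailored estimates along a carefully chosen renormalization subsequence break the symmetry. A secondary technical point is to match the parameter regime of \cite{ChW} with the uniformity built into the LUS hypothesis, by showing that on a set of density approaching one in the transversal the shearing estimates hold not only for a fixed distinguished point but uniformly in a neighborhood; this reduces to checking that the set of "good" transversal subintervals produced by \cite{ChW} forms a Vitali-type cover of the base compatible with the induced IET, which is standard once the shearing lower bounds are in place.
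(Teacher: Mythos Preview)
Your proposal is correct and follows essentially the same route as the paper. The paper's proof is in fact a single sentence: it observes that Theorem~6.1 of \cite{ChW} already furnishes partitions $\{J_i^t\}$ satisfying the shearing and distortion conditions (J1)--(J3) of \Cref{lem:mvs}; then the chain \Cref{lem:mvs} $\Rightarrow$ \Cref{prop:part1} $\Rightarrow$ \Cref{prop:good_implies_lus} $\Rightarrow$ \Cref{thm:main_abstract_result} yields LUS and hence mixing of all orders. Your sketch identifies precisely the same input (the residual shearing lower bound on $S_N(\Phi')$ from \cite{ChW} surviving the symmetric cancellation) and the same conclusion, so there is nothing materially different; the paper simply packages the passage from the \cite{ChW} estimates to the LUS property through the intermediate ``good special flow'' framework of \Cref{sec:Goodflows}, which absorbs the uniformity and covering issues you flag in your secondary technical point.
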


\subsection{Time-changes of unipotent flows}

Unipotent flows on quotients of semisimple Lie groups are the fundamental examples of \lq\lq uniform parabolicity\rq\rq\ in homogeneous dynamics.
Of the possible smooth perturbations which preserve the parabolic behaviour, time-changes form the simplest and most studied class. A time-change, or time-reparametrization, of a unipotent flow is obtained by rescaling the generating vector field by a (nonconstant) positive function. Despite their apparent simplicity, time-changes of unipotent flows display interesting rigidity phenomena and their ergodic properties (especially from the quantitative point of view) are far from being completely understood. Our main result in this direction shows that, under some natural conditions, they are polynomially mixing of all orders.

\begin{bigthm}\label{th:main_tc_unip}
	Let $G$ be a connected semisimple Lie group with finite centre and no compact factor,
	and let $M=\Gamma \backslash G$, where $\Gamma$ is an irreducible lattice. Any admissible time-change of a unipotent flow on $M$ is polynomially mixing of all orders.
\end{bigthm}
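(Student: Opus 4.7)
The plan is to deduce Theorem~\ref{th:main_tc_unip} from the general polynomial multiple mixing criterion advertised in the abstract: namely that \emph{polynomially mixing} plus \emph{polynomially LUS} implies polynomial mixing of all orders. Thus the proof splits into two tasks: first, establishing that any admissible time-change $\tilde u_t$ of a unipotent flow is polynomially mixing; second, verifying that $\tilde u_t$ fulfills the quantitative LUS property with polynomial rates. The first task is essentially present in the literature: Ravotti \cite{Rav} proves mixing for admissible time-changes of unipotent flows via shearing, and an inspection of that argument (together with the spectral gap for $\Gamma\backslash G$ coming from property (T) or effective decay of matrix coefficients) yields polynomial rates. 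So the real content is the second task.

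To establish polynomial LUS, the plan is to exploit the Jacobson--Morozov $\mathfrak{sl}_2$-triple $\{U,A,V\}$ containing the unipotent generator $U$. The renormalization identity $a_s u_t a_{-s} = u_{e^{2s} t}$ gives polynomial shearing of the $u$-direction under the centralizer structure: if $y = \exp(\eta V)\cdot x$ lies on a short transversal to the orbit, then $u_t y$ and $u_t x$ differ, to leading order and for $|\eta|t$ bounded away from zero, by a translation along $u_{c\eta t}$ with $c\ne 0$. This is the standard Margulis-type shearing used in all prior work on unipotent mixing, and it supplies a partial foliation of local transversals whose $u_t$-pushforwards equidistribute in the flow direction with explicit polynomial error controlled by a Sobolev norm of the test function.

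For the time-changed flow $\tilde u_t$ the orbits coincide, set-theoretically, with those of $u_t$, but are traversed with speed $1/\tau$. The admissibility hypothesis provides the regularity of $\tau$ needed to convert the $u$-parametrized shearing into $\tilde u$-parametrized shearing: points $x$ and $y$ on the same short transversal are reached at comparable $\tilde u$-times (with relative discrepancy controlled by a Birkhoff-type cocycle of $\log \tau$), so the polynomial drift in the orbit direction persists and the LUS partial foliation is simply the push of the transverse $V$-direction under a uniformly Lipschitz change of parameter. Uniformity in $x$ on a set of almost full measure follows from the integrability of the relevant cocycles together with a standard Lusin-type exhaustion.

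The main obstacle, and the step requiring the most care, is maintaining genuinely \emph{polynomial} bounds rather than merely asymptotic ones throughout the passage from $u_t$ to $\tilde u_t$. Two issues arise: (a) one must quantify the Lipschitz distortion of the time-change uniformly across the shearing scales used in the LUS estimate, which in turn dictates the precise form of admissibility needed; (b) when $G$ has several simple factors, the irreducibility of $\Gamma$ is crucial to ensure that projections of test functions to factors orthogonal to the unipotent decay quantitatively, so that the shearing argument in the $U$-direction is not destroyed by persistent invariance in a complementary direction. Both points are technical but should be tractable with Sobolev estimates and Howe--Moore-type decay; the remainder of the proof is then a direct application of the general polynomial LUS $\Rightarrow$ polynomial multiple mixing theorem.
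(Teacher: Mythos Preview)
Your overall architecture is right and matches the paper: cite \cite{Rav} for polynomial $2$-mixing, verify a quantitative shearing property via the Jacobson--Morozov triple, and invoke the abstract criterion. Two points deserve correction, one structural and one substantive.

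\medskip

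\textbf{Framework.} The paper does \emph{not} route this theorem through the (Q)LUS machinery. It introduces a simpler notion, \emph{quantitatively globally uniformly shearing} (QGUS, \Cref{def:QGUS_flow}), tailored to the uniformly parabolic case: one needs only a family of $\delta$-almost measure preserving maps $\mapg^{\tb}_m$ on all of $X$ satisfying a single uniform shearing bound, with no Rokhlin towers, no almost-partitions, and no case-splitting over indices $j$. \Cref{thm:multiple_mixing_QGUS_flows} then gives polynomial multiple mixing directly from QGUS plus polynomial $2$-mixing. Verifying QGUS is much shorter than verifying QLUS; the paper's proof of \Cref{prop:tc_unip_are_QGUS} is under a page.

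\medskip

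\textbf{The transversal direction.} Here there is a genuine error in your proposal. You take $y=\exp(\eta V)x$ with $V$ the opposite unipotent and claim $u_t y$ differs from $u_t x$ by $u_{c\eta t}$ to leading order. That is false: $\mathrm{Ad}(u_t)V = V + tA - t^2 U$ (in standard $\mathfrak{sl}_2$ normalization), so the drift along the flow is \emph{quadratic} in $t$, and worse, there is a transverse $A$-component of size $\eta t$ that does not stay small on the relevant scale. The renormalization identity $a_s u_t a_{-s}=u_{e^{2s}t}$ you quote is a statement about the \emph{Cartan} direction $A$, not about $V$, and it is precisely the $A$-direction (the paper's $Y$, with $[Y,U]=U$) that the paper uses: one sets $\mapg^{\tb}_m = h^Y_{mM^{\eta/2}/t_k}$, and the commutation relation of \Cref{prop:tc_geo_commute} gives $\flow_t\circ h^Y_s(x)=h^Y_s\circ \flow_{z(x,s,t)}(x)$ with $z(x,s,t)\approx t+st$ up to errors controlled by $s^2 t$ and by the ergodic average of $Y\alpha/\alpha$. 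The latter is handled by \Cref{prop:erg_int_tc_unip}, which carves out the bad set $G_M$. Your concern (a) about Lipschitz distortion of the time-change is exactly this $Y\alpha/\alpha$ term and is dealt with by a pointwise ergodic-theorem bound derived from polynomial mixing; your concern (b) about irreducibility and multiple simple factors is already absorbed into the polynomial mixing input from \cite{Rav} and the Sobolev-norm framework of \cite{BEG}, and does not reappear in the QGUS verification.
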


We refer to \Cref{sec:tc_unip_flows} for all definitions, including the notion of admissibility (see \Cref{def:admissible_tc}). We remark that, when $M$ is compact, all (normalized) smooth time changes are admissible.

Polynomial mixing of all orders for homogeneous unipotent flows in the setting of \Cref{th:main_tc_unip} was proved by Bj\"{o}rklund, Einsiedler, and Gorodnik in \cite{BEG}.
For $G = \SL_2(\R)$ and $\Gamma$ co-compact, polynomial $2$-mixing of smooth time-changes oh horocycle flows was proved in \cite{FU} and polynomial $3$-mixing in \cite{KR}. In the general setting of \Cref{th:main_tc_unip}, polynomial $2$-mixing of admissible time-changes of unipotent flows was proved in \cite{Rav}.

\subsection{Other mixing flows}
It is possible to apply our methods to other mixing flows. In particular we plan to use our approach to show mixing of all orders for mixing time changes of nilflows considered in  \cite{AFU,Ravotti,AFRU}. We also plan to use this approach to prove multiple mixing and quantitative multiple mixing for time-changes of linear flows on tori considered in \cite{Fay}. To keep the presentation of this paper more readable, we decided to study the above mentioned examples in a separate paper (but the approach will be based on methods from the present work). 

\subsection{Organization of the paper}

The remainder of the paper is organised as follows.

In \Cref{sec:GUSflows}, we introduce the notion of globally uniformly shearing (GUS) flows as an abstract notion that describes the shearing properties of many \lq\lq uniformly parabolic\rq\rq\ flows. We show that mixing GUS flows are mixing of all orders. We also provide a quantitative version of the GUS property, as well as a result on polynomial mixing of all orders. This section could serve as a \lq\lq warm up\rq\rq\ for the generalizations that will be the focus of \Cref{sec:LUSflows} and \Cref{sec:quantitative_results}.

In \Cref{sec:tc_unip_flows}, we prove \Cref{th:main_tc_unip} by showing that time-changes of unipotent flows are quantitatively GUS and satisfy the assumptions of the abstract results of \Cref{sec:GUSflows}.

\Cref{sec:towers} is an \emph{intermezzo} in which we discuss some properties of towers, which will be useful in the following sections. We also recall some basic definitions on special flows and we introduce the notion of almost partitions. 

\Cref{sec:LUSflows} and \Cref{sec:quantitative_results} contain the main abstract results of the paper: we introduce the notion of locally uniformly shearing (LUS) flows and its quantiative counterpart (QLUS flows), which aim at describing the shearing properties of many \lq\lq non-uniformly parabolic\rq\rq\ flows.
We show that mixing (respectively, polynomially mixing) LUS (respectively, QLUS) flows are mixing of all orders (respectively, polynomially mixing of all orders). To this end, we introduce the properties $\Par(k)$ and $\QPar(k)$ which, roughly speaking, imply a local decay of $k$-correlations on towers.
The core of the arguments is to show that the (Q)LUS property allows us to upgrade property (Q)Par$(k)$ to (Q)Par$(k+1)$.

\Cref{sec:Goodflows} focuses on special flows. We formulate sufficient conditions to ensure that a special flow is LUS or QLUS and we call the special flows which verify such conditions (quantitatively) good. 

In \Cref{sec:LUS_on_surfaces} and \Cref{sec:QLUS_on_surfaces}, we further give conditions to verify whether a special flow is (quantitatively) good. These conditions rely on the existence of partitions of the base of the suspension space on which more standard shearing inequalities hold (namely, lower bounds on the shearing and upper bounds on the distortion).

The subject of the last section, \Cref{sec:KFlows}, are Kochergin flows on the torus. We prove \Cref{thm:koch1} and \Cref{thm:koch2} by constructing the partitions which allow us to apply the results of the previous two sections.

Finally, the Appendices \ref{sec:appendix_a}, \ref{sec:vdc} and \ref{sec:shearing} contain the proofs of several technical lemmas.

%%%%%%%%%%

\section{Globally uniformly shearing flows}\label{sec:GUSflows}

In this section, we prove multiple mixing for a class of flows which we call \emph{globally uniformly shearing}, which correspond to the \lq\lq uniformly parabolic\rq\rq\ behaviour. The definitions and arguments presented below will be later generalized in \Cref{sec:LUSflows}, to cover the cases where weaker assumptions hold (in \lq\lq non-uniformly parabolic\rq\rq\ setting). 

\subsection{Motivation}
The notion of global uniform shearing is an abstract formulation of the shearing mechanism that parabolic flows display. In order to motivate \Cref{def:GUS_flow}, let us recall the simplest case of this behaviour: horocycle flows on unit tangent bundles of hyperbolic surfaces with constant negative curvature. Let $X$ be the unit tangent bundle of a compact hyperbolic surface, and let $(h_t)_{t\in \R}$ and $(g_t)_{t\in R}$ denote the horocycle and geodesic flow on $X$. The well-known commutation relation $h_t \circ g_s = g_s \circ h_{e^s t}$ implies that, when $t>1$ is large and, say, $|s|\leq \sigma_t := t^{-3/4}$, for any point $x \in X$, we can bound the distance
\[
\dist \big(h_t \circ g_s (x), h_{t+st}(x) \big) \leq \sigma_t + \sigma_t^2 t \leq 2 t^{-1/2}.
\] 
Geometrically, this means that the pushed geodesic segment $s \mapsto h_t \circ g_s (x)$, for $s\in [0,\sigma_t]$, becomes sheared along the flow direction and approximates the horocycle orbit segment of length $\sigma_t \, t = t^{1/4}$ starting at $h_t(x)$ with linear speed $t$, see \Cref{fig:shear} for a schematic picture. Since the flow is uniquely ergodic, the pushed segments equidistributes in $X$. A formalization of this heuristic argument takes the name of the \lq\lq mixing via shearing\rq\rq\ method.

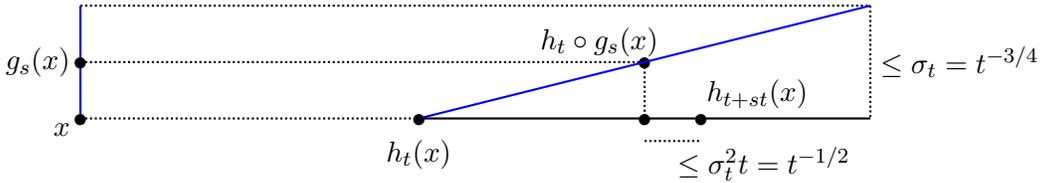
\begin{figure}[h]%!]
	\centering
	\begin{tikzpicture}[scale=3]
		\draw[thick, blue] (0,0) -- (0,0.5);
		\foreach \Point in {(0,-0.01), (0,0.24), (1.5,-0.01), (2.5,0.24), (2.5,-0.01), (2.75,-0.01)}{
			\node at \Point {\textbullet};
		}
		\draw[thick, densely dotted] (0,0) -- (1.5,0);
		\draw[thick, densely dotted] (0,0.25) -- (2.5,0.25);
		\draw[thick, densely dotted] (0,0.5) -- (3.5,0.5);
		\draw[thick, densely dotted] (2.5,0.25) -- (2.5,0);
		\draw[thick, densely dotted] (2.5,-0.1) -- (2.75,-0.1);
		\draw[thick, densely dotted] (3.5,0.5) -- (3.5,0);
		\draw[thick, blue] (1.5,0) -- (3.5,0.5);
		\draw[thick] (1.5,0) -- (3.5,0);
		
		\draw[white] (0,-0.05)  node[anchor=east] {\textcolor{black}{$x$}};
		\draw[white] (0,0.25)  node[anchor=east] {\textcolor{black}{$g_s(x)$}};
		\draw[white] (1.5,-0.05)  node[anchor=north] {\textcolor{black}{$h_t(x)$}};
		\draw[white] (3,0)  node[anchor=south] {\textcolor{black}{$h_{t+st}(x)$}};
		\draw[white] (3.5,0.25)  node[anchor=west] {\textcolor{black}{$\leq \sigma_t = t^{-3/4}$}};
		\draw[white] (2.3,0.23)  node[anchor=south] {\textcolor{black}{$h_t \circ g_s(x)$}};
		\draw[white] (2.6,-0.2)  node[anchor=west] {\textcolor{black}{$\leq \sigma_t^2t = t^{-1/2}$}};

		\clip (-0.2,-0.3) rectangle (4.5,0.7);
	\end{tikzpicture}
	\caption{Shearing phenomenon: the pushed geodesic segment approximate an orbit segment parametrized with linear speed.}
	\label{fig:shear}
\end{figure}

A similar situation holds for time-changes of horocycle flows as well, although in this case the maps $g_s$ are not measure preserving. Since $s \leq \sigma_t$ is small, its effect on the measure is small as well. In general, we formulate the following definition. 
 
\begin{definition}\label{def:almostmp}
	A measurable map $\mapg \colon (X, \mu) \to (Y,\nu)$ between two measure spaces $ (X, \mu)$ and $ (Y,\nu)$ is called \emph{$\varepsilon$-almost measure preserving} if there exists $X_0 \subseteq X$ and $ Y_0 \subseteq Y$ with $\mu(X\setminus X_0) < \varepsilon \mu(X)$ and $\nu(Y \setminus Y_0) < \varepsilon \nu(Y)$ such that $(1-\varepsilon) \nu(A) \leq \mu(\mapg^{-1}(A)\cap X_0) \leq (1+\varepsilon) \nu(A)$ for all measurable sets $A \subseteq Y_0$. We call $X_0$ and $Y_0$ the \emph{proper domain} and the  \emph{proper codomain} of $\mapg$ respectively.
\end{definition}

The key basic property of almost measure preserving maps is the following.

\begin{lemma}\label{lem:change_variab}
	Let $\mapg \colon (X, \mu) \to (X, \mu)$ be $\varepsilon$-almost measure preserving. Then, for every $ \phi \in L^{\infty}(X)$ we have
	\[
	\left\lvert \int \phi \, \diff \mu - \int \phi \circ \mapg \, \diff \mu  \right\rvert \leq 3 \|\phi \|_{\infty} \, \varepsilon. 
	\]
\end{lemma}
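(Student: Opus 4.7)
The plan is to separate the contributions to the two integrals coming from the proper (co)domain $X_0, Y_0 \subseteq X$ provided by \Cref{def:almostmp} (applied with $(Y,\nu) = (X,\mu)$) and from their complements, and then to handle the main term by approximating $\phi$ with simple functions supported on $Y_0$. I assume throughout that $\mu(X) = 1$, which is consistent with the conclusion not involving $\mu(X)$ and with the flow setting in the rest of the paper.

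I would start by writing
\[
\int \phi \, \diff \mu - \int \phi \circ \mapg \, \diff \mu = \left( \int_{Y_0} \phi \, \diff \mu - \int_{X_0} \phi \circ \mapg \, \diff \mu \right) + \int_{X \setminus Y_0} \phi \, \diff \mu - \int_{X \setminus X_0} \phi \circ \mapg \, \diff \mu.
\]
The pointwise bounds $|\phi|, |\phi \circ \mapg| \leq \|\phi\|_\infty$ combined with $\mu(X \setminus X_0), \mu(X \setminus Y_0) < \varepsilon$ immediately yield a bound of $\varepsilon \|\phi\|_\infty$ for each of the last two summands. For the main term, I would approximate $\phi \, \one_{Y_0}$ in $L^1(X,\mu)$ by a simple function $\psi = \sum_{j=1}^N c_j \one_{A_j}$ with the $A_j \subseteq Y_0$ pairwise disjoint measurable and $|c_j| \leq \|\phi\|_\infty$. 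For such $\psi$, the definition of $\varepsilon$-almost measure preserving gives
\[
\left| \int_{Y_0} \psi \, \diff \mu - \int_{X_0} \psi \circ \mapg \, \diff \mu \right| = \left| \sum_{j=1}^N c_j \bigl(\mu(A_j) - \mu(\mapg^{-1}(A_j) \cap X_0)\bigr) \right| \leq \varepsilon \|\phi\|_\infty \sum_{j=1}^N \mu(A_j) \leq \varepsilon \|\phi\|_\infty.
\]
Letting the $L^1$-error $\|\phi \, \one_{Y_0} - \psi\|_{L^1(\mu)}$ go to zero, and using the upper bound $\mu(\mapg^{-1}(A) \cap X_0) \leq (1+\varepsilon)\mu(A)$ from the definition to control $\|(\phi - \psi)\circ \mapg \, \one_{X_0}\|_{L^1(\mu)} \leq (1+\varepsilon)\|\phi \, \one_{Y_0} - \psi\|_{L^1(\mu)}$, the same bound passes from $\psi$ to $\phi$ itself. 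Summing the three estimates yields the claimed $3 \varepsilon \|\phi\|_\infty$.

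The only delicate point is the limiting step: one must choose the approximating simple functions so that each level set $A_j$ lies inside $Y_0$ (so that the almost measure preservation inequality genuinely applies), and verify that composing with $\mapg$ preserves the $L^1$-approximation up to the factor $(1+\varepsilon)$ guaranteed by the upper bound in \Cref{def:almostmp}. This is routine once the supports are controlled, but it is the only spot where one could inadvertently apply the defining inequality to a set straddling the boundary of $Y_0$.
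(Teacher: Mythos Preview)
Your approach is essentially the paper's: split off the complements of $X_0$ and $Y_0$, then bound the main term; the paper phrases the latter as the total-variation estimate $\|\mu|_{Y_0} - \mapg_\ast(\mu|_{X_0})\|_{\totvar} \leq \varepsilon$, which your simple-function argument is unpacking.

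There is, however, a slip in your limiting step. The inequality you assert,
\[
\|(\phi - \psi)\circ \mapg \cdot \one_{X_0}\|_{L^1(\mu)} \leq (1+\varepsilon)\|\phi \, \one_{Y_0} - \psi\|_{L^1(\mu)},
\]
is not true as written: \Cref{def:almostmp} only controls $\mapg^{-1}$-preimages of sets \emph{inside} $Y_0$, and nothing there forces $\mapg(X_0)\subseteq Y_0$. Taking $\psi = \phi\,\one_{Y_0}$ makes the right side vanish while the left side equals $\int_{X_0\cap \mapg^{-1}(X\setminus Y_0)}|\phi|\circ \mapg \,\diff\mu$, which need not. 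What your approximation actually delivers in the limit is
\[
\Bigl|\int_{Y_0}\phi\,\diff\mu - \int_{X_0}(\phi\,\one_{Y_0})\circ \mapg \,\diff\mu\Bigr|\leq \varepsilon\|\phi\|_\infty,
\]
and you must then add back the piece $\int_{X_0\cap \mapg^{-1}(X\setminus Y_0)}\phi\circ \mapg\,\diff\mu$, bounded by $\|\phi\|_\infty\,\mu\bigl(X_0\cap \mapg^{-1}(X\setminus Y_0)\bigr)$. Since the definition gives $\mu(\mapg^{-1}(Y_0)\cap X_0)\geq (1-\varepsilon)\mu(Y_0)>(1-\varepsilon)^2$, this extra term is below $(2\varepsilon-\varepsilon^2)\|\phi\|_\infty$, so the argument is easily repaired at the cost of a slightly worse constant. (The paper's one-line total-variation claim glosses over exactly the same point.)
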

\begin{proof}
	Fix $ \phi \in L^{\infty}(X)$, and let $D$ and $E$ denote the proper domain and codomain of $\mapg$ respectively. Then,
	\[
	\begin{split}
		\left\lvert \int \phi \, \diff \mu - \int \phi \circ \mapg \, \diff \mu  \right\rvert &\leq \| \phi\|_{\infty} \, \big( \mu(X \setminus E) + \mu(X \setminus D) \big)+ \left\lvert \int_E \phi \, \diff \mu - \int_D \phi \circ \mapg \, \diff \mu  \right\rvert \\
		& \leq 2\| \phi\|_{\infty} \, \varepsilon + \| \phi\|_{\infty} \, \| \mu|_E - \mapg_{\ast}( \mu|_D )\|_{\totvar},
	\end{split}
	\]
	where $\| \cdot \|_{\totvar}$ is the total variation norm. By the definition of $\varepsilon$-almost measure preserving maps, the latter is not larger than $\varepsilon$.
\end{proof}

\subsection{GUS mixing flows are mixing of all orders}

Let us now describe the general setting we work with.
Let $(X, \dist)$ be a complete locally compact, $\sigma$-compact, metric space, and let $\mu$ be a Radon probability measure on $X$.
We denote by $\Lip(X)$ the space of Lipschitz functions on $X$, and by $\Lip_c(X)$ the subspace of those with compact support. The norm $\|\cdot\|_{\Lip}$ is defined by
\[
\|\phi \|_{\Lip} = \|\phi\|_{\infty} + \sup_{x\neq y} \frac{|\phi(x) - \phi(y)|}{\dist(x,y)}.
\]
Let $\flowR$ be a measure preserving flow on $(X, \mathcal{B}, \mu)$. 
We fix some notation: henceforth, by a \emph{$k$-tuple of times} we mean a vector $\tb = (t_0, \dots, t_{k-1}) \in \R^k$ so that $0 = t_0 < t_1 < \cdots < t_{k-1}$. For any $k$-tuple of times $\tb$, we let $\Deltat = \min_{ 0 < i < k} t_i - t_{i-1}$, with the convention that $\Delta (0) = \infty$.

\begin{definition}[Globally uniformly shearing flows]\label{def:GUS_flow}
	The flow $\flowR$ is called {\em globally uniformly shearing} (GUS for short) if there exists $a \in (0,1/2]$ and, for every $\delta >0$ and $k \in \Z_{\geq 0}$, there exist a measurable set $G_{\delta} \subseteq X$, with $\mu(X\setminus G_{\delta}) \leq \delta$, and $M_{k,\delta} > \delta^{-2}$ such that for every $M \geq M_{k,\delta}$ there exist $t_{M}\geq M$ for which the following holds.
For every %$k \in \N$ and for every 
$(k+1)$-tuple of times $\tb$ with $\Deltat > t_{M}$, there exist $\delta$-almost measure preserving maps $\mapg^{\tb}_m \colon X \to X$, for $m=0,\dots, K_{\delta}= \lfloor \delta^{-1} \rfloor$, satisfying the following property
\begin{itemize}
\item[(GUS1)] for every $j=0,\dots,k$, we have
\[
\sup_{x \in G_{\delta}} \dist \left(\flow_{t_{j}}(\mapg^{\tb}_m x), \flow_{t_j +m M^a(t_j/ t_k) }(x) \right)\leq \delta.
\]
\end{itemize}
\end{definition}

Condition (GUS1) quantifies the uniform shearing effect produced by the flow $\flowR$ on the points $x$ and $\mapg^{\tb}_m(x)$. The reader can verify that the horocycle flow is GUS for any $0<a\leq 1/2$ by taking $\mapg^{\tb}_m = g_{mM^a/t_k}$; the more general situation of time changes of unipotent flows will be the subject of \Cref{sec:tc_unip_flows}.

We now prove that GUS mixing flows are mixing of all orders. 
We will use the following lemma.
\begin{lemma}[Uniform $k$-mixing]\label{lem:uniform_k_mixing}
	Assume that the flow $\flowR$ is $k$-mixing. 
	Let $\phi_0,\ldots, \phi_{k-1}\in \Lip(X)$, with $\|\phi_i\|_{\Lip} < \infty$, and assume that $\mu(\phi_j)=0$ for at least one $j \in \{0,\dots, k-1\}$. For every $\varepsilon>0$, there exists $R_{-}\geq 1$ and for every $R_{+} \geq R_{-}$ there exists $t_{\varepsilon, R_{+}} \geq 1$ so that for all $k$-tuples of times $\tb$ with $\Deltat \geq t_{\varepsilon, R_{+}}$, we have
	\[
	\sup_{\substack{0\leq |r_i|\leq R_{+} \\ R_{-} \leq |r_j| \leq R_{+}}}\left\lvert \int_{X} \prod_{i = 0}^{ k-1} (\phi_i \cdot \phi_i \circ \flow_{r_i}) \circ \flow_{t_i} \diff\mu\right\rvert \leq \varepsilon.
	\]
\end{lemma}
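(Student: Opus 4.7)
My plan is to reduce the claim to a direct application of $k$-mixing after handling two auxiliary issues: first, the functions $\psi_i(r_i) := \phi_i \cdot \phi_i \circ \flow_{r_i}$ need not have zero integral even when $\phi_j$ does (so $k$-mixing would give the wrong limit); and second, $k$-mixing as usually stated gives only pointwise convergence in the functions, whereas we need a bound that is uniform over the compact parameter set $\{(r_0,\dots,r_{k-1}) : |r_i|\leq R_{+},\ R_{-}\leq |r_j|\leq R_{+}\}$.

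For the first issue, I would exploit $2$-mixing (which follows from $k$-mixing since $k \geq 1$, or directly from mixing in the $k=1$ case): since $\mu(\phi_j)=0$,
\[
\mu(\psi_j(r_j)) = \int \phi_j \cdot \phi_j \circ \flow_{r_j}\, \diff\mu \xrightarrow{|r_j|\to\infty} \mu(\phi_j)^2 = 0,
\]
so one may choose $R_{-}$ so large that $|\mu(\psi_j(r_j))| < \varepsilon/C$ for all $|r_j|\geq R_{-}$, where $C$ absorbs $k$ and the finitely many $L^\infty$ norms. Writing $\psi_j(r_j) = \widehat{\psi}_j(r_j) + c_j(r_j)$ with $\widehat{\psi}_j(r_j)$ mean-zero and $c_j(r_j) = \mu(\psi_j(r_j))$, I would split
\[
\int \prod_{i=0}^{k-1}\psi_i(r_i)\circ \flow_{t_i}\,\diff\mu = \int \widehat{\psi}_j(r_j)\circ \flow_{t_j}\prod_{i\neq j}\psi_i(r_i)\circ \flow_{t_i}\,\diff\mu + c_j(r_j)\int \prod_{i\neq j}\psi_i(r_i)\circ\flow_{t_i}\,\diff\mu.
\]
The second summand is controlled directly by $|c_j(r_j)|\prod_{i\neq j}\|\phi_i\|_\infty^2$.

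For the first summand, and to obtain the uniformity in $r$, I would invoke the standard fact that the flow acts continuously on $L^2(\mu)$ (a general property of measure-preserving $\R$-actions), so that $r_i\mapsto \phi_i \circ \flow_{r_i}$ is continuous in $L^2$ and hence $r_i \mapsto \psi_i(r_i)$ is as well. By compactness of $[-R_{+},R_{+}]^k$, for any $\eta>0$ I can pick a finite $\eta$-net $\{r^{(\ell)}\}_{\ell\leq N}$ in the parameter space (in the $L^2$ sense for each component) and corresponding finitely many mean-zero approximants $\widehat{\psi}_j(r_j^{(\ell_j)})$. A telescoping argument combined with measure-preservation of $\flow_{t_i}$ yields
\[
\Bigl|\int \widehat{\psi}_j(r_j)\circ \flow_{t_j}\prod_{i\neq j}\psi_i(r_i)\circ \flow_{t_i}\diff\mu - \int \widehat{\psi}_j(r_j^{(\ell_j)})\circ \flow_{t_j}\prod_{i\neq j}\psi_i(r_i^{(\ell_i)})\circ \flow_{t_i}\diff\mu\Bigr| \leq C'\eta,
\]
where the $L^1$ norm of each one-index difference is paired with $L^\infty$ bounds on the remaining factors, and $C'$ depends only on $k$ and $\prod_i \|\phi_i\|_\infty$. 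Finally, $k$-mixing applied to each of the finitely many fixed tuples $(\widehat{\psi}_j(r_j^{(\ell_j)}),\,\psi_i(r_i^{(\ell_i)}))_{i\neq j}$ gives a threshold $t^{(\ell)}$ past which the corresponding integral is $<\varepsilon/2$, and $t_{\varepsilon,R_+} := \max_\ell t^{(\ell)}$ closes the argument after choosing $\eta$ small.

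The main obstacle, and essentially the only nontrivial point, is upgrading the pointwise convergence from $k$-mixing to uniform convergence over the compact parameter cube. I expect the combination of the net-approximation with the telescoping/Cauchy--Schwarz estimate to handle it cleanly, provided one is careful to approximate in $L^2$ (exploiting strong continuity of the flow on $L^2$) rather than in $L^\infty$, since the latter need not hold for a general continuous but not uniformly continuous flow on a non-compact metric space.
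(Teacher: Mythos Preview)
Your proposal is correct and follows the same high-level outline as the paper's proof: choose $R_{-}$ via $2$-mixing so that $|\mu(\phi_j\cdot\phi_j\circ\flow_{r_j})|$ is small for $|r_j|\geq R_{-}$, split off this constant, reduce to finitely many parameter values, and apply $k$-mixing to each. The genuine difference is in how you discretize the parameter cube. The paper exploits the Lipschitz hypothesis directly: it places $r_i$ on the grid $N^{-1}\Z\cap[-R_{+},R_{+}]$ and uses $\|\phi_i\cdot\phi_i\circ\flow_{r_i}-\phi_i\cdot\phi_i\circ\flow_{m_i/N}\|_\infty\leq\|\phi_i\|_{\Lip}^2 N^{-1}$, i.e.\ an $L^\infty$ approximation with an explicit grid and explicit $N$. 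You instead invoke strong continuity of $r\mapsto\phi_i\circ\flow_r$ in $L^2$, take an $\eta$-net by compactness, and pair the $L^2$ (hence $L^1$) difference in one factor against $L^\infty$ bounds on the rest. Your route does not actually use the Lipschitz assumption on the $\phi_i$ (only boundedness), so it is slightly more general and sidesteps any implicit assumption on the metric regularity of the flow; the paper's route is more elementary and gives explicit constants, which matters downstream when the argument is made quantitative (cf.\ the paper's \Cref{lem:uniform_polynomial_k_mixing}).
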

\begin{proof}
	For the sake of notation, let us fix $j=k-1$; the proof is identical for other choices of $j$. 
	Let $\varepsilon >0$ be fixed, and let $R_{-} \geq 1$ be so that $|\mu(\phi_{k-1} \cdot \phi_{k-1} \circ \flow_r)|\leq \varepsilon/(4\prod_{i=0}^{k-2} \|\phi_i \|_{\infty})$ for all $|r|\geq R_{-}$. Fix now $R_{+} \geq R_{-}$, and let $N \in \N$ be minimal so that 
	\[
	N \geq 4 \varepsilon^{-1} \, k \left( \prod_{i=0}^{k-1} \|\phi_i\|^2_{\Lip}\right).
	\]
	We use the $k$-mixing assumption for the finite collection of all possible $k$-tuples of functions 
	\[
	\{\phi_0 \cdot \phi_0 \circ \flow_{r_0}, \dots, \phi_{k-1} \cdot \phi_{k-1} \circ \flow_{r_{k-1}}\},
	\] 
	for all choices of $r_i \in N^{-1}\Z$, with $|r_i| \leq R_{+}$: there exist $t_{\varepsilon, R_{+}} \geq 1$ (given by the maximum of the respective quantity over all $k$-tuples of functions as above) so that for all $k$-tuples of times $\tb$ with $\Deltat \geq t_{\varepsilon, R_{+}}$, and for all $|r_i|\leq R_{+}$, $r_i \in N^{-1}\Z$, with $|r_{k-1} | \geq R_{-}$, we have
	\begin{equation}\label{eq:unif_mixing1}
		\begin{split}
			\left\lvert \int_{X} \prod_{i=0}^{k-1} (\phi_i \cdot \phi_i \circ \flow_{r_i}) \circ \flow_{t_i} \diff\mu\right\rvert &\leq \frac{\varepsilon}{4} + \left(\prod_{i=0}^{k-2} \|\phi_i \|_{\infty} \right)|\mu(\phi_{k-1} \cdot \phi_{k-1} \circ \flow_{r_{k-1}})| \leq \frac{\varepsilon}{2}.
		\end{split}
	\end{equation}
	Let now $|r_i| \leq R_{+}$ be arbitrary, with $|r_{k-1}| \geq R_{-}$,  and let $m_i/N\in N^{-1}\Z$ so that $|r_i-m_i/N|\leq N^{-1}$. By the Lipschitz property, we can bound
	\[
	\begin{split}
		&\| \phi_i \cdot \phi_i \circ \flow_{r_i} - \phi_i \cdot \phi_i \circ \flow_{m_i/N}\|_{\infty} \leq \|\phi_i\|^2_{\Lip} N^{-1}.
	\end{split}
	\]
	This implies that, up to an error bounded by
	\[
	\sum_{i=0}^{k-1} \left(\prod_{j\neq i} \|\phi_j\|^2_{\infty} \right) \|\phi_i\|^2_{\Lip} N^{-1} \leq \frac{\varepsilon}{2},
	\]
	the bound \eqref{eq:unif_mixing1} holds for arbitrary $|r_i| \leq R_{+}$, with $|r_{k-1}| \geq R_{-}$.
\end{proof}

\begin{theorem}\label{thm:multiple_mixing_GG_flows}
Let $\flowR$ be a mixing GUS flow. Then $\flowR$ is mixing of all orders.
\end{theorem}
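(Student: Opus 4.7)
The plan is to argue by induction on $k \geq 1$ that $\flowR$ is $(k+1)$-mixing; the base case $k = 1$ is the given mixing hypothesis. By a standard density argument and centering of $\phi_k$ (writing $\phi_k = (\phi_k - \mu(\phi_k)) + \mu(\phi_k)$ and using the inductive $k$-mixing hypothesis to handle the term with the constant factor), it suffices to prove that for Lipschitz functions $\phi_0, \ldots, \phi_k$ with $\|\phi_i\|_{\Lip} \leq 1$ and $\mu(\phi_k) = 0$, the correlation
$$ I := \int \prod_{i=0}^{k} \phi_i \circ \flow_{t_i} \, \diff \mu $$
tends to $0$ as $\Deltat \to \infty$.

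Fix $\delta > 0$ and apply the GUS property (\Cref{def:GUS_flow}) to the $(k+1)$-tuple $\tb$, obtaining the set $G_\delta$ and, for $M \geq M_{k,\delta}$ to be chosen later, the $\delta$-almost measure preserving maps $\mapg_m^{\tb}$, $m = 0, \ldots, K_\delta$. Combining \Cref{lem:change_variab} with condition (GUS1) and the Lipschitz property of the $\phi_i$, one obtains, uniformly in $m$,
$$ I = \int \prod_{i=0}^{k} \phi_i \circ \flow_{\lambda_m t_i} \, \diff \mu + O(\delta), \qquad \lambda_m := 1 + \frac{m M^a}{t_k}. $$
Averaging over $m$ and applying the Cauchy-Schwarz inequality (isolating $\phi_0$) gives
$$ |I|^2 \leq \|\phi_0\|_\infty^2 \, \|G\|_2^2 + O(\delta), \qquad G(x) := \frac{1}{K_\delta+1} \sum_{m=0}^{K_\delta} \prod_{j=1}^{k} \phi_j(\flow_{\lambda_m t_j} x). $$

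To estimate $\|G\|_2^2$, I would expand the square, pair $\phi_j \circ \flow_{\lambda_m t_j}$ with $\phi_j \circ \flow_{\lambda_{m'} t_j}$ for each $j$, and use $\flow$-invariance of $\mu$ to rewrite each of the $(K_\delta+1)^2$ terms as a $k$-fold correlation
$$ \int \prod_{j=1}^{k} \big( \phi_j \cdot \phi_j \circ \flow_{s_j} \big) \circ \flow_{\tau_j} \, \diff \mu, $$
where $s_j := (m' - m) M^a t_j / t_k$ and $\tau_j := \lambda_m(t_j - t_1)$, so that $\tau_1 = 0$ and $\tau_{j+1} - \tau_j \geq \Deltat$. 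The diagonal contribution ($m = m'$) is at most $\prod_j \|\phi_j\|_\infty^2 / (K_\delta + 1) = O(\delta)$. For the off-diagonal contribution ($m \neq m'$), the key observation is that at $j = k$ the shift satisfies $|s_k| = |m' - m| M^a \geq M^a$; \Cref{lem:uniform_k_mixing}, available under the inductive $k$-mixing hypothesis, then bounds each such term by an arbitrarily small $\varepsilon$ provided $M^a \geq R_-(\varepsilon, \phi_k)$ (so that $|\mu(\phi_k \cdot \phi_k \circ \flow_{s_k})|$ is small by mixing, using $\mu(\phi_k) = 0$) and $\Deltat$ exceeds the corresponding threshold $t_{\varepsilon, K_\delta M^a}$. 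Hence $\|G\|_2^2 = O(\delta + \varepsilon)$, and $I \to 0$ upon sending $\delta, \varepsilon \to 0$.

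The main technical obstacle is the layered choice of parameters, which must be made in the order: first $\delta$ and $\varepsilon$ (to set the target accuracy), then $M$ large enough to satisfy both $M \geq M_{k,\delta}$ and $M^a \geq R_-(\varepsilon, \phi_k)$, and finally $\Deltat$ large enough to exceed both $t_M$ from \Cref{def:GUS_flow} and $t_{\varepsilon, K_\delta M^a}$ from \Cref{lem:uniform_k_mixing}. A subtle point is that \Cref{lem:uniform_k_mixing} is applied to the auxiliary functions $\phi_j \cdot \phi_j \circ \flow_{s_j}$, whose $\Lip$-norms depend on the flow; since the shifts satisfy $|s_j| \leq K_\delta M^a$ and are bounded once $\delta$ and $M$ are fixed, this dependence only enters through the already-existing parameters of \Cref{lem:uniform_k_mixing}.
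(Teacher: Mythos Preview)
Your proposal is correct and follows essentially the same approach as the paper: induction on $k$, reduction to $\mu(\phi_k)=0$, change of variables via the $\delta$-almost measure preserving maps $\mapg_m^{\tb}$, Cauchy--Schwarz to isolate $\phi_0$, and then the uniform $k$-mixing lemma (\Cref{lem:uniform_k_mixing}) applied to the resulting $k$-fold correlations of the functions $\phi_j\cdot\phi_j\circ\flow_{s_j}$, exploiting that $|s_k|\geq M^a$.

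The only notable difference is organizational: you apply (GUS1) for all indices \emph{before} Cauchy--Schwarz and then simply expand $\|G\|_2^2$ as a double sum over $(m,m')$, whereas the paper first extracts $\phi_0$ via (GUS1) at $j=0$, then applies Cauchy--Schwarz together with a Van der Corput step (with $L=\lfloor\sqrt K\rfloor$), and only afterwards invokes (GUS1) for $j\geq 1$. In the GUS setting your direct expansion is a genuine simplification---the diagonal already contributes $O(1/K_\delta)=O(\delta)$ without any Van der Corput refinement---and your parameter ordering matches the paper's. The paper's Van der Corput formulation is really a warm-up for the more delicate LUS case in \Cref{sec:LUSflows}, where the extra averaging over a shorter window $L$ becomes relevant because one must keep track of towers and almost-partitions.
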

\begin{proof}
The proof proceeds by induction: let $k\geq 2$, and assume that $\flowR$ is $k$-mixing. It is enough to prove mixing of order $k+1$ for Lipschitz functions with compact support, so let $\phi_0,\ldots, \phi_{k}\in \Lip_c(X)$ be fixed, and let $\varepsilon \in (0,1/2)$. 
For the sake of notation, in the following when writing $A \ll B$, we mean $A \leq C \cdot B$ for a constant $C\geq 0$ whose value depends only on $k$ and on the Lipschitz norm of the functions $\phi_0, \dots, \phi_k$.
We need to show that there exists $T(k+1,\varepsilon)\geq 1$ so that for all $(k+1)$-tuple of times $\tb$ with $\Deltat \geq T(k+1,\varepsilon)$ we have 
\begin{equation*}
	\left\lvert \int_{X}\prod_{i=0}^{k} \phi_i\circ \flow_{t_i} \diff\mu - \prod_{i=0}^{k}  \int_{X} \phi_i \diff\mu  \right\rvert \ll \varepsilon.
\end{equation*}
Writing $\phi_k = \phi_k^{\perp} + \mu(\phi_{k})$, with $\phi_k^{\perp} = \phi - \mu(\phi_{k}) \in \Lip(X)$ and $\|\phi_k^{\perp}\|\ll 1$, and using the $k$-mixing assumption, we can assume that $\mu(\phi_k) = 0$. Hence, it suffices to show that 
\begin{equation}\label{eq:proof_thmGG_goal}
	\left\lvert \int_{X}\prod_{i=0}^{k} \phi_i\circ \flow_{t_i}\diff\mu  \right\rvert \ll \varepsilon.
\end{equation}
Let $a\in (0,1/2]$ and fix $\delta = \varepsilon^4$. Let $M_{k, \delta} \geq \delta^{-2}$ be given by the GUS property in \Cref{def:GUS_flow}.
\sloppy We apply the uniform $k$-mixing property of \Cref{lem:uniform_k_mixing} to the $k$-tuple of functions $\phi_1, \dots, \phi_k$ (notice the shift in the indexing) with parameter $\varepsilon^2$, and we choose $R_{+}= M = \max \{M_{k, \delta}, R_{-}^{1/a}\}$: there exists $t_{\varepsilon^2, M}$ so that 
\begin{equation}\label{eq:proof_thmGG2}
	\sup_{\substack{0\leq |r_i|\leq M \\ M^a \leq r_k \leq M}}\left\lvert \int_{X} \prod_{i =1 }^k (\phi_i \cdot \phi_i \circ \flow_{r_i}) \circ \flow_{t_{i-1}} \diff\mu\right\rvert \leq \varepsilon^2,
\end{equation}
for all $k$-tuples of times $\tb $ with  $\Deltat \geq t_{\varepsilon^2,M}$.
We define $T(k+1, {\varepsilon})$ as follows: 
we consider $t_{M}$ be given by \Cref{def:GUS_flow}, and we set $T(k+1, {\varepsilon}) = \max \{ t_{M}, t_{\varepsilon^2,M} + 2 M\}$.

Let now $\tb$ be a $(k+1)$-tuple of times with $\Deltat \geq T(k+1,\varepsilon)$. 
From the GUS property, for all $m \leq K_{\delta} = \lfloor \delta^{-1}\rfloor$, we have a $\delta$-almost measure preserving map $\mapg^{\tb}_m \colon X \to X$.
We set $K = \lfloor K_{\delta} /2 \rfloor$.
By \Cref{lem:change_variab}, we have
\begin{equation}\label{eq:proof_thmGG4}
\left\lvert \int_{X}\prod_{i=0}^{k} \phi_i\circ \flow_{t_i} \diff\mu\right\rvert \ll \left\lvert \frac{1}{K} \sum_{m=1}^K \int_{X} \prod_{i=0}^{k} \phi_i\circ \flow_{t_i} \circ \mapg^{\tb}_m \diff\mu\right\rvert + \delta.
\end{equation}
Note that, by definition for $t_0=0$, we have $\dist(x, \mapg^{\tb}_m (x)) \leq \delta$.
Thus, since $\phi_0$ is Lipschitz, we obtain
\begin{equation*}
	\left\lvert \frac{1}{K} \sum_{m=1}^K \int_{X} \prod_{i=0}^{k} \phi_i\circ \flow_{t_i} \circ \mapg^{\tb}_m \diff\mu\right\rvert
	\ll \left\lvert  \int_{X} \phi_0 \cdot \left(\frac{1}{K} \sum_{m=1}^K \prod_{i=1}^{k} \phi_i\circ \flow_{t_i} \circ \mapg^{\tb}_m \right) \diff\mu\right\rvert + \delta.
\end{equation*}
We now apply the Cauchy-Schwarz Inequality and the Van der Corput Inequality (\Cref{lem:vdc} in \Cref{sec:vdc}) with $L = \lfloor \sqrt{K} \rfloor$, which together yield
\begin{multline*}
	\left\lvert \frac{1}{K} \sum_{m=1}^K \int_{X} \prod_{i=0}^{k} \phi_i\circ \flow_{t_i} \circ \mapg^{\tb}_m \diff\mu\right\rvert \\
	\ll \left[ \frac{1}{K} \sum_{m=1}^K \left( \frac{1}{L} \sum_{l=0}^L \left\lvert \int_{X}  \prod_{i=1}^{k} \phi_i\circ \flow_{t_i} \circ \mapg^{\tb}_m \cdot \phi_i\circ \flow_{t_i} \circ \mapg^{\tb}_{m+l} \diff\mu \right\rvert \right) \right]^{1/2} +\left(\frac{L}{K}\right)^{1/2} + \delta.
\end{multline*}
Isolating the term corresponding to $l=0$, since $L^{-1/2} \ll K^{-1/4} \ll \varepsilon$ and $(L/K)^{1/2} \ll K^{-1/4} \ll \varepsilon$, we obtain
\begin{equation}\label{eq:proof_thmGG3}
	\left\lvert \frac{1}{K} \sum_{m=1}^K \int_{X} \prod_{i=0}^{k} \phi_i\circ \flow_{t_i} \circ \mapg^{\tb}_m \diff\mu\right\rvert 
	\ll \sup_{\substack{0 \leq m,l \leq K \\ |m-l| \geq 1} }\left\lvert \int_{X} \prod_{i=1}^{k} \phi_i\circ \flow_{t_i} \circ \mapg^{\tb}_m \cdot \phi_i\circ \flow_{t_i} \circ \mapg^{\tb}_{l} \diff\mu \right\rvert^{1/2} + \varepsilon.
\end{equation}

Let us fix $0 \leq m,l \leq K$ with $ |m-l| \geq 1$, and let us consider the absolute value on the right hand side above. 
By (GUS1), for all $x \in G = G_{\delta}$, we have
\[
|\phi_i\circ \flow_{t_i} \circ \mapg^{\tb}_m (x)\cdot \phi_i\circ \flow_{t_i} \circ \mapg^{\tb}_{l}(x) - \phi_i\circ \flow_{t_i +m M^a (t_i/ t_k) } (x) \cdot \phi_i\circ \flow_{t_i +l M^a(t_i/ t_k) } (x)| \ll \delta= \varepsilon^4.
\]
Since $\mu(X \setminus G) \leq \delta$, We deduce
\begin{multline*}
	\left\lvert \int_{X} \prod_{i=1}^{k} \phi_i\circ \flow_{t_i} \circ \mapg^{\tb}_m \cdot \phi_i\circ \flow_{t_i} \circ \mapg^{\tb}_{l} \diff\mu \right\rvert \\
	\ll \left\lvert \int_{X} \prod_{i =1}^{k} (\phi_i \cdot \phi_i\circ \flow_{(l-m) M^a(t_i/ t_k)} )\circ \flow_{t_i + m M^a(t_i/ t_k)} \diff\mu \right\rvert + \varepsilon^4.
\end{multline*}
By invariance of $\mu$, the integrand function above can be composed by $\flow_{-\overline t}$, where ${\overline t} = t_1 + m M^a(t_1/ t_k)$; then, we obtain
\begin{equation*}
	\left\lvert \int_{X} \prod_{i=1}^{k} \phi_i\circ \flow_{t_i} \circ \mapg^{\tb}_m \cdot \phi_i\circ \flow_{t_i} \circ \mapg^{\tb}_{l} \diff\mu \right\rvert 
	\ll \left\lvert \int_{X}  \prod_{i =1}^{k} (\phi_i \cdot \phi_i\circ \flow_{(l-m) M^a(t_i/ t_k)} )\circ \flow_{t_i'} \diff\mu \right\rvert + \varepsilon^4,
\end{equation*}
where the new $k$-tuple of times $\tb'$ has components $t_i' = t_i + m M^a(t_i/ t_k)-\overline t$.
Now, we claim that the right hand side above is bounded by $\ll \varepsilon^2$. Indeed, we can use \eqref{eq:proof_thmGG2}: we have 
\[
\begin{split}
	&|(l-m) M^a(t_i/ t_k)| \leq K \sqrt{M} \leq \delta^{-1} \sqrt{M} \leq M, \qquad |l-m| M^a \geq M^a\\
	&\text{and} \qquad  |t_i' - t_j'| \geq |t_i -t_j| - 2K \sqrt{M} \geq \Deltat- 2M \geq t_{\varepsilon^2,M}.
\end{split}
\]
Therefore, the uniform $k$-mixing property in \eqref{eq:proof_thmGG2} yields
\[
\left\lvert \int_{X}  \prod_{i=1}^{k} \phi_i\circ \flow_{t_i} \circ \mapg^{\tb}_m \cdot \phi_i\circ \flow_{t_i} \circ \mapg^{\tb}_{l} \diff\mu \right\rvert \ll\varepsilon^2.
\]
Combining this bound with \eqref{eq:proof_thmGG3} and \eqref{eq:proof_thmGG4} proves \eqref{eq:proof_thmGG_goal}, and hence \Cref{thm:multiple_mixing_GG_flows}.
\end{proof}

\subsection{Quantitative versions}\label{sec:QGUS}

We include a quantitative version of the notion of globally uniformly shearing flows, which allows us to prove quantitative (higher) mixing results. 
In the remainder of this section, in addition to the previous assumptions, we also require that $X$ is a smooth manifold (possibly with boundary). Here and henceforth, we denote by $\mathscr{C}^{\infty}_c(X)$ the space of infinitely differentiable functions with compact support. 

We assume the existence of appropriate norms as follows: there exist a constant $D \geq 0$ and, for every $d \geq 0$, constants $C_d, \sigma(d)\geq 1$, and norms $\cN_d$ on the space $\mathscr{C}^{\infty}_c(X)$ so that, for all $\phi, \psi  \in \mathscr{C}^{\infty}_c(X)$, we have
\begin{enumerate}
	\item $\| \phi \|_{\mathscr{C}^n} \leq C_{n+d} \cN_{n+d}(\phi)$ for every $n \geq 0$ and $d\geq D$,
	\item $\cN_{d}(\phi \cdot \psi) \leq C_d \cN_{d+D}(\phi) \cdot \cN_{d+D}(\psi) $ for every $d \geq 0$,
	\item $\cN_{d}(\phi \circ \flow_t) \leq C_d \cN_{d}(\phi) \cdot |t|^{\sigma(d)}$ for every $d \geq 0$ and $t \in \R$.
\end{enumerate} 
If the space $X$ is compact, then one can usually take $\cN_{d} = \| \cdot \|_{\mathscr{C}^d}$ and $D=0$; in general, however, one needs to consider appropriate Sobolev norms.

We want to prove quantitative mixing estimates in the following sense. As before, for a $k$-tuple of times $\tb = (t_0, \dots, t_{k-1})$ with $0 = t_0 <t_1 < \cdots <t_{k-1}$, we set $\Deltat = \min_{ 0< i < k} t_i-t_{i-1}$. 

\begin{definition}\label{def:quantitative_mixing}
	Let $k\geq 2$ and let $\eta_k>0$.
	We say that the flow $\flowR$ is polynomially $k$-mixing with rate $\eta_k$ if there exists $d_k\geq 1$ so that, for every $\phi_0, \dots, \phi_{k-1} \in \mathscr{C}^{\infty}_c(X)$ and every $k$-tuple of times $\tb$, we have
	\[
	\left\lvert \int_{X} \prod_{i=0}^{k-1}\phi_i \circ \flow_{t_i} \diff \mu - \prod_{i=0}^{k-1} \int_{X} \phi_i \diff \mu \right\rvert \leq \left(\prod_{i=0}^{k-1} \cN_{d_k}(\phi_i)\right)   \cdot \Deltat^{-\eta_k}.
	\]
\end{definition} 
We will assume that the norms $\cN_{d_k}$ are so that $\|\cdot \|_{\mathscr{C}^1} \ll \cN_{d_2} \leq \cN_{d_k} \leq \cN_{d_{k+1}}$, and we define $\sigma_k := k \cdot \sigma(d_k+D)$.
The following lemma is an easy consequence of the properties of the Sobolev norms.

\begin{lemma}[Uniform polynomial $k$-mixing]\label{lem:uniform_polynomial_k_mixing}
	Assume that the flow $\flowR$ is polynomially $k$-mixing with rate $\eta_k$. 
	Let $\phi_0,\ldots, \phi_{k-1}\in \mathscr{C}^{\infty}_c(X)$, and assume that $\mu(\phi_j)=0$ for at least one $j \in \{0,\dots, k-1\}$. For every $1\leq R_{-}\leq R_{+}$ and for every $k$-tuple of times $\tb$ with $\Deltat\geq 1$, we have
	\[
	\sup_{\substack{0\leq |r_i|\leq R_{+} \\ R_{-} \leq |r_j| \leq R_{+}}}\left\lvert \int_{X} \prod_{i = 0}^{ k-1} (\phi_i \cdot \phi_i \circ \flow_{r_i}) \circ \flow_{t_i} \diff\mu\right\rvert \leq \left(\prod_{i=0}^{k-1} \cN_{d_k+D}(\phi_i)^2\right) \left(R_{-}^{-\eta_2} + R_{+}^{\sigma_k}\Deltat^{-\frac{\eta_k\eta_2}{ 16\sigma_k}}\right).
	\]
\end{lemma}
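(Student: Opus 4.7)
The plan is to apply the polynomial $k$-mixing hypothesis directly to the ``doubled'' functions $\psi_i := \phi_i \cdot \phi_i \circ \flow_{r_i}$, and then control the remaining product $\prod_i \mu(\psi_i)$ via the zero-mean index $j$. The strategy follows the analogous qualitative argument in \Cref{lem:uniform_k_mixing}, but now every step is replaced by its quantitative (polynomial) counterpart, with the norm-propagation axioms (1)--(3) doing the bookkeeping on the Sobolev norms.

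First, I would observe that each $\psi_i \in \mathscr{C}^{\infty}_c(X)$ satisfies, by properties (2) and (3),
\[
\cN_{d_k}(\psi_i) \;\leq\; C_{d_k} \cN_{d_k+D}(\phi_i) \cdot \cN_{d_k+D}(\phi_i \circ \flow_{r_i}) \;\leq\; C \, \cN_{d_k+D}(\phi_i)^2 \, |r_i|^{\sigma(d_k+D)},
\]
so that $\prod_{i=0}^{k-1}\cN_{d_k}(\psi_i) \ll \prod_i \cN_{d_k+D}(\phi_i)^2 \cdot R_{+}^{\sigma_k}$, since $|r_i|\leq R_+$ and $\sigma_k = k\cdot \sigma(d_k+D)$. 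Applying \Cref{def:quantitative_mixing} to the $k$-tuple $(\psi_0,\dots,\psi_{k-1})$ and the $k$-tuple of times $\tb$ yields
\[
\left\lvert \int_X \prod_{i=0}^{k-1} \psi_i \circ \flow_{t_i}\, \diff \mu - \prod_{i=0}^{k-1}\mu(\psi_i)\right\rvert \;\ll\; \left(\prod_{i=0}^{k-1}\cN_{d_k+D}(\phi_i)^2\right) R_{+}^{\sigma_k}\, \Deltat^{-\eta_k}.
\]

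Second, I would estimate $\prod_i \mu(\psi_i)$. For the privileged index $j$, since $\mu(\phi_j)=0$ and $|r_j|\geq R_{-}\geq 1$, polynomial $2$-mixing (applied to the pair $(\phi_j, \phi_j)$) gives
\[
|\mu(\psi_j)| \;=\; |\mu(\phi_j \cdot \phi_j \circ \flow_{r_j})| \;\leq\; \cN_{d_2}(\phi_j)^2 \, |r_j|^{-\eta_2} \;\leq\; \cN_{d_k+D}(\phi_j)^2 \, R_{-}^{-\eta_2},
\]
using the assumed monotonicity $\cN_{d_2}\leq \cN_{d_k}\leq \cN_{d_k+D}$. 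For the remaining indices $i\neq j$, property (1) (with $n=0$) yields $\|\phi_i\|_{\infty} \ll \cN_{D}(\phi_i)\leq \cN_{d_k+D}(\phi_i)$, hence $|\mu(\psi_i)| \leq \|\phi_i\|_{\infty}^2 \ll \cN_{d_k+D}(\phi_i)^2$. Multiplying these bounds together,
\[
\left\lvert \prod_{i=0}^{k-1} \mu(\psi_i) \right\rvert \;\ll\; \left(\prod_{i=0}^{k-1}\cN_{d_k+D}(\phi_i)^2\right) R_{-}^{-\eta_2}.
\]

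Finally, I would combine the two inequalities by the triangle inequality, obtaining the bound
\[
\left(\prod_{i=0}^{k-1}\cN_{d_k+D}(\phi_i)^2\right) \left(R_{-}^{-\eta_2} + R_{+}^{\sigma_k}\,\Deltat^{-\eta_k}\right),
\]
which, since $\Deltat\geq 1$ and $\eta_k \eta_2/(16\sigma_k) \leq \eta_k$ (as $\sigma_k \geq k\geq 2$ while $\eta_2$ is typically a small positive rate), is controlled by the slightly weaker expression stated in the lemma. The uniformity in the supremum over all admissible $(r_0,\dots,r_{k-1})$ is automatic because every estimate above is monotone in $|r_i|$ and $R_{+}$. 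I do not foresee any genuine obstacle beyond careful bookkeeping of the norm constants; the only subtle point is making sure property (2) is invoked at the right Sobolev regularity $d_k+D$ rather than $d_k$, which is dictated by the definition of $\sigma_k$.
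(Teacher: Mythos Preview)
Your argument is correct and in fact more direct than the paper's. You apply the polynomial $k$-mixing hypothesis straight to the doubled functions $\psi_i=\phi_i\cdot\phi_i\circ\flow_{r_i}$ and then kill the resulting product of means $\prod_i\mu(\psi_i)$ via polynomial $2$-mixing at the index $j$. This yields the bound $R_-^{-\eta_2}+R_+^{\sigma_k}\Deltat^{-\eta_k}$, which is \emph{stronger} than the one stated (since $\eta_2/(16\sigma_k)\leq 1$, as $\sigma_k\geq 1$).

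The paper instead passes through \Cref{lem:phi_phi_orthogonal}: it replaces $\psi_{k-1}$ by a genuinely zero-mean approximant $\psi_{k-1}^{\perp}$ plus an $L^2$-small error controlled by a parameter $\varepsilon$, applies $k$-mixing to the term with $\psi_{k-1}^{\perp}$, and optimizes $\varepsilon=\Deltat^{-\eta_2\eta_k/(16\sigma_k)}$. This introduces the norm blow-up $\cN_{d_k}(\psi_{k-1}^{\perp})\ll\varepsilon^{-8\sigma(d_k)/\eta_2}\cN_{d_k}(\psi_{k-1})$, which is why the exponent degrades to $\eta_k\eta_2/(16\sigma_k)$. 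For the present lemma this detour is unnecessary: since the integral is over all of $X$, Definition~\ref{def:quantitative_mixing} already subtracts $\prod_i\mu(\psi_i)$, and you can bound that product directly. The paper's route, however, is the template that \emph{is} needed in the tower version, \Cref{lem:uniform_qpark}, where one integrates over $\cT\subsetneq X$ and the correlation does not split off a clean product of means; there the $\perp$-decomposition is essential to isolate a term of the form $\mu(\psi_{k-1})\mu(\cT)$. So your approach buys simplicity and a sharper exponent here, while the paper's buys uniformity of method across the global and local (tower) settings.
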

\begin{proof}
	Let us assume $j=k-1$, the proof being identical in the other cases. Let $\tb$ be a $k$-tuple of times and let $|r_i|\leq R_{+}$, with $|r_{k-1}|\geq R_{-}$ be fixed. We fix $\varepsilon = \Deltat^{-\eta_2\eta_k/(16\sigma_k)}$ and we apply \Cref{lem:phi_phi_orthogonal} to $\phi_{k-1} \cdot \phi_{k-1} \circ \flow_{r_{k-1}}$; using the Cauchy–Schwarz Inequality, we obtain
	\begin{multline*}
		\left\lvert \int_{X} \prod_{i = 0}^{ k-1} (\phi_i \cdot \phi_i \circ \flow_{r_i}) \circ \flow_{t_i} \diff\mu\right\rvert \leq \left\lvert \int_{X} \left(\prod_{i = 0}^{ k-2} (\phi_i \cdot \phi_i \circ \flow_{r_i}) \circ \flow_{t_i}\right) \cdot (\phi_{k-1} \cdot \phi_{k-1} \circ \flow_{r_{k-1}})^{\perp} \circ \flow_{r_{k-1}} \diff\mu\right\rvert \\
		+ \left(\prod_{i=0}^{k-2} \|\phi_i\|^2_{\infty}\right) \left(|\mu(\phi_{k-1} \cdot \phi_{k-1} \circ \flow_{r_{k-1}})| + \cN_{d_2}(\phi_{k-1} \cdot \phi_{k-1} \circ \flow_{r_{k-1}}) \varepsilon\right). 
	\end{multline*}
By polynomial $2$-mixing and the properties of the Sobolev norms, the second summand above can be bounded by 
	\begin{multline*}
	 \left(\prod_{i=0}^{k-2} \|\phi_i\|^2_{\infty}\right) \left(|\mu(\phi_{k-1} \cdot \phi_{k-1} \circ \flow_{r_{k-1}})| + \cN_{d_2}(\phi_{k-1} \cdot \phi_{k-1} \circ \flow_{r_{k-1}}) \varepsilon\right) \\
	\ll  \left(\prod_{i=0}^{k-1} \cN_{d_2+D}(\phi_i)^2\right) (R_{-}^{-\eta_2} + R_{+}^{\sigma(d_2+D)}\varepsilon),
\end{multline*}
whereas, by $k$-mixing and by \Cref{lem:phi_phi_orthogonal}, the first is bounded by 
	\begin{multline*}
	\left\lvert \int_{X} \left(\prod_{i = 0}^{ k-2} (\phi_i \cdot \phi_i \circ \flow_{r_i}) \circ \flow_{t_i}\right) \cdot (\phi_{k-1} \cdot \phi_{k-1} \circ \flow_{r_{k-1}})^{\perp} \circ \flow_{r_{k-1}} \diff\mu\right\rvert \\
	\leq \left(\prod_{i=0}^{k-1}  \cN_{d_{k}}(\phi_i \cdot \phi_i \circ \flow_{r_i}) \right) \cdot \varepsilon^{-8\sigma(d_k)/\eta_2} \Deltat^{-\eta_k} \ll  \left(\prod_{i=0}^{k-1} \cN_{d_k+D}(\phi_i)^2\right) R_{+}^{ k \sigma(d_k+D)}\Deltat^{-\eta_k/2}. 
\end{multline*}
With a little algebra, the proof is then complete.
\end{proof}

\begin{definition}[Quantitatively globally uniformly shearing flows]\label{def:QGUS_flow}
	The flow $\flowR$ is called {\em quantitatively globally uniformly shearing} (QGUS for short) if there exists $a\in (0,1/2]$ and, for every $k \in \Z_{\geq 0}$, there exists  
	$0< \gamma_k\leq 1/4$, so that for every $M\geq 1$ there exists a measurable set $G_M \subseteq X$, with $\mu(X \setminus G_M) \leq M^{-\gamma_k}$, and for any $(k+1)$-tuple of times $\tb$, with $\Deltat \geq 4M$, there exist $M^{-\gamma_k}$-almost measure preserving maps $\mapg^{\tb}_m \colon X \to X$, for $m=0, \dots, K_{M} = \lfloor M^{\gamma_k} \rfloor$, satisfying the following property
	\begin{itemize}
\item[(QGUS1)] for every $j=0, \dots, k$, we have
	\[
\sup_{x\in G_M} \dist \left(\flow_{t_j}(\mapg^{\tb}_m x), \flow_{t_j+m \cdot M^a (t_j/t_k) }(x) \right)\leq M^{-\gamma_k}.
\]
	\end{itemize}
\end{definition}

The following result says that QGUS flows which are polynomially mixing are polynomially mixing of all orders, with some explicit bound on the rates.

\begin{theorem}\label{thm:multiple_mixing_QGUS_flows}
	Let $\flowR$ be a QGUS flow, and assume it is polynomially mixing. 
	Then, for any $k\geq 2$, $\flowR$ is polynomially $k$-mixing with $d_{k+1}=d_k+D$. 
\end{theorem}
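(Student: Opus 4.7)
The plan is induction on $k \geq 2$, quantifying every step of the proof of \Cref{thm:multiple_mixing_GG_flows}; the base case is the assumed polynomial $2$-mixing. Assuming polynomial $k$-mixing with rate $\eta_k$ and Sobolev parameter $d_k$, I fix $\phi_0, \dots, \phi_k \in \mathscr{C}^{\infty}_c(X)$ and a $(k+1)$-tuple of times $\tb$ with $\Deltat = T$. Writing $\phi_k = \phi_k^{\perp} + \mu(\phi_k)$ and applying the inductive hypothesis to the residual term, it suffices to treat $\mu(\phi_k) = 0$.

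I will set $M = T^{\beta}$, with $\beta \in (0,1)$ to be optimized at the end, and invoke QGUS to obtain $M^{-\gamma_k}$-almost measure preserving maps $\mapg^{\tb}_m$ for $m = 0, \dots, K_M = \lfloor M^{\gamma_k}\rfloor$. Using \Cref{lem:change_variab}, together with the estimate $\dist(x, \mapg^{\tb}_m x) \leq M^{-\gamma_k}$ on $G_M$ to pull $\phi_0$ out of the product, and then Cauchy--Schwarz combined with Van der Corput over window $L = \lfloor \sqrt{K_M}\rfloor$, the estimate reduces to bounding the square root of
\[
\sup_{0\leq m \neq l \leq K_M}\left\lvert \int_X \prod_{i=1}^k (\phi_i \circ \flow_{t_i} \circ \mapg^{\tb}_m)(\phi_i \circ \flow_{t_i} \circ \mapg^{\tb}_l)\,\diff\mu\right\rvert,
\]
up to an additive $M^{-\gamma_k/4}$ error. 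By (QGUS1) and a single flow-invariance shift, each such integral equals, up to an error of order $M^{-\gamma_k}\prod_i \cN_{d_k+D}(\phi_i)^2$, an expression of the form $\int_X \prod_{i=1}^k (\phi_i \cdot \phi_i \circ \flow_{r_i}) \circ \flow_{t_i'}\,\diff\mu$ with $|r_i| \leq M$, $|r_1| \geq M^a$, and $\Delta\mathbf{t}' \geq T - 2M \gtrsim T$. Uniform polynomial $k$-mixing (\Cref{lem:uniform_polynomial_k_mixing}) with $R_+ = M$ and $R_- = M^a$ then furnishes the bound $\prod_i \cN_{d_k+D}(\phi_i)^2 \,(M^{-a\eta_2} + M^{\sigma_k}\, T^{-\eta_k\eta_2/(16\sigma_k)})$.

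After taking the square root from Cauchy--Schwarz and collecting the error terms, the whole expression will be controlled by $\prod_i \cN_{d_k+D}(\phi_i) \cdot (T^{-\gamma_k\beta/4} + T^{-a\eta_2\beta/2} + T^{\sigma_k\beta/2 - \eta_k\eta_2/(32\sigma_k)})$. Balancing the last two exponents gives $\beta = \eta_k\eta_2/(16\sigma_k(\sigma_k + a\eta_2))$ and a strictly positive rate $\eta_{k+1}$, while the index $d_{k+1} = d_k + D$ is forced by the product rule $\cN_{d}(\phi\psi) \leq C_d \cN_{d+D}(\phi)\cN_{d+D}(\psi)$ combined with the flow-composition estimate. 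The main obstacle will be the exponent bookkeeping: I will need to check that (i) the $M^{-\gamma_k/4}$ error does not dominate for the chosen $\beta$, (ii) the growth factor $M^{\sigma_k/2}$ from uniform $k$-mixing is absorbed by the time gain $T^{-\eta_k\eta_2/(32\sigma_k)}$, and (iii) the flow-composition factors $|r_i|^{\sigma(d_k+D)}$ all fit within a single index shift $d_k \mapsto d_k+D$. None of these is structural, but each requires verification to land at $d_{k+1} = d_k + D$ and a uniformly positive $\eta_{k+1}$.
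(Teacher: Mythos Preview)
Your outline follows the paper's proof closely: the same induction, the same Cauchy--Schwarz/Van der Corput reduction, the same application of (QGUS1) followed by \Cref{lem:uniform_polynomial_k_mixing}, and a final balancing of exponents. Two points need attention.

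First, the reduction ``writing $\phi_k = \phi_k^{\perp} + \mu(\phi_k)$'' is not innocent in this quantitative setting. The norms $\cN_d$ are defined on $\mathscr{C}^{\infty}_c(X)$, and when $X$ is noncompact $\phi_k - \mu(\phi_k)$ is not compactly supported, so you cannot feed it back into the inductive estimates. The paper handles this via \Cref{lem:phi_phi_orthogonal}: one replaces $\phi_k$ by an ergodic-average approximant $\phi_k^{\perp}\in\mathscr{C}^{\infty}_c(X)$ with $\mu(\phi_k^{\perp})=0$, at the cost of an $L^2$-error $\ll \cN_{d_2}(\phi_k)\,\varepsilon$ and a norm inflation $\cN_r(\phi_k^{\perp}) \ll \varepsilon^{-8\sigma(r)/\eta_2}\cN_r(\phi_k)$. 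This extra power of $M$ must be absorbed in your exponent bookkeeping (the paper chooses $\varepsilon = M^{-\beta_k}$ with $\beta_k$ small enough that the inflation is at most $M^{\gamma_k/4}$). Your item (iii) does not cover this; without it the claimed index $d_{k+1}=d_k+D$ and positive rate are not yet justified.

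Second, a minor indexing slip: after (QGUS1) the shear at time $t_i$ is $(l-m)M^a(t_i/t_k)$, so the guaranteed lower bound $|r_j|\geq M^a$ holds for $j=k$, not $j=1$; this matches $\mu(\phi_k)=0$ in \Cref{lem:uniform_polynomial_k_mixing}.
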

\begin{proof}
	The proof is analogous to the proof of \Cref{thm:multiple_mixing_GG_flows} and proceeds by induction: let $k\geq 2$, and assume that $\flowR$ is polynomially $k$-mixing with rate $\eta_k$, which we assume to be $\leq 1$. 
	Let $\phi_0,\ldots, \phi_{k}\in \mathscr{C}^{\infty}_c(X)$ and a $(k+1)$-tuple of times $\tb$ be fixed, with $\Deltat\geq 1$.
	Analogously as before, when writing $A \ll B$, we mean $A \leq C \cdot B$ for a constant $C\geq 0$ whose value depends only on $k$.
	
	Let $a \in (0,1/2]$ and $\gamma_k$ be given by the QGUS property. %; without loss of generality we assume $\gamma_k \leq \eta_2/2$. 
	Let us fix 
	\[
	\beta_k = \frac{a\eta_2^2\gamma_k}{32\sigma_k}, \qquad \text{and} \qquad M= \left(\frac{\Deltat}{4}\right)^{\frac{\eta_k\eta_2}{64\sigma_k^2}} \leq \frac{\Deltat}{4},
	\]
	and consider the set $G = G_M$ coming from \Cref{def:QGUS_flow}. 
	
	We apply \Cref{lem:phi_phi_orthogonal} to $\phi_{k}$ with $\varepsilon = M^{-\beta_k} $, so that we can replace $\phi_k$ with $\phi_k^{\perp} + \mu(\phi_k)$, up to an error $\ll \cN_{d_2}(\phi_k) \varepsilon$. Furthermore, using the polynomial $k$-mixing assumption, we can replace $\phi$ simply with $\phi_k^{\perp}$, keeping in mind that $\mu(\phi_{k}^{\perp}) = 0$, $\|\phi_k^{\perp}\|_{\infty} \ll \|\phi_k\|_{\infty}$, and, for all $r\geq 1$, we have $\cN_r(\phi_k^{\perp}) \ll \varepsilon^{-8\sigma(r)/\eta_2} \cN_{r}(\phi_k)$.
	To uniformize the notation, we call $\psi_i = \phi_i$ for $i=0,\dots, k-1$, and $\psi_k = \phi_k^{\perp}$, so that it suffices to show 
	\begin{equation}\label{eq:new_goal}
	\left\lvert \int_{X}\prod_{i=0}^{k} \psi_i\circ \flow_{t_i} \diff\mu\right\rvert  \ll   \left(\prod_{i=0}^{k} \cN_{d_k+D}(\phi_i)\right) \left(M^{-\beta_k} + \Deltat^{-\frac{a\eta_k\eta_2}{32 \sigma_k}}\right).
	\end{equation}
	
	From the QGUS property, for all $m \leq K_{M} = \lfloor M^{\gamma_k}\rfloor$, we have a $M^{-\gamma_k}$-almost measure preserving map $\mapg^{\tb}_m \colon X \to X$.	
	We set $K = \lfloor K_{M} /2 \rfloor$.
	Since $\dist(x, \mapg^{\tb}_m (x)) \leq M^{-\gamma_k}$, in the same way as we did in the proof of \Cref{thm:multiple_mixing_GG_flows}, we obtain
	\begin{equation*}
		\left\lvert \int_{X}\prod_{i=0}^{k} \psi_i\circ \flow_{t_i} \diff\mu\right\rvert  \ll  \left\lvert  \int_{X} \psi_0 \cdot \left[\frac{1}{K} \sum_{m=1}^K \prod_{i=1}^{k} \psi_i\circ \flow_{t_i} \circ \mapg^{\tb}_m \right] \diff\mu\right\rvert  + \left(\prod_{i=0}^{k} \cN_{d_2}(\phi_i)\right) M^{-\gamma_k},
	\end{equation*}
since, we recall, we have $\|\psi_0\|_{\mathscr{C}^1} \ll \cN_{d_2}(\phi_0)$ and $\|\psi_i\|_{\infty} \ll \|\phi_i\|_{\infty} \ll \cN_{d_2}(\phi_i)$ .

	We apply the Cauchy-Schwarz Inequality and the Van der Corput Inequality (\Cref{lem:vdc} in \Cref{sec:vdc}) with $L = \lfloor \sqrt{K} \rfloor$, which yield
	\begin{multline*}
		\left\lvert \int_{X}\prod_{i=0}^{k} \psi_i\circ \flow_{t_i} \diff\mu\right\rvert \ll \cN_{d_2}(\phi_0) \max_{\substack{0 \leq m,l \leq K \\ |m-l| \geq 1} }\left\lvert \int_{X} \prod_{i=1}^{k} \psi_i\circ \flow_{t_i} \circ \mapg^{\tb}_m \cdot \psi_i\circ \flow_{t_i} \circ \mapg^{\tb}_{l} \diff\mu \right\rvert^{1/2} \\ + \left(\prod_{i=0}^{k} \cN_{d_2}(\phi_i)\right) M^{-\gamma_k/4}.
	\end{multline*}
	
	Let us now fix $0\leq m,l \leq K$ with $|m-l|\geq 1$, and we consider the absolute value in the right hand side above. 
	By splitting the integral above over $G_M$, on which we can use (QGUS1) in \Cref{def:QGUS_flow}, and over $X\setminus G_M$, which has measure bounded by $M^{-\gamma_k}$, we get 
	\begin{multline*}
		\left\lvert \int_{X}  \prod_{i=1}^{k} \psi_i\circ \flow_{t_i} \circ \mapg^{\tb}_m \cdot \psi_i\circ \flow_{t_i} \circ \mapg^{\tb}_{l} \diff\mu \right\rvert \\
		\ll \left\lvert \int_{X} \prod_{i =1}^{k} (\psi_i \cdot \psi_i\circ \flow_{(l-m) \cdot M^a (t_i/t_k)} )\circ \flow_{t_i + m \cdot M^a (t_i/t_k)} \diff\mu \right\rvert +\left(\prod_{i=1}^{k} \cN_{d_2}(\phi_i)^2\right) M^{-\gamma_k/2}.
	\end{multline*}
	Note that in the previous inequalities we used the bounds 
	\[
	\|\psi_k\|_{\mathscr{C}^1} \ll \cN_{d_2}(\psi_k) \ll \varepsilon^{-8\sigma(d_2)/\eta_2} \cN_{d_2}(\phi_k) \ll M^{\gamma_k/4}\cN_{d_2}(\phi_k).
	\]
	By invariance of $\mu$, the integrand above can be composed by $\flow_{-\overline t}$, where ${\overline t} = t_1+ m \cdot M^a (t_1/t_k)$,
	so that 
	\begin{multline*}
		\left\lvert \int_{X}  \prod_{i=1}^{k} \psi_i\circ \flow_{t_i} \circ \mapg^{\tb}_m \cdot \psi_i\circ \flow_{t_i} \circ \mapg^{\tb}_{l} \diff\mu \right\rvert \\
		\ll \left\lvert \int_{X} \prod_{i =1}^{k} (\psi_i \cdot \psi_i\circ \flow_{(l-m) \cdot M^a (t_i/t_k)} )\circ \flow_{t_i'} \diff\mu \right\rvert +\left(\prod_{i=1}^{k} \cN_{d_2}(\phi_i)^2\right) M^{-\gamma_k/2},
	\end{multline*}
	where the $k$-tuple of times $\tb'$ with components $t_i' = t_i  + m \cdot M^a (t_i/t_k)-{\overline t}$ satisfies $\Deltat' \geq \Deltat - 2KM^a \geq \Deltat / 2$, since $2KM^a \leq 2M \leq \Deltat/2$.
	Note also that $|(l-m) \cdot M^a (t_i/t_k)|\leq M$ and $|(l-m)M^a| \geq M^a$.
	Thus, we can apply \Cref{lem:uniform_polynomial_k_mixing} with $R_{-} = M^a$ and $R_{+}=M$, which yields
	\begin{multline*}
		\left\lvert \int_{X}  \prod_{i=1}^{k} \psi_i\circ \flow_{t_i} \circ \mapg^{\tb}_m \cdot \psi_i\circ \flow_{t_i} \circ \mapg^{\tb}_{l} \diff\mu \right\rvert \\
		\ll \left(\prod_{i=1}^{k} \cN_{d_k+D}(\psi_i)^2\right) (M^{-a\eta_2} + M^{\sigma_k}\Deltat^{-\frac{\eta_k \eta_2}{16\sigma_k}})+\left(\prod_{i=1}^{k} \cN_{d_2}(\phi_i)^2\right) M^{-\gamma_k/2} \\
		\ll \left(\prod_{i=1}^{k} \cN_{d_{k+1}}(\phi_i)^2\right) (M^{-a\eta_2/2} + M^{\sigma_k + a\gamma_k\eta_2} \Deltat^{-\frac{\eta_k \eta_2}{16\sigma_k}}+ M^{-\gamma_k/2}).
	\end{multline*}
	Substituting the expression for $M$ proves \eqref{eq:new_goal} and hence completes the proof of 
	\Cref{thm:multiple_mixing_QGUS_flows}.
\end{proof}

%%%%%%%%%%

\section{Time changes of unipotent flows}\label{sec:tc_unip_flows}

In this section, we are going to prove \Cref{th:main_tc_unip}, namely that time-changes of polynomially mixing unipotent flows are polynomially mixing of all orders.

\subsection{Definitions and preliminaries}

Let $G$ be a connected semisimple Lie group with finite centre and without compact factors, and let $\lieg$ be its Lie algebra. An element $U \in \lieg \setminus \{0\}$ is called \emph{unipotent} if the linear operator $\mathfrak{ad}_U = [U, \cdot] \colon \lieg \to \lieg$ is nilpotent.

Let $\Gamma \leq G$ be an irreducible lattice, and equip the smooth manifold $X= \Gamma \backslash G$ with the normalized Haar measure $\vol$. We can identify any element $V \in \lieg \setminus \{0\}$ with a left-invariant vector field on $X$; in particular, any such $V$ generates a smooth flow $(h^V_t)_{t \in \R}$ on $X$ given by $h^V_t(x)=x\, \exp(tV)$, which preserves $\vol$. If $U \in \lieg \setminus \{0\}$ is unipotent, the associated smooth flow $(h^U_t)_{t \in \R}$ is called a \emph{unipotent flow}.

Let $\alpha\colon X \to \R_{>0}$ be a positive smooth function. The vector field $U_\alpha = \frac{1}{\alpha}U$ on $X$ is parallel to $U$, hence the flow $\flowR$ generated by $U_\alpha$ has the same orbits as the unipotent flow $(h^U_t)_{t \in \R}$, but percorred at different speed. The flow $\flowR$ is the \emph{time-change of $(h^U_t)_{t \in \R}$ generated by $\alpha$}. It is not hard to see that $\flowR$ preserves the smooth measure $\mu$ defined by $\diff \mu = \alpha \diff \vol$. We will assume that $\alpha$ is normalized to have integral 1 on $X$.

\begin{definition}\label{def:admissible_tc}
	A time-change $\flowR$ of $(h^U_t)_{t \in \R}$ generated by $\alpha$ is called admissible if $\vol(\alpha) = 1$ and there exists a compact set $K \subseteq X$ such that $\alpha$ is constant on $X \setminus K$.
\end{definition}

Clearly, if $X$ is compact, any positive smooth function, normalized to have integral 1, generates an admissible time-change.

Admissible time-changes of unipotent flows as above are polynomially mixing, as proved in \cite{FU} for the classical horocycle flow on compact manifolds and in \cite{Rav} in general. 
In turn, from polynomial mixing, we can obtain $L^2$-estimates for the ergodic integrals. More precisely, the same proof as in \cite[Proposition 3.1]{Rav} gives us the following result.

\begin{proposition}\label{prop:erg_int_tc_unip}
	There exist $\eta \in (0,1)$, $C_\eta >0$, and a norm $\mathcal{N} = \mathcal{N}_d$ such that, for every $\phi \in \mathscr{C}^{\infty}_c(X)$ and for every $T \geq 1$, there exists a set $E(\phi,T) \subset X$ of measure $\mu(E(\phi,T)) \leq T^{-\eta}$ so that 
	\[
	\left\lvert \int_0^t \phi \circ \flow_r(x) \diff r - t \mu(\phi)\right\rvert \leq C_\eta \, \mathcal{N}(\phi) \, t^{1-\eta}, \qquad \text{for all $t\geq T$ and $x \in X \setminus E(\phi,T)$.} 
	\]
\end{proposition}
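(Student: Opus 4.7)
My plan is to derive the almost-sure ergodic estimate from polynomial $2$-mixing via an $L^2$-Chebyshev argument combined with a time discretization. First I reduce to the mean-zero case by setting $\psi := \phi - \mu(\phi)$, so that $A_t(x) := \int_0^t \psi \circ \flow_r(x) \diff r$ equals the left-hand side in the statement, and I note that $\|\psi\|_\infty$ and $\cN_{d_2}(\psi)$ are each controlled by a constant multiple of $\cN_{d_2}(\phi)$. Expanding the $L^2$ norm,
\[
\|A_t\|_{L^2(\mu)}^2 = 2 \int_0^t (t - s) \int_X \psi \cdot (\psi \circ \flow_s) \diff \mu \, \diff s,
\]
and using polynomial $2$-mixing (known for admissible time-changes by \cite{FU, Rav}) together with the trivial bound $\|\psi\|_\infty^2$ for small $s$ to control the autocorrelations, one obtains $\|A_t\|_{L^2(\mu)}^2 \leq C \cN_{d_2}(\psi)^2 \, t^{2 - \eta_2^\ast}$ for $t \geq 1$, where $\eta_2^\ast = \min\{\eta_2, 1/2\}$ (the minimum absorbs the case $\eta_2 \geq 1$, in which the autocorrelations are summable and one gets a linear-in-$t$ bound).

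Next, Chebyshev's inequality turns this $L^2$ bound into a pointwise estimate at each fixed $t$:
\[
\mu\bigl\{x : |A_t(x)| > \cN_{d_2}(\psi)\, t^{1-\eta}\bigr\} \leq C \, t^{2\eta - \eta_2^\ast}.
\]
To promote this pointwise-in-$t$ statement to one uniform in $t \geq T$, I would discretize: within each dyadic range $[2^k T, 2^{k+1} T]$, take a grid of times $t_j^{(k)} = 2^k T + j (2^k T)^{1-\eta}$ with $j = 0, 1, \dots, \lfloor (2^k T)^\eta\rfloor$, so that there are about $(2^k T)^\eta$ grid points per scale. A union bound yields a bad set at scale $k$ of measure $\leq C (2^k T)^{3\eta - \eta_2^\ast}$, and summing the geometric series over $k \geq 0$ gives an exceptional set $E(\phi, T)$ with $\mu(E(\phi, T)) \leq C T^{3\eta - \eta_2^\ast}$ whenever $\eta < \eta_2^\ast / 3$. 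For $t$ lying between consecutive grid points, the Lipschitz-in-time bound $|A_t(x) - A_{t_j^{(k)}}(x)| \leq \|\psi\|_\infty (2^k T)^{1-\eta} \leq 2 \|\psi\|_\infty t^{1-\eta}$ is absorbed into the main estimate. Choosing $\eta := \eta_2^\ast/4$ then provides both the $T^{-\eta}$ measure bound on $E(\phi,T)$ and the $t^{1-\eta}$ bound on $|A_t|$ required in the statement, with norm $\mathcal{N} = \cN_{d_2}$.

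The main technical subtlety is the choice of discretization scale at each dyadic range: a coarser grid (say, integer times) produces an interpolation error of order $t - t_j^{(k)} \sim t$, which swamps the $t^{1-\eta}$ target, while a finer grid inflates the union bound over grid points. The scale $(2^k T)^{1-\eta}$ is precisely the one that balances these two effects and yields a positive final exponent $\eta_2^\ast - 3\eta > 0$. Apart from this balancing, the argument is a standard application of polynomial $2$-mixing and Chebyshev, so no further substantial obstacle is anticipated.
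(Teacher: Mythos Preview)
Your proposal is correct and follows essentially the same approach as the paper, which defers to \cite[Proposition~3.1]{Rav} and reproduces the argument in the Claim within the proof of \Cref{prop:QPar1}: an $L^2$ bound on the ergodic integral from polynomial mixing, Chebyshev at a discrete sequence of times spaced at scale $t^{1-\eta}$, a union bound, and interpolation between consecutive times. The only cosmetic difference is that the paper parametrizes the discretization by $k_n = n^{1/\eta}$ rather than your dyadic grid, which yields the same gap size $k_{n+1}-k_n \sim k_n^{1-\eta}$ and hence the same balance.
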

In the following, for simplicity, we are going to assume that $\eta \in (0,1/4)$.

Finally, we finish this subsection by discussing the shearing properties of admissible time-changes. The Jacobson-Morozov Theorem ensures the existence of a $\mathfrak{sl}_2$-triple containing $U$; in particular, there exists $Y \in \lieg$ such that $[Y,U] = U$. 
The flows $\flowR$ and $(h^Y_t)_{t \in \R}$ satisfy the following commutation relation, whose proof is contained in \Cref{sec:shearing}.

\begin{proposition}\label{prop:tc_geo_commute}
	There exists a smooth function $z \colon X \times \R \times \R \to \R$ such that 
	\[
	\flow_t \circ h^Y_s(x) = h^Y_s \circ \flow_{z(x,s,t)}(x),
	\]
	for all $x \in X$ and $t,s \in \R$. Moreover, there exists a constant $C_\alpha > 0$ such that 
	\[
	\left\lvert z(x,s,t) - t \right\rvert \leq C_{\alpha} s \, t,
	\]
	and
	\[
	\left\lvert z(x,s,t) - t - st + s\int_0^t \frac{Y \alpha }{\alpha}\circ \flow_r(x) \diff r \right\rvert \leq C_\alpha s^2 |t| (1+|st|).
	\]
\end{proposition}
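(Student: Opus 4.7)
The plan is to derive an explicit cocycle formula for $z$ from the adjoint action of $Y$ on $U$, and then Taylor-expand it in $s$ using an ODE satisfied by $z$ in the variable $t$.

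\textbf{Step 1 (Commutation and cocycles).} The Lie algebra relation $[Y,U]=U$ gives $\mathrm{Ad}(\exp(sY))U = e^{s}U$, hence $\exp(sY)\exp(tU) = \exp(te^sU)\exp(sY)$ and therefore, on $X=\Gamma\backslash G$, the exact identity $h^U_t \circ h^Y_s = h^Y_s \circ h^U_{te^s}$. Since $\flow$ is generated by $U/\alpha$ and has the same orbits as $h^U$, we can write $\flow_t(x) = h^U_{\tau(x,t)}(x)$; comparing time derivatives gives $\tau(x,t)=\int_0^t \alpha^{-1}(\flow_r x)\diff r$, and its inverse cocycle $\beta(x,u)=\int_0^u \alpha(h^U_v x)\diff v$ satisfies $h^U_u(x)=\flow_{\beta(x,u)}(x)$. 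Substituting these representations into $\flow_t(h^Y_s x) = h^Y_s(\flow_{z} x)$ and applying the commutation yields the closed form
\[
z(x,s,t) = \beta\bigl(x,\, e^s \tau(h^Y_s x,\, t)\bigr),
\]
which is smooth and reduces at $s=0$ to $\beta(x,\tau(x,t))=t$.

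\textbf{Step 2 (ODE and first estimate).} Differentiating the defining identity in $t$ and using $(h^Y_s)_*U=e^{-s}U$ (which also follows from $[Y,U]=U$) I obtain the ODE
\[
\partial_t z(x,s,t) = e^s\,\frac{\alpha(\flow_{z(x,s,t)} x)}{\alpha(\flow_t(h^Y_s x))},\qquad z(x,s,0)=0.
\]
Because $\flow_t(h^Y_s x)=h^Y_s(\flow_{z} x)$, the numerator and denominator are evaluated at two points whose distance is controlled by $|s|$; since by admissibility $\alpha$ and $1/\alpha$ are globally Lipschitz (being constant off a compact set), this yields $\partial_t z - 1 = O(s)$ uniformly in $r\in[0,t]$, and integrating in $t$ gives $|z-t|\le C_\alpha s t$.

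\textbf{Step 3 (Second-order Taylor expansion).} To obtain the finer estimate, differentiate the ODE once more in $s$ and evaluate at $s=0$. Writing $\dot z(x,t):=\partial_s z(x,s,t)|_{s=0}$, differentiation of the commutation $\flow_r\circ h^Y_s = h^Y_s\circ \flow_{z(\cdot,s,r)}$ at $s=0$ gives the key identity $\partial_s(\flow_r\circ h^Y_s)(x)|_{s=0}=Y(\flow_r x)+U_\alpha(\flow_r x)\,\dot z(x,r)$; after a short cancellation of the $U\alpha/\alpha$ contributions from numerator and denominator, I obtain
\[
\partial_t \dot z(x,t) = 1 - \frac{Y\alpha}{\alpha}\circ\flow_t(x),
\]
and integrating with $\dot z(x,0)=0$ yields exactly $\dot z(x,t) = t - \int_0^t \frac{Y\alpha}{\alpha}\circ\flow_r(x)\diff r$. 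Taylor's theorem with integral remainder then produces $z(x,s,t) - t - s\dot z(x,t) = \int_0^s (s-s')\partial_s^2 z(x,s',t)\diff s'$, so the target estimate reduces to the pointwise bound $|\partial_s^2 z(x,s',t)|\le C_\alpha |t|(1+|s' t|)$ for $s'\in(0,s)$.

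\textbf{Step 4 (The main obstacle: bounding $\partial_s^2 z$).} This uniform control of the second $s$-derivative is the non-trivial step. Differentiating the ODE of Step 2 a second time in $s$ produces a linear equation in $t$ for $\partial_s^2 z$, whose coefficient is $e^s\, U\alpha/\alpha(\flow_{z}x)/\alpha(\flow_t h^Y_s x)$ (bounded by admissibility) and whose source is a polynomial in $\dot z$, $\partial_s z$, and first- and second-order derivatives of $\alpha$ along the $Y$- and $U$-directions evaluated at $\flow_{z}(x)$ and $\flow_t(h^Y_s x)$. Applying Grönwall's inequality, together with the preliminary bounds $|z|\le C_\alpha e^{|s|}|t|$ and $|\partial_s z|\le C_\alpha |t|(1+|st|)$ obtained by iterating the arguments of Steps 2 and 3, yields the desired $|t|(1+|st|)$ estimate. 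The factor $|t|$ comes from integrating in $t$, while the $(1+|st|)$ reflects the fact that the two base points $\flow_z(x)$ and $\flow_t(h^Y_s x) = h^Y_s(\flow_z(x))$ separate by an amount growing linearly in $st$ along $U$ under the renormalizing action; this is precisely the shearing that makes the correction $(1+|st|)$ unavoidable once $|st|\gg 1$, and is the main source of care required in the bookkeeping of Appendix~\ref{sec:shearing}.
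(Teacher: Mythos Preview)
Your Steps 1--3 are correct and give a pleasant alternative to the paper's route: the paper never writes the closed cocycle formula for $z$, and instead of your $t$-ODE it differentiates the curve $s\mapsto h^Y_{-s}\circ\flow_t\circ h^Y_s(x)$ to obtain the explicit formula $\partial_s z=\exp\bigl(\int_0^s\ell\circ h^Y_u(\flow_z x)\,du\bigr)\,I_t\ell(h^Y_s x)$ with $\ell=1-Y\alpha/\alpha$, from which $|z-t|\le C_\alpha st$ and the identification of $\dot z$ follow immediately. Both approaches are equivalent at first order.

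The gap is in Step~4. First, your coefficient is incomplete: since $\flow_t(h^Y_s x)=h^Y_s(\flow_z x)$, the denominator $\alpha(\flow_t(h^Y_s x))$ also depends on $z$, so $F_z$ has a second term $-\,(U\alpha)(q)/\alpha(q)^2$ with $q=h^Y_s(\flow_z x)$. More importantly, ``bounded coefficient $+$ Gr\"onwall'' on an interval of length $t$ produces a factor $e^{Ct}$, not a polynomial; even after noticing the near-cancellation $F_z=O(s)$ one only gets $e^{O(st)}$. The point that actually saves the argument is that $F_z=\partial_t\log\bigl[\alpha(\flow_z x)/\alpha(\flow_t(h^Y_s x))\bigr]$ is a \emph{total} $t$-derivative, so the integrating factor $e^{\int_0^t F_z}$ is uniformly bounded; solving the first-order linear ODE then gives exactly $\partial_s z=(\partial_t z)\,I_t\ell(h^Y_s x)$, which is the paper's formula. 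You still cannot finish by Gr\"onwall on the \emph{second}-order equation: the source contains $F_{zz}(\partial_s z)^2=O(t^2)$, yielding only $O(t^3)$. The paper instead differentiates the explicit $\partial_s z$ once more and uses the pushforward identity (its Lemma~\ref{lemma:formulas_derivatives}) to show $\partial_s I_t\ell(h^Y_s x)=O(t)$ and $\partial_s(\partial_t z)=O(1+st)$, whence $\partial_s^2 z=O(t(1+st))$. Your framework reaches the same conclusion once you replace the Gr\"onwall step by this exact first-order solution followed by one more differentiation.
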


In the remainder of this section, the notation $A \ll B$ stands for $A \leq C \cdot B$, where $C$ is a constant depending on $X$, $U$, and $\alpha$ only, whose value is allowed to change in different occurrencies.

\subsection{Proof of \Cref{th:main_tc_unip}}

We now show that admissible time-changes of unipotent flows are QGUS flows.

\begin{proposition}\label{prop:tc_unip_are_QGUS}
	Let $\flowR$ be an admissible time change of a unipotent flow. Then, $\flowR$ is QGUS with $a=\eta/2$ and with $\gamma_k = \eta/4$.
\end{proposition}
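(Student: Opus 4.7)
The plan is to set $\mapg^{\tb}_m := h^Y_{s_m}$ with $s_m := m M^a / t_k$ and $a = \eta/2$, and to take $G_M$ as the complement in $X$ of the exceptional set produced by \Cref{prop:erg_int_tc_unip} applied to the smooth function $\phi = Y\alpha/\alpha$ at scale $T = 4M$. A key preliminary observation is that $\mu(Y\alpha/\alpha) = \int_X Y\alpha \diff \vol = 0$: the flow $(h^Y_t)$ is right translation on $\Gamma \backslash G$ and hence preserves the Haar measure $\vol$, so $Y$ is divergence-free with respect to $\vol$, and by admissibility $Y\alpha$ has compact support. Thus \Cref{prop:erg_int_tc_unip} yields, for every $x \in G_M$ and every $j = 1, \dots, k$, the estimate $\bigl\lvert \int_0^{t_j} (Y\alpha/\alpha) \circ \flow_r(x) \diff r \bigr\rvert \ll t_j^{1-\eta}$, while $\mu(X \setminus G_M) \leq (4M)^{-\eta} \ll M^{-\gamma_k}$.

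To verify (QGUS1), I would apply the commutation relation of \Cref{prop:tc_geo_commute}:
\begin{equation*}
\flow_{t_j} \circ h^Y_{s_m}(x) = h^Y_{s_m} \circ \flow_{z(x, s_m, t_j)}(x).
\end{equation*}
By the triangle inequality, the distance from $\flow_{t_j}(\mapg^{\tb}_m x)$ to $\flow_{t_j + m M^a (t_j/t_k)}(x)$ is controlled by the displacement of $h^Y_{s_m}$, which is at most $C|s_m| \ll M^{\gamma_k + a - 1}$, plus the $\flow$-time mismatch $|z(x, s_m, t_j) - t_j - s_m t_j|$, since by construction $s_m t_j$ is precisely the target shift $m M^a (t_j/t_k)$. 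Plugging into the quadratic expansion of $z$ from \Cref{prop:tc_geo_commute}, this mismatch is bounded by
\begin{equation*}
|s_m| \, \Bigl\lvert \int_0^{t_j} (Y\alpha/\alpha) \circ \flow_r(x) \diff r \Bigr\rvert + C_\alpha \, s_m^2 \, t_j \, (1 + |s_m t_j|).
\end{equation*}
For $x \in G_M$, using $t_k \geq 4M$, the first summand is $\ll M^{\gamma_k + a - \eta} = M^{-\gamma_k}$ under the choice $a = \eta/2$, $\gamma_k = \eta/4$, while the remaining summand is $\ll M^{2\gamma_k + 2a - 1} + M^{3\gamma_k + 3a - 1}$; both exponents are strictly below $-\gamma_k$ when $\eta \leq 1/4$.

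The almost measure preservation of $\mapg^{\tb}_m$ is handled separately: a change of variables using the $\vol$-invariance of $(h^Y_t)$ shows that the Radon-Nikodym derivative $\diff(h^Y_{s_m})_\ast \mu / \diff\mu$ equals $(\alpha \circ h^Y_{-s_m})/\alpha$, which differs from $1$ by at most $C|s_m|$ uniformly because $Y\alpha$ is compactly supported and $\alpha$ is bounded away from zero. Since $|s_m| \leq M^{\gamma_k + a - 1} \ll M^{-\gamma_k}$, taking $X_0 = Y_0 = X$ yields the $M^{-\gamma_k}$-almost measure preservation demanded by \Cref{def:QGUS_flow}.

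The main obstacle is the tightness of the budget: three competing constraints---$s_m$ small enough for almost measure preservation; $s_m$ large enough that the pushed times $t_j + s_m t_j$ reach $t_j + M^a (t_j/t_k)$ for all $m$ up to $M^{\gamma_k}$; and the linearization error in the commutation relation decaying as $M^{-\gamma_k}$---must be balanced simultaneously. The critical inequality comes from the ergodic-integral error in \Cref{prop:erg_int_tc_unip}, which forces $\gamma_k + a - \eta \leq -\gamma_k$; the choice $a = \eta/2$, $\gamma_k = \eta/4$ sits exactly on the boundary of this inequality, with the two higher-order Taylor terms and the $h^Y_{s_m}$ displacement comfortably subdominant.
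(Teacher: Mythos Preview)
Your proposal is correct and follows essentially the same route as the paper: define $\mapg^{\tb}_m = h^Y_{mM^{\eta/2}/t_k}$, take $G_M$ from \Cref{prop:erg_int_tc_unip} applied to $Y\alpha/\alpha$, verify almost measure preservation via the $\vol$-invariance of $h^Y$, and control (QGUS1) through the quadratic expansion of $z$ in \Cref{prop:tc_geo_commute}. Your explicit justification that $\mu(Y\alpha/\alpha)=0$ is a detail the paper uses without comment, and your Radon--Nikodym formulation of the almost measure preserving step is equivalent to the paper's duality argument against $L^\infty$ test functions.
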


This immediately implies \Cref{th:main_tc_unip}.

\begin{proof}[Proof of \Cref{th:main_tc_unip}]
	The existence of the Sobolev norms satisfying the required properties is proven in \cite[Section 2.2]{BEG}.
 By \Cref{prop:tc_unip_are_QGUS}, $\flowR$ is QGUS with  polynomial rate functions. The main result in \cite{Rav} implies that it is polynomially mixing. The conclusion follows from \Cref{thm:multiple_mixing_QGUS_flows}.
\end{proof}

\begin{proof}[Proof of \Cref{prop:tc_unip_are_QGUS}]
As in the statement, we fix $a=\eta/2$ and  $\gamma_k = \eta/4$.
Let $M\geq 1$ be fixed, and consider the set $G_M = E(Y\alpha / \alpha, M)$ given by \Cref{prop:erg_int_tc_unip} for the function $Y\alpha / \alpha \in \mathscr{C}_c^{\infty}(X)$. By the aforementioned result, $\mu(G_M) \leq M^{-\eta} \leq M^{-\gamma_k}$.
Let $\tb$ be a $(k+1)$-tuple of times with $\Deltat \geq 4M$.

For any $m = 0, \dots, K=\lfloor M^{\gamma_k}\rfloor \leq M^{\eta/4}$, we define 
\[
\mapg^{\tb}_m(x) = h^Y_{mM^{\eta/2} t_k^{-1}}(x).
\]
Then, $\mapg^{\tb}_m$ is $M^{-\gamma_k}$-almost measure preserving. Indeed, for any $\phi \in L^{\infty}(X)$, we have
\begin{multline*}
\left\lvert
\int_X \phi \circ \mapg^{\tb}_m \diff \mu - \int_X \phi \diff \mu \right\rvert= 
\left\lvert \int_X \phi \circ  h^Y_{m M^{\eta/2} t_k^{-1}} \cdot \alpha \diff \vol - \int_X \phi \cdot \alpha \diff \vol  \right\rvert \\
\leq  
\| \phi\|_{\infty} \,
\int_X  \left\lvert\alpha \circ  h^Y_{-mM^{\eta/2} t_k^{-1}} - \alpha  \right\rvert \diff \vol \ll \| \phi\|_{\infty} \, M^{3\eta/4} \Deltat^{-1}  \ll  \| \phi\|_{\infty} \, M^{-\gamma_k}.
\end{multline*}
Now, let $x \in G_M$ and fix $j \in \{0, \dots, k\}$. If $j=0$, then 
\[
\dist \left(\mapg^{\tb}_m (x), x \right) \leq |mM^{\eta/2}t_k^{-1}| \leq M^{3\eta/4-1} \leq M^{-\gamma_k},
\]
so that the claim is obvious. If $j >0$, by \Cref{prop:tc_geo_commute}, we have 
\[
\left\lvert \frac{1}{t_j} \int_0^{t_j} \frac{Y\alpha}{\alpha} \circ \flow_r(x) \diff r\right\rvert \ll t_j^{-\eta},
\]
since $t_j \geq \Deltat \geq 4M$. Therefore, we obtain
\[
\begin{split}
&\dist \left(\flow_{t_j}(\mapg^{\tb}_m x), \flow_{t_j+m \cdot M^{\eta/2}(t_j/t_k) }(x) \right) \\
&\qquad  \leq \dist \left(\flow_{t_j}(\mapg^{\tb}_m x), \flow_{z\big( x,mM^{\eta/2}t_k^{-1},t_j \big)}(x) \right) + \left\lvert z \big(x,mM^{\eta/2}t_k^{-1},t_j\big) - t_j-m \cdot M^{\eta/2} (t_j/t_k)\right\rvert \\
& \qquad \ll M^{-\gamma_k} + M^{3\eta/4} t_j^{-\eta} + \frac{M^{3\eta/2}}{t_k} (1+M^{3\eta/4})\ll M^{-\gamma_k},
\end{split}
\]
since, we recall, $\eta \leq 1/4$. 
The proof is thus complete.
\end{proof}

%%%%%%%%%%

\section{Intermezzo: towers and almost partitions}\label{sec:towers}

This section contains some basic facts about \emph{towers} and \emph{almost partitions} which we will need in the remainder of the paper.

Let $(Y, \mathcal{B}, \mu)$ be a Lebesgue measure space, where $(Y, \dist)$ is a complete metric space. 
\begin{definition}[Almost partitions]
	Let $\varepsilon \geq 0$ and let $A \subseteq Y$ be measurable. A family $\cP=\{P_a\}_{a\in \mathscr{A}}$ of measurable sets $P_a \subseteq Y$ is an \emph{$\varepsilon$-almost partition of $A$} if the sets $P_a$ are pairwise disjoint and 
	\[
	\mu \left(A \, \triangle \, \bigcup \{P_a \in \cP : P_a \cap A \neq \emptyset\}\right) <\varepsilon \mu(A).
	\]
\end{definition}
One can verify that the intersection of almost partitions is again an almost partition, see \Cref{lem:intersection_almost_partitions} in \Cref{sec:vdc}.

Let us consider an ergodic (aperiodic) flow $\flowR$ on $Y$. We define towers, which can be thought of as measurable flowboxes.  % $\special$

\begin{definition}\label{def:tower} 
Let $B \in \mathcal{B}$ and $\height \geq 0$.
The set $\cT=\bigcup_{0\leq t\leq \height}\flow_t(B)$ is called a {\em tower with base $B$ and height $\height$} if, for any $t,t'\in [0,\height]$ with $t \neq t'$, we have $\flow_t(B)\cap \flow_{t'}(B)=\emptyset$. 
\end{definition}

From the definition, it is easy to see that any point $x$ in a given tower $\cT$ of base $B$ and height $\height$ can be written in a unique way as $x=\flow_t(y)$ for a point $y \in B$ and $t \in [0,\height]$.

The following classical result, due to Rokhlin \cite{Rok} in the discrete setting (see \cite{Orn} for a version for flows and \cite{Lin} for $\R^d$-actions), says that we can find towers with arbitrarily large height and measure arbitrarily close to 1.
\begin{theorem}[Rokhlin]
Let $\flowR$ be as above. For any $\varepsilon>0$ and any $\height \geq 0$, there exists a tower $\cT$ of height $\height$ and base $B \in \mathcal{B}$ such that $\mu(\cT)\geq 1-\varepsilon$. Furthermore, there exists a measure $\nu$ on $B$ such that the restriction of $\mu$ to $\cT$ is the product of $\nu$ and the Lebesgue measure on $[0,\height]$.
\end{theorem}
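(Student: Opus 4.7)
The plan is to bootstrap the classical discrete Rokhlin lemma into its continuous version via a first-return refinement of the base. Fix $\varepsilon, h > 0$ and choose a large integer $N$, setting $\tau = h/N$. Since $\flowR$ is ergodic and aperiodic, the time-$\tau$ map $T := \flow_\tau$ is an aperiodic measure-preserving transformation of $(Y, \mu)$. Apply the classical (discrete) Rokhlin lemma to $T$ with $K$ levels, $K \gg N/\varepsilon$, to obtain a measurable base $A \subset Y$ such that $A, TA, \dots, T^{K-1}A$ are pairwise disjoint and $\mu\bigl(\bigsqcup_{k<K}T^k A\bigr) \geq 1 - \varepsilon/3$.

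The set $A$ is a discrete cross-section, but $\{\flow_t(A)\}_{t\in[0,h]}$ need not be pairwise disjoint in the continuous sense, because an orbit of a point of $A$ may re-enter $A$ at a non-lattice time in $(0,h]$. I would remedy this by thinning: set
\[
B \;=\; \{\, x \in A : \flow_s(x) \notin A \text{ for every } s \in (0, h]\,\}.
\]
By construction, $\{\flow_t(B)\}_{t\in[0,h]}$ are pairwise disjoint (any equality $\flow_t(b_1) = \flow_{t'}(b_2)$ with $t > t'$ forces $b_1 = \flow_{t-t'}(b_2) \in A$ with $t-t'\in(0,h]$, contradicting the definition of $B$), so $\mathcal{T} := \bigsqcup_{t\in[0,h]} \flow_t(B)$ is a bona fide tower of height $h$ with base $B$. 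The key estimate is then controlling $\mu(A\setminus B)$. Writing $s\in(0,h]$ as $s = k\tau + u$ with $k\in\{0,\dots,N-1\}$ and $u\in[0,\tau)$, the bad set $A\setminus B$ is contained in $\bigcup_{k=0}^{N-1} T^{-k}\bigl(T^kA \cap \bigcup_{u\in(0,\tau]}\flow_{-u}(A)\bigr)$; using $T$-invariance of $\mu$, flow-invariance, and the disjointness from Step~1, a Fubini bookkeeping (essentially applying Kac's formula to the first return of the flow to $A$, which forces $\mathbb{E}_{\mu_A}(R) = 1/\mu(A) \geq K \gg h$) shows that $\mu(A\setminus B)$ is negligible compared with $\mu(A)$. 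Combining this with the loss from the top $N$ levels of the discrete skyscraper (of measure $\leq N/K$) yields $\mu(\mathcal{T}) \geq 1 - \varepsilon$ once $K \gg N/\varepsilon$.

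For the product measure structure, the map $\Psi \colon B\times [0,h]\to \mathcal{T}$, $(b,t)\mapsto \flow_t(b)$, is a measurable bijection by the defining property of towers. Invariance of $\mu$ under $(\flow_s)$, restricted to the interior of $\mathcal{T}$, forces the pushforward $\Psi^{*}(\mu|_\mathcal{T})$ to be invariant under translation in the $t$-coordinate; by uniqueness of Haar measure on $[0,h]$, this pushforward disintegrates as $\nu\otimes \Leb$ for a uniquely determined finite measure $\nu$ on $B$. The principal obstacle throughout is the thinning estimate of the middle paragraph: upgrading disjointness on the discrete $\tau$-lattice to genuine continuous disjointness over $[0,h]$ while retaining most of the measure. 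The decisive gain comes from the very tall discrete tower (large $K$), which guarantees both that $\mu(A)$ is tiny and that the orbit spends an overwhelming fraction of time outside $A$, so that non-lattice returns to $A$ within time $h$ are exponentially rare.
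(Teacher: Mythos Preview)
The paper does not prove this theorem; it is stated as a classical result with references to Rokhlin (discrete case), Ornstein (flows), and Lind ($\R^d$-actions), and then used as a black box.

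Your strategy --- bootstrap from the discrete Rokhlin lemma for $T=\flow_\tau$ and then thin the base --- is the natural one, but the thinning estimate has a genuine gap. The discrete Rokhlin lemma hands you \emph{some} measurable $A$ with $A,TA,\dots,T^{K-1}A$ disjoint, and gives no control over how $A$ sits relative to the flow direction. Concretely: take the flow to be a suspension with constant roof $K\tau$ over an aperiodic base automorphism, with cross-section $S$, and put $A=\bigcup_{u\in[0,\eta)}\flow_u(S)$ for some $\eta$ just below $\tau$. Then $A,TA,\dots,T^{K-1}A$ are disjoint slabs whose union has measure $\eta/\tau$, as close to $1$ as you like; so $A$ is a perfectly valid discrete Rokhlin base. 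Yet every $x=\flow_u(s)\in A$ with $u<\eta$ satisfies $\flow_v(x)\in A$ for all $v\in(0,\eta-u)$, so your thinned set $B$ is null. The appeal to Kac does not rescue this: discrete Kac only controls the $T$-return time (which is indeed $\geq K$), irrelevant for continuous returns in $(0,\tau)$; and there is no Kac formula for the flow's first return to a positive-measure set, since that return time is typically zero. The Fubini argument you allude to shows that orbits spend little \emph{time} in $A$ on average, not that they make few \emph{visits}.

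The standard fix (and what the cited references do) is to invoke Ambrose--Kakutani first: represent the flow as a special flow over some base automorphism, apply discrete Rokhlin to that automorphism, and take the base of the flow tower inside the zero section. That base is then transversal to the flow by construction, and no thinning is needed. Your final paragraph, deriving the product structure from translation invariance in the fibre and Haar uniqueness, is correct.
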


We will need the following important fact about towers: the intersection of any two towers can be approximated by another tower, which in particular is not empty if the intersection is sufficiently large. Let us start with a preliminary step.
\begin{lemma}\label{lem:towers1}
Let $\cT$ be any tower of base $B$ and height $\height$.
For every $\varepsilon >0$ there exists $B' \subseteq B$ so that the tower $\cT' \subseteq \cT$ of base $B'$ and height $\height$ satisfies $\mu( \cT \setminus \cT') \leq \varepsilon$ and the restriction of $\mu$ to $\cT'$ can be expressed as the product of a measure $\nu$ on $B'$ and the Lebesgue measure on $[0,\height]$.
\end{lemma}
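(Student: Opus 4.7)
My approach is to show that the product structure of $\mu$ on a tower is in fact automatic, so that one can simply take $B'=B$ and $\cT'=\cT$, with the error $\mu(\cT\setminus\cT')=0\leq\varepsilon$. Thus the lemma is really a structural statement about towers, with the $\varepsilon$ present for convenience only.

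The first step is to observe that every slice $\flow_t(B)$ has $\mu$-measure zero. By flow invariance all slices carry the common value $\mu(B)$, and by the tower property the family $\{\flow_t(B)\}_{t\in[0,\height]}$ is an uncountable, pairwise disjoint family inside the finite-measure set $\cT$; countable additivity applied to any countable subfamily therefore forces $\mu(B)=0$. The same reasoning gives $\mu(\flow_t(A))=0$ for every measurable $A\subseteq B$ and every $t\in[0,\height]$.

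The second step is to pull $\mu|_\cT$ back to $B\times[0,\height]$ along the measurable bijection $\pi(y,t)=\flow_t(y)$, obtaining a measure $m=\pi^{-1}_*(\mu|_\cT)$. For fixed measurable $A\subseteq B$, set $g_A(s):=\mu\bigl(\bigcup_{t\in[0,s]}\flow_t(A)\bigr)$. Using the tower disjointness, the fact that the overlap slice is $\mu$-null by Step 1, and flow invariance (which identifies $\mu\bigl(\bigcup_{t\in(s_1,s_1+s_2]}\flow_t(A)\bigr)$ with $g_A(s_2)$ via $\flow_{-s_1}$), one verifies additivity $g_A(s_1+s_2)=g_A(s_1)+g_A(s_2)$ for $s_1+s_2\leq \height$. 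Monotonicity of $g_A$ is immediate, so $g_A$ is linear: $g_A(s)=\nu(A)\cdot s$ with $\nu(A):=g_A(\height)/\height$, which is easily seen to be a $\sigma$-additive measure on the trace of $\mathcal B$ on $B$. This yields $m(A\times[a,b])=\nu(A)(b-a)$ on every measurable rectangle, and a standard Dynkin $\pi$-$\lambda$ argument extends the identity to $m=\nu\otimes\Leb_{[0,\height]}$ on the full product $\sigma$-algebra.

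The one delicate point is measurability: the tower $\cT$ is in general only an analytic set (the image of $B\times[0,\height]$ under $\pi$), and similarly for the sets $\bigcup_{t\in[0,s]}\flow_t(A)$ entering $g_A$. Since $(Y,\mathcal B,\mu)$ is a Lebesgue space these sets are universally measurable, and the entire argument carries through with the completion of $\mu$. If one nevertheless wanted a proof that makes essential use of $\varepsilon$ and of the Rokhlin theorem already stated, an alternative would be to apply Rokhlin at height $\height$ to produce a product-structured tower $\tilde\cT$ of measure at least $1-\varepsilon/\height$, and take $B'\subseteq B$ to be the set of base points whose full orbit segment of length $\height$ is contained in $\tilde\cT$; the product structure on $\cT'$ is then inherited from that of $\tilde\cT$, and Fubini (in the already-established product form on $\tilde\cT$) bounds $\mu(\cT\setminus\cT')$ by $\varepsilon$.
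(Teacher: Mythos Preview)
Your main argument is correct and actually proves something stronger than the lemma: the product structure on \emph{any} tower is automatic, so one may take $B'=B$ and the $\varepsilon$ is superfluous. The Cauchy-equation argument for $g_A$ is clean, and the measurability caveat (analytic images, universal measurability in a Lebesgue space) is handled correctly; in fact, if $B$ is Borel then by Lusin--Souslin the tower is already Borel, so the caveat is even milder than you suggest.

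This is a genuinely different route from the paper. The paper does not establish the product structure directly; instead it invokes Rokhlin's theorem as a black box to produce a large auxiliary tower $\cR$ of height $H=4\height/\varepsilon$ and measure $\geq 1-\varepsilon/2$, takes $B'=B\cap\cR_0$ (where $\cR_0$ is the portion of $\cR$ low enough that the $\height$-orbit of any base point stays inside $\cR$), and inherits the product structure on $\cT'$ from that of $\cR$. The bound $\mu(\cT\setminus\cT')\leq\varepsilon$ then comes from $\cT\setminus\cT'\subseteq X\setminus\cR_1$. Your approach is more elementary and self-contained (it does not lean on Rokhlin), and it yields a sharper conclusion; the paper's approach is quicker if one is happy to quote Rokhlin and does not care about the loss of $\varepsilon$.

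One small correction to your final paragraph: the alternative you sketch --- applying Rokhlin at height $\height$ and taking $B'$ to be those base points whose $\height$-orbit stays in $\tilde\cT$ --- does not work as written. With a Rokhlin tower of height exactly $\height$, the only points whose full $\height$-orbit remains in $\tilde\cT$ are essentially those in the base $\tilde B$, and there is no reason $\mu(\cT\setminus\cT')$ should be small; the Fubini step you invoke is circular since you do not yet have a product structure on $\cT$. The fix, which is exactly what the paper does, is to take the Rokhlin tower of height $H\gg\height$ (specifically $H=4\height/\varepsilon$), so that the ``bad'' boundary layers have relative size $O(\height/H)=O(\varepsilon)$.
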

\begin{proof}
Fix $\varepsilon >0$, and let $\cR$ be a Rokhlin tower of height $H = 4\height / \varepsilon$ so that $\mu(X \setminus \cR) \leq \varepsilon / 2$. Let $A$ denote the base of $\cR$, and define $\cR_i = \bigcup \{ \flow_t(A) : i \cdot \height \leq t \leq H-\height\}$ for $i=0,1$. Note that $\mu(\cR \setminus \cR_1) \leq 2 \height / H = \varepsilon/2$.
Defining $B' = B \cap \cR_0$ ensures that the tower $\cT'$ of height $\height$ over it is fully contained in $\cR$, hence the claim on the restriction of $\mu$ to $\cT'$ follows from Rokhlin's Theorem. Finally, since $\cT \setminus \cT' \subseteq X \setminus \cR_1$, we conclude $\mu(\cT \setminus \cT') \leq \varepsilon$.
\end{proof}

\begin{lemma}\label{lem:towers2}
Let $\cT_1,\cT_2$ be two towers of bases $B_1, B_2$ and heights $\height_1, \height_2$ respectively; let $H = \min\{ \height_1, \height_2\}$. For any $\overline{\height} \leq H$, there exists a tower $\overline{\cT} \subseteq \cT_1 \cap \cT_2$ of height $\overline{\height}$ such that 
\[
\mu \left( \cT_1 \cap \cT_2 \setminus \overline{\cT} \right) \leq 4\frac{\overline{\height}}{H}.
\]
\end{lemma}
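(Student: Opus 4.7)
The plan is to assume without loss of generality that $\height_1 \leq \height_2$ (so $H = \height_1$), since the statement is symmetric in $\cT_1$ and $\cT_2$. The construction will then live inside $\cT_1$: the idea is to slice $\cT_1$ horizontally into floors of height $\overline\height$ and, on each floor, keep only those orbit pieces that remain inside $\cT_2$ throughout the entire floor.

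Concretely, I would let $N = \lfloor \height_1/\overline\height \rfloor$ and decompose $\cT_1 = \bigsqcup_{i=0}^{N-1} F_i \sqcup F_{\mathrm{top}}$, where $F_i = \bigcup_{t \in [i\overline\height,(i+1)\overline\height]} \flow_t(B_1)$ and $F_{\mathrm{top}} = \bigcup_{t \in [N\overline\height,\height_1]} \flow_t(B_1)$. For each $i$, set $\widetilde B_i = \{y \in B_1 : \flow_s(y) \in \cT_2 \text{ for all } s \in [i\overline\height,(i+1)\overline\height]\}$, $\overline B_i = \flow_{i\overline\height}(\widetilde B_i)$, and $\overline B = \bigsqcup_{i=0}^{N-1} \overline B_i$. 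The candidate tower is $\overline\cT = \bigcup_{t \in [0,\overline\height]} \flow_t(\overline B)$. By construction $\overline\cT \subseteq \cT_1 \cap \cT_2$, and for $t,t' \in [0,\overline\height]$ with $t \neq t'$ the identity $t + i\overline\height = t' + j\overline\height$ forces $i = j$ and $t = t'$ (the edge case $t - t' = \pm\overline\height$ produces only a measure-zero overlap between consecutive floors, which we ignore); combined with the tower structure of $\cT_1$, this shows $\flow_t(\overline B) \cap \flow_{t'}(\overline B) = \emptyset$, so $\overline\cT$ is a tower of height $\overline\height$.

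The heart of the argument is the measure estimate, which I would carry out via the disintegration $\mu|_{\cT_1} = \nu_1 \otimes \Leb|_{[0,\height_1]}$. For each $y \in B_1$, examine the set $\{t \in [0,\height_1] : \flow_t(y) \in \cT_2\}$. The key observation is that it is a union of at most two closed subintervals of $[0,\height_1]$: any entry of the orbit into $\cT_2$ happens through $B_2$, each such entry commits the orbit to stay in $\cT_2$ for the following $\height_2 \geq \height_1$ time units, and successive returns of a flow orbit to $B_2$ are separated by at least $\height_2 \geq \height_1$. Hence the orbit of $y$ has at most two transitions in/out of $\cT_2$ within $[0,\height_1]$, so at most two of the floors $F_0,\dots,F_{N-1}$ are \lq\lq bad\rq\rq, meaning that they straddle such a transition; each bad floor contributes at most $\overline\height$ of orbital Lebesgue length to $\cT_1 \cap \cT_2 \setminus \overline\cT$. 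Adding the remnant $F_{\mathrm{top}} \cap \cT_2$, which has orbital length at most $\height_1 - N\overline\height < \overline\height$, and integrating against $\nu_1$, I obtain
\[
\mu(\cT_1 \cap \cT_2 \setminus \overline\cT) \leq 3\overline\height \cdot \nu_1(B_1) = \frac{3\overline\height \, \mu(\cT_1)}{\height_1} \leq \frac{3\overline\height}{H} \leq \frac{4\overline\height}{H}.
\]

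The only step requiring genuine thought is the \lq\lq at most two intervals\rq\rq\ claim, which is a direct consequence of the tower rigidity of $\cT_2$ combined with $\height_1 \leq \height_2$: return times to $B_2$ along any flow orbit are $\geq \height_2$, and this is precisely what caps the number of in/out transitions on an orbit of length $\height_1$. Everything else is floor-by-floor bookkeeping together with the standard disintegration of $\mu$ on a Rokhlin-type tower.
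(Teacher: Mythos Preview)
Your proof is correct and takes a genuinely different route from the paper's. The paper does not break the symmetry between the two towers: after applying Lemma~\ref{lem:towers1} to both $\cT_1,\cT_2$ (to secure the product disintegration at the cost of $2\overline\height/H$), it introduces the \emph{entry set} $\widetilde B = (B_1 \cap \cT_2)\cup(B_2\cap \cT_1)$, realises $\cT_1\cap\cT_2$ as a variable-height skyscraper over $\widetilde B$ via a height function $h\colon\widetilde B\to[0,H]$, and stacks the level sets $A_j=\{h\ge j\overline\height\}$ to build $\overline\cT$; the estimate then comes from $\nu(\widetilde B)\le 2/H$. You instead fix $\height_1\le\height_2$, work entirely inside $\cT_1$, and replace the skyscraper picture by the sharper orbitwise observation that each length-$\height_1$ orbit meets $\cT_2$ in at most two subintervals, so at most two floors are bad. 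Your argument is more elementary and even yields the constant $3$ rather than $4$; the paper's argument is more structural in that it explicitly exhibits $\cT_1\cap\cT_2$ as a tower under a roof over an explicit base. One small point worth a sentence: you invoke the product disintegration $\mu|_{\cT_1}=\nu_1\otimes\Leb$ as ``standard''; in the Lebesgue-space setting of \S\ref{sec:towers} this is indeed automatic (the pull-back of $\mu$ to $B_1\times[0,\height_1]$ is translation-invariant in the second coordinate), but the paper obtains it via Lemma~\ref{lem:towers1}, so you should either say why it holds directly or cite that lemma.
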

Although in general the base of the tower $\overline{\cT}$ in the statement above might be empty, clearly this is not the case as soon as $\mu(\cT_1 \cap \cT_2 ) > 4\overline{\height} / H$.
\begin{proof}
We begin by applying \Cref{lem:towers1} to both $\cT_1$ and $\cT_2$, with $\varepsilon = \overline{\height} / H$, and let us call $\cT_i', B_i', \nu_i'$ the resulting towers, bases, and measures. 
It suffices to prove that there exists a tower $\overline{\cT} \subseteq \cT_1' \cap \cT_2'$ of height $\overline{\height}$ such that $\mu \left( \cT_1' \cap \cT_2' \setminus \overline{\cT} \right) \leq 2\frac{\overline{\height}}{H}$. For the sake of notation, in the following we suppress the $'$ symbol.

Let us define 
\[
\widetilde{B} := (B_1 \cap \cT_2) \cup (B_2 \cap \cT_1) \subset B_1 \cup B_2,
\]
and let $\nu$ be the measure on $\widetilde{B}$ given by $\nu_1$ and $\nu_2$ (note that, by definition, $\nu_1$ and $\nu_2$ coincide on $B_1 \cap B_2$, so that $\nu$ is well-defined on $\widetilde{B}$).

It is easy to see that any $x \in \cT_1 \cap \cT_2$ can be written in a unique way as $x = \flow_t(y)$ for some $y \in \widetilde{B}$ and some $0\leq t\leq H$. Thus, there exists a function $h \colon \widetilde{B} \to [0,H]$ such that
\[
\cT_1 \cap \cT_2 = \{ \flow_t(x) : x \in \widetilde{B} \text{\ and\ } t \in [0, h(x)]\}.
\]
Given $\overline{\height} \leq H$, we define
\[
A_i = \{x \in \widetilde{B} : h(x) \geq i \cdot \overline{\height} \}, \qquad \text{for\ }i = 0, \dots, K=\lfloor H / \overline{\height} \rfloor,
\]
and note that $A_K \subseteq A_{K-1} \subseteq \cdots \subseteq A_0 = \widetilde{B}$.
We claim that the tower $\overline{\cT}$ of height $\overline{\height}$ and base 
\[
\overline{A} := \bigcup_{j = 1}^{K-1} \flow_{(j-1)\height} (A_j) = A_1 \cup \flow_{\height} (A_2) \cup \cdots \cup \flow_{(K-1)\height} A_{K-1}
\]
satisfies the conclusion of the lemma. Indeed, by construction, we have
\[
\cT_1 \cap \cT_2 \setminus \overline{\cT} \subseteq \bigcup \{ \flow_t(A_j \setminus A_{j+1}) : t \in [j\cdot \overline{\height}, (j+1)\cdot \overline{\height}) \text{\ and\ } j = 0, \dots, K\},
\]
so that 
\[
\mu\left( \cT_1 \cap \cT_2 \setminus \overline{\cT} \right) \leq \sum_{j=0}^K \overline{\height} \cdot \nu(A_j \setminus A_{j+1}) = \overline{\height} \cdot \nu(\widetilde{B}).
\]
Finally, since the tower of height $H$ over $B_1 \cap \cT_2$ is contained in $\cT_1$, we have that $\nu(B_1 \cap \cT_2) \leq \mu(\cT_1) / H$, and similarly for $B_2 \cap \cT_1$. It follows that $\nu(\widetilde{B}) \leq 2/H$, and hence $\mu\left( \cT_1 \cap \cT_2 \setminus \overline{\cT} \right) \leq 2 \overline{\height} / H$, which completes the proof.
\end{proof}

We now assume that $Y$ is a \emph{suspension space}, namely we place ourselves in the following setting.
Let $(X,\dist)$ be a compact metric space, let $\nu$ be a Borel probability measure on $X$, and let $T \colon X \to X$ be an ergodic (aperiodic) probability preserving automorphism. 
Let $\roof \colon X \to \R_{>0}$ be a positive integrable function, which we assume to be normalized so that $\nu(\roof) = 1$. The space $Y = \susp$ is defined as
\[
\susp = \{ (x,r) \in X \times \R : 0\leq r \leq \roof(x) \} / \sim,
\]
where $\sim$ is the equivalence relation generated by $(x, \roof(x)) \sim (Tx,0)$.
The space $\susp$ is equipped with the probability measure $\mu$ given by the product of $\nu$ on $X$ with the Lebesgue measure on the \lq\lq vertical\rq\rq\ fibers contained in $\R$. Then, the \emph{special flow over $T$ under the roof function $\roof$} is the flow $\flowR$ on $\susp$ obtained by moving each point vertically with unit speed. 
Explicitly, we can write
\[
\flow_t(x,r) = \left( T^{N(x,r,t)}x, r+t - S_{N(x,r,t)}\roof(x)\right),
\]
where $S_n\roof(x)$ denote the Birkhoff sums of $\roof$ at $x \in \baseX$, namely
\[
S_n\roof(x) = 
\begin{cases}
\roof(x) + \roof(Tx) + \cdots + \roof(T^{n-1}x), & \text{ if } n>0,\\
0 & \text{ if } n=0,\\
- \roof(T^{-1}x) - \cdots - \roof(T^nx) & \text{ if } n<0,
\end{cases}
\]
and $N(x,r,t)$ is the unique integer satisfying
\[
0\leq r+t - S_{N(x,r,t)}\roof(x) < \roof( T^{N(x,r,t)}x).
\]
We will often write $N(x,t)$ instead of $N(x,0,t)$.
We note that the probability measure $\mu$ is invariant under $\flowR$.

We now show that we can construct Rokhlin towers with arbitrarly large height and with base contained in $X$.

\begin{lemma}\label{lem:towers3}
	Assume that $\roof \in L^p(X,\nu)$ for some $p >1$. There exists a constant $C_{\roof}\geq 1$ for which the following holds. Let $\cT$ be a tower of base $B$ and height $\height \geq 16$. There exists a tower $\cT' \subseteq \cT$ of base $B \subseteq X$ and height $\height' \geq \height/2$ such that $\mu(\cT \setminus \cT') \leq C_{\roof} \height^{-\alpha}$, where $\alpha = \frac{p-1}{p+1} \in (0,1)$.
\end{lemma}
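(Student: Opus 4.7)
The plan is to construct $\cT'$ by shifting each base point of $\cT$ forward along its flow orbit until it reaches the canonical section $X\subseteq \susp$, provided the required shift is small. I introduce a parameter $\Theta>0$ (to be optimised at the end) and set $\height':=\height-2\Theta$. For $y=(y_x,y_r)\in B$, let $\tau(y):=\inf\{t\geq 0:\flow_t(y)\in X\}$, so $\tau(y)=\roof(y_x)-y_r$ when $y_r>0$ and $\tau(y)=0$ when $y_r=0$. Call $y$ \emph{good} if $\tau(y)\leq\Theta$, \emph{bad} otherwise, and define
\[
B':=\{\flow_{\tau(y)}(y):y\in B^{\mathrm{good}}\}\subseteq X,\qquad \cT':=\bigcup_{x\in B'}\bigcup_{t\in[0,\height']}\flow_t(x).
\]

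Both the inclusion $\cT'\subseteq\cT$ and the tower property of $\cT'$ are essentially rewritings of the tower property of $\cT$. For $y\in B^{\mathrm{good}}$ one has $\tau(y)+\height'\leq\Theta+\height-2\Theta\leq\height$, so the segment of length $\height'$ starting at $\flow_{\tau(y)}(y)$ is contained in the orbit segment of $\cT$ issued from $y$. For disjointness, if $\flow_t(x_1)=\flow_{t'}(x_2)$ with $x_i=\flow_{\tau(y_i)}(y_i)$, $y_i\in B^{\mathrm{good}}$, and $t,t'\in[0,\height']$, then the shifted identity $\flow_{t+\tau(y_1)}(y_1)=\flow_{t'+\tau(y_2)}(y_2)$ has both exponents in $[0,\height]$, and the tower condition on $\cT$ forces $y_1=y_2$ and $t=t'$.

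The decisive step is to estimate the deficit $\mu(\cT\setminus\cT')$. Using the natural product structure $\mu|_\cT=\nu_B\otimes\Leb_{[0,\height]}$ inherited from the tower and the trivial bound $\nu_B(B)=\mu(\cT)/\height\leq 1/\height$, the deficit splits as
\[
\mu(\cT\setminus\cT')=2\Theta\,\nu_B(B^{\mathrm{good}})+\height\,\nu_B(B^{\mathrm{bad}})\leq \frac{2\Theta}{\height}+\height\,\nu_B(B^{\mathrm{bad}}).
\]
The main obstacle, and the place where the $L^p$ hypothesis on $\roof$ enters, is the bound on $\nu_B(B^{\mathrm{bad}})$. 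The key observation is that for $y\in B^{\mathrm{bad}}$ the pre-crossing segment $\{\flow_t(y):t\in[0,\min(\tau(y),\height))\}$ has length strictly greater than $\Theta$, lies in the fiber $\{y_x\}\times[y_r,\roof(y_x))$, and thus in $\{(x,r)\in \susp:\roof(x)>\Theta\}$, since $\roof(y_x)>\tau(y)>\Theta$. These pre-crossing segments are pairwise disjoint subsets of $\cT$, so
\[
\Theta\,\nu_B(B^{\mathrm{bad}})\leq \mu\bigl(\cT\cap\{\roof>\Theta\}\bigr)\leq \int_{\{\roof>\Theta\}}\roof\,\diff\nu \leq \Theta^{1-p}\|\roof\|_p^p,
\]
where the last step uses $\roof\leq \roof^p\,\Theta^{1-p}$ on $\{\roof>\Theta\}$; hence $\nu_B(B^{\mathrm{bad}})\leq\|\roof\|_p^p\,\Theta^{-p}$.

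Substituting and optimising in $\Theta$, the choice $\Theta\asymp \|\roof\|_p^{p/(p+1)}\height^{2/(p+1)}$ balances the two terms and yields $\mu(\cT\setminus\cT')\leq C_{\roof}\height^{-(p-1)/(p+1)}=C_{\roof}\height^{-\alpha}$. The constraint $\height'\geq\height/2$ reduces to $\Theta\leq\height/4$, which holds once $\height$ is larger than a constant depending only on $\roof$ and $p$; for the remaining values in $[16,\text{threshold}]$ one simply takes $\cT'=\emptyset$ and enlarges $C_{\roof}$ so that the (trivially bounded) deficit is absorbed.
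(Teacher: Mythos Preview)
Your proof is correct and follows essentially the same strategy as the paper: slide base points along the flow until they first reach the canonical section $X$, discard those for which this shift exceeds a threshold $\Theta\asymp\height^{1-\alpha}$, and bound the loss using the $L^p$ integrability of $\roof$.

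The one genuine difference is in how you control the bad set. The paper stratifies $\susp$ into layers $C_i=\{(x,r):\roof(x)\geq i\height^{1-\alpha},\ (i-1)\height^{1-\alpha}\leq r<i\height^{1-\alpha}\}$, observes that $\pi_1$ is injective on each $B\cap C_i$, and sums the Chebyshev bounds $\nu(\{\roof\geq i\height^{1-\alpha}\})\leq(i\height^{1-\alpha})^{-p}\|\roof\|_p^p$ over $i$, picking up a factor $\zeta(p)$. Your pre-crossing segment argument is more direct: for each bad $y$ the initial orbit segment of length $\Theta$ lies inside $\cT\cap\{\roof>\Theta\}$, and since these segments are pairwise disjoint one obtains $\Theta\,\nu_B(B^{\mathrm{bad}})\leq\int_{\{\roof>\Theta\}}\roof\,\diff\nu\leq\Theta^{1-p}\|\roof\|_p^p$ in one step, without stratification and with a slightly better constant. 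Both routes rely on the same product structure $\mu|_\cT=\nu_B\otimes\Leb_{[0,\height]}$, which the paper also takes for granted.
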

\begin{proof}
We are going to construct the base $B'$ of the desired tower by removing part of the base of $\cT$ and by sliding the remainder part in $X$. To this end, define 
\[
C_i =\{(x,r) \in \susp : \roof(x) \geq i \cdot \height^{1-\alpha} \text{ and } (i-1) \cdot \height^{1-\alpha} \leq r < i \cdot \height^{1-\alpha} \},
\]
and let $C = \cup_{i \geq 1} C_i$. Note that for each point $z = (x,r) \in \susp \setminus C$, there exists $0\leq t(z) \leq \height^{1-\alpha}$ such that $\flow_{t(z)}(z) \in X$.
We also note that, if $(x,y_1)$ and $(x,y_2)$ both belong to $B \cap C_i$ for some $i$, then $y_1=y_2$.
Therefore, using Chebyshev's Inequality, we get
\[
\nu(B \cap C_i ) \leq \nu( \{ x : \roof(x) \geq i \cdot \height^{1-\alpha} \}) \leq (i \cdot \height^{1-\alpha})^{-p} \, \|\roof\|_p^p.
\]
Therefore, the set $B_0 = B \setminus C$ has measure $\nu(B_0) \geq \nu(B) - \zeta(p) \|\roof\|_p^p \height^{-p(1-\alpha)}$. Let $B' \subseteq X$ be the first entrance of points of $B_0$ to $X$, which, as we observed, happens before time $\height^{1-\alpha}$, and let $\cT'$ be the tower of height $\height - \height^{1-\alpha} \geq \height/2$. Since $\cT'$ is fully contained in $\cT$, it is indeed a tower. Finally,
\[
\begin{split}
	\mu(\cT \setminus \cT') \leq \nu(B \setminus B_0) \, \height + 2 \nu(B) \, \height^{1-\alpha} \leq \zeta(p) \|\roof\|_p^p \height^{1-p+p\alpha} + 2\height^{-\alpha} \leq C_{\roof} \height^{-\alpha} ,
\end{split}
\] 
where we set $C_{\roof} := \zeta(p) \|\roof\|_p^p +2$.
\end{proof}

\begin{corollary}\label{cor:towers3}
Assume that $\roof \in L^p(X,\nu)$ for some $p >1$. For every $\varepsilon >0 $ and $H\geq 1$ there exists a Rokhlin tower $\cR$ with base $B \subseteq X$ and height $\height \geq H$ such that $\mu(X \setminus \cR) \leq \varepsilon$.
\end{corollary}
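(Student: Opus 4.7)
The plan is to combine the classical Rokhlin theorem stated earlier in this section with the newly proved \Cref{lem:towers3}. The Rokhlin theorem produces arbitrarily tall towers of almost full measure, but it does not guarantee that their base sits inside the cross-section $X \subseteq \susp$; \Cref{lem:towers3}, on the other hand, starts from any tower and produces a sub-tower whose base is contained in $X$ while losing only a polynomially small amount of measure in the height. Chaining the two bounds is straightforward.

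In detail, given $\varepsilon > 0$ and $H \geq 1$, I would pick an auxiliary height $\widetilde{H}$ large enough that simultaneously
\[
\widetilde{H} \geq \max\{2H,\,16\} \qquad \text{and} \qquad C_{\roof}\,\widetilde{H}^{-\alpha} \leq \frac{\varepsilon}{2},
\]
where $\alpha = (p-1)/(p+1)$ and $C_{\roof}$ are the constants from \Cref{lem:towers3}. I would then invoke Rokhlin's theorem with height $\widetilde{H}$ and tolerance $\varepsilon/2$ to obtain a tower $\cT$ of height $\widetilde{H}$ with $\mu(\susp \setminus \cT) \leq \varepsilon/2$. Applying \Cref{lem:towers3} to $\cT$ yields a sub-tower $\cR := \cT' \subseteq \cT$ with base $B \subseteq X$ and height $\height \geq \widetilde{H}/2 \geq H$, and with $\mu(\cT \setminus \cR) \leq C_{\roof}\,\widetilde{H}^{-\alpha} \leq \varepsilon/2$.

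Combining the two estimates through the trivial inclusion $\susp \setminus \cR \subseteq (\susp \setminus \cT) \cup (\cT \setminus \cR)$ gives $\mu(\susp \setminus \cR) \leq \varepsilon$, which is exactly the claim. There is no essential obstacle here: the only bookkeeping is to verify that $\widetilde{H}$ can be chosen to satisfy both the height requirement $\widetilde{H}/2 \geq H$ and the measure requirement $C_{\roof}\widetilde{H}^{-\alpha} \leq \varepsilon/2$ simultaneously, which is possible because $\alpha > 0$ so the latter holds for all sufficiently large $\widetilde{H}$.
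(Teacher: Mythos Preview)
Your proof is correct and essentially identical to the paper's: both apply Rokhlin's theorem with a sufficiently large auxiliary height and tolerance $\varepsilon/2$, then invoke \Cref{lem:towers3} to slide the base into $X$ at the cost of another $\varepsilon/2$ in measure. The only cosmetic difference is that you explicitly record the condition $\widetilde{H} \geq 16$ required by \Cref{lem:towers3}, which the paper leaves implicit.
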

\begin{proof}
Let $\varepsilon$ and $H$ be given.
Let $\cR_0$ be a Rokhlin tower of height $\height$ and base $B_0$ such that $\mu(X \setminus \cR_0)\leq \varepsilon /2$, where $\height \geq H$ satisfies 
\[
\height^{-\alpha} < \frac{\varepsilon}{2 \zeta(p) \| \roof\|_p^p + 4}, \qquad \text{and} \qquad \height > 2H.
\]
By \Cref{lem:towers3}, there exists a Rokhlin tower $\cR \subseteq \cR_0$ with base $B \subseteq X$ and height $\geq \height/2 >H$ so that $\mu(\cR_0 \setminus \cR) \leq \varepsilon/2$, which proves the corollary.
\end{proof}

%%%%%%%%%%

\section{Locally uniformly shearing flows}\label{sec:LUSflows}
In this section, we introduce the central notion of locally uniformly shearing (LUS) flows, which is a generalization of the GUS property introduced in \Cref{sec:GUSflows} (see \Cref{rmk:comments_LUS} below). Using the same notation, we give the following definition.

\begin{definition}[Locally uniformly shearing flows] \label{def:LUS_flows}
	The flow $\flowR$ is called {\em locally uniformly shearing} (LUS for short) if there exist $a \in (0,1/2]$, $d \geq 1$, an increasing sequence $(M_s)_{s \in \N}$, and, for every $\delta >0$ and $k \in \Z_{\geq 0}$, there exists $M_{k,\delta} > \delta^{-2}$ such that for every $M_s, \height \geq M_{k,\delta}$ there exist $t_{\height, M_s}\geq \max\{M_s, \height\}$ and a $\delta$-almost partition $\{\cR_{\varrho}\}_{\varrho =1}^d$ of $X$ into towers $\cR_{\varrho}$ of heights $\height_{\varrho} \in [\height^{1/2},\height^{3/2}]$ such that, for every $(k+1)$-tuple of times $\tb$ with $\Deltat > t_{\height, M_s}$, the following conditions hold:
	\begin{enumerate}
		\item[(LUS1)] there exist $\delta$-almost measure preserving maps $\mapg^{\tb}_m \colon \cup_{\varrho=1}^d \cR_{\varrho} \to \cup_{\varrho=1}^d \cR_{\varrho}$, for $m=0,\dots, K_{\delta} = \lfloor \delta^{-1}\rfloor$, with the following properties
		\begin{enumerate}
			\item for all $x$ % \in \cup_{\varrho=1}^d \cR_{\varrho}$
			in the proper domain of $\mapg^{\tb}_m$, we have $\dist(\mapg^{\tb}_m(x),x)\leq \delta$,
			\item for any tower $\cT$ of height $H \geq \height^{1/2}$, there exists a tower $\overline{\cT} \subseteq (\mapg^{\tb}_m)^{-1}(\cT)$ of height $\height^{1/4}$ with
			\[
			\mu \left( (\mapg^{\tb}_m)^{-1}(\cT) \setminus \overline{\cT} \right) \leq d \,\delta;
			\]
		\end{enumerate} 
		\item[(LUS2)] for every $\varrho= 1,\dots, d$, there exists a $\delta$-almost partition $\{\Tow_{\varrho, j}\}_{j=1}^k$ of $\cR_{\varrho}$ into towers $\Tow_{\varrho, j} \subseteq \cR_{\varrho}$ of the same height\footnote{In particular, this means that the basis of each towers $\Tow_{\varrho, j}$ is a subset of the basis of $\cR_{\varrho}$.} $\height_{\varrho}$, so that, for every $m=0,\dots,K_{\delta}$, we have 
		\[
	\max_{j=1, \dots, k} \sup_{x\in \Tow_{\varrho,j}} \dist \left(\flow_{t_j}(\mapg^{\tb}_m x), \flow_{t_j+m \cdot M_s^a }(x) \right)\leq \delta;
		\]
		\item[(LUS3)] for every $\varrho=1, \dots, d$, there exists a $\delta$-almost partition of $\cR_{\varrho}$ into (possibly empty) towers $R_{\varrho}(i,m,\ell) \subseteq \cR_{\varrho}$, with $\ell = 0, \dots, \delta^{-1}M-1$, of the same height\footnote{As in (LUS2), this implies that the basis of each tower $R_{\varrho}(i,m,\ell)$ is a subset of the basis of $\cR_{\varrho}$.} $\height_{\varrho}$ such that, for any $i=1, \dots, k$, any $m=0,\dots,K_{\varepsilon}$, and any $\ell$ as above, we have
		\[
		\sup_{x\in R_\varrho(i,m,\ell)} \dist \left(\flow_{t_i}(\mapg^{\tb}_m x), \flow_{t_i+ \ell \cdot \delta}(x) \right)\leq \delta.
		\]
		\end{enumerate}
\end{definition}

\begin{remark}\label{rmk:comments_LUS}
\Cref{def:LUS_flows} is a generalization of the notion of GUS flows in \Cref{sec:GUSflows}, in which we require weaker assumptions on the shear of the flow on the points $x$ and $\mapg^{\tb}_m(x)$. To explain what we mean, let us assume for simplicity that $d=1$, so that there is only one Rokhlin tower $\cR$ of height $\height$. Since we are working with towers, condition (LUS1) asks for the almost measure preserving maps $\mapg^{\tb}_m$ to \lq\lq almost preserve towers\rq\rq, which is a natural requirement. Condition (LUS2) implies that, for any $j = 1,\dots, k$, we can find a subtower $\Tow_j$ of the Rokhlin tower $\cR$ with the same height $\height$ on which we see a uniform shear. If we had only one nonempty tower $\Tow_k$ corresponding to $j=k$, we would recover (GUS1) for the time $t_k$ (note that on $\Tow_{j}$ we only control the shear at time $t_j$). In general, we need some (weak) control on the complement of the subtower $\Tow_j$, as well as for other times $t_i$. This is ensured by condition (LUS3), up to taking finer subtowers: we can partition $\cR$ into finer subtowers, still with height $\height$, on which the shear for any fixed time $t_i$ (if there is any) has a uniform upper bound and is almost independent on the point. We stress that, for fixed $i$ and $m$, some of the towers $R(i,m,\ell)$ might be empty, depending on the relative sizes of the times $t_i$: if, for example, $t_1$ is much smaller than $t_k$, it could happen that the almost partition of $\cR$ consists of a single tower $R(1,m,0)$ on which there is no shear at all (since $\ell =0$).
\end{remark}

The remainder of this section contains the main abstract result of the paper: we show how to upgrade mixing for LUS flows to mixing of all orders. 
We begin by defining the properties $\Par(k)$, which will be a useful intermediate step in proving the main theorem.

\subsection{Property $\Par(k)$}

We recall that, for the (only) 1-tuple of times $\tb = (0)$, we set $\Deltat = \infty$.

\begin{definition}\label{def:property_park}
	Let $k\geq 1$. A flow $\flowR$ on $(X,\mu)$ has {\em property $\Par(k)$} if the following holds. Let $\phi_0,\ldots, \phi_{k-1}\in \Lip(X)$, with $\|\phi_i\|_{\Lip}<\infty$ and with $\mu(\phi_j)=0$ for at least one $j \in \{0,\dots, k-1\}$, be fixed. For every $\varepsilon >0$, there exist $t_{\varepsilon}, H_{\varepsilon} \geq 1$ so that for all $k$-tuples of times $\tb$ with $\Deltat \geq t_{\varepsilon}$ and all towers $\cT \subseteq X$ with height $\height \geq H_{\varepsilon}$, we have
	\[
	\left\lvert \int_{\cT}\prod_{i=0}^{k-1} \phi_i\circ \flow_{t_i} \diff\mu\right\rvert \leq \varepsilon.
	\]
\end{definition}

Property $\Par(k)$ directly implies mixing of order $k$, as the next lemma shows.

\begin{lemma}\label{lem:Park_implies_k_mixing} 
	If $\flowR$ is mixing of order $k-1$ and satisfies property $\Par(k)$, then it is mixing of order $k$. 
\end{lemma}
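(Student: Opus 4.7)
The plan is to use the standard reduction: since $X$ is a metric space carrying a Radon probability measure, indicator functions of measurable sets can be approximated from above and below in $L^1(\mu)$ by Lipschitz functions, so in order to prove $k$-mixing it suffices to show, for every $\phi_0,\dots,\phi_{k-1} \in \Lip(X)$ with $\|\phi_i\|_{\Lip}<\infty$, that
\[
\int_X \prod_{i=0}^{k-1} \phi_i \circ \flow_{t_i} \diff\mu \longrightarrow \prod_{i=0}^{k-1} \mu(\phi_i) \quad \text{as } \Deltat \to \infty.
\]
I would then single out $\phi_0$ and write $\phi_0 = \widetilde{\phi}_0 + \mu(\phi_0)$, where $\widetilde{\phi}_0 := \phi_0 - \mu(\phi_0)$ has zero mean and the same finite Lipschitz norm as $\phi_0$.

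The contribution of the constant part is $\mu(\phi_0) \int_X \prod_{i=1}^{k-1} \phi_i \circ \flow_{t_i}\diff\mu$. Using the $\flow_{-t_1}$-invariance of $\mu$, this equals $\mu(\phi_0) \int_X \phi_1 \cdot \prod_{i=2}^{k-1} \phi_i \circ \flow_{t_i - t_1} \diff\mu$; the shifted $(k-1)$-tuple of times $(0, t_2-t_1, \dots, t_{k-1}-t_1)$ still has consecutive gaps at least $\Deltat$, so the $(k-1)$-mixing hypothesis forces this term to converge to $\mu(\phi_0) \prod_{i=1}^{k-1} \mu(\phi_i) = \prod_{i=0}^{k-1}\mu(\phi_i)$, which is precisely the desired limit.

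It then remains to show that the zero-mean contribution
\[
I := \int_X \widetilde{\phi}_0 \cdot \prod_{i=1}^{k-1} \phi_i \circ \flow_{t_i} \diff\mu
\]
tends to $0$ as $\Deltat \to \infty$. Fix $\varepsilon > 0$ and let $t_\varepsilon, H_\varepsilon \geq 1$ be the constants supplied by $\Par(k)$ applied to the Lipschitz tuple $(\widetilde{\phi}_0,\phi_1,\dots,\phi_{k-1})$, with the zero-mean factor $\widetilde{\phi}_0$ sitting in position $j=0$. By the Rokhlin theorem (applicable since $\flowR$, being mixing, is ergodic and aperiodic), there exists a tower $\cT \subseteq X$ of height at least $H_\varepsilon$ with $\mu(X \setminus \cT) < \varepsilon$. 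Splitting $I$ over $\cT$ and $X\setminus\cT$, the first piece is bounded in absolute value by $\varepsilon$ as soon as $\Deltat \geq t_\varepsilon$ by property $\Par(k)$, while the second is bounded by $\varepsilon \cdot \|\widetilde{\phi}_0\|_\infty \prod_{i=1}^{k-1}\|\phi_i\|_\infty$. Since $\varepsilon$ was arbitrary, $I \to 0$, completing the proof.

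No step poses a genuine obstacle; the content of the lemma is the recognition that $\Par(k)$ is tailored precisely to control $k$-point correlations \emph{locally on one tall tower}, and that Rokhlin's theorem lets such a tower exhaust $X$ up to arbitrarily small measure. Combined with the zero-mean extraction and the $(k-1)$-mixing hypothesis, these ingredients assemble into mixing of order $k$.
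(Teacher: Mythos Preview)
Your proof is correct and follows essentially the same approach as the paper: split off the mean of one function, handle the constant part via the $(k-1)$-mixing hypothesis, and control the zero-mean part by restricting to a tall Rokhlin tower where $\Par(k)$ applies. The only cosmetic differences are that the paper subtracts the mean from $\phi_{k-1}$ rather than $\phi_0$, and works with compactly supported Lipschitz functions.
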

\begin{proof}
	It is enough to prove mixing of order $k$ for Lipschitz functions with compact support, so let $\phi_0,\ldots, \phi_{k-1}\in \Lip_c(X)$ be fixed, and let $\varepsilon \in (0,1/2)$. 
	By writing $\phi_{k-1} = \phi_{k-1}^{\perp} + \mu(\phi_{k-1})$, where $\phi_{k-1}^{\perp} = \phi_{k-1} - \mu(\phi_{k-1})$ satisfies $\mu(\phi_{k-1}^{\perp}) = 0$ and $\|\phi_{k-1}^{\perp}\|_{\Lip}\leq 2 \|\phi_{k-1}\|_{\Lip}$, and by using the $(k-1)$-mixing assumption, we can assume that $\mu(\phi_{k-1}) = 0$.
	
	Let $t_{\varepsilon}, H_{\varepsilon} $ be given by property $\Par(k)$. Let $\cR \subseteq X$ be a Rokhlin tower with $\mu(\cR) \geq 1- (\prod_{i=0}^{k-1} \| \phi_i\|_{\infty})^{-1} \varepsilon$ and height $\geq H_{\varepsilon}$. Then, for any $k$-tuple of times $\tb$ with $\Deltat \geq t_{\varepsilon}$, we have 
	\[
	\left\lvert \int_{X}\prod_{i=0}^{k-1} \phi_i\circ \flow_{t_i} \diff\mu\right\rvert \leq \left(\prod_{i=0}^{k-1} \| \phi_i\|_{\infty} \right) \mu(X \setminus \cR) + \left\lvert \int_{\cR}\prod_{i=0}^{k-1} \phi_i\circ \flow_{t_i} \diff\mu\right\rvert \leq 2 \varepsilon,
	\]
	where we used property $\Par(k)$ applied to $\cR$.
\end{proof}

We will prove inductively that, under certain assumptions, $\Par(k)$ implies $\Par(k+1)$. The base of the induction is given by the following proposition.

\begin{proposition}\label{prop:par1} 
	If $\flowR$ is ergodic, then it satisfies $\Par(1)$.
\end{proposition}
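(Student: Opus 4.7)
The plan is to prove property $\Par(1)$ directly from Birkhoff's ergodic theorem, combined with the product structure of $\mu$ on a tower. Since $k=1$, the only 1-tuple of times $\tb = (0)$ has $\Deltat = \infty$, so the condition $\Deltat \geq t_{\varepsilon}$ is vacuous (take e.g. $t_{\varepsilon} = 1$), and the task reduces to showing that for any $\phi_0 \in \Lip(X)$ with $\mu(\phi_0) = 0$ and any $\varepsilon > 0$, there exists $H_{\varepsilon}$ such that $|\int_{\cT} \phi_0 \diff \mu| \leq \varepsilon$ for every tower $\cT$ of height $\height \geq H_{\varepsilon}$. We may assume $\|\phi_0\|_{\infty} > 0$, and in fact that $\varepsilon \leq 8\|\phi_0\|_{\infty}$ (otherwise the bound is trivial since $|\int_{\cT} \phi_0 \diff \mu| \leq \|\phi_0\|_{\infty}$).

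Using the product structure of $\mu|_{\cT}$, we rewrite
\[
\int_{\cT} \phi_0 \diff \mu = \int_B g_\height(y) \diff \nu(y), \qquad g_\height(y) := \int_0^\height \phi_0 \circ \flow_t(y) \diff t,
\]
where $B$ and $\nu$ are the base and the base measure of $\cT$. By Birkhoff's ergodic theorem applied to the ergodic flow $\flowR$ and the bounded function $\phi_0$ with $\mu(\phi_0) = 0$, combined with dominated convergence, $g_\height/\height \to 0$ in $L^1(X,\mu)$ as $\height \to \infty$. Hence, for $\alpha := \varepsilon/2$, Chebyshev's Inequality gives that $\mu(E_\height) \to 0$, where $E_\height := \{x \in X : |g_\height(x)| > \alpha \height / 2\}$.

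The delicate step, and the main obstacle, is to upgrade this $\mu$-measure bound on $X$ to a $\nu$-measure bound on $B_\alpha := \{y \in B : |g_\height(y)| > \alpha \height\} \subseteq B$. This is nontrivial because $B$ has no $\mu$-mass, and its total $\nu$-mass $\nu(B) = \mu(\cT)/\height$ shrinks as $\height \to \infty$. The key observation is that $g_\height$ varies slowly along orbits: splitting the relevant integrals, one checks the elementary bound $|g_\height(\flow_t y) - g_\height(y)| \leq 2 t \|\phi_0\|_{\infty}$ for $y \in B$ and $t \in [0,\height]$. Consequently, if $y \in B_\alpha$, then $\flow_t(y) \in E_\height$ for every $t$ in the interval $I := [0, \alpha \height / (4\|\phi_0\|_{\infty})] \subseteq [0,\height]$. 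Since the map $(y,t) \mapsto \flow_t(y)$ is injective on $B \times [0,\height]$, the product structure gives
\[
\mu(E_\height) \geq |I| \cdot \nu(B_\alpha), \qquad \text{hence} \qquad \nu(B_\alpha) \leq \frac{4 \|\phi_0\|_{\infty}}{\alpha \height} \, \mu(E_\height).
\]

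Finally, splitting $|\int_{\cT} \phi_0 \diff \mu| \leq \int_{B \setminus B_\alpha} |g_\height| \diff \nu + \int_{B_\alpha} |g_\height| \diff \nu$, the first summand is bounded by $\alpha \height \cdot \nu(B) = \alpha \mu(\cT) \leq \varepsilon/2$, and the second by $\height \|\phi_0\|_{\infty} \nu(B_\alpha) \leq 4 \|\phi_0\|_{\infty}^2 \mu(E_\height)/\alpha$. Choosing $H_{\varepsilon}$ large enough that $\mu(E_\height) \leq \varepsilon^2/(16\|\phi_0\|_{\infty}^2)$ for all $\height \geq H_{\varepsilon}$ makes the second summand $\leq \varepsilon/2$, and the proof is complete.
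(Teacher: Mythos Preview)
Your proof is correct and takes a genuinely different route from the paper's. The paper proceeds via Egorov's theorem: it finds a set $E$ of small $\mu$-measure outside which both forward and backward Birkhoff averages of $\phi_0$ are uniformly $\varepsilon$-small for all times $t\geq T_0$, then for each base point $x\in B$ whose orbit segment is not entirely trapped in $E$, picks a \lq\lq good\rq\rq\ point $y_x=\flow_{r_x}(x)\notin E$ and bounds the fiber integral $\int_0^{\height}\phi_0\circ\flow_t(x)\diff t$ by a three-case analysis according to whether $r_x$ is near $0$, in the bulk, or near $\height$. Your argument instead uses only the forward $L^1$ convergence of Birkhoff averages together with the elementary Lipschitz bound $|g_{\height}(\flow_t y)-g_{\height}(y)|\leq 2t\|\phi_0\|_\infty$; this lets you convert the $\mu$-smallness of the exceptional set $E_{\height}\subset X$ directly into a $\nu$-smallness bound on $B_\alpha\subset B$ via the tower's product structure, bypassing both Egorov and the case analysis. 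Your approach is shorter and avoids appealing to ergodicity of the inverse flow, while the paper's Egorov-based argument has the virtue of making the pointwise mechanism more explicit and is the template the authors reuse in the quantitative version (\Cref{prop:QPar1}). One minor remark: like the paper, you are tacitly invoking the product structure $\mu|_{\cT}=\nu\times\Leb$ on an arbitrary tower; strictly speaking this is only guaranteed up to a set of arbitrarily small measure by \Cref{lem:towers1}, but both proofs handle this the same (informal) way.
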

\begin{proof} 
	Let $\phi = \phi_{0}\in \Lip(X)$ with $\mu(\phi) =0$ and $\|\phi\|_{\Lip} < \infty$. Without loss of generality, we assume that $\|\phi\|_{\infty} \leq 1$. 
	
	Fix $\varepsilon >0$. Since both the flow $\flowR$ and its inverse are ergodic, by Egorov's Theorem there exists a measurable set $E \subset X$ with $\mu(E)\leq \varepsilon/4$ and there exists $T_0 \geq 1$ so that 
	\[
	\sup_{x \in X \setminus E} \left\lvert \frac{1}{t} \int_{0}^{t} \phi \circ \flow_{\pm r}(x) \diff r \right\rvert \leq \frac{\varepsilon}{4} \qquad \text{for all $t\geq T_0$.}
	\]
	We define $H_{\varepsilon} = 4T_0/ \varepsilon$.
	Let $\cT$ be a tower of height $\height \geq H_{\varepsilon}$ and base $B$; we need to show that 
	\[
	\left\lvert \int_{\cT} \phi \diff\mu\right\rvert = \left\lvert \int_{B} \int_0^{\height} \phi \circ \flow_t (x) \diff t \diff\nu(x)\right\rvert\leq \varepsilon.
	\]
	Let $\nu$ be the measure on the base $B$ of $\cT$ (see \Cref{lem:towers1}) and consider the set $A = \{x \in B : \flow_r(x) \in E \text{ for all } r\in [0,\height]\}$. Since 
	\[
	\nu(A) \cdot \height \leq \mu(E) \leq \frac{\varepsilon}{4},
	\]
	it follows that $\nu(A) \leq \varepsilon / (4\height)$. For all $x \notin A$, there exists $r_x \in [0,\height]$ so that $y_x:= \flow_{r_x}(x) \notin E$. We claim that 
	\begin{equation}\label{eq:to_show_par1}
		\sup_{x \in B \setminus A}\left\lvert \int_0^{\height} \phi \circ \flow_t (x) \diff t \right\rvert\leq \frac{\varepsilon\cdot \height}{2}.
	\end{equation}
	Assuming the claim \eqref{eq:to_show_par1}, since $\nu(B) \leq \height^{-1}$, we conclude that  
	\[
	\left\lvert \int_{\cT} \phi \diff\mu\right\rvert \leq \nu(A) \cdot \height + \nu(B \setminus A) \sup_{x \in B \setminus A} \left\lvert  \int_0^{\height} \phi \circ \flow_t (x) \diff t \right\rvert\leq \varepsilon,
	\]
	which proves the proposition.
	
	Let us now show \eqref{eq:to_show_par1}. Fix $x\in B \setminus A$, and assume first that $r_x \leq T_0$; then
	\[
	\left\lvert \int_0^{\height} \phi \circ \flow_t (x) \diff t \right\rvert\leq r_x + \left\lvert \int_0^{\height - r_x} \phi \circ \flow_t (y_x) \diff t \right\rvert \leq T_0 + (\height - T_0) \frac{\varepsilon}{4} \leq \frac{\varepsilon \cdot \height }{2},
	\]
	where we used the fact that $\height - r_x \geq H_{\varepsilon} - T_0 > T_0$. If $T_0 < r_x \leq \height - T_0$, then
	\[
	\left\lvert \int_0^{\height} \phi \circ \flow_t (x) \diff t \right\rvert\leq \left\lvert \int_0^{r_x} \phi \circ \flow_{-t} (y_x) \diff t \right\rvert + \left\lvert \int_0^{\height - r_x} \phi \circ \flow_t (y_x) \diff t \right\rvert \leq \frac{\varepsilon}{4} (r_x + \height - r_x)\leq \frac{\varepsilon \cdot \height }{4},
	\]
	since both $r_x$ and $\height - r_x$ are larger than $T_0$. The final case $\height - T_0 < r_x \leq \height$ is treated similarly:
	\[
	\left\lvert \int_0^{\height} \phi \circ \flow_t (x) \diff t \right\rvert\leq \left\lvert \int_0^{r_x} \phi \circ \flow_{-t} (y_x) \diff t \right\rvert + \height - r_x \leq \frac{\varepsilon}{4} r_x + T_0 \leq \frac{\varepsilon \cdot \height }{2}.
	\]
	This proves our claim \eqref{eq:to_show_par1} and hence the proposition.
\end{proof}

We will also need the following strengthening of property $\Par(k)$. The proof of \Cref{lem:uniform_park} below is similar to the proof of \Cref{lem:uniform_k_mixing}, we provide it for completeness.

\begin{lemma}[Uniform $\Par(k)$]\label{lem:uniform_park}
	Assume that $\flowR$ satisfies $\Par(k)$. 
	Let $\phi_0,\ldots, \phi_{k-1}\in \Lip(X)$, with $\|\phi_i\|_{\Lip} < \infty$, and assume that $\mu(\phi_j)=0$ for at least one $j \in \{0,\dots, k-1\}$. For every $\varepsilon>0$, there exists $R_{-}\geq 1$ and, for every $R_{+} \geq R_{-}$ and $\delta>0$, there exist $t_{\delta, R_{+}}, H_{\delta, R_{+}} \geq 1$ so that for all $k$-tuples of times $\tb$ with $\Deltat \geq t_{\delta, R_{+}}$ and all towers $\cT \subseteq X$ with height $\height \geq H_{\delta, R_{+}}$, we have
	\[
	\sup_{\substack{0\leq |r_i|\leq R_{+} \\ R_{-} \leq |r_j| \leq R_{+}}}\left\lvert \int_{\cT} \prod_{i=0}^{k-1} (\phi_i \cdot \phi_i \circ \flow_{r_i}) \circ \flow_{t_i} \diff\mu\right\rvert \leq \delta + \varepsilon \mu(\cT).
	\]
\end{lemma}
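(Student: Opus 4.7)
The argument mirrors that of \Cref{lem:uniform_k_mixing}, replacing $k$-mixing on $X$ by $\Par(k)$ on towers. Without loss of generality take $j = k-1$, so that $\mu(\phi_{k-1}) = 0$, and set $c(r) := \mu(\phi_{k-1} \cdot \phi_{k-1} \circ \flow_r)$, so that the centred function $(\phi_{k-1} \cdot \phi_{k-1} \circ \flow_r)^{\perp} := \phi_{k-1} \cdot \phi_{k-1} \circ \flow_r - c(r)$ has zero mean. Decomposing the last factor of the integrand as $(\phi_{k-1} \cdot \phi_{k-1} \circ \flow_{r_{k-1}})^{\perp} + c(r_{k-1})$ splits the integral into a \emph{main term} in which the last factor is $(\phi_{k-1} \cdot \phi_{k-1} \circ \flow_{r_{k-1}})^{\perp} \circ \flow_{t_{k-1}}$, and a \emph{remainder term} equal to $c(r_{k-1}) \int_{\cT} \prod_{i<k-1}(\phi_i \cdot \phi_i \circ \flow_{r_i}) \circ \flow_{t_i}\diff \mu$, bounded in absolute value by $|c(r_{k-1})| \cdot \bigl(\prod_{i<k-1}\|\phi_i\|_{\infty}^2\bigr)\, \mu(\cT)$.

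To bound the remainder, I would first derive $|c(r)| \to 0$ as $|r|\to\infty$ from $\Par(k)$ itself: applying $\Par(k)$ to the $k$-tuple $(\phi_{k-1}, \phi_{k-1}, \mathbf{1}, \dots, \mathbf{1})$ at times $(0, r, 2t, \dots, (k-1)t)$ with $t$ and $r$ sufficiently large gives $\bigl|\int_{\cR}\phi_{k-1}\cdot\phi_{k-1}\circ\flow_r\diff\mu\bigr|\leq \varepsilon'$ on any tower $\cR$ of sufficiently large height; taking $\cR$ to be a Rokhlin tower with $\mu(X\setminus \cR)\leq \eta$ via \Cref{cor:towers3} and controlling the complement by $\|\phi_{k-1}\|_{\infty}^{2}\eta$ then yields the desired decay of $|c(r)|$. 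I can thus fix $R_{-}\geq 1$ such that $|c(r)|\leq \varepsilon\bigl(2\prod_{i<k-1}\|\phi_i\|_{\infty}^2\bigr)^{-1}$ for all $|r|\geq R_{-}$, forcing the remainder to be at most $(\varepsilon/2)\mu(\cT)$.

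For the main term I would apply $\Par(k)$ with parameter $\delta/2$ to the $k$-tuple $\bigl(\phi_0\cdot\phi_0\circ\flow_{r_0}, \dots, \phi_{k-2}\cdot\phi_{k-2}\circ\flow_{r_{k-2}}, (\phi_{k-1}\cdot\phi_{k-1}\circ\flow_{r_{k-1}})^{\perp}\bigr)$, whose last entry has zero mean. Since the thresholds produced by $\Par(k)$ depend on the functions and hence on the continuous parameters $r_i$, uniformisation requires a discretisation analogous to the one in \Cref{lem:uniform_k_mixing}: pick $N\in\N$ large enough (using continuity of $r\mapsto \flow_r$ on $[-R_+, R_+]$ and the Lipschitz norms $\|\phi_i\|_{\Lip}$) so that replacing each $r_i$ with the nearest $m_i/N \in N^{-1}\Z$ perturbs the whole product in $L^{\infty}$ by at most $\varepsilon/2$; then apply $\Par(k)$ to the finite collection of $k$-tuples indexed by $(m_0,\dots,m_{k-1})\in (N^{-1}\Z\cap[-R_+,R_+])^k$ and set $t_{\delta, R_+}, H_{\delta, R_+}$ to be the maxima of the resulting thresholds. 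Summing the three contributions---$\delta/2$ from $\Par(k)$ on the discretised tuple, $(\varepsilon/2)\mu(\cT)$ from the discretisation error, and $(\varepsilon/2)\mu(\cT)$ from the remainder---yields the claimed bound $\delta + \varepsilon\mu(\cT)$.

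The main obstacle I anticipate is the two-point mixing bootstrap of the second paragraph: it is crucial that $\Par(k)$, applied to a tuple containing $k-2$ constant factors, still yields an estimate strong enough to conclude $|c(r)|\to 0$ after Rokhlin covering. Once this step is secured, the rest of the argument is a faithful tower analogue of \Cref{lem:uniform_k_mixing}, the only routine check being the uniform control of the Lipschitz properties of $\phi_i\cdot\phi_i\circ\flow_{r_i}$ for $|r_i|\leq R_+$.
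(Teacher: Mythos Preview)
Your proposal is correct and follows the same scheme as the paper: centre the $(k{-}1)$-th factor, bound the remainder via the decay of $c(r)=\mu(\phi_{k-1}\cdot\phi_{k-1}\circ\flow_r)$, and handle the main term by applying $\Par(k)$ to a finite discretised family of centred $k$-tuples. Two minor differences are worth noting. First, the paper places the discretisation error in the $\delta$ budget (choosing $N$ in terms of $\delta$) rather than in $\varepsilon\mu(\cT)$ as you do; either allocation works. Second, and more interestingly, the paper simply asserts the existence of $R_-$ with $|c(r)|$ small for $|r|\geq R_-$, tacitly relying on the ambient mixing hypothesis of \Cref{prop:park_implies_park+1} where the lemma is actually used, whereas your padding-with-constants argument extracts this directly from $\Par(k)$ and a Rokhlin covering---this makes the statement self-contained under the hypothesis $\Par(k)$ alone. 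For that step you should cite the general Rokhlin theorem stated in \Cref{sec:towers} rather than \Cref{cor:towers3}, which is specific to special flows with $L^p$ roof.
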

\begin{proof}
Without loss of generality, let us assume $j=k-1$ (the proof is identical for other choices of $j$). 
Let $\varepsilon >0$ be fixed, and let $R_{-}>0$ be so that $|\mu(\phi_{k-1} \cdot \phi_{k-1} \circ \flow_r)|\leq \varepsilon/(\prod_{i=0}^{k-2} \|\phi_i \|_{\infty})$ for all $|r|\geq R_{-}$. 
Fix now $R_{+} \geq R_{-}$ and $\delta>0$, and let $N \in \N$ be minimal so that  $N \geq \delta^{-1} \, k ( \prod_{i=0}^{k-1} \|\phi_i\|^2_{\Lip})$.
As in \Cref{lem:uniform_k_mixing}, when writing $\psi^{\perp}$, we mean $\psi - \mu(\psi)$.
We use property $\Par(k)$ for $\delta$ and for the finite collection of all possible $k$-tuples of functions 
\[
\{\phi_0 \cdot \phi_0 \circ \flow_{r_0}, \dots, \phi_{k-2} \cdot \phi_{k-2} \circ \flow_{r_{k-2}},(\phi_{k-1} \cdot \phi_{k-1} \circ \flow_{r_{k-1}})^{\perp}\},
\] 
for all choices of $r_i \in N^{-1}\Z$, with $|r_i| \leq R_{+}$: there exist $t_{\delta, R_{+}}, H_{\delta, R_{+}} \geq 1$ (given by the maximum of the respective quantity over all $k$-tuples of functions as above) so that for all $k$-tuples of times $\tb$ with $\Deltat \geq t_{\delta, R_{+}}$, for all towers $\cT$ with height $\geq H_{\delta, R_{+}}$, and for all $|r_i|\leq R_{+}$, $r_i \in N^{-1}\Z$, with $r_{k-1} \geq R_{-}$, we have
\begin{multline}\label{eq:unif_Park1}
			\left\lvert \int_{\cT} \prod_{i=0}^{k-1} (\phi_i \cdot \phi_i \circ \flow_{r_i}) \circ \flow_{t_i} \diff\mu\right\rvert \leq \left\lvert \int_{\cT} \left(\prod_{i=0}^{k-2} (\phi_i \cdot \phi_i \circ \flow_{r_i}) \circ \flow_{t_i}\right) \cdot (\phi_{k-1} \cdot \phi_{k-1} \circ \flow_{r_{k-1}})^{\perp}\circ \flow_{t_{k-1}} \diff\mu\right\rvert \\
		+ \left(\prod_{i=0}^{k-2} \|\phi_i \|_{\infty} \right)|\mu(\phi_{k-1} \cdot \phi_{k-1} \circ \flow_{r_{k-1}})| \cdot \mu(\cT) 
		\leq  \delta + \varepsilon \mu(\cT).
\end{multline}
Let now $|r_i| \leq R_{+}$ be arbitrary, with $r_{k-1} \geq R_{-}$,  and let $m_i/N\in N^{-1}\Z$ so that $|r_i-m_i/N|\leq N^{-1}$. The Lipschitz property implies $\| \phi_i \cdot \phi_i \circ \flow_{r_i} - \phi_i \cdot \phi_i \circ \flow_{m_i/N}\|_{\infty} \leq \|\phi_i\|^2_{\Lip} N^{-1}$; hence, up to an error bounded by
\[
\sum_{i=0}^{k-1} \left(\prod_{j\neq i} \|\phi_j\|^2_{\infty} \right) \|\phi_i\|^2_{\Lip} N^{-1} \leq \delta,
\]
the bound \eqref{eq:unif_Park1} holds for arbitrary $|r_i| \leq R_{+}$, with $r_{k-1} \geq R_{-}$.
\end{proof}

The following is the key result on property $\Par(k)$.

\begin{proposition}\label{prop:park_implies_park+1} 
	Let $\flowR$ be a mixing LUS flow. Then, for every $k\geq 1$, property $\Par(k)$ implies property $\Par(k+1)$. 
\end{proposition}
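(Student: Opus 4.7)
The plan is to adapt the shearing-and-averaging argument from the proof of Theorem 3.4 (multiple mixing for GUS flows) to the tower-restricted setting required by $\Par(k+1)$, with the locally uniform structure of LUS playing the role of global uniform shearing. I proceed by induction on $k$; the base case $\Par(1)$ is Proposition 5.3. Given Lipschitz functions $\phi_0, \ldots, \phi_k$ with $\mu(\phi_j) = 0$ for some $j$, I first reduce to the case $\mu(\phi_i) = 0$ for every $i$ by writing $\phi_i = (\phi_i - \mu(\phi_i)) + \mu(\phi_i)$ and absorbing each correction (a $k$-factor integral still containing a zero-mean function) into the inductive hypothesis $\Par(k)$. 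Having fixed $\varepsilon > 0$, I take $\delta$ to be a suitable small power of $\varepsilon$, apply the LUS definition to extract the parameters $a, d, (M_s), M_{k,\delta}$, choose $M = M_s$ large, and invoke the uniform $\Par(k)$ lemma (Lemma 5.4) applied to $\phi_1, \ldots, \phi_k$ with $R_{-} = M^a$ and $R_{+} = M$, yielding thresholds $t_{\delta,M}$ and $H_{\delta,M}$. The LUS scale $\height$ is then chosen so that $\height^{1/4} \geq H_{\delta,M}$, which sets $H_\varepsilon = \height^{1/2}$ and a suitable $t_\varepsilon$ in the statement of $\Par(k+1)$.

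With $\cT$ a tower of height at least $H_\varepsilon$ and $\Deltat \geq t_\varepsilon$, I introduce the shearing maps $\mapg^{\tb}_m$. Applying Lemma 3.3 to the bounded function $\one_\cT \prod_i \phi_i \circ \flow_{t_i}$ and exploiting (LUS1)(b), for each $m$ the integral $\int_\cT \prod_i \phi_i \circ \flow_{t_i} \diff\mu$ equals, up to an error of order $\delta$, the corresponding integral over a tower $\tilde{\cT}^{(m)} \subseteq (\mapg^{\tb}_m)^{-1}(\cT)$ of height at least $\height^{1/4}$, with the integrand now composed with $\mapg^{\tb}_m$. The Lipschitz property of $\phi_0$ combined with $\dist(\mapg^{\tb}_m x, x) \leq \delta$ allows $\phi_0$ to be pulled out of the composition. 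Averaging over $m = 1, \ldots, K = \lfloor K_\delta/2 \rfloor$, then applying the Cauchy--Schwarz and Van der Corput inequalities with $L = \lfloor \sqrt{K}\rfloor$, the problem reduces---up to an error $O(\varepsilon)$---to estimating
\[
\sup_{\substack{0 \leq m, l \leq K \\ |m-l|\geq 1}} \left\lvert \int \prod_{i=1}^k \bigl(\phi_i \circ \flow_{t_i} \circ \mapg^{\tb}_m\bigr)\bigl(\phi_i \circ \flow_{t_i} \circ \mapg^{\tb}_l\bigr) \diff\mu \right\rvert,
\]
where the integration domain is (a sub-tower of) $\tilde{\cT}^{(m)} \cap \tilde{\cT}^{(l)}$.

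To estimate this supremum for fixed $(m, l)$, I decompose the integration domain first via the almost partition $\{\cR_\varrho\}$ from LUS, then within each $\cR_\varrho$ via the (LUS2) partition $\{\Tow_{\varrho,j}\}_{j=1}^k$, and finally within each $\Tow_{\varrho,j}$ via the (LUS3) partitions $\{R_\varrho(i, m, \ell)\}$ and $\{R_\varrho(i, l, \ell)\}$ for every $i \neq j$. On a single cell of this refinement, (LUS2) yields $\flow_{t_j} \circ \mapg^{\tb}_m x \approx \flow_{t_j + m M^a}(x)$ (and similarly for $l$), while (LUS3) yields $\flow_{t_i} \circ \mapg^{\tb}_m x \approx \flow_{t_i + \ell_i^m \delta}(x)$ for $i \neq j$ with $|\ell_i^m|, |\ell_i^l| \leq \delta^{-1}M$, all within $\delta$. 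Using Lipschitz estimates and invariance of $\mu$, the cell's integrand becomes $\prod_{i=1}^k (\phi_i \cdot \phi_i \circ \flow_{r_i}) \circ \flow_{t_i'}$ with $|r_j| = |l-m|M^a \geq R_{-}$, $|r_i| \leq R_{+}$ for $i \neq j$, and a new $k$-tuple of times $\tb'$ satisfying $\Deltat' \geq \Deltat - 2M \geq t_{\delta, M}$. Since $\mu(\phi_j) = 0$ and each (shifted) cell remains a tower of height at least $\height^{1/4} \geq H_{\delta, M}$ (by (LUS1)(b) and Lemma 4.5), uniform $\Par(k)$ controls its contribution by $\delta + \varepsilon \mu(\text{cell})$; summing over all cells, whose measures total $\leq 1 + O(\delta)$, gives the desired bound $\ll \varepsilon$.

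The principal obstacle is bookkeeping the tower structure across the many operations performed---the change of variables by $\mapg^{\tb}_m$, the Cauchy--Schwarz and Van der Corput step, the three successive LUS-based refinements, and the invariance-based time-shifts---so as to guarantee at every stage that the integration domain contains a genuine tower of height at least $H_{\delta, M}$, which is required for uniform $\Par(k)$ to apply. A secondary subtlety lies in controlling the combinatorial blow-up of cells produced by the (LUS3) refinement (polynomial in $\delta^{-1} M$): this is harmless precisely because each cell's contribution scales with its measure, so the total error after summation remains $O(\varepsilon)$.
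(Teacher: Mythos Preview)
Your overall architecture matches the paper's: reduce to zero-mean functions via $\Par(k)$, introduce the shearing maps, average over $m$, apply Cauchy--Schwarz and Van der Corput, then use (LUS2) for the distinguished index $j$ and (LUS3) for the remaining indices to linearize the shear, and finally invoke uniform $\Par(k)$ on each cell. However, there is a genuine gap in your handling of the combinatorial blow-up.

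You write that the blow-up ``is harmless precisely because each cell's contribution scales with its measure.'' This is false. The uniform $\Par(k)$ bound on a cell is $\delta + \varepsilon\,\mu(\text{cell})$: only the second term scales with measure; the first is an \emph{additive} constant per cell. Since the (LUS3) refinement produces on the order of $(\delta_0^{-1}M)^{2(k-1)}$ cells (one factor of $\delta_0^{-1}M$ for each of the $2(k-1)$ indices $(i,m)$ and $(i,l)$ with $i\neq j$), summing the additive part gives $(\delta_0^{-1}M)^{2(k-1)}\delta$, which is \emph{not} small if $\delta$ is merely a fixed power of $\varepsilon$. The paper resolves this by running a two-scale argument: a coarse parameter $\delta_0=\varepsilon^4$ is used in the LUS definition (controlling the almost partitions, the almost measure-preserving property of $\mapg^{\tb}_m$, and the number $K_{\delta_0}$ of maps), while a much finer parameter $\delta$, taken of order $\varepsilon^{2}(\delta_0^{-1}M)^{-2(k-1)}$ and hence depending on $M$, is fed into Lemma~6.4 (uniform $\Par(k)$). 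This second $\delta$ in turn forces $H_\varepsilon$ to be taken enormous, of order $(H_{\delta,R_+}/\delta^{2k})^{8}$, so that after all the iterated tower intersections (via (LUS1)(b) and Lemma~5.5) the surviving height is still at least $H_{\delta,R_+}$; your choice $\height^{1/4}\geq H_{\delta,M}$ does not survive the $k$-fold intersection with $R_\varrho(\pb,\qb)$.

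A secondary point: $R_-$ in Lemma~6.4 is \emph{output} by the lemma from the parameter $\varepsilon$ and the functions, not freely chosen. You cannot set $R_-=M^a$; instead you must take the $R_-$ handed to you and then choose $M=M_s$ large enough that $M^a\geq R_-$, which is how the paper orders the choices.
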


The proof of \Cref{prop:park_implies_park+1} is contained in the next subsection. As an immediate corollary, we obtain the main abstract result of the paper.

\begin{theorem}\label{thm:main_abstract_result} Let $\flowR$ be a mixing LUS flow. Then $\flowR$ is mixing of all orders.
\end{theorem}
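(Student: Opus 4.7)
The theorem falls out immediately by combining the three prior results of this subsection, so the plan is to run two inductions in sequence: one to propagate property $\Par(k)$ to all $k$, and then one to bootstrap $k$-mixing from the base case $k=2$ given by the hypothesis.

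First, I would establish that $\Par(k)$ holds for every $k\geq 1$. Since a mixing flow is ergodic, \Cref{prop:par1} gives $\Par(1)$. The hypotheses of \Cref{prop:park_implies_park+1} -- namely that $\flowR$ be mixing and LUS -- are exactly the standing assumptions of the theorem, so a direct induction on $k$ yields $\Par(k)$ for all $k\geq 1$.

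Second, I would prove $k$-mixing for every $k\geq 2$ by induction on $k$. The base case $k=2$ is precisely the mixing hypothesis. For the inductive step, assuming the flow is $k$-mixing, I would combine this with $\Par(k+1)$ (obtained in the first step) and apply \Cref{lem:Park_implies_k_mixing} to deduce $(k+1)$-mixing. Iterating this gives mixing of all orders, proving the theorem.

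There is no genuine obstacle at the level of \Cref{thm:main_abstract_result} itself; all the hard work has already been outsourced to \Cref{prop:park_implies_park+1}, which is where the LUS structure, the almost measure-preserving shearing maps $\mapg^{\tb}_m$, and the tower partitions from \Cref{def:LUS_flows} must actually be put to use -- presumably via a Cauchy--Schwarz plus Van der Corput estimate executed tower-by-tower, mirroring the global argument of \Cref{thm:multiple_mixing_GG_flows} but now restricted to the towers $\cR_{\varrho}$ and their subtowers $\Tow_{\varrho,j}$ and $R_\varrho(i,m,\ell)$ on which shearing is approximately uniform.
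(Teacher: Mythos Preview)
Your proposal is correct and follows essentially the same route as the paper: obtain $\Par(1)$ from ergodicity via \Cref{prop:par1}, propagate to all $\Par(k)$ by \Cref{prop:park_implies_park+1}, and then bootstrap $k$-mixing inductively via \Cref{lem:Park_implies_k_mixing}. Your closing remarks about where the real work lies (in \Cref{prop:park_implies_park+1}) are also accurate.
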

\begin{proof}
	By \Cref{prop:par1}, the flow $\flowR$ satisfies $\Par(1)$; hence, \Cref{prop:park_implies_park+1} implies that $\flowR$ satisfies $\Par(k)$ for all $k \geq 2$ as well. Mixing of all orders follows from \Cref{lem:Park_implies_k_mixing} by induction.
\end{proof}

%%%
%%%
%%%

\subsection{Proof of \Cref{prop:park_implies_park+1}}\label{sec:Proof_induction_park}

This subsection is devoted to the proof of \Cref{prop:park_implies_park+1}: we assume that the LUS flow $\flowR$ is mixing and satisfies property $\Par(k)$; we show it satisfies property $\Par(k+1)$. 
The proof follows the same lines as the one of \Cref{thm:multiple_mixing_GG_flows}, with additional complications due to the weaker assumptions.

Let $a\in (0,1/2]$, $d\geq 1$, and $(M_s)_{s\in \N}$ be given by \Cref{def:LUS_flows}.
We fix $\phi_0,\ldots, \phi_{k}\in \Lip(X)$, with finite Lipschitz norms and with $\mu(\phi_j)=0$ for at least one $j \in \{0,\dots, k\}$, and $\varepsilon \in (0,1)$.
For all $j$'s for which $\mu(\phi_j)\neq 0$, we replace $\phi_j$ with $\phi_j^{\perp} + \mu(\phi_j)$; using property $\Par(k)$, we can then assume that $\mu(\phi_i)=0$ for all $i=0,\dots, k$.
In the following when writing $A \ll B$, we mean $A \leq C \cdot B$ for a constant $C\geq 0$ whose value depends only on $d$, on $k$, and on the Lipschitz norm of the functions $\phi_0, \dots, \phi_k$.

Let $\delta_0 = \varepsilon^{4}$ and 
let $M_{k, \delta_0} \geq \delta_0^{-2}$ be given by the LUS property in \Cref{def:LUS_flows}.
Let $R_{-}\geq 1$ be given by \Cref{lem:uniform_park} applied with $\delta_0$ and for the $k$-tuple of functions $\phi_1, \dots, \phi_k$ (notice the shift in the indexing); we let 
\[
R_{+} = M = M_s \geq \max\{M_{k, \delta}, R_{-}^{1/a}\}, \qquad \text{and} \qquad \delta = \varepsilon^2/(\delta_0^{-1}M)^{k-1}.
\]
By \Cref{lem:uniform_park}, there exist $t_{\delta, R_{+}}$ and $ H_{\delta, R_{+}}$ so that 
\begin{equation}\label{eq:proof_prop_lus_2}
	\sup_{\substack{0 \leq |r_i| \leq M \\ M^a \leq |r_j| \leq M }}\left\lvert \int_{\cT} \prod_{i =1 }^k (\phi_i \cdot \phi_i \circ \flow_{r_i}) \circ \flow_{t_{i-1}} \diff\mu\right\rvert \leq \delta + \varepsilon^4 \mu(\cT),
\end{equation}
for any $j \in \{1,\dots,k\}$, for all $k$-tuples of times $\tb $ with  $\Deltat \geq t_{\delta, R_{+}}$, and for all towers $\cT$ with height $\height \geq  H_{\delta, R_{+}}$.

We set $t_{\varepsilon}$ and $ H_{\varepsilon}$ in \Cref{def:property_park} as follows: 
we let $H_{\varepsilon} = \max \{M_{k, \delta_0}, (H_{\delta, R_{+}}/ \delta^{2k})^{8}\}$, then we consider $t_{H_{\varepsilon},M}$ be given by \Cref{def:LUS_flows}, as well as the Rokhlin towers $\cR_1, \dots, \cR_d$ of bases $B_\varrho$ and heights $\height_{\varrho} \geq H_{\varepsilon}^{1/2}$, and we set $t_{\varepsilon} = \max \{ t_{H_{\varepsilon},M}, t_{\delta, R_{+}} + 2 M\}$.
Let us then fix $\height \geq H_{\varepsilon}$, a $(k+1)$-tuple of times $\tb$ with $\Deltat \geq t_{\varepsilon}$, and a tower $\cT \subseteq X$ with height $\height$; we need to show that\footnote{We recall that the implicit constant only depends on $k$ and on the Lipschitz norm of the functions $\phi_i$.}
\begin{equation}\label{eq:proof_prop_lus_goal}
	\left\lvert \int_{\cT}\prod_{i=0}^{k} \phi_i\circ \flow_{t_i} \diff\mu\right\rvert \ll \varepsilon.
\end{equation}

\Cref{def:LUS_flows} yields the existence of $\delta_0$-almost measure preserving maps $\mapg^{\tb}_m \colon \cup \cR_{\varrho} \to \cup \cR_{\varrho}$, with $m=0,\dots, K_{\delta_0} = \lfloor \delta_0^{-1}\rfloor$, satisfying (LUS1,2,3). 
Let $K = \lfloor K_{\delta} /2 \rfloor$, and note that $ \varepsilon^{-4} \ll K \ll \sqrt{M}$.
By \Cref{lem:change_variab}, we have
\[
\left\lvert \int_{\cT}\prod_{i=0}^{k} \phi_i\circ \flow_{t_i} \diff\mu\right\rvert \ll \left\lvert \frac{1}{K} \sum_{m=1}^K \int_{X} (\one_{\cT} \circ \mapg^{\tb}_m) \, \prod_{i=0}^{k} \phi_i\circ \flow_{t_i} \circ \mapg^{\tb}_m \diff\mu\right\rvert + \delta_0.
\]
Furthermore, by (LUS2), we can replace the domain of integration with the union of the towers $\Tow_{\varrho, j}$ for $j=1,\dots,k$ and $\varrho=1,\dots,d$; in particular
\begin{equation}\label{eq:proof_prop_lus_3}
	\left\lvert \int_{\cT}\prod_{i=0}^{k} \phi_i\circ \flow_{t_i} \diff\mu\right\rvert \ll \max_{\varrho =1, \dots, d} \max_{j=1, \dots, k} \left\lvert \frac{1}{K} \sum_{m=1}^K \int_{\Tow_{\varrho, j}} (\one_{\cT} \circ \mapg^{\tb}_m) \, \prod_{i=0}^{k} \phi_i\circ \flow_{t_i} \circ \mapg^{\tb}_m \diff\mu\right\rvert + \delta_0.
\end{equation}
Let us fix $\varrho \in \{1, \dots, d\}$ and $j \in \{1, \dots, k\}$; we focus on the absolute value in the right hand side above. 
By (LUS1)-(a), since $\phi_0$ is Lipschitz, we have $\|\phi_0\circ \mapg^{\tb}_m-\phi_0\|_{\infty} \leq \|\phi_0\|_{\Lip} \delta_0$; hence, 
\begin{multline*}
	\left\lvert \frac{1}{K} \sum_{m=1}^K \int_{\Tow_{\varrho, j}} (\one_{\cT} \circ \mapg^{\tb}_m) \, \prod_{i=0}^{k} \phi_i\circ \flow_{t_i} \circ \mapg^{\tb}_m \diff\mu\right\rvert \\
	\ll \left\lvert  \int_{X} \phi_0 \cdot \left[\frac{1}{K} \sum_{m=1}^K \one_{\Tow_{\varrho, j} \cap (\mapg^{\tb}_m)^{-1}(\cT)} \, \prod_{i=1}^{k} \phi_i\circ \flow_{t_i} \circ \mapg^{\tb}_m \right] \diff\mu\right\rvert + \delta_0.
\end{multline*}
An application of the Cauchy-Schwarz Inequality and of the Van der Corput Inequality (\Cref{lem:vdc} in \Cref{sec:vdc}) with $L = \lfloor \sqrt{K} \rfloor$ yields
\[
\begin{split}
	&\left\lvert \frac{1}{K} \sum_{m=1}^K \int_{\Tow_{\varrho, j}} (\one_{\cT} \circ \mapg^{\tb}_m) \, \prod_{i=0}^{k} \phi_i\circ \flow_{t_i} \circ \mapg^{\tb}_m \diff\mu\right\rvert \\
	&\ll \left[ \frac{1}{K} \sum_{m=1}^K \left( \frac{1}{L} \sum_{l=0}^L \left\lvert \int_{X} \one_{\Tow_{\varrho, j} \cap (\mapg^{\tb}_m)^{-1}(\cT) \cap (\mapg^{\tb}_{m+l})^{-1}(\cT)} \, \prod_{i=1}^{k} \phi_i\circ \flow_{t_i} \circ \mapg^{\tb}_m \cdot \phi_i\circ \flow_{t_i} \circ \mapg^{\tb}_{m+l} \diff\mu \right\rvert \right) \right]^{1/2} \\
	& \quad +\left(\frac{L}{K}\right)^{1/2} + \varepsilon^4.
\end{split}
\]
We isolate the term corresponding to $l=0$ in the inner sum: since $L^{-1/2} \ll K^{-1/4} \ll \varepsilon$ and $(L/K)^{1/2} \ll K^{-1/4} \ll \varepsilon$, we obtain
\begin{multline}\label{eq:proof_prop_lus_4}
	\left\lvert \frac{1}{K} \sum_{m=1}^K \int_{\Tow_{\varrho, j}} (\one_{\cT} \circ \mapg^{\tb}_m) \, \prod_{i=0}^{k} \phi_i\circ \flow_{t_i} \circ \mapg^{\tb}_m \diff\mu\right\rvert \\
	\ll \sup_{\substack{0 \leq m,l \leq K \\ |m-l| \geq 1} }\left\lvert \int_{X} \one_{\cT(j,m,l)} \, \prod_{i=1}^{k} \phi_i\circ \flow_{t_i} \circ \mapg^{\tb}_m \cdot \phi_i\circ \flow_{t_i} \circ \mapg^{\tb}_{l} \diff\mu \right\rvert^{1/2} + \varepsilon,
\end{multline}
where we denoted $\cT(j,m,l) = \Tow_{\varrho, j} \cap (\mapg^{\tb}_m)^{-1}(\cT) \cap (\mapg^{\tb}_{l})^{-1}(\cT)$. Let us fix $0 \leq m,l \leq K$ with $ |m-l| \geq 1$, and let us consider the absolute value on the right hand side above. 
Property (LUS1)-(b) implies that the sets $(\mapg^{\tb}_{m})^{-1}(\cT)$ and $(\mapg^{\tb}_{l})^{-1}(\cT)$ can be replaced by two towers of height $H_{\varepsilon}^{1/4}$, up to a set of measure $\ll \delta_0 = \varepsilon^4$. Therefore, by \Cref{lem:towers2}, we can replace the set $\cT(j,m,l)$ with a tower $\cT(j)$ of height $H_{\varepsilon}^{1/8}$.

By construction, on the tower $\cT(j)$ we have a good control of the factor in the integral corresponding to the time $t_j$, for which we have a uniform shear by (LUS2). In order to control the factors corresponding to the other times $t_i$, we exploit property (LUS3) and use the towers $R_{\varrho}(i,m,\ell)$.
Recall that, for fixed $i$ and $m$, the family $\{R_{\varrho}(i,m,\ell) : \ell = 0, \dots, \delta_0^{-1}M - 1 \}$ forms a $\delta_0$-almost partition of $\cR_{\varrho}$.
Then, by \Cref{lem:intersection_almost_partitions}, the family $\{R_{\varrho}(i,m,p_i) \cap R_{\varrho}(i,l,q_i): p_i,q_i = 0, \dots, \delta_0^{-1}M - 1 \}$ forms a $2\delta_0$-almost partition of the same set.
Therefore, using again \Cref{lem:intersection_almost_partitions}, the collection of towers
\[
R_{\varrho}(\pb, \qb) := \bigcap_{i \in \{1, \dots, k\} \setminus \{j\}} R_{\varrho}(i,m,p_i) \cap R_{\varrho}(i,l,q_i)
\]
over all $\pb = (p_1, \dots, p_{j-1}, p_{j+1}, \dots, p_k), \qb = (q_1, \dots, q_{j-1}, q_{j+1}, \dots, q_k) \in \Z^{k-1}$ with $0 \leq p_i, q_i \leq \delta_0^{-1}M - 1$ form a $(2^{k-1}\delta_0)$-almost partition of $\cR_{\varrho}$.
In particular,
\begin{multline*}
	\left\lvert \int_{X} \one_{\cT(j)} \, \prod_{i=1}^{k} \phi_i\circ \flow_{t_i} \circ \mapg^{\tb}_m \cdot \phi_i\circ \flow_{t_i} \circ \mapg^{\tb}_{l} \diff\mu \right\rvert \\
	\ll \left\lvert \sum_{\pb, \qb} \int_{X} \one_{\cT(j) \cap R_{\varrho}(\pb, \qb)} \, \prod_{i=1}^{k} \phi_i\circ \flow_{t_i} \circ \mapg^{\tb}_m \cdot \phi_i\circ \flow_{t_i} \circ \mapg^{\tb}_{l} \diff\mu \right\rvert + \delta_0.
\end{multline*}
For the term corresponding to the index $j$ in the product above, we have
\[
\|\phi_j\circ \flow_{t_j} \circ \mapg^{\tb}_m \cdot \phi_j\circ \flow_{t_j} \circ \mapg^{\tb}_{l} - \phi_j\circ \flow_{t_j+mM^a} \cdot \phi_j\circ \flow_{t_j+lM^a} \|_{\infty} \ll \delta_0 = \varepsilon^4,
\]
using twice (LUS2). For the other terms, we use (LUS3) instead and we get 
\[
\|\phi_i\circ \flow_{t_i} \circ \mapg^{\tb}_m \cdot \phi_i\circ \flow_{t_i} \circ \mapg^{\tb}_{l} - \phi_i\circ \flow_{t_i+\delta p_i} \cdot \phi_i\circ \flow_{t_i+\delta q_i} \|_{\infty} \ll \delta_0= \varepsilon^4.
\]
Again, since the towers $R_{\varrho}(\pb, \qb)$ form an almost partition of $\cR_{\varrho}$, we deduce
\[
\begin{split}
	&	\left\lvert \int_{X} \one_{\cT(j)} \, \prod_{i=1}^{k} \phi_i\circ \flow_{t_i} \circ \mapg^{\tb}_m \cdot \phi_i\circ \flow_{t_i} \circ \mapg^{\tb}_{l} \diff\mu \right\rvert \\
	&	\ll \sum_{\pb, \qb} \left\lvert \int_{\cT(j) \cap R_{\varrho}(\pb, \qb)} \, \left( \prod_{i \in \{1, \dots, k\} \setminus \{j\}} (\phi_i \cdot \phi_i\circ \flow_{\delta (q_i-p_i)} )\circ \flow_{t_i + \delta p_i} \right) (\phi_j \cdot \phi_j\circ \flow_{(l-m)M^a} )\circ \flow_{t_j + mM^a}\diff\mu \right\rvert \\
	& \quad + \varepsilon^4.
\end{split}
\]
By invariance of $\mu$, the integrand function above can be composed by $\flow_{-\overline t}$, where ${\overline t} = t_1+\delta p_1$, or ${\overline t} = t_1+mM^a$ in case $j=1$; then, the integral is taken over the set 
\[
\flow_{-\overline t}(\cT(j) \cap R(\pb, \qb)) = \flow_{-\overline t}(\cT(j)) \cap \flow_{-\overline t}(R(\pb, \qb)).
\] 
We note that the image of a tower under the flow is still a tower of the same height, so that the set above is an intersection of $k$ towers of heights at least $H_{\varepsilon}^{1/8}\geq H_{\delta, R_{+}} / \delta^{2k}$. By \Cref{lem:towers2}, there exists a (possibly empty) tower $\widetilde{\cT} \subseteq \flow_{-\overline t}(\cT(j) \cap R(\pb, \qb))$ of height $H_{\delta, R_{+}}$ so that 
\[
\mu(\flow_{-\overline t}(\cT(j) \cap R(\pb, \qb)) \setminus \widetilde{\cT} ) \ll \delta^2 = \varepsilon^4  (\delta_0^{-1}M)^{-2(k-1)}.
\]
Since  there are $\delta_0^{-1}M$ possible values for each $p_i$ and $q_i$, we obtain
\begin{multline*}
	\left\lvert \int_{X} \one_{\cT(j)} \, \prod_{i=1}^{k} \phi_i\circ \flow_{t_i} \circ \mapg^{\tb}_m \cdot \phi_i\circ \flow_{t_i} \circ \mapg^{\tb}_{l} \diff\mu \right\rvert \ll \\
	\sum_{\pb, \qb} \left\lvert \int_{\widetilde{\cT}} \, \left( \prod_{i \neq j} (\phi_i \cdot \phi_i\circ \flow_{\delta (q_i-p_i)} )\circ \flow_{t_i'} \right) (\phi_j \cdot \phi_j\circ \flow_{(l-m)M^a} )\circ \flow_{t_j'}\diff\mu \right\rvert 	 + \varepsilon^4,
\end{multline*}
where the $k$-tuple of  times $\tb'$ has components 
$t_i'=t_i + \delta p_i {-\overline t}$ for $i\neq j$ and
$t_j' = t_j + (l-m)M^a {-\overline t}$.
Now, we claim that the right hand side above is bounded by $\ll \varepsilon^4$. Indeed, we can use \eqref{eq:proof_prop_lus_2}: we have $|\delta (q_i-p_i)| \leq M $, and $|(l-m)M^a| \leq K_{\delta}\sqrt{M}  \leq M$, and 
\[
\begin{split}
	&|(t_i + \delta p_i {-\overline t}) - (t_{i'} + \delta p_{i'} {-\overline t})| \geq |t_i -t_{i'}| - \delta |p_i - p_{i'}| \geq t_{\varepsilon} - M \geq t_{\delta,R_{+}}, \\
	&|(t_i + \delta p_i {-\overline t}) - (t_j + mM^a{-\overline t})| \geq |t_i -t_j| - \delta |p_i| - mM^a \geq t_{\varepsilon} - M - K_{\delta} \sqrt{M}  \geq t_{\delta,R_{+}}.
\end{split}
\]
Therefore, the uniform property $\Par(k)$ in \eqref{eq:proof_prop_lus_2} yields
\[
\left\lvert \int_{X} \one_{\cT(j)} \, \prod_{i=1}^{k} \phi_i\circ \flow_{t_i} \circ \mapg^{\tb}_m \cdot \phi_i\circ \flow_{t_i} \circ \mapg^{\tb}_{l} \diff\mu \right\rvert \ll \sum_{\pb, \qb}\big(\delta + \varepsilon^4 \mu(\widetilde{\cT}) \big)  \ll (\delta_0^{-1}M)^{2(k-1)} \delta+ \varepsilon^4 \ll \varepsilon^2,
\]
where we used the fact that
\[
\sum_{\pb, \qb}\mu(\widetilde{\cT}) \leq \sum_{\pb, \qb} \mu(R(\pb, \qb)) = \mu\left(\bigcup_{\pb, \qb}R(\pb, \qb)\right) \ll 1.
\]  
Combining this bound with \eqref{eq:proof_prop_lus_4} and \eqref{eq:proof_prop_lus_3} proves \eqref{eq:proof_prop_lus_goal} and hence establishes property $\Par(k+1)$.
The proof of \Cref{prop:park_implies_park+1} is therefore complete.

%%%%%%%%%%

\section{Quantitative results}\label{sec:quantitative_results}

In this section we discuss the quantitative analogues of the results we proved in the previous sections. Here, the assumptions are the same as in \Cref{sec:QGUS}; in particular, $X$ is a smooth manifold and $\flowR$ a smooth flow. Analogously to \Cref{def:LUS_flows}, quantitatively locally uniformly shearing flows are defined as follows.

\begin{definition}[Quantitatively locally uniformly shearing flows] \label{def:QLUS_flows}
	The flow $\flowR$ is called {\em quantitatively locally uniformly shearing} (QLUS for short) if there exist $a \in (0,1/2]$, $d \geq 1$, and, for every $k \in \Z_{\geq 0}$, there exists $0 < \gamma_k\leq 1/4$ such that for every $1\leq M \leq \height$ there exists a  $M^{-\gamma_k}$-almost partition $\{\cR_{\varrho}\}_{\varrho =1}^d$ of $X$ into towers $\cR_{\varrho}$ of heights $\height_{\varrho} \in [\height^{1/2},\height^{3/2}]$ such that, for every $(k+1)$-tuple of times $\tb$ with $\Deltat^{\gamma_k} \geq 4\height$, the following conditions hold:
	\begin{enumerate}
		\item[(QLUS1)] there exist $M^{-\gamma_k}$-almost measure preserving maps $\mapg^{\tb}_m \colon \cup_{\varrho=1}^d \cR_{\varrho} \to \cup_{\varrho=1}^d \cR_{\varrho}$, for $m=0,\dots, K_{M} = \lfloor M^{\gamma_k}\rfloor$, with the following properties
		\begin{enumerate}
			\item for all $x$ % \in \cup_{\varrho=1}^d \cR_{\varrho}$
			in the proper domain of $\mapg^{\tb}_m$, we have $\dist(\mapg^{\tb}_m(x),x)\leq M^{-\gamma_k}$,
			\item for any tower $\cT$ of height $H \geq \height^{1/2}$, there exists a tower $\overline{\cT} \subseteq (\mapg^{\tb}_m)^{-1}(\cT)$ of height $\height^{1/4}$ with
			\[
			\mu \left( (\mapg^{\tb}_m)^{-1}(\cT) \setminus \overline{\cT} \right) \leq d \,M^{-\gamma_k};
			\]
		\end{enumerate} 
		\item[(QLUS2)] for every $\varrho= 1,\dots, d$, there exists a $M^{-\gamma_k}$-almost partition $\{\Tow_{\varrho, j}\}_{j=1}^k$ of $\cR_{\varrho}$ into towers $\Tow_{\varrho, j} \subseteq \cR_{\varrho}$ of the same height\footnote{In particular, this means that the basis of each towers $\Tow_{\varrho, j}$ is a subset of the basis of $\cR_{\varrho}$.} $\height_{\varrho}$, so that, for every $m=0,\dots,K_{M}$, we have 
		\[
		\max_{j=1, \dots, k} \sup_{x\in \Tow_{\varrho,j}} \dist \left(\flow_{t_j}(\mapg^{\tb}_m x), \flow_{t_j+m \cdot M^a }(x) \right)\leq M^{-\gamma_k};
		\]
		\item[(QLUS3)] for every $\varrho=1, \dots, d$, there exists a $M^{-\gamma_k}$-almost partition of $\cR_{\varrho}$ into (possibly empty) towers $R_{\varrho}(i,m,\ell) \subseteq \cR_{\varrho}$, with $\ell = 0, \dots, M^{1+\gamma_k}-1$, of the same height\footnote{As in (QLUS2), this implies that the basis of each tower $R_{\varrho}(i,m,\ell)$ is a subset of the basis of $\cR_{\varrho}$.} $\height_{\varrho}$ such that, for any $i=1, \dots, k$, $m=0,\dots,K_{M}$, and $\ell$, we have
		\[
		\sup_{x\in R_\varrho(i,m,\ell)} \dist \left(\flow_{t_i}(\mapg^{\tb}_m x), \flow_{t_i+ \ell \cdot M^{-\gamma_k}}(x) \right)\leq M^{-\gamma_k}.
		\]
	\end{enumerate}
\end{definition}

\subsection{Property $\QPar(k)$}

We now formulate the quantitative version of property $\Par(k)$. As before, if $\tb = (0)$ is the trivial 1-tuple, then we set $\Deltat = \infty$.

\begin{definition}\label{def:quantitative_park}
	Let $k\geq 1$. A flow $\flowR$ on $(X,\mu)$ has {\em property $\QPar(k)$} if there exist $\delta_k>0$ and $D_k\geq d_k$ for which the following holds. Let $\phi_0,\ldots, \phi_{k-1}\in \mathscr{C}^{\infty}_c(X)$ with $\mu(\phi_i)=0$ for at least one $i \in \{0,\dots, k-1\}$. 
	For every $k$-tuples of times $\tb$ with $\Deltat \geq 1$ and for every tower $\cT \subseteq X$ with height $\height \geq 1$, we have
	\[
	\left\lvert \int_{\cT}\prod_{i=0}^{k-1} \phi_i\circ \flow_{t_i} \diff\mu\right\rvert \leq \left(\prod_{i=0}^{k-1} \cN_{D_k}(\phi_i)\right) \cdot
	\min \{ \Deltat, \height\}^{-\delta_k}.
	\]
\end{definition}
A result analogous to \Cref{lem:Park_implies_k_mixing} holds: Property $\QPar(k)$ implies polynomial mixing of order $k$.

\begin{lemma}\label{lem:QPark_implies_k_mixing} 
	If $\flowR$ is $(k-1)$-mixing with rate $\eta_{k-1}$ and satisfies property $\QPar(k)$, then it is $k$-mixing with rate $\min\{\eta_{k-1}, \delta_k\eta_2/(16\sigma({D_k})) \}$ and $d_k = D_k$. 
\end{lemma}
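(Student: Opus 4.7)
The plan is to adapt the proof of the non-quantitative \Cref{lem:Park_implies_k_mixing} to the quantitative setting. The one technical complication is that the naive centering $\phi_{k-1} - \mu(\phi_{k-1})$ is no longer compactly supported, so one must use \Cref{lem:phi_phi_orthogonal} (as in the proof of \Cref{thm:multiple_mixing_QGUS_flows}) to produce a smooth compactly supported approximation with zero integral. Fix $\phi_0, \dots, \phi_{k-1} \in \mathscr{C}^\infty_c(X)$ and a $k$-tuple of times $\tb$ with $\Deltat \geq 1$. Set $\varepsilon := \Deltat^{-\delta_k\eta_2/(16\sigma(D_k))}$ and apply \Cref{lem:phi_phi_orthogonal} to $\phi_{k-1}$ with parameter $\varepsilon$: this yields $\phi_{k-1}^\perp \in \mathscr{C}^\infty_c(X)$ with $\mu(\phi_{k-1}^\perp) = 0$, with $\|\phi_{k-1} - \phi_{k-1}^\perp - \mu(\phi_{k-1})\|_\infty \ll \cN_{d_2}(\phi_{k-1})\, \varepsilon$, and with $\cN_{D_k}(\phi_{k-1}^\perp) \ll \varepsilon^{-8\sigma(D_k)/\eta_2}\, \cN_{D_k}(\phi_{k-1})$.

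Using this, decompose $\int \prod_i \phi_i \circ \flow_{t_i}\, \diff \mu$ into three pieces: a uniform $L^\infty$ approximation error of order $\prod \cN_{d_2}(\phi_i)\, \varepsilon$, a constant-factor term $\mu(\phi_{k-1})\int \prod_{i=0}^{k-2} \phi_i\circ \flow_{t_i}\, \diff \mu$, and a zero-mean term $\int \prod_{i=0}^{k-2} \phi_i\circ \flow_{t_i} \cdot \phi_{k-1}^\perp\circ \flow_{t_{k-1}}\, \diff \mu$. The constant-factor piece is handled directly by the $(k-1)$-mixing hypothesis, producing $\prod_{i=0}^{k-1} \mu(\phi_i)$ with error $\ll \prod \cN_{d_{k-1}}(\phi_i) \Deltat^{-\eta_{k-1}}$ (the $(k-1)$-tuple $(t_0,\dots,t_{k-2})$ has gap $\geq \Deltat$), and this accounts for the $\eta_{k-1}$ contribution to the target rate.

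For the zero-mean piece, invoke Rokhlin's theorem (applicable since mixing of order $k-1 \geq 1$ implies ergodicity and aperiodicity) to fit $X$ into a Rokhlin tower $\cR$ of height $\height = \Deltat$ with $\mu(X\setminus \cR)$ as small as desired, say $\leq \Deltat^{-1}$. Split the integral as $\int_\cR + \int_{X\setminus \cR}$: the complement piece is bounded by $\prod \cN_{d_2}(\phi_i) \Deltat^{-1}$ (using also $\|\phi_{k-1}^\perp\|_\infty \ll \|\phi_{k-1}\|_\infty$) and is negligible, while the tower piece is estimated by $\QPar(k)$, which applies precisely because $\mu(\phi_{k-1}^\perp) = 0$: it is bounded by $\prod_{i=0}^{k-2} \cN_{D_k}(\phi_i)\, \cN_{D_k}(\phi_{k-1}^\perp)\, \min\{\Deltat, \height\}^{-\delta_k} \ll \prod \cN_{D_k}(\phi_i)\, \varepsilon^{-8\sigma(D_k)/\eta_2}\, \Deltat^{-\delta_k}$.

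Summing the three contributions, the total is bounded by $\prod \cN_{D_k}(\phi_i)\bigl(\varepsilon + \varepsilon^{-8\sigma(D_k)/\eta_2}\Deltat^{-\delta_k} + \Deltat^{-\eta_{k-1}}\bigr)$. With the chosen $\varepsilon$, a direct computation gives $\varepsilon^{-8\sigma(D_k)/\eta_2}\Deltat^{-\delta_k} = \Deltat^{-\delta_k/2} \leq \varepsilon$ (since $\eta_2 \leq 1 \leq \sigma(D_k)$), so the first two contributions are each bounded by $\Deltat^{-\delta_k\eta_2/(16\sigma(D_k))}$, and the third by $\Deltat^{-\eta_{k-1}}$. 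This establishes polynomial $k$-mixing with the claimed rate $\eta_k = \min\{\eta_{k-1}, \delta_k\eta_2/(16\sigma(D_k))\}$ and $d_k = D_k$. There is no genuine obstacle beyond parameter bookkeeping: the value of $\varepsilon$ is entirely forced by the trade-off between the $L^\infty$ approximation error in \Cref{lem:phi_phi_orthogonal} and the Sobolev blow-up of $\phi_{k-1}^\perp$, and it is exactly this trade-off that produces the specific factor $\eta_2/(16\sigma(D_k))$ in the rate.
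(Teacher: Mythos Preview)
Your proposal is correct and follows essentially the same approach as the paper's proof (the paper applies \Cref{lem:phi_phi_orthogonal} to $\phi_0$ rather than $\phi_{k-1}$, and takes the Rokhlin complement of measure $\leq \Deltat^{-\delta_k}$ rather than $\leq \Deltat^{-1}$, but these choices are immaterial). One small slip: \Cref{lem:phi_phi_orthogonal} only gives $L^2$ control of $\phi_{k-1} - \mu(\phi_{k-1}) - \phi_{k-1}^\perp$, not $L^\infty$; the approximation error in your decomposition must therefore be bounded via Cauchy--Schwarz against $\prod_{i<k-1}\phi_i\circ\flow_{t_i} \in L^\infty$, which yields the same estimate.
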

\begin{proof}
	Let $\phi_0,\ldots, \phi_{k-1}\in \mathscr{C}^{\infty}_c(X)$ and a $k$-tuple of times $\tb$ be fixed. By \Cref{lem:phi_phi_orthogonal} applied to $\phi_{0}$ with $\varepsilon = \Deltat^{-\delta_k\eta_2/(16\sigma({D_k}))}$, and by the $(k-1)$-mixing assumption, we have
	\begin{equation*}
		\begin{split}
			&\left\lvert \int_{X} \prod_{i=0}^{k-1}\phi_i \circ \flow_{t_i} \diff \mu - \prod_{i=0}^{k-1} \mu(\phi_i) \right\rvert \\ & \qquad 
			\leq  |\mu(\phi_0)| \cdot \left\lvert \int_{X} \prod_{i=1}^{k-1}\phi_i \circ \flow_{t_i} \diff \mu - \prod_{i=1}^{k-1} \mu(\phi_i) \right\rvert 
			+\varepsilon \left(\prod_{i=1}^{k-1} \|\phi_i\|_{\infty}\right)
			+
			\left\lvert \int_{X} \phi_0^{\perp} \cdot \prod_{i=1}^{k-1}\phi_i \circ \flow_{t_i} \diff \mu \right\rvert \\
			 & \qquad \leq 2 \left(\prod_{i=0}^{k-1} \cN_{d_{k-1}}(\phi_i)\right)
			(\Deltat^{-\eta_{k-1}} + \Deltat^{-\delta_k\eta_2/(16\sigma({D_k}))}) + \left\lvert \int_{X} \phi_0^{\perp} \cdot \prod_{i=1}^{k-1}\phi_i \circ \flow_{t_i} \diff \mu \right\rvert.
		\end{split}
	\end{equation*}
	To bound the last summand above, let $\cR \subseteq X$ be a Rokhlin tower with $\mu(X \setminus \cR) \leq \Deltat^{-\delta_k}$ and height $\height \geq \Deltat$; then
	\begin{multline*}
			\left\lvert \int_{X} \phi_0^{\perp} \cdot \prod_{i=1}^{k-1}\phi_i \circ \flow_{t_i} \diff \mu \right\rvert \leq 2\left(\prod_{i=0}^{k-1} \| \phi_i\|_{\infty} \right)\Deltat^{-\delta_k} + \left\lvert \int_{\cR}\phi_0^{\perp} \cdot\prod_{i=1}^{k-1} \phi_i\circ \flow_{t_i} \diff\mu\right\rvert \\
			\leq 4 \left(\prod_{i=0}^{k-1} \cN_{D_k}(\phi_i)\right) \varepsilon^{-8\sigma({D_k})/\eta_2} \min \{ \Deltat, \height\}^{-\delta_k} \leq 4 \left(\prod_{i=0}^{k-1} \cN_{D_k}(\phi_i)\right) \Deltat^{-\delta_k/2},
	\end{multline*}
	which proves the result.
\end{proof}

To start our induction, we need to establish property $\QPar(1)$. A sufficient condition is given by the following result

\begin{proposition}\label{prop:QPar1} 
	Assume that the flow $\flowR$ is polynomially mixing with polynomial rate $\eta_2 \in (0,1)$.
	Then $\flowR$ satisfies $\QPar(1)$ with $\delta_1 = \eta_2/(8(1+\eta_2))$ and $D_1=d_2$.
\end{proposition}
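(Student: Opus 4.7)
Fix $\phi \in \mathscr{C}^{\infty}_c(X)$ with $\mu(\phi)=0$, a tower $\cT$ of base $B$, height $\height\geq 1$, and let $\nu$ denote the base measure on $B$ (so that $\mu|_{\cT} = \nu \otimes \Leb|_{[0,\height]}$ and $\nu(B) \leq \height^{-1}$). Note that for the 1-tuple $\tb=(0)$ one has $\Deltat = \infty$, so the bound to prove reduces to $\lvert \int_{\cT} \phi \, \diff \mu\rvert \ll \cN_{d_2}(\phi)\, \height^{-\delta_1}$. Writing the integral via the tower structure gives
\[
\left\lvert \int_{\cT} \phi \, \diff \mu \right\rvert \leq \int_B \left\lvert \int_0^{\height} \phi \circ \flow_r(x)\, \diff r \right\rvert \diff \nu(x),
\]
so it suffices to control the inner ergodic integral for $\nu$-a.e.\ $x$, on average.

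The plan is to introduce the auxiliary function $\Phi_T(y) := \int_0^T \phi \circ \flow_r(y) \, \diff r$ for a parameter $T \in [1,\height]$ to be optimized at the end. A Fubini computation (exchanging the $r$- and $t$-integration in $\int_0^{\height - T}\Phi_T(\flow_t(x))\,\diff t$) shows
\[
\int_0^{\height} \phi \circ \flow_r(x)\, \diff r \;=\; \frac{1}{T}\int_0^{\height - T} \Phi_T(\flow_t(x))\, \diff t \;+\; E(x),
\]
where the boundary error is controlled by $|E(x)| \ll \|\phi\|_{\infty}\, T$. Integrating over $B$ against $\nu$, the first term becomes (a piece of) $\frac{1}{T}\int_{\cT}|\Phi_T|\,\diff \mu$, while the boundary error contributes at most $\|\phi\|_{\infty}\,T\,\nu(B) \leq \|\phi\|_{\infty}\,T/\height$. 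By Cauchy--Schwarz and $\mu(\cT)\leq 1$,
\[
\frac{1}{T}\int_{\cT}|\Phi_T|\,\diff \mu \leq \frac{1}{T}\|\Phi_T\|_{L^2(\mu)}.
\]

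The core step is to estimate $\|\Phi_T\|_{L^2(\mu)}$ using polynomial $2$-mixing. Unfolding the square gives
\[
\|\Phi_T\|_{L^2(\mu)}^2 = 2\int_0^T (T-s)\int_X \phi \cdot (\phi \circ \flow_s)\, \diff \mu \, \diff s,
\]
and since $\mu(\phi)=0$, \Cref{def:quantitative_mixing} with $k=2$ bounds the inner correlation by $\cN_{d_2}(\phi)^2\, s^{-\eta_2}$ for $s\geq 1$ (and by $\cN_{d_2}(\phi)^2$ for $s\leq 1$, using $\|\phi\|_{\infty}\ll \cN_{d_2}(\phi)$). Integrating, we obtain
\[
\|\Phi_T\|_{L^2(\mu)}^2 \;\ll\; \frac{\cN_{d_2}(\phi)^2\, T^{2-\eta_2}}{1-\eta_2}.
\]
Combining these bounds,
\[
\left\lvert \int_{\cT} \phi \, \diff \mu \right\rvert \;\ll\; \cN_{d_2}(\phi)\left( T^{-\eta_2/2} + \frac{T}{\height}\right).
\]
Choosing $T = \height^{2/(2+\eta_2)}$ balances the two terms and yields a bound of the form $\cN_{d_2}(\phi)\, \height^{-\eta_2/(2+\eta_2)}$. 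Since $\eta_2 \in (0,1)$ gives $\eta_2/(2+\eta_2) \geq \eta_2/(8(1+\eta_2)) = \delta_1$, this establishes $\QPar(1)$ with $D_1 = d_2$.

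The only real bookkeeping issue is handling the boundary contribution in the Fubini step (the short pieces $r \in [0,T]\cup [\height-T,\height]$ where the weight in the change of variables is smaller than $T$); this only contributes $O(\|\phi\|_{\infty}\,T)$ and is absorbed into $E(x)$, so it poses no genuine difficulty. No obstacle beyond choosing the parameter $T$ correctly; the somewhat loose exponent $\delta_1$ stated in the proposition leaves plenty of room.
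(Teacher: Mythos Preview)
Your proof is correct and in fact more streamlined than the paper's. Both arguments start from the same $L^2$ bound on ergodic integrals, namely $\|\Phi_T\|_{L^2}^2 \ll \cN_{d_2}(\phi)^2 T^{2-\eta_2}$, obtained by expanding the square and applying polynomial $2$-mixing to the resulting autocorrelations. From there the paper takes an indirect route: it upgrades the $L^2$ bound to a pointwise bound outside an exceptional set $E(\phi,T)$ of small measure (via a lacunary-sequence/Chebyshev argument), and then mimics the qualitative proof of \Cref{prop:par1}, splitting the base $B$ into points whose whole fiber lies in $E$ and points whose fiber meets $E^c$, for which the pointwise estimate applies. Your argument skips this detour entirely: the averaging identity $\int_0^{\height}\phi\circ\flow_r = \tfrac{1}{T}\int_0^{\height-T}\Phi_T\circ\flow_t\,\diff t + O(\|\phi\|_\infty T)$ lets you integrate over the tower, bound by $\tfrac{1}{T}\int_X|\Phi_T|\,\diff\mu$, and apply Cauchy--Schwarz directly. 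This is cleaner and yields the sharper exponent $\eta_2/(2+\eta_2)$ rather than $\eta_2/(8(1+\eta_2))$. The paper's approach, on the other hand, establishes the quantitative pointwise ergodic bound (the ``Claim'') as a by-product, which parallels \Cref{prop:erg_int_tc_unip} used in the unipotent time-change section and keeps the exposition symmetric with the qualitative \Cref{prop:par1}; but for the purpose of proving $\QPar(1)$ alone, your argument is the more economical one.
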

\begin{proof} 
	Let us define $ \eta = \frac{\eta_2}{4(1+\eta_2)} $. We first make the following claim.
	
	\textbf{Claim.} %There exist $\eta \in (0,1)$ and a norm $\cN_{d_1} \geq \|\cdot\|_{\infty}$ so that, f
	For every $\phi \in \mathscr{C}_c^{\infty}(X)$ with $\mu(\phi)=0$ and every $T\geq 1$, there exists a measurable subset $E(\phi,T) \subseteq X$ of measure $\mu(E(\phi,T))\leq T^{-\eta}$ for which we have
	\[
	\left\lvert \frac{1}{t} \int_0^t \phi \circ \flow_{\pm r}(x) \diff r \right\rvert \leq \mathcal{N}_{d_2}(\phi) \, t^{-\eta}, \qquad \text{for all $t\geq T$ and $x \in X \setminus E(\phi,T)$.} 
	\]
	Assuming the Claim, let us prove $\QPar(1)$.
	Let $\cT$ be a tower of base $B$ and height $\height$, and let $T=\sqrt{\height}$. Fix $\phi \in \mathscr{C}_c^\infty(X)$ so that $\mu(\phi)=0$; we have to show that 
	\[
	\left\lvert \int_\cT \phi \diff \mu \right\rvert \leq \cN_{d_2}(\phi) \height^{-\eta/2}.
	\]	
	We can assume that $\mu(\cT) \geq \height^{-\eta/2}$, otherwise the conclusion is clear, and note that we have $\height^{-1-\eta/2} \leq \nu(B) \leq \height^{-1}$. Let us consider the set $A = \{x \in B : \flow_r(x) \in E \text{ for all } r\in [0,\height]\}$: we have
	\[
	\nu(A) \height \leq \mu(E) \leq T^{-\eta} = \height^{-\eta/2},
	\]
	which implies $\nu(A) \leq \height^{-1-\eta/2}$. For all $x\in B \setminus A$ there exists $y_x = \flow_{r_x}(x) \notin E$, for some $r_x \in [0,\height]$. We claim that this implies that 
	\[
	\left\lvert \int_0^{\height} \phi \circ \flow_r(x) \diff r \right\rvert \leq \cN_{d_2}(\phi) \height^{1-\eta/2}.
	\]
	Indeed, if $r_x \leq T = \height^{1/2}$, then
	\[
	\left\lvert \int_0^{\height} \phi \circ \flow_r(x) \diff r \right\rvert \leq \|\phi\|_{\infty} \height^{1/2} + \left\lvert \int_0^{\height-r_x} \phi \circ \flow_r(y_x) \diff r \right\rvert \ll \, \cN_{d_2}(\phi) (\height^{1/2} + \height^{1-\eta/2} ),
	\]
	which proves the claim in this case; 
	the cases $\height^{1/2}< r_x \leq \height - \height^{1/2}$ and $\height - \height^{1/2} < r_x \leq \height$ are treated similarly as it was done in \Cref{prop:par1}.
	Therefore,
	\[
	\begin{split}
		\left\lvert \int_\cT \phi \diff \mu \right\rvert & \leq \int_A \left\lvert \int_0^{\height} \phi \circ \flow_r(x) \diff r \right\rvert \diff \nu(x) + \int_{B \setminus A} \left\lvert \int_0^{\height} \phi \circ \flow_r(x) \diff r \right\rvert \diff \nu(x) \\
		&\leq \|\phi\|_{\infty} \nu(A) \height + \nu(B) \sup_{x \in B \setminus A}\left\lvert \int_0^{\height} \phi \circ \flow_r(x) \diff r \right\rvert \ll \, \cN_{d_2}(\phi) \height^{-\eta/2},
	\end{split}
	\]
	which proves the result.
	
	\textbf{Proof of the Claim.} Let us now prove the Claim. 	The proof is essentially the same as \cite[Proposition 3.1]{Rav}, we reproduce it here for completeness.

	The first step is to obtain a $L^2$-bound for the ergodic integrals of $\phi$ for any $t\geq 1$ as follows. By invariance of $\mu$ under $\flowR$, and using Fubini's Theorem, we have
	\[
	\begin{split}
		\left\|\int_0^t \phi\circ \flow_r \diff r \right\|_2^2 &= \int_X \int_0^t \int_0^t \phi \circ \flow_s \cdot \phi \circ \flow_r \diff r \diff s \diff \mu = \int_{[0,t]^2} \int_X \phi \circ \flow_{s-r} \cdot \phi \diff \mu \diff s \diff r.
	\end{split}
	\]
	Let $A = \{(s,r) \in [0,t]^2 : |s-r| \leq t^{1/(1+\eta_2)} \}$, and note that $\Leb(A) \leq 4 t^{1+1/(1+\eta_2)}$. Then, we have %there exists $D\geq d_2$, 
	\begin{equation}\label{eq:poly_averages}
		\begin{split}
			\left\|\int_0^t \phi\circ \flow_r \diff r \right\|_2^2 &\leq \|\phi\|_2^2 \Leb(A) + \int_{[0,t]^2 \setminus A} \int_X \phi \circ \flow_{s-r} \cdot \phi  \diff \mu \diff s \diff r\\
			& \leq 4 C_{d_2} \norm_{d_2}(\phi)^2 t^{1+1/(1+\eta_2)} + t^2 \sup_{(s,r) \in A} \left\lvert \int_X \phi \circ \flow_{s-r} \cdot \phi \diff \mu  \right\rvert \\
			& \leq (4 C_{d_2}  + 1) \norm_{d_2}(\phi)^2 t^{2-\eta_2/(1+\eta_2)}.
		\end{split}
	\end{equation}
	
	Define $C'=(4 C_D  + 1)^{1/2}$ and $\eta = \frac{\eta_2}{4(1+\eta_2)} <1$. 
	Let now $T \geq 1$ be fixed, and let $N = \lfloor T^{\eta} \rfloor +2$. For every $n \in \N$, further define $k_n = n^{1/\eta}$ and 
	\[
	E_n := \left\{ x \in X : \left\lvert \int_0^{k_n} \phi \circ \flow_r(x) \diff r\right\rvert \geq C'\norm_{d_2}(\phi) k_n^{1-\eta} \right\}, \qquad \text{and}\qquad E(\phi, T) := \bigcup_{n \geq N} E_n.
	\] 
	By Chebyshev's Inequality and \eqref{eq:poly_averages} we can bound
	\[
	\mu(E(\phi, T)) \leq \sum_{n=N}^\infty \mu(E_n) \leq \sum_{n=N}^\infty k_n^{-2\eta} \leq (N-1)^{-1} \leq T^{-\eta}.
	\]
	Consider now $t\geq T$ and let us prove that $|\int_0^t\phi\circ\flow_r(x) \diff r| \leq C_0  \norm_{d_2}(f) t^{1-\eta}$ for all $x\in X\setminus E(\phi,T)$ for some constant $C_0>0$. Let $n(t) =  \lfloor t^{\eta} \rfloor +2 \geq N$, so that $k_{n(t)} = n(t)^{1/\eta} > t$. Moreover, note that there exists a constant $C_{\eta}>0$ depending on $\eta$ only such that 
	\[
	k_{n(t)} - t \leq (t^{\eta}+2)^{1/\eta} - t \leq C_{\eta} t^{1-\eta}.
	\]
	Therefore, for all $x \in X$, we have
	\[
	\left\lvert \int_0^{t} \phi \circ \flow_r(x) \diff r\right\rvert \leq \left\lvert \int_0^{k_{n(t)}} \phi \circ \flow_r(x) \diff r\right\rvert + C_\eta \|\phi\|_\infty t^{1-\eta}.
	\]
	If moreover $x \notin E(\phi, T)$, then $x \notin E_{n(t)}$, so that we can also bound the first summand above and get 
	\[
		\left\lvert \int_0^{t} \phi \circ \flow_r(x) \diff r\right\rvert  \leq  C' \norm_{d_2}(\phi) k_{n(t)}^{1-\eta'} + C_{\eta}  C_{d_2} \norm_{d_2}(\phi) t^{1-\eta} \leq C_0\norm_{d_2}(\phi) t^{1-\eta},
	\]
	where $C_0 = 4(C_{\eta}+1)C$.
	This proves the Claim.
\end{proof}

We will also need the following lemma. We set, inductively, $D_1=d_2$ and $D_{k+1} = D_k +D$, and $\sigma_k = k \cdot \sigma(D_{k+1})$.
\begin{lemma}[Uniform $\QPar(k)$]\label{lem:uniform_qpark}
	Assume that $\flowR$ satisfies $\QPar(k)$. 
	Let $\phi_0,\ldots, \phi_{k-1}\in \mathscr{C}_c^{\infty}(X)$, and assume that $\mu(\phi_j)=0$ for at least one $j \in \{0,\dots, k-1\}$. 
	For every $1 \leq R_{-} \leq R_{+}$,
	for every $k$-tuple of times $\tb$ with $\Deltat \geq 1$ and for every tower $\cT \subseteq X$ with height $\height$, we have
	\[
	\sup_{\substack{0\leq |r_i|\leq R_{+} \\ R_{-} \leq |r_j| \leq R_{+}}}\left\lvert \int_{\cT} \prod_{i=0}^{k-1} (\phi_i \cdot \phi_i \circ \flow_{r_i}) \circ \flow_{t_i} \diff\mu\right\rvert \leq \left(\prod_{i=0}^{k-1} \cN_{D_{k+1}}(\phi_i)^2\right) \left(R_{-}^{-\eta_2}\mu(\cT) + R_{+}^{\sigma_k} \min \{\height, \Deltat\}^{-\frac{\eta_2\delta_k}{16\sigma_k}}\right).
	\]
\end{lemma}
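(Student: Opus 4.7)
The plan is to follow the same strategy as the proof of \Cref{lem:uniform_polynomial_k_mixing}, substituting property $\QPar(k)$ for polynomial $k$-mixing and carrying the tower $\cT$ through the argument. Without loss of generality assume $j=k-1$. Fix
\[
\varepsilon := \min\{\Deltat, \height\}^{-\eta_2\delta_k/(16\sigma_k)}, \qquad \psi := \phi_{k-1}\cdot \phi_{k-1}\circ \flow_{r_{k-1}},
\]
and apply \Cref{lem:phi_phi_orthogonal} to decompose $\psi = \psi^{\perp} + \mu(\psi) + e$ with $\|e\|_\infty \ll \cN_{d_2}(\psi)\,\varepsilon$ and $\cN_r(\psi^\perp) \ll \varepsilon^{-8\sigma(r)/\eta_2}\cN_r(\psi)$ for all $r\geq 1$. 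Substituting this decomposition into the $k$-fold integral over $\cT$ splits the bound into three contributions.

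The constant part, with $\mu(\psi)$ factored out, is controlled via polynomial $2$-mixing and $\mu(\phi_{k-1})=0$: we get $|\mu(\psi)| \leq \cN_{d_2}(\phi_{k-1})^2 R_{-}^{-\eta_2}$, and the residual integral over $\cT$ is trivially bounded by $\mu(\cT)\prod_{i<k-1}\|\phi_i\|^2_\infty$, which yields exactly the $R_{-}^{-\eta_2}\mu(\cT)$ summand. The error term $e$ contributes at most $\ll \cN_{d_2+D}(\phi_{k-1})^2 R_{+}^{\sigma(d_2+D)}\,\varepsilon\cdot\mu(\cT)$, which is absorbed into the second summand by the choice of $\varepsilon$. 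For the orthogonal contribution, I would apply property $\QPar(k)$ to the $k$-tuple of functions $\phi_0\cdot\phi_0\circ\flow_{r_0}, \ldots, \phi_{k-2}\cdot\phi_{k-2}\circ\flow_{r_{k-2}}, \psi^\perp$ with the same tuple of times $\tb$ and the same tower $\cT$ (using that $\mu(\psi^\perp)=0$). Bounding each $\cN_{D_k}$ norm via the submultiplicative and composition properties gives
\[
\cN_{D_k}(\phi_i\cdot\phi_i\circ\flow_{r_i}) \ll R_{+}^{\sigma(D_k+D)}\cN_{D_k+D}(\phi_i)^2, \quad \cN_{D_k}(\psi^\perp) \ll \varepsilon^{-8\sigma(D_k)/\eta_2} R_{+}^{\sigma(D_k+D)}\cN_{D_k+D}(\phi_{k-1})^2;
\]
gathering the $k$ compositions produces the factor $R_{+}^{\sigma_k} = R_{+}^{k\sigma(D_{k+1})}$, and combining $\varepsilon^{-8\sigma(D_k)/\eta_2}\min\{\Deltat,\height\}^{-\delta_k}$ with the chosen $\varepsilon$ (and using $\sigma(D_k)\leq \sigma_k$, after replacing $\sigma$ by its non-decreasing envelope if needed) yields the target exponent $-\eta_2\delta_k/(16\sigma_k)$.

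The main point of care will be the norm bookkeeping: the value $\sigma_k = k\sigma(D_{k+1})$ appearing in the statement is precisely what is needed to absorb the $k$ powers of $R_{+}^{\sigma(D_k+D)}$ coming from the compositions $\phi_i\circ\flow_{r_i}$ together with the $\varepsilon^{-8\sigma(D_k)/\eta_2}$ loss from the smoothing in \Cref{lem:phi_phi_orthogonal}, and this pinning of the exponent is exactly what dictates the choice of $\varepsilon$. Beyond that, there is no conceptual novelty: the lemma is a direct transcription of the argument in \Cref{lem:uniform_polynomial_k_mixing}, with $\QPar(k)$ replacing global polynomial $k$-mixing and with the extra factor $\mu(\cT)$ appearing naturally in the mean term.
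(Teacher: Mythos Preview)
Your proposal is correct and follows essentially the same route as the paper: the same choice of $\varepsilon$, the same application of \Cref{lem:phi_phi_orthogonal} to $\psi=\phi_{k-1}\cdot\phi_{k-1}\circ\flow_{r_{k-1}}$, the mean term handled by polynomial $2$-mixing (giving the $R_{-}^{-\eta_2}\mu(\cT)$ contribution), and the zero-mean part handled by $\QPar(k)$ with the same norm bookkeeping. One small slip: \Cref{lem:phi_phi_orthogonal} only gives $\|e\|_{L^2}\ll \cN_{d_2}(\psi)\varepsilon$, not an $L^\infty$ bound, so the error term contributes $\ll\big(\prod_{i<k-1}\|\phi_i\|_\infty^2\big)\cN_{d_2}(\psi)\varepsilon$ without the factor $\mu(\cT)$; this is harmless since it is still absorbed into the second summand by your choice of $\varepsilon$.
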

\begin{proof}
	Let us assume $j=k-1$, the proof being identical in the other cases. Let $\tb$ be a $k$-tuple of times, let $\cT$ be a tower of height $\height$, and let $|r_i|\leq R_{+}$, with $|r_{k-1}|\geq R_{-}$ be fixed. We fix $\varepsilon = \min \{\height, \Deltat\}^{-\eta_2\delta_k/(16\sigma_k)}$ and we apply \Cref{lem:phi_phi_orthogonal} to $\phi_{k-1} \cdot \phi_{k-1} \circ \flow_{r_{k-1}}$;
	\begin{multline*}
		\left\lvert \int_{\cT} \prod_{i = 0}^{ k-1} (\phi_i \cdot \phi_i \circ \flow_{r_i}) \circ \flow_{t_i} \diff\mu\right\rvert \leq \left\lvert \int_{\cT} \left(\prod_{i = 0}^{ k-2} (\phi_i \cdot \phi_i \circ \flow_{r_i}) \circ \flow_{t_i}\right) \cdot (\phi_{k-1} \cdot \phi_{k-1} \circ \flow_{r_{k-1}})^{\perp} \circ \flow_{r_{k-1}} \diff\mu\right\rvert \\
		+ \left(\prod_{i=0}^{k-2} \|\phi_i\|^2_{\infty}\right) \left(|\mu(\phi_{k-1} \cdot \phi_{k-1} \circ \flow_{r_{k-1}})| \cdot \mu(\cT) + \cN_{d_2}(\phi_{k-1} \cdot \phi_{k-1} \circ \flow_{r_{k-1}}) \varepsilon \right).
	\end{multline*}
	By polynomial $2$-mixing and the properties of the Sobolev norms, the second summand above is bounded by 
	\begin{multline*}
		\left(\prod_{i=0}^{k-2} \|\phi_i\|^2_{\infty}\right) \left(|\mu(\phi_{k-1} \cdot \phi_{k-1} \circ \flow_{r_{k-1}})| \mu(\cT) + \cN_{d_2}(\phi_{k-1} \cdot \phi_{k-1} \circ \flow_{r_{k-1}}) \varepsilon \right) \\
		\ll  \left(\prod_{i=0}^{k-1} \cN_{d_2+D}(\phi_i)^2\right) \left(R_{-}^{-\eta_2} \mu(\cT) + R_{+}^{\sigma(d_2+D)}\varepsilon\right),
	\end{multline*}
	whereas, by property $\QPar(k)$ and by \Cref{lem:phi_phi_orthogonal}, the first is bounded by 
	\begin{multline*}
		\left\lvert \int_{\cT} \left(\prod_{i = 0}^{ k-2} (\phi_i \cdot \phi_i \circ \flow_{r_i}) \circ \flow_{t_i}\right) \cdot (\phi_{k-1} \cdot \phi_{k-1} \circ \flow_{r_{k-1}})^{\perp} \circ \flow_{r_{k-1}} \diff\mu\right\rvert \\
		\leq \left(\prod_{i=0}^{k-1}  \cN_{D_{k}}(\phi_i \cdot \phi_i \circ \flow_{r_i}) \right) \cdot \varepsilon^{-8\sigma(D_k)/\eta_2} (\min \{\height, \Deltat\})^{-\delta_k} \\ \ll  \left(\prod_{i=0}^{k-1} \cN_{D_k+D}(\phi_i)^2\right) R_{+}^{k\sigma(D_k+D)}(\min \{\height, \Deltat\})^{-\delta_k/2}. 
	\end{multline*}
	Combining the two estimates completes the proof.
\end{proof}

We consider a flow which is polynomially mixing with rate $\eta_2$ and QLUS with rates $\gamma_k$. 
The key result on property $\QPar(k)$ is the following quantitative analogue of \Cref{prop:park_implies_park+1}.

\begin{proposition}\label{prop:qpark_implies_qpark+1} 
	 Assume that the flow $\flowR$ is polynomially mixing with rate $\eta_2$ and QLUS with rates $\gamma_k$. Then, for every $k\geq 1$, property $\QPar(k)$ implies property $\QPar(k+1)$.
\end{proposition}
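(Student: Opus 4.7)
The argument will follow the scheme of the proof of Proposition~\ref{prop:park_implies_park+1}, with qualitative tolerances replaced by explicit polynomial decay in $N = \min\{\Deltat, \height\}$, in much the same relationship as the proof of Theorem~\ref{thm:multiple_mixing_QGUS_flows} has to that of Theorem~\ref{thm:multiple_mixing_GG_flows}. Fix $\phi_0, \dots, \phi_k \in \mathscr{C}^{\infty}_c(X)$ with $\mu(\phi_j) = 0$ for some $j$, a $(k+1)$-tuple of times $\tb$ with $\Deltat \geq 1$, and a tower $\cT$ of height $\height$. I will first reduce to the case $\mu(\phi_i) = 0$ for every $i$, by writing $\phi_i = \phi_i^{\perp} + \mu(\phi_i)$, applying $\QPar(k)$ to each of the resulting $k$-variable integrals, and using \Cref{lem:phi_phi_orthogonal} to control $\cN_d(\phi_i^{\perp})$. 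In the regime $\Deltat^{\gamma_k} < 4\height$ the result will follow by effectively replacing $\height$ with $\Deltat^{\gamma_k}/4$ and absorbing the discrepancy into the final $\delta_{k+1}$, so I may assume $\Deltat^{\gamma_k} \geq 4\height$.

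Next, I will choose a scale $M = N^{\alpha}$ with $\alpha \in (0, \gamma_k]$ to be fixed at the end, and invoke QLUS with parameters $M$ and $\height$. This yields the almost partition $\{\cR_{\varrho}\}$ by towers of heights in $[\height^{1/2}, \height^{3/2}]$, the $M^{-\gamma_k}$-almost measure preserving maps $\mapg^{\tb}_m$ for $m = 0, \dots, K_M = \lfloor M^{\gamma_k}\rfloor$, and the subpartitions from (QLUS2) and (QLUS3). I will then apply \Cref{lem:change_variab} to replace $\int_{\cT}\prod_i \phi_i \circ \flow_{t_i}\, d\mu$ with the average over $m$ of $\int_X (\one_{\cT}\circ \mapg^{\tb}_m)\prod_i \phi_i \circ \flow_{t_i}\circ \mapg^{\tb}_m\, d\mu$; localize the domain on $\bigcup_{\varrho, j} \Tow_{\varrho, j}$ via (QLUS2); and apply Cauchy--Schwarz together with Van der Corput (\Cref{lem:vdc}) with $L = \lfloor \sqrt{K_M/2}\rfloor$ to reduce matters to a supremum over $0\leq m, l \leq K_M/2$ with $|m-l| \geq 1$ of integrals over the triple intersection $\cT(j,m,l) = \Tow_{\varrho, j} \cap (\mapg^{\tb}_m)^{-1}(\cT)\cap (\mapg^{\tb}_l)^{-1}(\cT)$.

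For each such integral, I will use (QLUS1)-(b) combined with \Cref{lem:towers2} to replace $\cT(j,m,l)$ by a single tower of height $\height^{1/8}$ at a measure-cost of $O(M^{-\gamma_k})$. The shearing estimate in (QLUS2) at time $t_j$ produces a shift by $(l-m)M^a$, while the finer partition $\{R_{\varrho}(i,m,\ell)\}$ from (QLUS3) lets me replace, for every $i\neq j$, the compositions $\phi_i \circ \flow_{t_i}\circ \mapg^{\tb}_m$ and $\phi_i \circ \flow_{t_i}\circ \mapg^{\tb}_l$ by $\phi_i \circ \flow_{t_i + M^{-\gamma_k} p_i}$ and $\phi_i \circ \flow_{t_i + M^{-\gamma_k} q_i}$ on each piece $R_{\varrho}(\pb, \qb)$ of the refined almost partition. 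After translating the integrand by $\flow_{-\overline{t}}$, each resulting integral is in the form covered by \Cref{lem:uniform_qpark}, which I will apply with $R_{-} = M^a$ and $R_{+} = M$ on a tower of height $\height^{1/8}$ and new $k$-tuple $\tb'$ with $\Deltat' \geq \Deltat/2$. Summing over the $O(M^{2(1+\gamma_k)(k-1)})$ choices of $(\pb, \qb)$ and using $\sum_{\pb, \qb}\mu(R_{\varrho}(\pb, \qb)) \ll 1$ preserves the $R_{-}^{-\eta_2}$ contribution while inflating the $R_{+}^{\sigma_k}\min\{\height^{1/8}, \Deltat/2\}^{-\eta_2\delta_k/(16\sigma_k)}$ contribution by the multiplicity factor.

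The hard part will be the parameter balancing. Each error source contributes a term of the form $N^{-e(\alpha)}$: the almost measure preservation and the almost partitions contribute $N^{-\alpha\gamma_k}$, the $R_{-}$-piece gives $N^{-\alpha a \eta_2}$, and the $R_{+}$-piece gives $N^{\alpha[2(1+\gamma_k)(k-1) + \sigma_k] - \eta_2\delta_k/(128\sigma_k)}$ since $\height^{1/8} \geq N^{1/8}$. Choosing $\alpha > 0$ small enough (depending only on $k$, $\gamma_k$, $\eta_2$, $\delta_k$, $a$, and $\sigma_k$) makes the $R_{+}$ exponent strictly negative, and after taking the square root produced by Van der Corput I can define $\delta_{k+1}$ to be half the minimum of the resulting positive exponents. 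The Sobolev index $D_{k+1} = D_k + D$ emerges from the product-rule property of the norms $\cN_d$, while the polynomial inflation of norms under composition with $\flow_t$ and multiplication is absorbed by the small exponent $\alpha$. The delicate point throughout is to ensure that the multiplicity factor from summing over $(\pb, \qb)$ is dominated by the gain from uniform $\QPar(k)$, which is possible precisely because $\sigma_k$ is finite and $\alpha$ may be chosen arbitrarily small.
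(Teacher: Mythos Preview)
Your proposal is correct and follows essentially the same approach as the paper's proof: reduce to zero-mean observables via \Cref{lem:phi_phi_orthogonal} and $\QPar(k)$, choose a scale $M$ as a small power of the governing quantity, apply QLUS, localize on the towers $\Tow_{\varrho,j}$, use Cauchy--Schwarz and Van der Corput, then (QLUS1)-(b), (QLUS3), and finally the uniform $\QPar(k)$ lemma with $R_- = M^a$, $R_+ = M$. The paper packages the two regimes by setting $H = \tfrac{1}{4}\min\{\Deltat^{\gamma_k}, \height\}$ and applying QLUS with height parameter $H$, and it fixes the exponent explicitly rather than leaving $\alpha$ to be optimized, but these are cosmetic differences. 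One step you skate over: after intersecting $\cT(j)$ with $R_\varrho(\pb,\qb)$ (which is itself an intersection of $2(k-1)$ towers), the result is not a tower, so the paper invokes \Cref{lem:towers2} once more to extract a tower $\widetilde{\cT}$ of height $H^{2^{-k-3}}$ before applying \Cref{lem:uniform_qpark}; you should make this explicit, since it is this height (not $\height^{1/8}$) that feeds into the $\min\{\height,\Deltat\}$ of the uniform $\QPar(k)$ bound.
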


The proof of \Cref{prop:qpark_implies_qpark+1} is contained in the next subsection. The main abstract result of the paper for polynomial mixing is an immediate corollary. 

\begin{theorem}\label{thm:main_abstract_result_quantitative} 
	Let $(\flow_t, X, \mu)$ be a polynomially mixing flow. If $\flowR$ is QLUS, then it is mixing of all orders with polynomial rates, with $d_{k+1} = d_k + D$. 
\end{theorem}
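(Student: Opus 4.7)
The plan is to run the same induction as in the non-quantitative \Cref{thm:main_abstract_result}, replacing each ingredient by its quantitative counterpart: \Cref{prop:QPar1} will serve as the base case for property $\QPar(k)$, \Cref{prop:qpark_implies_qpark+1} provides the induction step, and \Cref{lem:QPark_implies_k_mixing} converts property $\QPar(k)$ into genuine polynomial $k$-mixing.

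First I would apply \Cref{prop:QPar1} to the polynomial mixing hypothesis to obtain $\QPar(1)$ with $D_1 = d_2$ and $\delta_1 = \eta_2/(8(1+\eta_2))$. Iterating \Cref{prop:qpark_implies_qpark+1} then yields $\QPar(k)$ for every $k \geq 1$, since both polynomial $2$-mixing and the QLUS hypothesis are available at every step. Running a second induction in parallel, with base case the polynomial $2$-mixing assumption, \Cref{lem:QPark_implies_k_mixing} takes us from polynomial $(k-1)$-mixing together with $\QPar(k)$ to polynomial $k$-mixing with Sobolev index $d_k = D_k$; this gives polynomial $k$-mixing for every $k \geq 2$, and hence mixing of all orders with polynomial rates.

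The only non-trivial bookkeeping is to establish the claimed relation $d_{k+1} = d_k + D$. This amounts to tracing the Sobolev index through the induction step \Cref{prop:qpark_implies_qpark+1}: the unique source of loss of regularity in that argument is the single application of the uniform estimate \Cref{lem:uniform_qpark}, which by the product-rule property of the norms $\cN_d$ (property (2) in the axioms) introduces exactly one extra factor of $D$ in the Sobolev index. The situation is entirely parallel to the QGUS bookkeeping carried out at the end of the proof of \Cref{thm:multiple_mixing_QGUS_flows}, where the same relation $d_{k+1} = d_k + D$ is obtained by exactly the same mechanism. I do not expect any conceptual obstacle here; the only issue to check, besides the arithmetic on $D_k$, is that the rate exponents $\delta_k$ and $\eta_k$ produced by successive applications of \Cref{prop:qpark_implies_qpark+1} and \Cref{lem:QPark_implies_k_mixing} remain strictly positive, which is immediate since each application decreases them only by a multiplicative factor that is strictly positive for each fixed $k$.
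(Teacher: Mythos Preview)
Your proposal is correct and follows essentially the same approach as the paper: apply \Cref{prop:QPar1} for the base case, iterate \Cref{prop:qpark_implies_qpark+1} to obtain $\QPar(k)$ for all $k$, and then use \Cref{lem:QPark_implies_k_mixing} inductively to deduce polynomial $k$-mixing. Your additional discussion of the bookkeeping $d_{k+1}=d_k+D$ (which the paper establishes by fixing $D_1=d_2$, $D_{k+1}=D_k+D$ before \Cref{lem:uniform_qpark} and then reading off $d_k=D_k$ from \Cref{lem:QPark_implies_k_mixing}) is accurate and more explicit than the paper's own statement.
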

\begin{proof}
	By \Cref{prop:QPar1}, the flow $\flowR$ satisfies property $\QPar(1)$ with a polynomial rate $\delta_1$. Hence, \Cref{prop:qpark_implies_qpark+1} implies that $\flowR$ satisfies $\QPar(k)$ for all $k \geq 2$ as well. Polynomial mixing of all orders follows from \Cref{lem:QPark_implies_k_mixing} by induction.
\end{proof}

\subsection{Proof of \Cref{prop:qpark_implies_qpark+1} }

The proof of \Cref{prop:qpark_implies_qpark+1} is entirely analogous to the one of \Cref{prop:park_implies_park+1}. 
Let us assume that $\flowR$ is QLUS with rates $\gamma_k$ and satisfies $\QPar(k)$ with rates $\delta_k$; let us show it satisfies $\QPar(k+1)$ with some explicit rate $\delta_{k+1} >0$.

We fix $\phi_0,\ldots, \phi_{k}\in \mathscr{C}^{\infty}_c(X)$, with $\mu(\phi_i)=0$ for at least one $i \in \{0,\dots, k\}$,
a $(k+1)$-tuples of times $\tb$ with $\Deltat \geq 1$, and a tower $\cT$ with height $\height \geq 1$.
Let $a\in (0,1/2]$ be given by the QLUS property, and we fix
\[
\beta_k := \frac{a\gamma_k\eta_2^2}{2^{5+k}\sigma_{k}}, \qquad 
H:= \frac{1}{4}\min \{\Deltat^{\gamma_k}, \height \}, \qquad 
 \text{and} \qquad M := %\frac{1}{4}\min \{\Deltat, \height \}
 H^{\frac{\eta_2\delta_k \gamma_k}{2^{12}k\sigma_k^2}}.
\] 
Using \Cref{lem:phi_phi_orthogonal} with $\varepsilon = M^{-\beta_k}$ for all $j$'s for which $\mu(\phi_j)\neq 0$ and property $\QPar(k)$, we can replace $\phi_i$ with $\phi_i^{\perp}$ for all $i \in \{0,\dots, k\}$.
Thus, it suffices to show 
\begin{equation}\label{eq:proof_prop_qlus_goal}
	\left\lvert \int_{\cT}\prod_{i=0}^{k} \phi_i^{\perp}\circ \flow_{t_i} \diff\mu\right\rvert \ll \left(\prod_{i=0}^{k} \cN_{D_{k+1}}(\phi_i)\right) \left(M^{-\beta_k} + %\min \{\Deltat, H \}
	H^{-\frac{\eta_2\delta_k}{2^8\sigma_k}}\right).
\end{equation}

We apply the QLUS property with $M\leq H$ chosen above: there exist $K_M := \lfloor M^{\gamma_k}  \rfloor$ maps $\mapg^{\tb}_m \colon \cup_{\varrho=1}^d \cR_{\varrho} \to \cup_{\varrho=1}^d \cR_{\varrho}$ which are $M^{-\gamma_k} $-almost measure preserving and satisfy (a) and (b) of (QLUS1).
Let $K=\lfloor K_M/2\rfloor$. In the same way as in \S\ref{sec:Proof_induction_park}, using the towers $\Tow_{\varrho,j }$ in (QLUS2), we obtain
\begin{multline*}
	\left\lvert \int_{\cT}\prod_{i=0}^{k} \phi_i^{\perp}\circ \flow_{t_i} \diff\mu\right\rvert  \ll \max_{\substack{\varrho = 1, \dots, d \\ j=1, \dots, k}} \left\lvert  \int_{X} \phi_0^{\perp} \cdot \left[\frac{1}{K} \sum_{m=1}^K \one_{\Tow_{\varrho, j} \cap (\mapg^{\tb}_m)^{-1}(\cT)} \, \prod_{i=1}^{k} \phi_i^{\perp}\circ \flow_{t_i} \circ \mapg^{\tb}_m \right] \diff\mu\right\rvert \\ + \left(\prod_{i=0}^{k} \cN_{D_1}(\phi_i)\right) M^{-\gamma_k/2}, 
\end{multline*}
where the towers $\Tow_{\varrho,j} \subset \cR_{\varrho}$ have height $H^{1/2}$. Here, we used that $\|\phi_0^{\perp}\|_{\mathscr{C}^1} \leq \cN_{d_2}(\phi_0^{\perp}) \leq M^{8\beta_k\sigma(D_1)/\eta_2} \cN_{d_2}(\phi_0)\ll M^{\gamma_k/4} \cN_{d_2}(\phi_0)$.

We apply the Cauchy-Schwarz Inequality and the Van der Corput Inequality (\Cref{lem:vdc} in \Cref{sec:vdc}) with $L = \lfloor \sqrt{K} \rfloor$, which, similarly as in \S\ref{sec:Proof_induction_park}, yield
\begin{multline*}
	\left\lvert \int_{\cT}\prod_{i=0}^{k} \phi_i^{\perp}\circ \flow_{t_i} \diff\mu\right\rvert \ll \|\phi_0\|_{\infty} \max_{\substack{\varrho = 1,\dots,d \\ j=1, \dots, k}} \sup_{\substack{0 \leq m,l \leq K \\ |m-l| \geq 1} }\left\lvert \int_{X} \one_{\cT(j,m,l)} \, \prod_{i=1}^{k} \phi_i^{\perp}\circ \flow_{t_i} \circ \mapg^{\tb}_m \cdot \phi_i^{\perp}\circ \flow_{t_i} \circ \mapg^{\tb}_{l} \diff\mu \right\rvert^{1/2} \\ + \left(\prod_{i=0}^{k} \cN_{D_1}(\phi_i)\right) M^{-\gamma_k/4},
\end{multline*}
where we denoted $\cT(j,m,l) = \Tow_{\varrho, j} \cap (\mapg^{\tb}_m)^{-1}(\cT) \cap (\mapg^{\tb}_{l})^{-1}(\cT)$.
Using (QLUS1)-(b) and \Cref{lem:towers2}, there exists a tower $\cT(j) \subset \cT(j,m,l)$ of height $H^{1/8}$ so that 
\begin{multline}\label{eq:final_step_qpark}
	\left\lvert \int_{\cT}\prod_{i=0}^{k} \phi_i^{\perp}\circ \flow_{t_i} \diff\mu\right\rvert \ll \cN_{D_1}(\phi_0) \max_{\substack{\varrho = 1,\dots,d \\ j=1, \dots, k}} \sup_{\substack{0 \leq m,l \leq K \\ |m-l| \geq 1} }\left\lvert \int_{\cT(j)}  \prod_{i=1}^{k} \phi_i^{\perp}\circ \flow_{t_i} \circ \mapg^{\tb}_m \cdot \phi_i^{\perp}\circ \flow_{t_i} \circ \mapg^{\tb}_{l} \diff\mu \right\rvert^{1/2} \\ + \left(\prod_{i=0}^{k} \cN_{D_1}(\phi_i)\right) (M^{-\gamma_k/4} + \height^{-1/16}),
\end{multline}

Let us now fix $1\leq j \leq k$, $1\leq \varrho \leq d$, and $0\leq m,l \leq K$ with $|m-l|\geq 1$, and we consider the right hand side above. 
Property (QLUS3) implies that, for fixed $i$ and $m$, the set $R_{\varrho}(i,m,p) \cap R_{\varrho}(i,l,q)$, for $p,q < M^{1+\gamma_k}$, is an intersection of towers of heights at least $H^{1/2}$.
Thus, by \Cref{lem:intersection_almost_partitions}, as in \S\ref{sec:Proof_induction_park}, the intersections  
\[
R_{\varrho}(\pb, \qb) := \bigcap_{i \in \{1, \dots, k\} \setminus \{j\}} R_{\varrho}(i,m,p_i) \cap R_{\varrho}(i,l,q_i),
\]
over all $\pb = (p_1, \dots, p_{j-1}, p_{j+1}, \dots, p_k), \qb = (q_1, \dots, q_{j-1}, q_{j+1}, \dots, q_k) \in \Z^{k-1}$ with $0 \leq p_i, q_i \leq M^{1+\gamma_k} - 1$,
form a $M^{-\gamma_k}$-almost partition of $\cR_{\varrho}$.
Abbreviating $\cT_{\pb, \qb} := \cT(j,m,l) \cap R_{\varrho}(\pb, \qb)$ and using the property (QLUS3), we get 
\[
\begin{split}
	&	\left\lvert \int_{\cT(j)}  \prod_{i=1}^{k} \phi_i^{\perp}\circ \flow_{t_i} \circ \mapg^{\tb}_m \cdot \phi_i^{\perp}\circ \flow_{t_i} \circ \mapg^{\tb}_{l} \diff\mu \right\rvert \\
	& \qquad	\ll \sum_{\pb, \qb} \left\lvert \int_{\cT_{\pb, \qb}} \, \left( \prod_{i \neq j} (\phi_i^{\perp} \cdot \phi_i^{\perp}\circ \flow_{M^{-\gamma_k} (q_i-p_i)} )\circ \flow_{t_i + M^{-\gamma_k}   p_i} \right) (\phi_j^{\perp} \cdot \phi_j^{\perp}\circ \flow_{(l-m)M^a} )\circ \flow_{t_j + mM^a}\diff\mu \right\rvert \\
	& \qquad \qquad +\left(\prod_{i=1}^{k} \cN_{D_1}(\phi_i)^2\right) M^{-\gamma_k/2}.
\end{split}
\]
By invariance of $\mu$, the integrand above can be composed by $\flow_{-\overline t}$, where ${\overline t} = t_1+M^{-\gamma_k} p_1$, or ${\overline t} = t_1+mM^a$ in case $j=1$. Then, the set 
\[
\flow_{-\overline t}(\cT_{\pb, \qb}) = \flow_{-\overline t}(\cT(j)) \cap \flow_{-\overline t}(R(\pb, \qb))
\] 
is an intersection of $k$ towers of heights $\geq H^{1/8}$. By \Cref{lem:towers2}, there exists a (possibly empty) tower $\widetilde{\cT} \subseteq \flow_{-\overline t}(\cT_{\pb, \qb})$ of height $H^{2^{-k-3}}$, so that 
\[
\mu(\flow_{-\overline t}(\cT_{\pb, \qb}) \setminus \widetilde{\cT} ) \ll H^{-2^{-k-4}} \ll M^{-\beta_k}.
\]
Thus,
\begin{equation}\label{eq:second_goal_qpark}
\begin{split}
	&	\left\lvert \int_{\cT(j)}  \prod_{i=1}^{k} \phi_i^{\perp}\circ \flow_{t_i} \circ \mapg^{\tb}_m \cdot \phi_i^{\perp}\circ \flow_{t_i} \circ \mapg^{\tb}_{l} \diff\mu \right\rvert \\
	& \qquad	\ll \sum_{\pb, \qb} \left\lvert \int_{\widetilde{\cT}} \, \left( \prod_{i \neq j} (\phi_i^{\perp} \cdot \phi_i^{\perp}\circ \flow_{M^{-\gamma_k} (q_i-p_i)} )\circ \flow_{t_i'} \right) (\phi_j^{\perp} \cdot \phi_j^{\perp}\circ \flow_{(l-m)M^a} )\circ \flow_{t_j'}\diff\mu \right\rvert \\
	& \qquad \qquad +\left(\prod_{i=1}^{k} \cN_{D_1}(\phi_i)^2\right) M^{-\beta_k},
\end{split}
\end{equation}
where the $k$-tuple of times $\tb'$ has components $t_i'=t_i + M^{-\gamma_k} p_i -\overline{ t}$ for $i\neq j$, and $t_j' = t_j + mM^a-\overline{ t}$.

We now apply \Cref{lem:uniform_qpark} to the integral in the right hand side of \eqref{eq:second_goal_qpark}, with $R_{-} = M^a \leq R_{+} = M$, and with $\Deltat' \geq \Deltat - 2M \geq \Deltat/2$. We get 
\begin{multline*}
\left\lvert \int_{\widetilde{\cT}} \, \left( \prod_{i \neq j} (\phi_i^{\perp} \cdot \phi_i^{\perp}\circ \flow_{M^{-\gamma_k} (q_i-p_i)} )\circ \flow_{t_i'} \right) (\phi_j^{\perp} \cdot \phi_j^{\perp}\circ \flow_{(l-m)M^a} )\circ \flow_{t_j'}\diff\mu \right\rvert \\
\leq \left(\prod_{i=0}^{k-1} \cN_{D_{k+1}}(\phi_i)^2\right) M^{8\beta_k\sigma_k/\eta_2} \left(M^{-a\eta_2}\mu(\widetilde{\cT}) + M^{\sigma_k} \min \{H, \Deltat\}^{-\frac{\eta_2\delta_k}{16^2\sigma_k}}\right).
\end{multline*}
Plugging this estimate into \eqref{eq:second_goal_qpark}, we obtain
\begin{multline*}
		\left\lvert \int_{\cT(j)}  \prod_{i=1}^{k} \phi_i^{\perp}\circ \flow_{t_i} \circ \mapg^{\tb}_m \cdot \phi_i^{\perp}\circ \flow_{t_i} \circ \mapg^{\tb}_{l} \diff\mu \right\rvert \\
		\leq \left(\prod_{i=0}^{k-1} \cN_{D_{k+1}}(\phi_i)^2\right) \left(M^{-a\eta_2/2} + M^{8(k-1)\sigma_k} %\min \{H, \Deltat\}
		H^{-\frac{\eta_2\delta_k}{16^2\sigma_k}} \right),
\end{multline*}
where we used the fact that there are $M^{1+\gamma_k} \leq M^2$ choices for each $p_i$ and $q_i$.
The above estimate, combined with \eqref{eq:final_step_qpark}, proves 
\eqref{eq:proof_prop_qlus_goal} and thus completes the proof of \Cref{prop:qpark_implies_qpark+1}.

%%%%%%%%%%

\section{Good special flows}\label{sec:Goodflows}

In this section, we focus on special flows and we provide sufficient conditions for them to be LUS and QLUS.

\subsection{Definitions}

Here and henceforth, we assume that the flow $\flowR$ is a special flow over $T \colon (X, \mathcal{B}, \nu) \to (X, \mathcal{B}, \nu)$ with roof function $\roof$ as in \Cref{sec:towers} satisfying the following properties.
\begin{itemize}
	\item[(a)] \emph{Integrability}: $\roof \in L^p(\baseX, \nu)$ for some $p >1$,
	%\item[(b)] \emph{Almost continuity}: there exists an open set $\baseX' \subseteq \baseX$ of full measure such that the restriction $\roof|_{\baseX'} \colon \baseX' \to \R_{>0}$ is continuous,
	\item[(b)] \emph{Factorization}: there exists a (possibly trivial) partition of $X$ into a family of measurable sets $\{X_z : z \in Z \}$, a disintegration $\{ \nu_z : z \in Z\}$ of $\nu$ into measures $\nu_z$ so that $\nu_z(X \setminus X_z) = 0$, and a measure $\sigma$ on $Z$ such that for any measurable set $A \subseteq X$ we can write 
	\[
	\nu(A) = \int_Z \nu_z(A) \diff \sigma(z).
	\]
\end{itemize} 
The example to have in mind for (b) above is the case of a fiber bundle $X$ over $Z$ (or simply a product space). 
Actually, in the cases we have in mind, the space $X$ is partitioned into 1-dimensional sets $X_z$ (segments or circles) and $\nu_z$ is the 1-dimensional Lebesgue measure. In this article, the factorization is trivial, i.e. $Z=\{z\}$ and $X=X_z$; here, we formulate our assumptions in this generality to be able to apply our results to mixing time-changes of nilflows, which is the subject of forthcoming work.

Recalling the notion of almost partitions introduced in \Cref{sec:towers}, we formulate the following definition.
\begin{definition}[Good almost partitions]
	Let $\varepsilon \geq 0$ and let $A \subseteq X$ be measurable. A family $\cP=\{P_a\}_{a\in \mathscr{A}}$ of measurable sets $P_a \subseteq X$ is a \emph{good $\varepsilon$-almost partition of $A$} if 
	\begin{enumerate}
		\item $\cP$ is a $\varepsilon$-almost partition of $A$ (with respect to the measure $\nu$ on $X$),
		\item for each $a\in \mathscr{A}$, there exists $z \in Z$ so that $P_a \subseteq X_z$,
		\item for every $z \in Z$, the collection $\{P_a \in \cP : P_a \subseteq X_z\}$ is at most countable.
	\end{enumerate}
\end{definition}
From \Cref{lem:intersection_almost_partitions} in \Cref{sec:vdc}, it is immediate that finite intersections of good almost partitions are good almost partitions.

\begin{definition}[Good special flows]\label{def:good_sf}
The flow $\flowR$ is a {\em good special flow} if there exist $d \geq 1$, an increasing sequence $(M_s)_{s \in \N}$, and, for every $\varepsilon >0$ and $k \in \Z_{\geq 0}$, there exists $M_{k,\varepsilon} > \varepsilon^{-2005k}$ such that for every $M_s, \height \geq M_{k,\varepsilon}$ there exist $t_{\height, M_s}\geq \max\{M_s, \height\}$  and a $\varepsilon$-almost partition $\{\cR_{\varrho}\}_{\varrho=1}^d$ of $\susp$ into $d$ towers $\cR_{\varrho}$ of base $B_{\varrho} \subseteq X$ and heights $\height_{\varrho} \in [\height^{1/2},\height^{3/2}]$ for which the following holds.
For every %$k \in \N$ and for every 
$(k+1)$-tuple of times $\tb$ with $\Deltat > t_{\height, M_s}$, there exists a (possibly uncountable) family $\cP_{\tb}=\cP_{\tb}(\height,M)=\{P_a\}_{a\in \mathscr{A}}$ such that

\begin{enumerate}
\item[(G1)] we have
\[
\max_{i=0,\ldots, k}\sup_{a\in \mathscr{A}} \;  \sup_{x\in P_a} \; \max_{0 \leq t \leq M_s + \height^{3/2}} \diam( \flow_t( T^{N(x,t_i)} P_a )) < \varepsilon;
\]
\item[(G2)] $\cP_{\tb}$ is a good $\varepsilon$-almost partition of $B_\varrho$ for every $\varrho= 1,\dots, d$;
\item[(G3)] for any $a\in \mathscr{A}$, $i \in \{1,\dots, k\}$, and $x \in P_a$, define $S^{+}(a,t_i,x)\geq 0$ (respectively $S^{-}(a,t_i,x)\leq 0$) be the smallest (respectively largest) number such that 
\[
\flow_{t_i}(P_a)\subset \bigcup_{S^{-}(a,t_i,x)\leq r\leq S^{+}(a,t_i,x)} \flow_r(T^{N(x,t_i)}(P_a)).
\]
Then, the latter set is a tower and $S^{+}(a,t_i,x)-S^{-}(a,t_i,x) <M_s$.

\item[(G4)] For every $a\in \mathscr{A}$ there exists $i_a\in \{1,\ldots, k\}$ and $x_a\in P_a\subseteq X_z$ such that 
 $\flow_{t_{i_a}}(x_a,0)=(T^{N(x_a,t_{i_a})}x_a,0)$ and 
$$
\Delta S_a := S^{+}(a,t_{i_a},x_a)-S^{-}(a,t_{i_a},x_a)> \varepsilon^{1000k}M_s
$$
Moreover
\begin{equation*}
\frac{\varepsilon (1-\varepsilon) \nu_z(P_a)}{\Delta S_a } \leq  \nu_z\Big\{x\in P_a \, : \, \flow_{t_{i_a}}(x)\in \bigcup_{r\in [L,L+\varepsilon]}\flow_r(T^{N(x_a,t_{i_a})}P_a) \Big\} \leq  \frac{\varepsilon (1+\varepsilon) \nu_z(P_a)}{\Delta S_a },
\end{equation*}
for every $S^{-}(a,t_{i_a},x_a)\leq L\leq S^{+}(a,t_{i_a},x_a)-\varepsilon$, see \Cref{fig:1}.
\end{enumerate}
\end{definition}

\begin{remark}
Let us explain the intuition behind this definition.
Properties (G1) and (G2) guarantee that the family $\cP_{\tb}$ is an almost partition of the bases of the Rokhlin towers with atoms converging to points. A crucial point is to control the shearing on each atom $P_a$. Property (G1) allows us to be sure that the deformation of the set $\flow_t(P_a)$ is only due to the shearing coming from fluctuations of the ergodic sums of the roof function; in other words, there is no stretching due to the base map. 

Property (G3) provides a uniform upper bound on the shearing of the images of $P_a$ under $\flow_{t_i}$. 
Property (L5) has two parts, giving us a fine control on the shearing. The first part  implies that, for every $a\in \mathscr{A}$, the atom $P_a$ is sheared by $\flow_{t_i}$ for at least one $i \in \{1,\dots, k\}$. Finally, the crucial second part is where the adjective \lq\lq locally uniformly\rq\rq\ comes from: on the atoms of $\cP_{\tb}$, the image of the stretched curve $\flow_{t_{i_a}}(P_a)$ is almost uniformly distributed with respect to the flow direction.
\end{remark}

\begin{figure}[h]%!]
	\centering
	\begin{tikzpicture}[scale=3.5]
		\draw (0,0) -- (1,0);
		\draw[white] (0,0.01)  node[anchor=east] {\textcolor{black}{$P_a$}};
		\draw[->] (1.3,0.5) -- (1.7,0.5);
		\draw[white] (1.5,0.51)  node[anchor=south] {\textcolor{black}{$\flow_{t_{i_a}}$}};
		\draw plot [smooth] coordinates { (2.2,-0.5) (2.4,0) (2.6,0.2) (2.8,0.6) (3,0.8) (3.2,1.2)};
		\draw[white] (3.1,1)  node[anchor=east] {\textcolor{black}{$\flow_{t_{i_a}}(P_a)$}};
		\draw (2.2,0) -- (3.2,0);
		\draw[white] (3.2,0.01)  node[anchor=west] {\textcolor{black}{$T^{N(x_a,t_{i_a})}P_a$}};
		\foreach \Point in {(0.2,-0.01), (2.4,-0.01)}{
			\node at \Point {\textbullet};
		}
		\draw[white] (0.2,-0.01)  node[anchor=north] {\textcolor{black}{$x_a$}};
		\draw[white] (2.4,-0.1)  node[anchor=west] 	{\textcolor{black}{$T^{N(x_a,t_{i_a})}(x_a)$}};
		\draw[thick, densely dotted] (2.2,-0.5) -- (2.2,1.2); 
		\draw[thick, densely dotted] (2.2,-0.5) -- (3.2,-0.5); 
		\draw[thick, densely dotted] (2.2,1.2) -- (3.2,1.2); 
		\draw[thick, densely dotted] (3.2,-0.5) -- (3.2,1.2); 
		\draw[white] (2.1,-0.5)  node[anchor=north] 	{\textcolor{black}{$S^{-}(a,t_{i_a},x_a)$}};
		\draw[white] (2.1,1.2)  node[anchor=south] 	{\textcolor{black}{$S^{+}(a,t_{i_a},x_a)$}};
		\draw[thick, blue, densely dotted] (2.2,0.5) -- (3.2,0.5); 
		\draw[thick, blue, densely dotted] (2.2,0.6) -- (3.2,0.6); 
		\draw[white] (2.2,0.65)  node[anchor=east] 	{\textcolor{blue}{$L+\varepsilon$}};
		\draw[white] (2.2,0.45)  node[anchor=east] 	{\textcolor{blue}{$L$}};
		\draw [line width=1mm, blue] (2.747,0.5) -- (2.8,0.6);
		\draw [line width=1mm, blue] (0.547,0) -- (0.6,0);
		\draw[white] (2.73,0.4)  node[anchor=west] {\textcolor{blue}{$\flow_{t_{i_a}}(A_L)$}};
		\draw[white] (0.55,0.01)  node[anchor=south] 	{\textcolor{blue}{$A_L$}};
	
		\clip (-0.5,-0.6) rectangle (3.5,1.5);
	\end{tikzpicture}
	\caption{Condition (G4) for LUS flows: for any valid choice of $L$, the relative measure of the blue set $A_L$ in $P_a$ is approximately $\varepsilon / \Delta S_a$, where $\Delta S_a := S^{+}(a,t_{i_a},x_a)-S^{-}(a,t_{i_a},x_a)$.}
	\label{fig:1}
\end{figure}
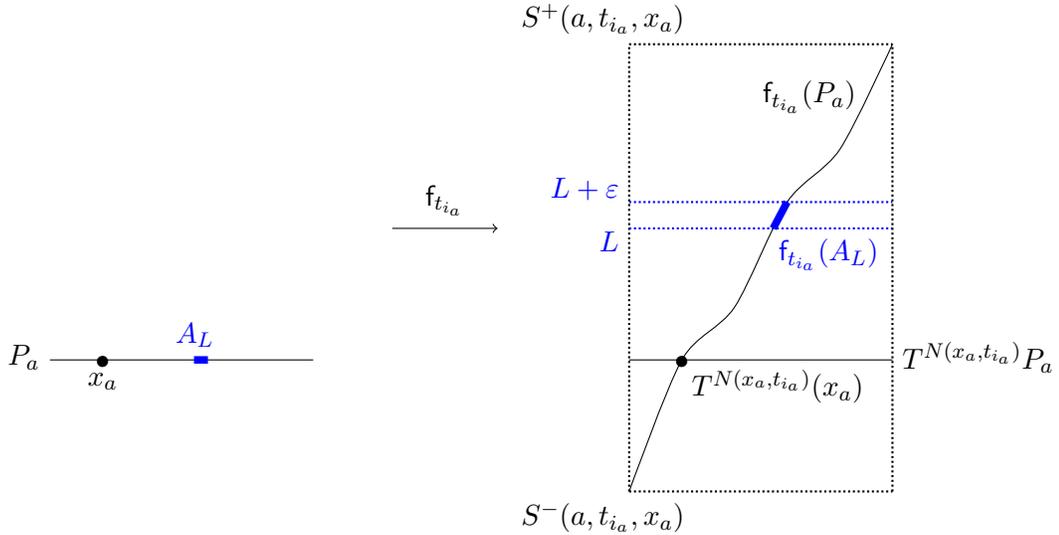

The following quantitative version corresponds to the notion of QLUS flows. 

\begin{definition}[Quantitatively good special flows]\label{def:qgood_sf}
	The flow $\flowR$ is a {\em quantitatively good special flow} if there exist $d\geq 1$ and, for every $k \in \Z_{\geq 0}$, there exists a rate $0< \varepsilon_k \leq 1/8$ and for every $1 < M \leq \height$ there exists a $M^{-\varepsilon_k}$-almost partition $\{\cR_{\varrho}\}_{\varrho=1}^d$ of $\susp$ into $d$ towers $\cR_{\varrho}$ with bases $B_{\varrho} \subset \baseX$ and heights $\height_{\varrho}\in[\height^{1/2},\height^{3/2}]$ for which the following holds.  
	For every $(k+1)$-tuple of times $\tb$ with $\Deltat^{\varepsilon_k} \geq 4\height$, there exists a (possibly uncountable) family $\cP_{\tb}=\{P_a\}_{a\in \mathscr{A}}$ such that

	\begin{enumerate}
		\item[(QG1)] we have
		\[
		\max_{i=0,\ldots, k}\sup_{a\in \mathscr{A}} \; \sup_{x\in P_a} \; \max_{0 \leq t \leq 2\height^{3/2}} \diam( \flow_t(T^{N(x,t_i)} P_a)) < M^{-\varepsilon_k};
		\]
		\item[(QG2)] $\cP_{\tb}$ is a good $M^{-\varepsilon_k}$-almost partition of $B_{\varrho}$ for every $\varrho=1,\dots, d$;
		\item[(QG3)] for any $a\in \mathscr{A}$, $i \in \{1,\dots, k\}$, and $x \in P_a$, define $S^{+}(a,t_i,x)\geq 0$ (respectively $S^{-}(a,t_i,x)\leq 0$) be the smallest (respectively largest) number such that 
		\[
		\flow_{t_i}(P_a)\subset \bigcup_{S^{-}(a,t_i,x)\leq r\leq S^{+}(a,t_i,x)} \flow_r(T^{N(x,t_i)}(P_a)).
		\]
		Then, the latter set is a tower and $S^{+}(a,t_i,x) - S^{-}(a,t_i,x) <M$.
		\item[(QG4)] For every $a\in \mathscr{A}$ there exists $i_a\in \{1,\ldots, k\}$ and $x_a\in P_a\subseteq X_z$ such that 
 $\flow_{t_{i_a}}(x_a,0)=(T^{N(x_a,t_{i_a})}x_a,0)$ and 
$$
\Delta S_a := S^{+}(a,t_{i_a},x_a)-S^{-}(a,t_{i_a},x_a)> M^{1-\varepsilon_k}.
$$
Moreover
	\begin{multline*}
			\frac{M^{-\varepsilon_k} (1-M^{-\varepsilon_k}) \nu_z(P_a)}{\Delta S_a} \leq  \nu_z\Big\{x\in P_a \, : \, \flow_{t_{i_a}}(x)\in \bigcup_{r\in [L,L+M^{-\varepsilon_k}]}\flow_r(T^{N(x_a,t_{i_a})}P_a) \Big\} \\ \leq  \frac{M^{-\varepsilon_k} (1+M^{-\varepsilon_k}) \nu_z(P_a)}{\Delta S_a},
		\end{multline*}
		for every $S^{-}(a,t_{i_a},x_a)\leq L\leq S^{+}(a,t_{i_a},x_a)-M^{-\varepsilon_k}$.
		\end{enumerate}
\end{definition}

\begin{proposition}\label{prop:good_implies_lus}
Good special flows are LUS. Furthermore, quantitatively good special flows with rate function $\varepsilon_k$ are QLUS with rate function $\gamma_k = \varepsilon_k^{1/4}$. 
\end{proposition}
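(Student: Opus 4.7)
The plan is to take the almost partition $\{\cR_\varrho\}_{\varrho=1}^d$ supplied by (G)/(QG) as the one required by LUS/QLUS, and to build the maps $\mapg^{\tb}_m$ atom-by-atom from the family $\cP_\tb = \{P_a\}$. For each atom $P_a \in \cP_\tb$, with associated index $i_a$ and basepoint $x_a$, I introduce the flow-direction coordinate $\rho_a \colon P_a \to [S^-(a, t_{i_a}, x_a), S^+(a, t_{i_a}, x_a)]$ characterized by $\flow_{t_{i_a}}(x) \in \flow_{\rho_a(x)}(T^{N(x_a, t_{i_a})} P_a)$. The content of (G4)/(QG4) is precisely that $(\rho_a)_\ast \nu_z|_{P_a}$ is $\varepsilon$-close (respectively $M^{-\varepsilon_k}$-close) to the uniform distribution of total mass $\nu_z(P_a)$ on $[S^-, S^+]$. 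I define $\mapg^{\tb}_m$ on $P_a$ as the \emph{shift by $m M_s^a$} in this coordinate, using the cumulative distribution function of $\nu_z$ to parametrize $P_a$ by $[0, \nu_z(P_a)]$ so that the shift is well-posed; the map is undefined on a sub-atom of relative $\nu_z$-measure $O(K_\delta M_s^a / \Delta S_a) = O(\delta)$ (respectively $O(M^{-\gamma_k})$) by the lower bound $\Delta S_a > \varepsilon^{1000k} M_s$ (respectively $\Delta S_a > M^{1-\varepsilon_k}$). Extend fiberwise to all of $\cR_\varrho$ by $\mapg^{\tb}_m(\flow_t(y)) := \flow_t(\mapg^{\tb}_m(y))$, so that $\mapg^{\tb}_m$ commutes with $\flowR$ on $\cR_\varrho$ and preserves its tower structure.

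Now verify the three LUS/QLUS conditions. Property (LUS1)(a) is immediate from (G1) applied with $i = 0$: both $x$ and $\mapg^{\tb}_m(x)$ lie in $\flow_t(P_a)$ for the same $t \leq \height_\varrho \leq \height^{3/2}$, and this set has diameter $<\varepsilon$. For (LUS1)(b), commutation of $\mapg^{\tb}_m$ with the flow on $\cR_\varrho$ means the preimage of any tower inside $\cR_\varrho$ is again a tower of the same height; applying \Cref{lem:towers2} to $\cT \cap \cR_\varrho$ yields a subtower of height $\height^{1/4}$ with small measure loss whose preimage under $\mapg^{\tb}_m$ is the required tower, and taking disjoint unions over $\varrho$ assembles a single tower. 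Almost measure preservation of $\mapg^{\tb}_m$ on $P_a$ follows from the quasi-uniformity in (G4): a shift by a constant in a coordinate with $(1 \pm \varepsilon)$-uniform pushforward induces a $1 + O(\varepsilon)$ density distortion. For (LUS2), set $\Tow_{\varrho, j}$ to be the subtower of $\cR_\varrho$ whose base is the union of those atoms $P_a \subseteq B_\varrho$ with $i_a = j$. For $y \in P_a$ in this base, both $\flow_{t_j}(\mapg^{\tb}_m(y))$ and $\flow_{t_j + m M_s^a}(y) = \flow_{m M_s^a} \circ \flow_{t_j}(y)$ lie inside $\flow_{\rho_a(y) + m M_s^a}(T^{N(x_a, t_j)} P_a)$, whose diameter is $<\varepsilon$ by (G1); flowing by an additional time $t \leq \height^{3/2}$ preserves the bound since the total time stays within the range controlled by (G1).

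For (LUS3), fix $i \in \{1,\dots,k\}$ and $m$. Condition (G3) guarantees that $\flow_{t_i}(P_a)$ lies in a tower of height $<M_s$, so the analogous coordinate $\rho_{a,i}$ is well defined on $P_a$ and the induced shear $\rho_{a,i}(\mapg^{\tb}_m(x)) - \rho_{a,i}(x)$ ranges in an interval of length $<M_s$; partitioning $P_a$ according to whether this shear lies in $[\ell \delta, (\ell+1)\delta)$ produces the $\lfloor \delta^{-1} M_s \rfloor$ subtowers $R_\varrho(i, m, \ell)$, and the distance bound is another invocation of (G1). The quantitative case is structurally identical with every $\varepsilon$ replaced by $M^{-\varepsilon_k}$; the principal obstacle is exactly the quantitative bookkeeping, namely verifying that $\gamma_k = \varepsilon_k^{1/4}$ is simultaneously (i) small enough that the maximum shift $K_M M^a = M^{a + \gamma_k}$ is negligible compared with the interval length $\Delta S_a \geq M^{1-\varepsilon_k}$, giving almost measure preservation within tolerance $M^{-\gamma_k}$; (ii) compatible with the finer $M^{1+\gamma_k}$-discretization required in (QLUS3), which refines the $\rho_{a,i}$ partition of $P_a$; and (iii) large enough to dominate the compound errors arising when intersecting multiple towers via \Cref{lem:towers2} and when passing the local measure estimates on each $P_a$ to global estimates on $\cup \cR_\varrho$ through the uncountable collection $\cP_\tb$.
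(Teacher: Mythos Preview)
Your approach is essentially the same as the paper's: the paper also builds $\mapg^{\tb}_m$ atom-by-atom on each $P_a$ as a shift by $N_m = m\sqrt{M_s}$ in the flow-direction coordinate (it does so by discretizing your $\rho_a$ into the level sets $A_L^{a,\tb}$ at scale $\varepsilon$ and mapping $A_L \to A_{L+N_m}$ via the near-equality of measures coming from (G4)), extends fiberwise to $\cR_\varrho$ via $\mapg^{\tb}_m(\flow_t y)=\flow_t(\mapg^{\tb}_m y)$, defines $\Tow_{\varrho,j}$ by grouping atoms with $i_a=j$, and verifies (LUS1)--(LUS3) exactly as you outline. The one point the paper makes more explicit than you do is the passage from the local $\varepsilon$-almost measure preservation on each $P_a$ to the $3\sqrt{\varepsilon}$-almost measure preservation on $B_\varrho$ (their Lemma~\ref{lem:mapg_on_bases}), which requires showing that most atoms intersecting $B_\varrho$ do so in relative measure $\geq 1-\sqrt{\varepsilon}$; you allude to this under your item (iii) but do not carry it out.
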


%%%%%%%%%%

\subsection{Proof of \Cref{prop:good_implies_lus}}\label{sec:contruct_mapg}

We prove that good special flows are LUS; the proof for the second claim is identical, with $M^{-\varepsilon_k}$ playing the role of $\varepsilon$.

Let $\flowR$ be a good special flow, and let $d \geq 1$ and $(M_s)_{s \in \N}$ be given by definition.
In \Cref{def:LUS_flows}, we set the same $d$ and $(M_s)_{s \in \N}$, together with $a=1/2$. 
Fix $\delta >0$ and $k \in \Z_{\geq 0}$; we let $\varepsilon = \delta^4/(20d)$ and $ M_{k, \varepsilon} \geq \varepsilon^{-2005k}$ be given by the definition of good special flows.
Fix $M_s, \height \geq M_{k, \varepsilon}$. 
By definition (see also \Cref{cor:towers3}), let $(\cR_\varrho)_{\varrho =1}^d$ be Rokhlin towers of heights $\height_\varrho \in [\height^{1/2},\height^{3/2}]$ and bases $B_\varrho \subset X$ such that $\mu(\susp \setminus \cup_\varrho \cR_\varrho) \leq \varepsilon$. 
Let us fix a $(k+1)$-tuple of times $\tb$ with $\Deltat > t_{\height, M_s}$, where $t_{\height, M_s}$ is given by \Cref{def:good_sf}. Let also $\cP_{\tb} = \{P_a\}_{a \in \mathscr{A}}$ be the associated good $\varepsilon$-almost partition of the bases $B_{\varrho}$, by (G2).
We first define $\delta^2$-almost measure preserving maps on the bases $B_\varrho$ into themselves with the aid of $\cP_{\tb}$, and then extend them to the union of the towers $\cR_\varrho$.

Fix $a \in \mathscr{A}$ and consider $i_a \in \{1,\dots, k\}$, $x_a \in P_a$, and $S_a^{\pm} = S_a^{\pm}(a,t_{i_a},x_a)$ be given by (G4) in \Cref{def:good_sf}.
For $0\leq L \leq \Delta S_a - \varepsilon$, where $\Delta S_a := S_a^{+} - S_a^{-} $, we define
\[
A_L^{a,\tb} := \left\{ x \in P_a : \flow_{t_{i_a}}(x) \in \bigcup_{r\in [S_a^{-} + L,S_a^{-} + L+\varepsilon]}\flow_r(T^{N(x_a,t_{i_a})}P_a) \right\},
\]
see \Cref{fig:1}. 
By property (G4), for all $L,L'$ as above, we have
\begin{equation}\label{eq:unifmeasureA}
1-3\varepsilon \leq \frac{\nu_z(A_L^{a,\tb})}{\nu_z(A_{L'}^{a,\tb})} \leq 1+3\varepsilon,
\end{equation}
provided that $\varepsilon <1/3$.

For $m \leq K_{\varepsilon} := \lfloor \varepsilon^{-1/4} \rfloor \leq \lfloor \delta^{-1} \rfloor$, let $N_m = m \cdot \sqrt{M_s}$. Let us also consider $0\leq \ell \leq \varepsilon^{-1} (\Delta S_a -  K_{\varepsilon} \sqrt{M_s})$ and $L = \ell \cdot \varepsilon \leq \Delta S_a -  K_{\varepsilon} \sqrt{M_s}$. By \eqref{eq:unifmeasureA}, there exists a $3\varepsilon$-almost measure preserving map
\[
\mapg^{a,\tb}_{m,L} \colon A_L^{a,\tb} \to A_{L+N_m}^{a,\tb}.
\]

\begin{lemma}
There exists a $3\varepsilon$-almost measure preserving map $\mapg^{a,\tb}_m \colon P_a \to P_a$ whose restriction to any $A_{\ell \varepsilon}^{a,\tb}$ with $0 \leq \ell \leq \varepsilon^{-1} (\Delta S_a - K_{\varepsilon}\sqrt{M_s})$ coincides with $\mapg^{a,\tb}_{m,L}$.
\end{lemma}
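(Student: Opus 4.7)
The plan is to glue together the maps $\mapg^{a,\tb}_{m,\ell\varepsilon}$ for $\ell$ ranging over integers in $\{0, 1, \ldots, L_{\max}\}$, where $L_{\max} := \lfloor \varepsilon^{-1}(\Delta S_a - K_\varepsilon \sqrt{M_s})\rfloor$, and to extend the result arbitrarily outside the domain of the construction. I would first observe that the strips $\{A_{\ell\varepsilon}^{a,\tb}\}_{\ell=0}^{L_{\max}}$ are pairwise disjoint: they are the preimages under $\flow_{t_{i_a}}$ of the disjoint sub-strips $\flow_{[S_a^- + \ell\varepsilon,\, S_a^- + (\ell+1)\varepsilon]}(T^{N(x_a,t_{i_a})}P_a)$; analogously, the shifted strips $\{A_{\ell\varepsilon + N_m}^{a,\tb}\}_{\ell=0}^{L_{\max}}$ are pairwise disjoint. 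Set
\[
D := \bigsqcup_{\ell=0}^{L_{\max}} A_{\ell\varepsilon}^{a,\tb}, \qquad E := \bigsqcup_{\ell=0}^{L_{\max}} A_{\ell\varepsilon+N_m}^{a,\tb}.
\]

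The second step is to show that $\nu_z(P_a \setminus D)$ and $\nu_z(P_a \setminus E)$ are negligible compared to $\varepsilon \nu_z(P_a)$. By construction, $P_a \setminus D$ consists of points $x \in P_a$ whose image $\flow_{t_{i_a}}(x)$ lies in a top strip of length at most $K_\varepsilon \sqrt{M_s} + \varepsilon$ along $\flow_r(T^{N(x_a,t_{i_a})}P_a)$. Partitioning this top strip into sub-strips of width $\varepsilon$ and summing the upper bound in (G4) yields
\[
\nu_z(P_a \setminus D) \leq \frac{4 K_\varepsilon \sqrt{M_s}}{\Delta S_a}\, \nu_z(P_a).
\]
Using $\Delta S_a > \varepsilon^{1000k} M_s$, $K_\varepsilon \leq \varepsilon^{-1/4}$, and $M_s \geq M_{k,\varepsilon} \geq \varepsilon^{-2005k}$, the right-hand side is bounded by $4\varepsilon^{2.5k - 1/4}\nu_z(P_a)$, which is much smaller than $\varepsilon \nu_z(P_a)$ for every $k \geq 1$; the identical argument controls $\nu_z(P_a \setminus E)$.

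Third, I would define $\mapg^{a,\tb}_m|_{A_{\ell\varepsilon}^{a,\tb}} := \mapg^{a,\tb}_{m,\ell\varepsilon}$ for each $\ell \in \{0, \dots, L_{\max}\}$ and extend arbitrarily (e.g.\ by the identity) on $P_a \setminus D$. Since the source strips $\{A_{\ell\varepsilon}^{a,\tb}\}$ and the target strips $\{A_{\ell\varepsilon + N_m}^{a,\tb}\}$ form disjoint families, the $3\varepsilon$-almost measure preserving inequality can be summed piece by piece: for any measurable $B$ in the proper codomain, decompose $B = \bigsqcup_\ell (B \cap A_{\ell\varepsilon+N_m}^{a,\tb})$, compute $(\mapg^{a,\tb}_m)^{-1}(B) \cap D$ using the corresponding decomposition into the $\mapg^{a,\tb}_{m,\ell\varepsilon}$, and conclude that $(1-3\varepsilon)\nu_z(B) \leq \nu_z((\mapg^{a,\tb}_m)^{-1}(B) \cap D) \leq (1+3\varepsilon)\nu_z(B)$. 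Combined with the bounds from the second step, this yields the desired $3\varepsilon$-almost measure preserving property.

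The main technical point is the estimate $\nu_z(P_a \setminus D) \ll \varepsilon\, \nu_z(P_a)$: this is where the very specific quantitative relation between $K_\varepsilon$, $M_s$, and $\Delta S_a$ built into \Cref{def:good_sf} (in particular the requirement $M_{k,\varepsilon} \geq \varepsilon^{-2005k}$, which is precisely tuned to dominate the losses $\varepsilon^{-1/4-1000k}$ coming from $K_\varepsilon$ and the lower bound on $\Delta S_a$) is essential, so that the error from the missing top strip can be absorbed without degrading the constant $3\varepsilon$ inherited from the individual pieces.
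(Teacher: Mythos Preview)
Your proposal is correct and follows essentially the same approach as the paper: both arguments reduce to showing that the strips $A_{\ell\varepsilon}^{a,\tb}$ are pairwise disjoint (the paper cites (G3), which is equivalent to your tower observation) and that their union covers $P_a$ up to measure $\ll \varepsilon^2\nu_z(P_a)$ via the bound $K_\varepsilon\sqrt{M_s}/\Delta S_a \leq \varepsilon^{-1/4-1000k}M_{k,\varepsilon}^{-1/2}$ together with $M_{k,\varepsilon}\geq \varepsilon^{-2005k}$. Your version is slightly more explicit in also checking the codomain $E$ and spelling out the additivity of the $3\varepsilon$-inequality across the disjoint pieces, which the paper leaves implicit.
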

\begin{proof}
It suffices to show that the sets $\{A_{\ell \varepsilon}^{a,\tb} : 0\leq \ell \leq \varepsilon^{-1} (\Delta S - K_{\varepsilon}\sqrt{M_s})\}$ are pairwise disjoint and they cover $P_a$ up to a set of measure at most $\varepsilon^2 \nu_z(P_a)$; their union $D_m^{a,\tb}$ will form the proper domain of $\mapg^{a,\tb}_m$.

The fact that the sets $A_{\ell \varepsilon}^{a,\tb}$ are disjoint for different $\ell$ follows from their definition and property (G3). Using property (G4) we have 
\begin{multline*}
\nu_z \left(P_a \setminus \bigcup\{A_{\ell \varepsilon}^{a,\tb} : 0\leq \ell \leq \varepsilon^{-1} (\Delta S_a -  K_{\varepsilon} \sqrt{M_s})\} \right) \leq \sum_{\ell =  \varepsilon^{-1} (\Delta S_a -  K_{\varepsilon} \sqrt{M_s})}^{ \varepsilon^{-1} (\Delta S_a - \varepsilon)} \nu_z(A_{\ell \varepsilon}^{a,\tb})  \\ \leq  \varepsilon^{-1}  K_{\varepsilon} \sqrt{M_s}  \frac{\varepsilon (1+\varepsilon) \nu_z(P_a)}{\Delta S_a} \leq 2 \varepsilon^{-1/4 - 1000k} M_{k,\varepsilon}^{-1/2} \nu_z(P_a) \leq \varepsilon^2 \nu_z(P_a),
\end{multline*}
since $M_{k,\varepsilon} \geq \varepsilon^{-2005k}$. 
\end{proof}

We now extend $\mapg^{a,\tb}_m$ to almost measure preserving maps over the bases $B_\varrho$ as follows.

\begin{lemma}\label{lem:mapg_on_bases}
	For any $\varrho=1, \dots, d$, there exists a $3\sqrt{\varepsilon}$-almost measure preserving map $\mapg^{\varrho,\tb}_m \colon B_\varrho \to B_\varrho$ whose restriction to any $P_a$ coincides with $\mapg^{a,\tb}_m$.
\end{lemma}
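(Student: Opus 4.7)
The plan is to define $\mapg^{\varrho,\tb}_m$ by simply pasting together the fiberwise maps $\mapg^{a,\tb}_m$ constructed in the previous lemma. Concretely, for each atom $P_a$ of the good $\varepsilon$-almost partition $\cP_{\tb}$ that is contained in $B_\varrho$, let $D^a_m, E^a_m \subseteq P_a$ be the proper domain and codomain of $\mapg^{a,\tb}_m$. I will set $D_\varrho := \bigsqcup_a D^a_m$, $E_\varrho := \bigsqcup_a E^a_m$, and declare
\[
\mapg^{\varrho,\tb}_m (x) := \mapg^{a,\tb}_m(x) \quad \text{for } x \in D^a_m,
\]
extending arbitrarily (e.g.\ by the identity) outside $D_\varrho$. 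It then remains to verify the two required measure estimates with constant $3\sqrt{\varepsilon}$.

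For the bound on $\nu(B_\varrho \setminus D_\varrho)$, I would split
\[
\nu(B_\varrho \setminus D_\varrho) \leq \nu\Big(B_\varrho \setminus \bigsqcup_a P_a\Big) + \sum_a \nu(P_a \setminus D^a_m).
\]
The first summand is at most $\varepsilon\, \nu(B_\varrho)$ by property (G2). For the second, since each $P_a \subseteq X_{z_a}$ and $\mapg^{a,\tb}_m$ is $3\varepsilon$-almost measure preserving with respect to $\nu_{z_a}$, the disintegration $\nu = \int \nu_z\diff\sigma(z)$ of assumption (b) gives $\nu(P_a \setminus D^a_m)\le 3\varepsilon\,\nu(P_a)$. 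Applying Fubini/Tonelli in the form $\sum_a \nu(P_a) = \int_Z \sum_{a:P_a \subseteq X_z} \nu_z(P_a)\,\diff\sigma(z) \leq \nu(B_\varrho)$ (the inner sum being countable by the third clause of the good partition definition, and controlled because the atoms in a fixed fiber are disjoint), one gets $\sum_a \nu(P_a\setminus D^a_m) \le 3\varepsilon\,\nu(B_\varrho)$. Hence $\nu(B_\varrho\setminus D_\varrho)\leq 4\varepsilon\,\nu(B_\varrho)$, which is $\le 3\sqrt{\varepsilon}\,\nu(B_\varrho)$ for $\varepsilon$ small. The identical argument gives the corresponding bound for $\nu(B_\varrho\setminus E_\varrho)$.

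For the pushforward condition, fix measurable $A\subseteq E_\varrho$ and decompose $A = \bigsqcup_a (A\cap E^a_m)$. By construction,
\[
\nu\bigl((\mapg^{\varrho,\tb}_m)^{-1}(A)\cap D_\varrho\bigr) = \sum_a \nu\bigl((\mapg^{a,\tb}_m)^{-1}(A\cap E^a_m)\cap D^a_m\bigr).
\]
Each summand lies in $(1\pm 3\varepsilon)\,\nu(A\cap E^a_m)$ by the fiberwise almost measure preserving property transported through the disintegration, so summing (again via Tonelli over $(Z,\sigma)$) yields a value in $(1\pm 3\varepsilon)\,\nu(A)$. Combined with the previous step, this verifies that $\mapg^{\varrho,\tb}_m$ is $3\sqrt{\varepsilon}$-almost measure preserving.

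The only real subtlety is bookkeeping for a possibly \emph{uncountable} index set $\mathscr{A}$: the sums above are not, a priori, well-defined. This is exactly what the goodness of the almost partition is designed to circumvent, since it guarantees countability of the atoms within each fiber $X_z$; everything is then organised as a countable sum inside the $\sigma$-integral. The explicit constants are intentionally loose—$4\varepsilon$ suffices and we upgrade to $3\sqrt{\varepsilon}$ for later convenience—so no sharp estimate is needed at any step.
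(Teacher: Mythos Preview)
Your proposal has a genuine gap concerning which atoms you use. You write that you restrict to atoms $P_a$ \emph{contained in} $B_\varrho$, but property (G2) only tells you that the union of atoms \emph{intersecting} $B_\varrho$ covers $B_\varrho$ up to measure $\varepsilon\,\nu(B_\varrho)$; there is no control whatsoever on the union of atoms \emph{contained} in $B_\varrho$. So your bound $\nu(B_\varrho\setminus\bigsqcup_a P_a)\le\varepsilon\,\nu(B_\varrho)$ is not justified with the indexing you chose. Conversely, if you switch to atoms intersecting $B_\varrho$, then $\mapg^{a,\tb}_m\colon P_a\to P_a$ can send points of $B_\varrho$ outside $B_\varrho$, and your map is no longer a self-map of $B_\varrho$ on a large set.

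This is precisely why the $\sqrt{\varepsilon}$ appears, and your remark that ``$4\varepsilon$ suffices and we upgrade to $3\sqrt{\varepsilon}$ for later convenience'' is incorrect. The paper's argument uses a Chebyshev-type step: from (G2), the total $\nu$-measure of atoms $P_a$ with $\nu_z(P_a\setminus B_\varrho)\ge\sqrt{\varepsilon}\,\nu_z(P_a)$ is at most $\sqrt{\varepsilon}\,\nu(B_\varrho)$, so one may discard them. On each remaining atom, at most a $\sqrt{\varepsilon}$-fraction lies outside $B_\varrho$, and hence restricting the domain to $D^a_m\cap B_\varrho\cap(\mapg^{a,\tb}_m)^{-1}(B_\varrho)$ loses only another $O(\sqrt{\varepsilon})$-fraction (using that $\mapg^{a,\tb}_m$ is $3\varepsilon$-almost measure preserving). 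This square-root splitting is the missing idea; once you include it, the rest of your pasting and disintegration bookkeeping is exactly right.
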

\begin{proof}
	We can define $\mapg^{\tb}_m \colon \cup \cP_{\tb} \to \cup \cP_{\tb}$ by $\mapg^{a,\tb}_m$ for each atom of the almost partition $\cP_{\tb}$. The proper domain of this map is then the union $ \cup_a D_m^{a, \tb}$ of the proper domains $D_m^{a, \tb}$ of the maps $\mapg^{a,\tb}_m$. Let us show that it restrict to a $3\sqrt{\varepsilon}$-almost measure preserving map of the bases $B_\varrho$ of each Rokhlin tower $\cR_\varrho$. By (G2), we have 
	\[
	\nu\left(B_\varrho \triangle \bigcup \{P_a \in \cP_{\tb} : P_a \cap B_\varrho \neq \emptyset \} \right)<\varepsilon \nu(B_\varrho).  
	\]
	Thus, using the disintegration of $\nu$, we get
	\[
	\begin{split}
		\varepsilon \nu(B_\varrho) &\geq \nu \left(B_\varrho^c \cap \bigcup \{P_a \in \cP_{\tb} : P_a \cap B_\varrho \neq \emptyset \} \right) \\
		&\geq \int_Z \nu_z \left(B_\varrho^c \cap \bigcup \{ P_a \subseteq X_z : P_a \cap B_\varrho \neq \emptyset \text{ and } \nu_z (B_\varrho^c \cap P_a) \geq \sqrt{\varepsilon} \nu_z(P_a)  \} \right) \diff \sigma(z)\\
		& = \int_Z \sum \left\{\nu_z (B_\varrho^c \cap  P_a) \ :\ P_a \subseteq X_z, \text{ } P_a \cap B_\varrho \neq \emptyset \text{ and } \nu_z (B_\varrho^c \cap P_a) \geq \sqrt{\varepsilon} \nu_z(P_a)\right\}\\
		& \geq \sqrt{\varepsilon}\nu\left(\bigcup \{ P_a : P_a \cap B_\varrho \neq \emptyset \text{ and } \nu_z (B_\varrho^c \cap P_a) \geq \sqrt{\varepsilon} \nu_z(P_a)  \} \right);
	\end{split}
	\]
	note that the sum above is at most countable by the properties of $\cP_{\tb}$. 
	We deduce that
	\[
	\nu\left(\bigcup \{ P_a : P_a \cap B_\varrho \neq \emptyset \text{ and } \nu_z (B_\varrho \cap P_a) < (1-\sqrt{\varepsilon}) \nu_z(P_a)  \} \right) \leq \sqrt{\varepsilon} \nu(B_\varrho);
	\]
	in other words, up to removing from $B_\varrho$ a set of atoms whose union has measure at most $\sqrt{\varepsilon} \nu(B_\varrho)$, we can make sure that all atoms $P_a$ which intersect $B_\varrho$ do so in a set of relative measure at least $(1-\sqrt{\varepsilon})$. Let us call $\mathscr{A}'$ the corresponding collection of indexes of these atoms. 
	Then, the set 
	\[
	D_m^{\varrho, \tb} = \bigcup_{a \in \mathscr{A}'} D_m^{a,\tb} \cap B_\varrho \cap (\mapg^{a,\tb}_m)^{-1} (B_\varrho)
	\]
	has measure at least $(1-3\sqrt{\varepsilon})\nu(B_\varrho)$ and is a proper domain for $\mapg^{\varrho,\tb}_m \colon B_\varrho \to B_\varrho$. This proves the lemma.
\end{proof}

By the properties of towers, we can extend the maps $\mapg^{\varrho,\tb}_m$ to the whole tower $\cR_\varrho$ by setting
\begin{equation}\label{eq:towdef}
\mapg^{\varrho,\tb}_m (\flow_t(x)) = \flow_t(\mapg^{\varrho,\tb}_m (x)),
\end{equation}
for $x \in B_\varrho$ and $0\leq t \leq \height_\varrho$. In particular, since the towers $\cR_\varrho$ are disjoint, we can define a map $\mapg^{\tb}_m \colon \susp \to \susp$ by $\mapg^{\tb}_m|_{\cR_\varrho} = \mapg^{\varrho,\tb}_m$, which is $3\sqrt{\varepsilon}$-almost measure preserving. We denote by 
\[
D^{\tb}= \bigcap_{m=0}^{ K_{\varepsilon} } \bigcup_{\varrho=1}^d \bigcup_{t \in [0,\height_\varrho]} \flow_t(D_m^{\varrho,\tb})
\]
the intersection of the proper domains of $\mapg^{\tb}_m$, for all $m=0, \dots, K_{\varepsilon}$.
Note that $\mu(\susp \setminus D^{\tb}) \leq 4\sqrt{\varepsilon}  K_{\varepsilon} \leq 4 \varepsilon^{1/4} \leq \delta$.

Let us verify (LUS1): fix $0 \leq m \leq K_{\varepsilon}$ and $x \in D^{\tb}$, write $x = \flow_t(y) \in \cR_{\varrho}$ for some $0\leq t \leq \height_{\varrho}$ and $y \in D_m^{\varrho,\tb} \subseteq P_a$. Then, by (G1),
\[
\dist \left(\mapg^{\tb}_m (x), x \right)= \dist \left(\flow_t(\mapg^{\varrho, \tb}_m y), \flow_t(y) \right) \leq \diam(\flow_t(P_a)) \leq \varepsilon.
\]
This proves (LUS1)-(a); the following lemma proves (LUS1)-(b).

\begin{lemma}\label{lem:mapg_sends_towers_to_towers}
	Let 
	$\cT$ be a tower of height $H \geq \height^{1/2}$. There exists a tower $\overline{\cT} \subseteq (\mapg^{\tb}_m)^{-1}(\cT)$ of height $\height^{1/4}$ so that 
	\[
	\mu \left((\mapg^{\tb}_m)^{-1}(\cT) \setminus \overline{\cT}\right) \leq 16d \varepsilon^{1/4} \qquad \text{for all} \qquad m=0, \dots, K_{\varepsilon}.
	\]
\end{lemma}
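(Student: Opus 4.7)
The idea is to build $\overline{\cT}$ as a disjoint union $\bigcup_\varrho \overline{\cT}_\varrho$ of towers $\overline{\cT}_\varrho \subseteq \cR_\varrho$ of height $\height^{1/4}$, exploiting the key fact that, by the construction in \eqref{eq:towdef}, the map $\mapg^{\tb}_m$ commutes with the flow along orbit segments that stay inside a single $\cR_\varrho$; under this commutation, preimages of towers in $\cR_\varrho$ inherit a tower structure.

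For each $\varrho$, first apply \Cref{lem:towers2} to the two towers $\cT$ and $\cR_\varrho$ (both of height $\geq \height^{1/2}$) with $\overline{\height} = \height^{1/4}$, obtaining a tower $\widetilde{\cT}_\varrho \subseteq \cT \cap \cR_\varrho$ of height $\height^{1/4}$ and base $\widetilde{B}_\varrho \subseteq \cR_\varrho$ such that $\mu(\cT \cap \cR_\varrho \setminus \widetilde{\cT}_\varrho) \leq 4\height^{-1/4}$. Then set
\[
\overline{B}_\varrho := \{x \in \cR_\varrho \cap D^{\tb} : \mapg^{\tb}_m(x) \in \widetilde{B}_\varrho\}, \qquad \overline{\cT}_\varrho := \bigcup_{0 \leq t \leq \height^{1/4}} \flow_t(\overline{B}_\varrho),
\]
and $\overline{\cT} := \bigcup_\varrho \overline{\cT}_\varrho$. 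Flow commutation gives $\mapg^{\tb}_m(\flow_t(x)) = \flow_t(\mapg^{\tb}_m(x)) \in \flow_t(\widetilde{B}_\varrho) \subseteq \widetilde{\cT}_\varrho \subseteq \cT$ for every $x \in \overline{B}_\varrho$ and $t \in [0, \height^{1/4}]$, so $\overline{\cT} \subseteq (\mapg^{\tb}_m)^{-1}(\cT)$.

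The main technical step is to verify the tower disjointness of $\overline{\cT}_\varrho$. Suppose $\flow_t(x_1) = \flow_{t'}(x_2)$ with $x_1, x_2 \in \overline{B}_\varrho$ and $t,t' \in [0, \height^{1/4}]$. Writing $x_i = \flow_{s_i}(y_i)$ with $y_i \in B_\varrho$, the uniqueness of coordinates in the tower $\cR_\varrho$ forces $y_1 = y_2$ and $s_1 - s_2 = t' - t$; applying $\mapg^{\tb}_m$ and flow commutation, the images $\mapg^{\tb}_m(x_i) \in \widetilde{B}_\varrho$ lie on a common orbit at vertical distance $|s_1 - s_2| \leq \height^{1/4}$, which violates the tower property of $\widetilde{\cT}_\varrho$ (whose height is exactly $\height^{1/4}$) unless $s_1 = s_2$. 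Then $t = t'$, and aperiodicity forces $x_1 = x_2$. Across distinct $\varrho$'s, disjointness is automatic from $\cR_\varrho \cap \cR_{\varrho'} = \emptyset$.

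To bound $\mu((\mapg^{\tb}_m)^{-1}(\cT) \setminus \overline{\cT})$, I decompose the complement into four pieces: (i) $\susp \setminus D^{\tb}$, of measure $\leq 4\varepsilon^{1/4}$; (ii) $D^{\tb} \setminus \bigcup_\varrho \cR_\varrho$, of measure $\leq \varepsilon$; (iii) for each $\varrho$, the preimage of $\cT \cap \cR_\varrho \setminus \widetilde{\cT}_\varrho$, controlled by $(1 + 3\sqrt{\varepsilon}) \cdot 4\height^{-1/4}$ via almost measure preservation of $\mapg^{\tb}_m$; and (iv) for each $\varrho$, the set where $\mapg^{\tb}_m(x) \in \widetilde{\cT}_\varrho$ but $x \notin \overline{\cT}_\varrho$. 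For (iv), writing $\mapg^{\tb}_m(x) = \flow_\tau(b)$ with $b \in \widetilde{B}_\varrho$ and $\tau \in [0, \height^{1/4}]$, the reverse flow commutation places $\flow_{-\tau}(x) \in \overline{B}_\varrho$ provided $x = \flow_{s_x}(y_x) \in D^{\tb} \cap \cR_\varrho$ satisfies $s_x \geq \tau$; the failure set is thus contained in the bottom $\height^{1/4}$-slab of $\cR_\varrho$, of measure $\leq \height^{-1/4}$. Summing all contributions and using $\height^{-1/4} \leq \varepsilon^{1/4}$ (which holds since $\height \geq M_{k,\varepsilon} \geq \varepsilon^{-2005k}$) gives the desired $16 d \varepsilon^{1/4}$ bound.
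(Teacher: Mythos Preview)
Your proof is correct and follows essentially the same approach as the paper. Both arguments use \Cref{lem:towers2} to extract, for each $\varrho$, a subtower $\widetilde{\cT}_\varrho \subseteq \cT \cap \cR_\varrho$ of height $\height^{1/4}$, and then exploit the flow-commutation relation \eqref{eq:towdef} to conclude that its preimage under $\mapg^{\tb}_m$ is again a tower of the same height inside $\cR_\varrho$; the union over $\varrho$ is the desired $\overline{\cT}$. The paper states this last step in one line---since $\mapg^{\varrho,\tb}_m$ acts level-by-level on $\cR_\varrho$, the set $(\mapg^{\tb}_m)^{-1}(\widetilde{\cT}_\varrho)$ is automatically a tower---whereas you unpack it by explicitly verifying the tower-disjointness condition and decomposing the error into four pieces. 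Your restriction to $D^{\tb}$ in the definition of $\overline{B}_\varrho$ and your step~(iv) (the bottom $\height^{1/4}$-slab) are harmless extras that the paper's more direct formulation avoids by working with the full preimage rather than the tower over the preimage of the base.
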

\begin{proof}
	Since $H, \height_\varrho \geq \height^{1/2}$, by \Cref{lem:towers2}, for every $\varrho = 1,\dots,d$, there exists a tower $\cT_{\varrho} \subseteq \cT \cap \cR_{\varrho}$ of height $\height^{1/4}$ so that $\mu(\cT \cap \cR_{\varrho} \setminus \cT_{\varrho})\leq 4 \height^{-1/4}$. By construction of the map $\mapg^{\tb}_m$, see \eqref{eq:towdef}, the set $(\mapg^{\tb}_m)^{-1}(\cT_{\varrho})$ is a tower contained in $\cR_{\varrho}$ of the same height as $\cT_{\varrho}$. We then consider the tower $\overline{\cT} = (\mapg^{\tb}_m)^{-1}(\cT_1) \cup \cdots \cup (\mapg^{\tb}_m)^{-1}(\cT_d)$. By the almost measure preserving property, we have
	\[
	\begin{split}
		\mu \left( (\mapg^{\tb}_m)^{-1}(\cT) \setminus \overline{\cT} \right) &\leq   \mu\left( (\mapg^{\tb}_m)^{-1}(\cT) \setminus \bigcup_{\varrho = 1}^d \cR_{\varrho}\right) + (1+\varepsilon) \sum_{\varrho = 1}^d \mu\left( \cT \cap \cR_{\varrho} \setminus \cT_{\varrho} \right) \\
		&\leq 5 \varepsilon^{1/4}+ 4d(1+\varepsilon)\height^{-1/4} \leq 16d \varepsilon^{1/4},
	\end{split}
	\]
	which completes the proof.
\end{proof}

We group the atoms $P_a$ (and hence the maps $\mapg^{\tb}_m$ as well) into the following sets: for $j \in \{1, \dots, k\}$, let $\mathscr{A}_{j} = \{ a \in \mathscr{A} : i_a = j\}$. We then define the corresponding bases and towers
\[
B_{\varrho,j} = \bigcap_{m=0}^{ K_{\varepsilon} }D_m^{\varrho,\tb} \cap \bigcup_{a\in \mathscr{A}_{j}} P_a \subseteq B_\varrho, \qquad \text{and} \qquad \Tow_{\varrho,j} = \bigcup_{0\leq t\leq \height_\varrho} \flow_t(B_{\varrho,j}) \subseteq D^{\tb} \cap \cR_\varrho.
\]
In other words, $B_{\varrho,j}$ consists of those points in the base $B_\varrho$ which belong to the proper domain of the maps $\mapg^{\varrho,\tb}_m$ for all $m=0,\dots, K_{\varepsilon}$ and moreover for which the corresponding $i_a$ in (G4) equals $j$. Note that 
\begin{equation}\label{eq:measure_union_towj}
	\begin{split}
&\mu \left( \bigcup_{j=1}^k \Tow_{\varrho, j}\right) \geq \mu(\cR_\varrho) - \nu\left(B_\varrho \setminus \bigcup_{m=0}^{ K_{\varepsilon} }D_m^{\varrho,\tb}\right) \height_\varrho \geq \big( 1- 4\varepsilon^{1/4} \big)\mu(\cR_\varrho), \quad \text{and} \\
&\mu \left( \bigcup_{\varrho=1}^d \bigcup_{j=1}^k \Tow_{\varrho, j}\right) \geq 1 - \mu \left( \susp \setminus \bigcup_{\varrho=1}^d \cR_\varrho \right) - \sum_{\varrho=1}^{d} \mu \left( \cR_\varrho \setminus \bigcup_{j=1}^k \Tow_{\varrho, j}\right) \geq 1- 5\varepsilon^{1/4}.
	\end{split}
\end{equation}
We can now prove property (LUS2).

\begin{lemma}\label{lem:locren} Property (LUS2) is satisfied: for any $m\leq  K_{\varepsilon} $, $j  \in \{1, \dots, k\}$, and $\varrho \in \{1, \dots, d\}$ we have 
	\[
	\sup_{x\in \Tow_{\varrho,j}} \dist (\flow_{t_j}(\mapg^{\tb}_m x), \flow_{t_j+N_m}(x))\leq 3\varepsilon. 
	\]
\end{lemma}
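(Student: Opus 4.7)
The statement concerns the shearing on the subtowers $\Tow_{\varrho,j}$, so the plan is to trace through how a point in $\Tow_{\varrho,j}$ is displaced by $\mapg^{\tb}_m$ and compare it with the genuine flow displacement by $N_m = m\sqrt{M_s}$. The core observation is that on each atom $P_a$ with $i_a = j$, the set $\flow_{t_j}(P_a)$ is contained in a thin vertical tower over $T^{N(x_a,t_j)}P_a$ of total height $\Delta S_a$, and the maps $\mapg^{a,\tb}_m$ were constructed precisely to translate the \lq\lq vertical coordinate\rq\rq{} in this tower by $N_m$, up to error $\varepsilon$. Once this is understood, the desired estimate should follow by unfolding definitions and a single triangle inequality.

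The key steps, in order: (i) Write $x=\flow_t(y)$ with $y \in B_{\varrho,j}$ and $0\leq t\leq \height_\varrho\leq \height^{3/2}$, and use the definition of $B_{\varrho,j}$ to locate $y$ in some atom $P_a$ with $i_a=j$ and in the proper domain $D_m^{\varrho,\tb}$, so that $y\in A_L^{a,\tb}$ for some admissible $L=\ell\varepsilon$. (ii) By construction, $\mapg^{\varrho,\tb}_m(y)=\mapg^{a,\tb}_m(y)\in A_{L+N_m}^{a,\tb}$. (iii) Unfolding the definitions of $A_L^{a,\tb}$ and $A_{L+N_m}^{a,\tb}$, pick $z,z'\in P_a$ and parameters $r\in[S_a^-+L,S_a^-+L+\varepsilon]$, $r'\in[S_a^-+L+N_m,S_a^-+L+N_m+\varepsilon]$ with
\[
\flow_{t_j}(y)=\flow_r(T^{N(x_a,t_j)}z),\qquad \flow_{t_j}(\mapg^{\varrho,\tb}_m y)=\flow_{r'}(T^{N(x_a,t_j)}z').
\]
(iv) Apply the extension formula \eqref{eq:towdef} together with the group property of $\flowR$ to rewrite both target points as $\flow_{t+r+N_m}(T^{N(x_a,t_j)}z)$ and $\flow_{t+r'}(T^{N(x_a,t_j)}z')$.

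For the distance bound, by (G3) we have $|r|,|r'|\leq M_s+\varepsilon$ and $N_m\leq K_\varepsilon\sqrt{M_s}\leq M_s$, so the relevant time $t+r+N_m$ is at most $\height^{3/2}+M_s+\varepsilon$, i.e.\ within the range where (G1) is applicable. Thus (G1) gives
\[
\dist\!\big(\flow_{t+r+N_m}(T^{N(x_a,t_j)}z),\,\flow_{t+r+N_m}(T^{N(x_a,t_j)}z')\big)<\varepsilon,
\]
and since $|(r+N_m)-r'|\leq\varepsilon$, a further displacement of one of these points along its flow orbit by time $\leq\varepsilon$ contributes at most $\varepsilon$ to the distance in the natural metric on the suspension. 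The triangle inequality yields the bound $2\varepsilon\leq 3\varepsilon$.

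I expect the only genuine pitfall to be bookkeeping: checking that the time $t+r+N_m$ stays within the range $[0,M_s+\height^{3/2}]$ allowed by (G1), and being careful with the metric so that a flow-time shift by $\leq\varepsilon$ genuinely changes the distance by $\leq\varepsilon$ (this holds in the product-type metric on the suspension space away from the roof, which is the standard choice). Neither issue is deep, but both require that the parameters $\delta,\varepsilon,K_\varepsilon,M_s$ be chosen with the slack already built into the definition of good special flows; this is exactly the reason (G1) is stated with the generous height $M_s+\height^{3/2}$.
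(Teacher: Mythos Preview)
Your proof is correct and follows essentially the same approach as the paper: write $x=\flow_H(y)$ with $y\in B_{\varrho,j}\cap P_a$, use the construction of $\mapg^{a,\tb}_m$ to locate $\flow_{t_j}(y)$ and $\flow_{t_j}(\mapg^{\tb}_m y)$ in adjacent vertical strips $\flow_{r_1}(T^{N(x_a,t_j)}P_a)$ and $\flow_{r_2}(T^{N(x_a,t_j)}P_a)$ with $|r_2-r_1-N_m|\leq\varepsilon$, and then conclude via (G1) and a triangle inequality. The only differences are notational (your $t,r,r'$ are the paper's $H,r_1,r_2$), and the minor bookkeeping on the time range you flag at the end is handled identically in the paper by the bound $r_2+H\leq\Delta S_a+\height_\varrho\leq M_s+\height^{3/2}$.
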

\begin{proof} 
	Let us consider $x = \flow_H(y) \in \Tow_{\varrho,j}$, with $y\in B_{\varrho,j}$ and $H \leq \height_\varrho$. In particular, there exists $a \in \mathscr{A}_{j}$ as defined above so that $y \in P_a$, and $y \in D_m^{\varrho,\tb}$ for all $m = 0, \dots, K_{\varepsilon}$. By construction, there exists $0\leq \ell \leq \varepsilon^{-1} (\Delta S_a -  K_{\varepsilon} \sqrt{M_s})$ so that $y \in A^{a,\tb}_{\ell \varepsilon} \subset P_a$ and $\mapg^{\tb}_m y \in A^{a,\tb}_{\ell \varepsilon + N_m} \subset P_a$. Thus, there exist $r_1 \in [S_a^{-} + \ell \varepsilon, S_a^{-} + (\ell +1) \varepsilon]$ and $r_2 \in [S_a^{-} + \ell \varepsilon +N_m, S_a^{-} + (\ell +1) \varepsilon +N_m]$, where $S_a^{-} = S^{-}(a,t_{j},x_a)$, so that
	\[
	\flow_{t_{j}}(y) \in \flow_{r_1} (T^{N(x_a,t_{j})}P_a), \qquad \text{and} \qquad  \flow_{t_{j}}(\mapg^{\tb}_m y) \in \flow_{r_2} (T^{N(x_a,t_{j})}P_a).
	\]
	This, recalling the definition \eqref{eq:towdef}, implies that 
	\begin{multline*}
	\dist (\flow_{t_{j}}(\mapg^{\tb}_m x), \flow_{t_{j} + N_m}(x)) = \dist (\flow_{t_{j}+H}(\mapg^{\tb}_m y), \flow_{t_{j} + N_m + H}(y)) \\ \leq |r_2 - r_1 -N_m| + \diam (\flow_{r_2+H} (T^{N(x_a,t_{j})}P_a)) \leq 3\varepsilon,
	\end{multline*}
	where we used the fact that $r_2 + H \leq \Delta S_a + \height_\varrho \leq M_s + \height^{3/2}$, together with property (G1).
\end{proof}

We now define subtowers of the towers $\cR_\varrho$ on which we control the shear for the other times $t_i$, in order to prove (LUS3).
For any $a \in \mathscr{A}$, we fix $x_0 \in P_a$. Now, given $ i \in {1,\dots,k}$ and $0 \leq \ell \leq \varepsilon^{-1} M_s - 1$, we let 
\[
P_a(i,\ell) := \left\{ x \in P_a : \flow_{t_i}(x) \in \bigcup_{r\in [S_0^{-} + \varepsilon \ell,S_0^{-} + \varepsilon (\ell +1)]}\flow_r(T^{N(x_0,t_{i})}P_a) \right\},
\]
where $S_0^{-} = S^{-}(a,t_i,x_0)$.
Note that, by property (G3), the sets $P_a(i,0), \dots, P_a(i,\varepsilon^{-1} M_s - 1)$ form a partition of $P_a$ (note that the sets $P_a(i,\ell)$ for $\varepsilon^{-1}(S_0^+-S_0^-) \leq \ell \leq \varepsilon^{-1} M_s -1$ are actually empty). 

Given $0 \leq \ell \leq \varepsilon^{-1} M_s - 1$, we define
\[
Q_a(i, m, \ell) = \bigcup_{|\ell_2 - \ell_1| = \ell} P_a(i,\ell_1) \cap (\mapg^{\tb}_m)^{-1}(P_a(i,\ell_2)) \subseteq P_a, \qquad Q(i, m, \ell) = \bigcup_{a \in \mathscr{A}}Q_a(i, m, \ell),
\]
and
\[
R_\varrho(i, m, \ell) = \bigcup_{0\leq t \leq \height_\varrho} \flow_t(Q(i, m, \ell) \cap B_\varrho) \subseteq \cR_\varrho.
\]
\begin{lemma}\label{lem:R_varsigma_almost_partition}
For any fixed $\varrho\in \{1,\dots, d\}$, $i\in \{1,\dots, k\}$, and $m\in \{0,\dots, K_{\varepsilon}\}$, the sets $R_\varrho(i, m,0), \dots, R_\varrho(i, m,\varepsilon^{-1} M_s - 1)$ form a $4\varepsilon$-almost partition of $\cR_\varrho$.
\end{lemma}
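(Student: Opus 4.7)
The plan is to verify the two conditions in the definition of an almost partition: pairwise disjointness of the towers $\{R_\varrho(i,m,\ell)\}_{\ell}$, and near-coverage of $\cR_\varrho$. Both properties will be transferred from corresponding statements about the bases $Q(i,m,\ell)\cap B_\varrho$, since each $R_\varrho(i,m,\ell)$ is by construction a tower of the common height $\height_\varrho$ over its base.

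For disjointness, I would fix $\ell \neq \ell'$ and show $Q(i,m,\ell) \cap Q(i,m,\ell') = \emptyset$. The key observation is that for a single atom $P_a$, the family $\{P_a(i,\cdot)\}$ is an honest partition of $P_a$ (by property (G3), since $S^+(a,t_i,x_0) - S^-(a,t_i,x_0) < M_s$). Therefore any point $x$ in the proper domain of $\mapg^{a,\tb}_m \colon P_a \to P_a$ determines uniquely indices $\ell_1$ and $\ell_2$ with $x \in P_a(i,\ell_1)$ and $\mapg^{\tb}_m(x) \in P_a(i,\ell_2)$, hence a unique value $\ell = |\ell_2 - \ell_1|$. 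Across different atoms, disjointness is automatic because the $P_a$'s are pairwise disjoint. Disjointness of the towers $R_\varrho(i,m,\ell)$ then follows since they sit above disjoint bases inside the single Rokhlin tower $\cR_\varrho$.

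For near-coverage, the task reduces to estimating $\nu\bigl(B_\varrho \setminus \bigcup_\ell Q(i,m,\ell)\bigr)$. Using the observation above, $\bigcup_\ell Q(i,m,\ell) = \bigcup_{a} D_m^{a,\tb}$, where $D_m^{a,\tb} \subseteq P_a$ is the proper domain of $\mapg^{a,\tb}_m$. I would then split the complement as
\[
B_\varrho \setminus \bigcup_a D_m^{a,\tb} \;\subseteq\; \Bigl(B_\varrho \setminus \bigcup_a P_a\Bigr) \;\cup\; \bigcup_a \bigl(P_a \setminus D_m^{a,\tb}\bigr),
\]
and bound the two pieces using the two ingredients already established in Section~\ref{sec:contruct_mapg}: the good $\varepsilon$-almost partition property (G2) controls the first piece by $\varepsilon\,\nu(B_\varrho)$, while the construction of $\mapg^{a,\tb}_m$ (the lemma preceding \Cref{lem:mapg_on_bases}) yields $\nu_z(P_a \setminus D_m^{a,\tb}) \leq \varepsilon^2 \nu_z(P_a)$. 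Summing and using that $\sum_a \nu(P_a) \leq (1+\varepsilon)\nu(B_\varrho)$ (again from (G2)) gives a total of at most $2\varepsilon\,\nu(B_\varrho)$. Multiplying by $\height_\varrho$, this upgrades to $\mu\bigl(\cR_\varrho \setminus \bigcup_\ell R_\varrho(i,m,\ell)\bigr) \leq 2\varepsilon\,\mu(\cR_\varrho) \leq 4\varepsilon\,\mu(\cR_\varrho)$, as required.

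The proof is essentially bookkeeping, so the main obstacle is only to track the various error terms cleanly; one must in particular be careful that the error $\varepsilon^2$ coming from the lemma on $\mapg^{a,\tb}_m$ (rather than the weaker $3\sqrt{\varepsilon}$ obtained when extending to $B_\varrho$ in \Cref{lem:mapg_on_bases}) is the one that governs the per-atom estimate, so that the summed error stays linear in $\varepsilon$ and no $\sqrt{\varepsilon}$ loss creeps in. Apart from this bookkeeping, all inputs are already in hand and the proof is short.
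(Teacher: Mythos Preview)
Your proposal is correct and follows essentially the same route as the paper: disjointness via uniqueness of the indices $\ell_1,\ell_2$ determined by the partition $\{P_a(i,\cdot)\}$ of $P_a$, and near-coverage by combining (G2) with the per-atom measure bound on the complement of the proper domain of $\mapg^{a,\tb}_m$, then lifting from the base $B_\varrho$ to the tower $\cR_\varrho$.

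The only minor difference is that the paper writes $\bigcup_\ell Q_a(i,m,\ell) = P_a \cap (\mapg^{\tb}_m)^{-1}(P_a)$ and invokes the $3\varepsilon$-almost measure preserving property to get $\nu(\bigcup_\ell Q_a(i,m,\ell)) \geq (1-3\varepsilon)\nu(P_a)$, whereas you identify this union with (or bound it below by) $D_m^{a,\tb}$ and use the sharper $\varepsilon^2$ bound from the explicit construction. Either route gives the $4\varepsilon$ conclusion with room to spare.
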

\begin{proof}
	The sets $P_a(i,\ell)$ are pairwise disjoint for different values of $\ell$, hence so are the sets $Q_a(i,m,\ell)$. By the properties of towers, the sets $R_\varrho(i, m,0), \dots, R_\varrho(i, m,\varepsilon^{-1} M - 1)$ are also pairwise disjoint.
	
Since
\[
\bigcup_{\ell = 0}^{\varepsilon^{-1} M_s - 1} Q_a(i, m, \ell) = \bigcup_{\ell_1 = 0}^{\varepsilon^{-1} M_s - 1} P_a(i,\ell_1) \cap (\mapg^{\tb}_m)^{-1}\left(\bigcup_{\ell_2 = 0}^{\varepsilon^{-1} M_s - 1} P_a(i,\ell_2)\right) = P_a \cap (\mapg^{\tb}_m)^{-1}(P_a),
\]
the almost measure preserving property of $\mapg^{\tb}_m = \mapg^{a,\tb}_m$ on $P_a$ tells us that 
\[
\nu\left( \bigcup_{\ell = 0}^{\varepsilon^{-1} M_s - 1} Q_a(i, m, \ell)\right) \geq (1-3\varepsilon) \nu(P_a).
\]
Therefore, by property (G2),
\[
\nu\left( B_\varrho \cap \bigcup_{a \in \mathscr{A}} \bigcup_{\ell = 0}^{\varepsilon^{-1} M_s - 1} Q_a(i, m, \ell)\right) \geq \nu\left( B_\varrho \cap \bigcup_{a \in \mathscr{A}} P_a \right) - 3\varepsilon \nu(B_\varrho) \geq (1-4\varepsilon) \nu(B_\varrho),
\]
and thus
\[
\mu\left(\bigcup_{\ell = 0}^{\varepsilon^{-1} M_s - 1} R_\varrho(i, m, \ell)\right) = \height_\varrho \cdot \nu\left( B_\varrho \cap \bigcup_{a \in \mathscr{A}} \bigcup_{\ell = 0}^{\varepsilon^{-1} M_s - 1} Q_a(i, m, \ell) \right) \geq (1-4\varepsilon) \mu(\cR_\varrho),
\]
which proves the claim.
\end{proof}

On the towers $R_\varrho(i,m,\ell)$, the control the shear is as dictated by (LUS3), which completes the proof of \Cref{prop:good_implies_lus}.
\begin{lemma}\label{lem:disinter} 
	Property (LUS3) holds: for every $i \in \{1, \dots, k\}$, every $0 \leq \ell \leq \varepsilon^{-1} M - 1$, every $m \leq  K_{\varepsilon} $, and every $x\in R_\varrho(i,m, \ell)$, we have
	\[
		\dist \Big(\flow_{t_i}(\mapg^{\tb}_m x),f_{t_i+\varepsilon \ell}(x)\Big)<2 \varepsilon.
	\]
\end{lemma}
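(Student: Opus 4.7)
The plan is to unfold the nested definitions of $R_\varrho(i,m,\ell)$, $Q(i,m,\ell)$, $Q_a(i,m,\ell)$ and $P_a(i,\cdot)$, and then combine property (G1) with an elementary flow displacement estimate. Given $x\in R_\varrho(i,m,\ell)$, I write $x=\flow_t(y)$ for some $y\in B_\varrho\cap Q(i,m,\ell)$ and $0\leq t\leq \height_\varrho$. By definition of $Q_a$, there exist an atom $P_a$ and indices $\ell_1,\ell_2$ with $|\ell_2-\ell_1|=\ell$ such that $y\in P_a(i,\ell_1)$ and $\mapg^{\tb}_m(y)\in P_a(i,\ell_2)$. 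Using the fixed reference point $x_0\in P_a$ defining $P_a(i,\cdot)$, writing $S_0^-=S^-(a,t_i,x_0)$, and unfolding the membership, there are $z_1,z_2\in T^{N(x_0,t_i)}P_a$ together with reals $r_j\in[S_0^-+\varepsilon\ell_j,\,S_0^-+\varepsilon(\ell_j+1)]$ satisfying $\flow_{t_i}(y)=\flow_{r_1}(z_1)$ and $\flow_{t_i}(\mapg^{\tb}_m y)=\flow_{r_2}(z_2)$.

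Combining this with the tower extension \eqref{eq:towdef}, which gives $\mapg^{\tb}_m\circ\flow_t=\flow_t\circ\mapg^{\tb}_m$ on $\cR_\varrho$, I obtain
\[
\flow_{t_i}(\mapg^{\tb}_m x)=\flow_{t+r_2}(z_2), \qquad \flow_{t_i+\varepsilon\ell}(x)=\flow_{t+r_1+\varepsilon\ell}(z_1).
\]
Inserting the intermediate point $\flow_{t+r_2}(z_1)$ and applying the triangle inequality yields
\[
\dist\bigl(\flow_{t_i}(\mapg^{\tb}_m x),\,\flow_{t_i+\varepsilon\ell}(x)\bigr)\leq \diam\bigl(\flow_{t+r_2}(T^{N(x_0,t_i)}P_a)\bigr)+\bigl|(r_2-r_1)-\varepsilon\ell\bigr|.
\]
The first summand is bounded by $\varepsilon$ via (G1): by (G3) together with $0\leq t\leq \height^{3/2}$ the time $t+r_2$ lies in the admissible window $[0,M_s+\height^{3/2}]$ (the possible negativity of $r_2$ is absorbed by a harmless rigid time shift, which leaves the diameter unchanged). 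The second summand is at most $\varepsilon$ because $r_j-S_0^--\varepsilon\ell_j\in[0,\varepsilon]$, so $(r_2-r_1)-\varepsilon(\ell_2-\ell_1)\in[-\varepsilon,\varepsilon]$.

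The main obstacle is reconciling the sign of $\ell_2-\ell_1$ with the one-sided forward shift $+\varepsilon\ell$ that appears in the conclusion: the set $Q_a(i,m,\ell)$ is indexed only by the magnitude $|\ell_2-\ell_1|=\ell\geq 0$, but the inequality $(r_2-r_1)-\varepsilon\ell\in[-\varepsilon,\varepsilon]$ requires $\ell_2-\ell_1=+\ell$. I would resolve this by reading $\ell$ in $R_\varrho(i,m,\ell)$ as a signed index (equivalently, splitting $Q_a$ into its two one-sided branches $\ell_2-\ell_1=\pm\ell$); this at most doubles the number of atoms in the partition, so the almost-partition property established in \Cref{lem:R_varsigma_almost_partition} carries through unchanged. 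Under this convention the total bound is $\leq 2\varepsilon$, which is exactly the desired inequality. The quantitative version, needed for (QLUS3), follows by the same argument with $\varepsilon$ replaced by $M^{-\varepsilon_k}$ and $M_s$ replaced by $M$ throughout.
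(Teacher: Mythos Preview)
Your proof is correct and follows essentially the same route as the paper's: both arguments write $x=\flow_H(y)$ with $y$ in the base, unfold the membership $y\in P_a(i,\ell_1)\cap(\mapg^{\tb}_m)^{-1}(P_a(i,\ell_2))$ to locate $\flow_{t_i}(y)$ and $\flow_{t_i}(\mapg^{\tb}_m y)$ in nearby flow-translates of $T^{N(x_0,t_i)}P_a$, and then combine a diameter bound via (G1) with the elementary estimate $|(r_2-r_1)-\varepsilon\ell|\leq\varepsilon$.

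You are in fact more careful than the paper on one point: the definition of $Q_a(i,m,\ell)$ uses $|\ell_2-\ell_1|=\ell$, whereas the displacement $+\varepsilon\ell$ in the conclusion forces $\ell_2-\ell_1=+\ell$. The paper's proof silently assumes the positive branch; your observation that one should read $\ell$ as signed (or split each $Q_a$ into its two one-sided pieces) is the natural fix and is harmless for the downstream application in \S\ref{sec:Proof_induction_park}, where only the magnitude of the shifts and the cardinality of the partition enter. Your remark on the possible negativity of $t+r_2$ is slightly informal---a rigid time shift does not by itself place the time in the window $[0,M_s+\height^{3/2}]$ required by (G1)---but the paper's own proofs of this lemma and of \Cref{lem:locren} are equally casual on this point, and it is readily fixed by choosing the reference point $x_0$ so that $S_0^-$ is bounded below (or by the tower property in (G3)).
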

\begin{proof}
	Let $x \in R_\varrho(i,m, \ell)$, and write $x = \flow_H(y)$, with $y \in B_\varrho \cap P_a(i,\ell_1) \cap (\mapg^{\tb}_m)^{-1}(P_a(i,\ell_2))$ for some $a \in \mathscr{A}$, some $|\ell_2 - \ell_1| = \ell$, and $0\leq H \leq \height_\varrho$. Thus, there exist $0 \leq \delta_1, \delta_2 \leq \varepsilon$ so that $\flow_{t_i}(\mapg^{\tb}_m y)$ and $\flow_{t_i + \varepsilon \ell + \delta_1}(y)$ both belong to $\flow_{S^- + \varepsilon \ell_2 + \delta_2 }(T^{N(x_0,t_i)}P_a)$. Using property (G1), we conclude
	\[
	\dist \Big(\flow_{t_i}(\mapg^{\tb}_m x),f_{t_i+\varepsilon \ell}(x)\Big) \leq \varepsilon + \diam \left(\flow_{H+S^- + \varepsilon \ell_2 + \delta_2 }(T^{N(x_0,t_i)}P_a)\right) \leq 2\varepsilon.
	\]
\end{proof}

%%%%%%%%%%

\section{LUS flows on surfaces}\label{sec:LUS_on_surfaces}

In this section $(\flow_t,\susp,\mu^\roof)$ is a special flow over an ergodic interval exchange transformation (IET) $T\colon \T\to \T$ whose discontinuities are a subset of $\{\beta_i\}_{i=1}^d$. We will assume moreover that the roof function satisfies $\Phi\in \mathscr{C}^2(\T \setminus\{\beta_1,\ldots \beta_d\})$.  Note that this allows the function $\Phi$ to have a singularity at a point which is not a discontinuity of $T$ (for example if $T$ is a rotation). We also assume that $\inf \Phi=c>0$ and for simplicity we normalize $\Phi$ so that $\mu(\Phi)=1$.

\subsection{Preliminaries}

  Let $\zeta>0$ satisfy $(1+\zeta)(1+\gamma)<2$ and  be small enough so that if
  $$
  V_R:=\Big\{x\in \T\;:\; \Phi(x)\leq R^{(1+\zeta/2)\gamma} \Big\}.
  $$
 then for sufficiently large $R$, 
  \begin{equation}\label{eq:incl}
  V_R\cap \bigcup_{i=1}^d\Big[-\frac{1}{R^{1+\zeta}}+\beta_i,\frac{1}{R^{1+\zeta}}+\beta_i\Big]=\emptyset.
  \end{equation}

  We will often abbreviate the Birkhoff sums $S_N\roof$ of $\roof$ simply with $S_N$.
  We start with the following lemma.
  
  \begin{lemma}\label{lem:nm1} For every $R>0$, any $t>0$, and for any interval $I=[a,b]\subset \T$ with $|I|<R^{-4}$  satisfying  $0\leq \max_{x\in I}N(x,t)-\min_{x\in I}N(x,t)<6c^{-1}R$,
  \begin{equation}\label{eq:nsin}
 T^{N(z,t)}I\in V_R\text{ for every }z\in I, \;\;\;\text{ and }\;\;\; \bigcup_{\ell\in [0,\max_{x\in I}N(x,t)]}T^{\ell}(I)\cap \{\beta_i\}_{i=1}^d=\emptyset,
  \end{equation}
  and $\sup_{x,y\in I}|S_{N(a,t)}(y)-S_{N(a,t)}(x)|<R$, the following holds: for every $J\subset I$, every $z\in J$, we have
  $$
  f_t(J)\subset \bigcup_{|r|<R+1}f_r(T^{N(z,t)}J).
  $$
  \end{lemma}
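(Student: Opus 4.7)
The plan is to exhibit, for each $y \in J$, a time $r = r(y)$ with $|r| < R + 1$ such that $f_t(y, 0) \in f_r(T^{N(z,t)}J)$. Set $n := N(z,t)$. The hypothesis \eqref{eq:nsin} guarantees that the iterates $T^0, T^1, \ldots, T^{\max_x N(x,t)}$ act as translations on $I$, and in particular $T^n$ sends $J \subseteq I$ onto $T^n J$ with $T^n y \in T^n J$. By the very definition of the special flow, $f_{S_n(y)}(y, 0) = (T^n y, 0)$, and hence
\[
f_t(y, 0) = f_{t - S_n(y)}(T^n y, 0) \in f_r(T^n J), \qquad r := t - S_n(y).
\]
The problem thus reduces to bounding $|r|$ uniformly over $y \in J$.

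I would split $r = (t - S_n(z)) + (S_n(z) - S_n(y))$ and estimate each piece. For the first piece, $t - S_n(z) \in [0, \Phi(T^n z))$ by the definition of $n = N(z,t)$, and since $T^n z \in T^n I \subset V_R$, this is bounded by $R^{(1+\zeta/2)\gamma}$. For the second piece, I would insert the reference index $n_a := N(a,t)$ and decompose
\[
S_n(z) - S_n(y) = \bigl[S_{n_a}(z) - S_{n_a}(y)\bigr] + \bigl[(S_n - S_{n_a})(z) - (S_n - S_{n_a})(y)\bigr].
\]
The first bracket is bounded by $R$ directly via the Birkhoff-sum hypothesis $\sup_{x,y\in I}|S_{N(a,t)}(y) - S_{N(a,t)}(x)| < R$. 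The second bracket is a telescoped sum over at most $k := |n - n_a| \leq 6c^{-1}R$ consecutive indices $j$; for each such $j$, the set $T^j I$ lies in $V_R$ because $j$ belongs to the range of $N(\cdot, t)$ on $I$, and $\Phi$ is smooth there. From the singularity profile $\Phi \sim |\cdot|^{-\gamma}$ combined with the distance estimate \eqref{eq:incl}, one derives $|\Phi'| \lesssim R^{(1+\zeta)(1+\gamma)}$ on $V_R$. Combining this with $|T^j z - T^j y| \leq |I| < R^{-4}$ via the mean value theorem produces a per-term bound of order $R^{(1+\zeta)(1+\gamma) - 4}$, and summing over the $k$ indices gives a total of order $R^{(1+\zeta)(1+\gamma) - 3}$, which tends to zero as $R \to \infty$ by virtue of the constraint $(1+\zeta)(1+\gamma) < 2$.

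Assembling, $|r| \leq R + R^{(1+\zeta/2)\gamma} + o(1)$. Since $(1+\zeta)(1+\gamma) < 2$ and $\gamma < 1$ together imply $(1+\zeta/2)\gamma < 1$, the $\gamma$-subleading contribution is of lower order than $R$ and can be absorbed for $R$ sufficiently large, giving $|r| < R + 1$ as required. The main technical obstacle is the correction-term estimate, whose success depends on the compatibility of the length bound $|I| < R^{-4}$, the number of iterates $k \leq 6c^{-1}R$, and the derivative blow-up $R^{(1+\zeta)(1+\gamma)}$ on $V_R$; this is exactly what the defining constraint $(1+\zeta)(1+\gamma) < 2$ secures, yielding a strictly negative net exponent of $R$ and thereby rendering the telescoped correction negligible.
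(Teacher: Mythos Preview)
Your argument follows the paper's proof essentially line for line: write $f_t(y,0)=f_r(T^{N(z,t)}y,0)$ with $r=t-S_{N(z,t)}(y)$, split $r=(t-S_{N(z,t)}(z))+(S_{N(z,t)}(z)-S_{N(z,t)}(y))$, insert the reference index $N(a,t)$ into the second piece, and bound the resulting telescoped correction via the mean value theorem using $|\Phi'|\ll R^{(1+\zeta)(1+\gamma)}$ on $V_R$ together with $|I|<R^{-4}$ and the count $\leq 6c^{-1}R$ of terms.

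One remark on your last step: the claim that $R^{(1+\zeta/2)\gamma}$ ``can be absorbed for $R$ sufficiently large'' into the $+1$ is not correct as written, since $(1+\zeta/2)\gamma>0$ means this term tends to infinity (it is $o(R)$, not $o(1)$). The paper's proof has precisely the same slip --- it bounds $|t-S_{N(z,t)}(z)|$ by $2R(\log R)^{-1}$ and then asserts the total is $<R+1$, which likewise fails for large $R$. This appears to be a cosmetic imprecision in the lemma's stated constant rather than a real obstruction: in every application one only needs $|r|<M_s$ or $|r|\lesssim R$, which your estimate $|r|\leq R+R^{(1+\zeta/2)\gamma}+o(1)$ does deliver.
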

  \begin{proof} Let $J\subset I$ and $z\in J$. Then for $u\in J$ 
\begin{multline*}
|S_{N(z,t)}(z)-S_{N(z,t)}(u)|\leq 
|S_{N(a,t)}(z)-S_{N(a,t)}(u)| \\
+|S_{N(z,t)-N(a,t)}(T^{N(a,t)}z)-S_{N(z,t)-N(a,t)}(T^{N(a,t)}u)|.
\end{multline*}
  The first term above is bounded by $R$.  Moreover, 
\begin{multline*}
	  |S_{N(z,t)-N(a,t)}(T^{N(a,t)}z)-S_{N(z,t)-N(a,t)}(T^{N(a,t)}u)|\leq \\
   \sum_{0\leq i< N(z,t)- N(a,t)}  |\Phi(T^{N(a,t)+i}z)-\Phi(T^{N(a,t)+i}u)|.
\end{multline*}
  By \eqref{eq:nsin} and \eqref{eq:incl} it follows that for every $i\leq \max_{x\in I}N(x,t)-\min_{x\in I}N(x,t)$, we have $T^{N(a,t)+i}I\notin  \bigcup_{i=1}^d[-\frac{1}{R^{1+\zeta}}+\beta_i,\beta_i+\frac{1}{R^{1+\zeta}}]$. This implies that
  $$
   |\Phi(T^{N(a,t)+i}(z))-\Phi(T^{N(a,t)+i}(u))|\leq |\Phi'(\theta_{a,z})|\cdot |I|
  $$
   for some $\theta_{a,z}\in T^{N(a,t)+i}I$.
  By the definition of $V_R$ it follows that  $|\Phi'(\theta_{a,z})|\ll R^{(1+\zeta)(1+\gamma)}$. 
  Using also that $|N(z,t)-N(a,t)|<c^{-1}R/2$, we get 
  $$
  \sum_{0\leq i< N(z,t)- N(a,t)}  |\Phi(T^{N(a,t)+i}(z)-\Phi(T^{N(a,t)+i}(u)|\leq c^{-1}R R^{(1+\zeta)(1+\gamma)} R^{-4}<R^{-1}.
  $$
Summarizing, we have obtained $|S_{N(z,t)}(z)-S_{N(z,t)}(u)|<R+R^{-1}$.

We have
$$\flow_t(u)=\flow_{(S_{N(z,t)}(z)-S_{N(z,t)}(u))+(t-S_{N(z,t)}(z))}(T^{N(z,t)}u).$$
Since $T^{N(z,t)}I\in V_R$, it follows that $|t-S_{N(z,t)}(z))|<\Phi(T^{N(z,t)}(z))<2R(\log R)^{-1}$. Therefore, $|S_{N(z,t)}(z)-S_{N(z,t)}(u)+t-S_{N(z,t)}(z)|<R+1$. This finishes the proof, as $T^{N(z,t)}u\in T^{N(z,t)}J$.
  \end{proof}

\begin{lemma}\label{lem:nm2} There exists $\eta''>0$ such that the following holds: let $I=[a,b]\subset \T$ be an interval and assume that $N(\cdot, t)=N$ is constant on $I$. If $T^{N}I\in V_R$, then 
$$
\sup_{x,y\in I}|S_N(x)-S_N(y)|\leq 2R^{1-\eta''},
$$
\end{lemma}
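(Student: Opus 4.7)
The plan is to observe that the constancy $N(\cdot, t) \equiv N$ on $I$ traps $S_N(x)$ in a narrow window just below $t$, whose width is controlled by the single value $\Phi(T^N x)$. Since $T^N I \subset V_R$, this single value is polynomially bounded in $R$ with exponent strictly less than $1$, and the claim then follows immediately by the triangle inequality.

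Concretely, the first step is to unpack the definition of $N(x,t)$ recalled in \Cref{sec:towers}: the equality $N(x,t) = N$ is equivalent to the double inequality $S_N(x) \leq t < S_{N+1}(x) = S_N(x) + \Phi(T^N x)$. Rearranging the right-hand side gives
\[
0 \leq t - S_N(x) < \Phi(T^N x), \qquad \text{for every } x \in I.
\]
The hypothesis $T^N I \subset V_R$ together with the definition of $V_R$ then yields the uniform bound $\Phi(T^N x) \leq R^{(1+\zeta/2)\gamma}$ on $I$. An application of the triangle inequality produces
\[
|S_N(x) - S_N(y)| \leq |S_N(x) - t| + |t - S_N(y)| \leq 2 R^{(1+\zeta/2)\gamma}, \qquad \text{for all } x,y \in I.
\]

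The second step is to choose $\eta'' > 0$ so that $2R^{(1+\zeta/2)\gamma} \leq 2R^{1-\eta''}$ for all $R \geq 1$. From the standing condition $(1+\zeta)(1+\gamma) < 2$ one extracts $\gamma < (1-\zeta)/(1+\zeta)$, whence
\[
(1 + \zeta/2)\gamma < \frac{(1+\zeta/2)(1-\zeta)}{1+\zeta} < 1,
\]
the last inequality being a short calculus check (the function of $\zeta$ on the right equals $1$ at $\zeta = 0$ with strictly negative derivative there). We may then fix any $\eta'' \in (0,\, 1 - (1+\zeta/2)\gamma)$ depending only on $\zeta$ and $\gamma$, and the desired estimate is satisfied.

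There is no real obstacle here; the argument is essentially a tautological consequence of the defining inequalities for $N(x,t)$ combined with the pointwise bound on $\Phi$ on $V_R$. The only point worth a check is that the upper inequality in the definition of $N(x,t)$ is strict, so that $t - S_N(x)$ is bounded by $\Phi(T^N x)$ rather than merely by $t$; this strictness is exactly the convention fixed in \Cref{sec:towers} and is what drives the whole estimate.
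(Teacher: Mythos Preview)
Your proof is correct and follows essentially the same approach as the paper: both use the defining inequalities $0 \leq t - S_N(x) < \Phi(T^N x)$ coming from $N(x,t)=N$, apply the triangle inequality via the pivot $t$, and then invoke the bound $\Phi \leq R^{(1+\zeta/2)\gamma}$ on $V_R$. Your additional verification that $(1+\zeta/2)\gamma < 1$ follows from the standing hypothesis $(1+\zeta)(1+\gamma)<2$ makes explicit a step the paper leaves implicit; the derivative check at $\zeta=0$ you give is a slight underjustification (it only covers small $\zeta$), but the inequality is easily seen to hold for all $\zeta>0$ as your own algebraic manipulation already shows.
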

\begin{proof} Note that 
\begin{multline*}
|S_N(x)-S_N(y)|=|(t-S_N(x))-(t-S_N(y))| \\ 
\leq |t-S_N(x)|+|t-S_N(y)|<\Phi(T^Nx)+\Phi(T^Ny)\leq 2R^{1-\eta''}.
\end{multline*}
This finishes the proof.
\end{proof}

\begin{lemma}\label{lem:boun}  For every $\varepsilon, \overline{M}$ there exists $\kappa=\kappa(\varepsilon,\overline{M})<\varepsilon$  such that for any $Z\subset \T$, with $\diam(Z)<\kappa$,  any $t>0$ any $x\in Z$ for which $\flow_t(x,0)\in \T$ and such that $T^{N(x,t)}\vert_{Z}$ is an isometry, we have: if $z\in Z$  and $|L|\leq 2\overline{M}$ satisfies $S_{N(x,t)}(x)-S_{N(x,t)}(z)\in [L,L+\varepsilon]$ then $\flow_tz\in \bigcup_{r\in [L,L+\varepsilon]}\flow_r(T^{N(x,t)}Z)$. On the other hand if $\flow_tz\in \bigcup_{r\in [L,L+\varepsilon]}\flow_r(T^{N(x,t)}Z)$ and 
$|S_{N(x,t)}(x)-S_{N(x,t)}(z)|<\overline{M}$ then
$$
S_{N(x,t)}(x)-S_{N(x,t)}(z)\in [L-\varepsilon^2,L+\varepsilon+\varepsilon^2].
$$

\end{lemma}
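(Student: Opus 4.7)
The whole proof is driven by the special-flow cocycle identity. Set $N := N(x,t)$. The hypothesis $\flow_t(x,0)\in\T$ means exactly that $t = S_N(x)$, and the isometry of $T^{N}\vert_{Z}$ rules out any discontinuity of $T^{j}$ with $j\leq N$ meeting $Z$, so the orbit $(z,0)\to(Tz,0)\to\cdots\to(T^{N}z,0)$ is well-defined for every $z\in Z$; composing $\flow_{S_N(z)}$ with $\flow_{t-S_N(z)}$ yields
\[
\flow_{t}(z,0)\;=\;\flow_{L'}(T^{N}z,0), \qquad L':=S_N(x)-S_N(z).
\]
Part~(1) is then immediate: if $L'\in[L,L+\varepsilon]$, then $\flow_{t}z=\flow_{L'}(T^{N}z,0)\in \flow_{L'}(T^{N}Z)\subseteq \bigcup_{r\in[L,L+\varepsilon]}\flow_r(T^{N}Z)$. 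No smallness of $\kappa$ beyond $\kappa<\varepsilon$ is used here.

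For part~(2), rewrite the membership hypothesis as $\flow_{t}z=\flow_r(T^{N}y,0)$ for some $y\in Z$ and $r\in[L,L+\varepsilon]$. Combined with the cocycle identity, this produces
\[
\flow_{L'-r}(T^{N}z,0)\;=\;(T^{N}y,0).
\]
Since the orbit of the base point $(T^{N}z,0)$ returns to $\T\times\{0\}$ precisely at the times $s=S_k(T^{N}z)$, $k\in\Z$, landing then on $T^{N+k}z$, there exists an integer $k$ with $L'-r=S_k(T^{N}z)$ and $T^{k}z=y$. The lower bound $\Phi\geq c$ together with $|L'-r|\leq|L'|+|r|\leq 3\overline{M}+\varepsilon$ confines $k$ to $|k|\leq c^{-1}(3\overline{M}+\varepsilon)$.

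When $k=0$, one has $L'=r\in[L,L+\varepsilon]\subseteq[L-\varepsilon^{2},L+\varepsilon+\varepsilon^{2}]$ and we are done. The crux of the proof is excluding $k\neq 0$: in that case both $z$ and $T^{k}z$ would lie in $Z$, forcing $Z\cap T^{-k}Z\neq\emptyset$ for some $k$ in the bounded range above. I would choose $\kappa=\kappa(\varepsilon,\overline{M})$ small enough that this cannot occur, exploiting the aperiodicity of the IET $T$ on the compact part of $\T$ where $\Phi$ is smooth, together with the fact that each iterate $T^{j}$ in the relevant range acts isometrically on $Z$ (by hypothesis). The $\varepsilon^{2}$ safety slack in the statement is intended to absorb the oscillation of $\Phi$ across the small set $T^{N-j}Z$, which appears when the two representations of $\flow_t z$ sit on slightly different floors and one identifies $y$ with $z$ across an interval of diameter $<\kappa$.

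The principal obstacle is the quantitative exclusion of the recurrence case $k\neq 0$: for general $Z$ of small diameter, one must uniformly preclude the possibility that some iterate $T^{k}$ maps a point of $Z$ back into $Z$ within the bounded range of $k$. Making the smallness of $\kappa$ fully quantitative requires combining the piecewise-isometric structure of $T$ (so that $T^{j}\vert_{Z}$ is a translation in the relevant range) with the $\mathscr{C}^{2}$-regularity of $\Phi$ on the smoothness domain, which yields the modulus of continuity needed to guarantee the $\varepsilon^{2}$ slack.
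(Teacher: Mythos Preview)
Your core argument is correct and essentially the paper's: both reduce Part~(2) to the identity $\flow_{L'-r}(T^N z,0)=(T^N y,0)$ with $L':=S_N(x)-S_N(z)$, and then invoke aperiodicity. You pass to the base, writing $L'-r=S_k(T^Nz)$ and $T^kz=y$ for bounded $|k|$; the paper stays at the flow level, observing $\dist(\flow_{L'-r}(T^Nz),T^Nz)=\dist(y,z)<\diam(Z)$ and concluding $|L'-r|\leq\varepsilon^2$ directly from aperiodicity of the flow.

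Your hesitation in the last two paragraphs is misplaced, and the extra ingredients you invoke are not needed. The exclusion of $k\neq0$ is elementary: since $T$ is an aperiodic IET, each $T^k$ with $k\neq0$ is an IET with no fixed points, so its finitely many translation constants are all nonzero and $\delta_k:=\inf_{p}d(T^kp,p)>0$. Choosing $\kappa<\min\{\delta_k:1\leq|k|\leq c^{-1}(3\overline M+\varepsilon)\}$ rules out $T^kz=y\in Z$, since that would force $d(z,T^kz)<\diam(Z)<\kappa\leq\delta_k$. Neither the $\mathscr{C}^2$-regularity of $\Phi$ nor any isometry of intermediate iterates $T^j|_Z$ is used. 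Moreover, once $k=0$ you obtain $L'=r$ \emph{exactly}, so you in fact prove the sharper conclusion $S_N(x)-S_N(z)\in[L,L+\varepsilon]$; the $\varepsilon^2$ slack in the statement is not absorbing any oscillation of $\Phi$---it is simply a harmless weakening coming from the paper's flow-level phrasing.
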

\begin{proof} 
	Note  that $\flow_t(z)=\flow_{t-S_{N(x,t)}(z)}(T^{N(x,t)}z,0)\subset \bigcup_{r\in [L,L+\varepsilon]}\flow_r(T^{N(x,t)}Z)$, since $t-S_{N(x,t)}(z)=S_{N(x,t)}(x)-S_{N(x,t)}(z)\in [L,L+\varepsilon]$; thus, the first part follows. 
	
	On the other hand assume  $\flow_tz=\flow_{r}(T^{N(x,t)}(y))$, for some $y\in Z$ and $r\in [L,L+\varepsilon]$. Note that 
$$
\flow_{S_{N(x,t)}(x)-S_{N(x,t)}(z)-r}(T^{N(x,t)}z)=\flow_{t-r}(\flow_{-S_{N(x,t)}(z)}(T^{N(x,t)}z))=\flow_{t-r}(z)=T^{N(x,t)}(y).
$$
Since $T^{N(x,t)}\vert_{Z}$ is an isometry, this implies that 
$$
\dist(\flow_{S_{N(x,t)}(x)-S_{N(x,t)}(z)-r}(T^{N(x,t)}z),T^{N(x,t)}(z))=\dist(y,z)<\diam(Z).
$$
However, $|S_{N(x,t)}(x)-S_{N(x,t)}(z)-r|<L+\overline{M}+\varepsilon$. Since the roof function $\Phi\geq c$ and the map $T$ is piecewise continuous and has no periodic points, there is  $\kappa=\kappa(\varepsilon,\overline{M})>0$ such that  if $\diam(Z)\leq \kappa$, then the only way the above inequality can hold is if  $|S_{N(x,t)}(x)-S_{N(x,t)}(z)-r|\leq\varepsilon^2$. Since $r\in [L,L+\varepsilon]$, the statement follows. This finishes the proof.
\end{proof}

  We will recall the definition of $(\varepsilon,k)$--uniformly stretching function, \cite{Koch} see also \cite{Fay}. 
\begin{definition}[{\cite[Definition 1]{Fay}}] Given $\varepsilon, K>0$ we  say that a function $g:[a,b]\to \R$ is  $(\varepsilon,K)$-uniformly stretching on $[a,b]$ if  $\sup_{[a,b]} g-\inf_{[a,b]}(g)>K$ and for any $\inf g\leq u\leq v\leq \sup g$, the set 
$$
I_{u,v}=\{x\in [a,b]\;:\; g(x)\in [u,v]\}
$$
has measure 
$$
(1-\varepsilon) \frac{v-u}{\sup g-\inf g} (b-a)<\lambda(I_{u,v})<(1+\varepsilon) \frac{v-u}{\sup g-\inf g} (b-a)
$$
\end{definition}

\begin{lemma}[{\cite[Lemma 2]{Fay}}] \label{lem:fay2} 
	Let $g:[a,b]\to \R$ be a $\mathscr{C}^2$ function if $\inf_{x\in [a,b]}|g'(x)| (b-a)>K$ and 
$\sup_{x\in [a,b]}|g''(x)|(b-a)\leq \varepsilon \inf_{x\in [a,b]}|g'(x)| $ then the function $g$ is $(\varepsilon, K)$-uniformly stretching on $[a,b]$.
\end{lemma}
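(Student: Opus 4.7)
The plan is to reduce the result to two applications of the mean value theorem combined with a uniform bound on the oscillation of $g'$.

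First, I would observe that the hypothesis $\sup_{[a,b]}|g''|(b-a)\leq \varepsilon\inf_{[a,b]}|g'|$ together with $\inf_{[a,b]}|g'|(b-a)>K>0$ forces $g'$ to have constant sign on $[a,b]$ (indeed $|g'|$ never falls below its positive infimum). Replacing $g$ by $-g$ if necessary, I may assume $g'>0$, so that $g$ is a strictly increasing $\mathscr{C}^2$-diffeomorphism from $[a,b]$ onto $[\inf g,\sup g]$. In particular, for any $\inf g\leq u\leq v\leq \sup g$ the preimage $I_{u,v}$ is the interval $[g^{-1}(u),g^{-1}(v)]$, and the stretching condition $\sup g-\inf g>K$ follows immediately from $\sup g-\inf g\geq \inf|g'|(b-a)>K$.

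Next, I would apply the mean value theorem twice: once on $I_{u,v}$ to obtain $x_0\in I_{u,v}$ with $v-u=g'(x_0)\lambda(I_{u,v})$, and once on $[a,b]$ to obtain $x_1\in[a,b]$ with $\sup g-\inf g=g'(x_1)(b-a)$. Dividing these identities gives
$$\frac{\lambda(I_{u,v})}{(b-a)\,(v-u)/(\sup g-\inf g)}=\frac{g'(x_1)}{g'(x_0)},$$
so the whole statement reduces to showing that this ratio lies in $[1-\varepsilon,1+\varepsilon]$.

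Finally, I would exploit the hypothesis on $g''$: by the mean value theorem applied to $g'$ itself, for any $x,y\in[a,b]$ one has $|g'(x)-g'(y)|\leq \sup_{[a,b]}|g''|\cdot(b-a)\leq \varepsilon\inf_{[a,b]}|g'|$. Applying this to $x_0$ and $x_1$ and dividing by $g'(x_0)\geq \inf_{[a,b]}|g'|$ yields $|g'(x_1)/g'(x_0)-1|\leq \varepsilon$, closing the argument. There is no genuine obstacle: the lemma is a quantitative instance of the standard fact that a $\mathscr{C}^2$-diffeomorphism whose first derivative varies by at most an $\varepsilon$-fraction of its minimum pushes Lebesgue measure close to a uniform rescaling onto its image.
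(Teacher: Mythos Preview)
Your proof is correct. The paper does not actually prove this lemma; it simply cites it as \cite[Lemma 2]{Fay} and states it without argument. Your approach---reducing to a strictly monotone $g$, applying the mean value theorem to $g$ on $I_{u,v}$ and on $[a,b]$, and then bounding the ratio $g'(x_1)/g'(x_0)$ via the second-derivative hypothesis---is exactly the standard short argument one expects for this statement and is almost certainly what appears in Fayad's original.
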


\subsection{Good special flows over IETs}

The following proposition is the main technical result of this section. Let $(M_s)$ be a fixed increasing sequence.
  
  \begin{proposition}\label{prop:part1}If for any $\varepsilon>0$  there exists $M(\varepsilon)$ such that for every $M_s\geq M(\varepsilon)$ there is a $t(\varepsilon,M_s) >0$ satisfying the following: for any $t>t(\varepsilon,M_s)$ there exists an $\varepsilon$-almost partition $\{I_j^t=[a_j,b_j]\}$ of $\T$ with $\lim_{t\to \infty}\sup_j |I_j^t|=0$  satisfying the following:
  \begin{itemize}
  \item[(P1)] $T^n(I_j^t)\cap \{\beta_i\}=\emptyset$ for every $n\in [0, \max_{x\in I_j^t}N(x,t)]$;
 \item[(P2)] $T^n(I^t_j)\subset V_{M_s}$ for every $n\in [\min_{x\in I_j}N(x,t), \max_{x\in I_j}N(x,t)+c^{-1}M_s]$;
 \item[(P3)] $|S_{N(a_j,t)}(y)-S_{N(a_j,t)}(x)|<M_s$ for every $x,y\in I_j$;
 \item[(P4)] $\sup_{x\in I_j}|S_{N(a_j,t)}(a_j)-S_{N(a_j,t)}(x)|>\varepsilon^{100}M_s$;
 \item[(P5)]  for $n\in [\min_{x\in I_j}N(x,t), \max_{x\in I_j}N(x,t)]$, the function $S_n\Phi(\cdot)$ is monotone and $\Big(\varepsilon,\frac{M_s(d-c)}{6|I_j|}\Big)$-uniformly distributed on every $[c,d]\subset I_j$  with $|d-c|\geq  M_s^{-1/4}|I_j|$ .
  \end{itemize}
 Then the flow $\flowR$ is a good special flow. 
  \end{proposition}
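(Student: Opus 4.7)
The plan is to verify each of the four conditions (G1)--(G4) of \Cref{def:good_sf}. The factorization is trivial, so $d = 1$. Fix $\varepsilon$ and $k$, and take $M_{k,\varepsilon}$ large enough that $M_{k,\varepsilon} \geq \varepsilon^{-2005k}$, that the threshold $\delta := 6\varepsilon^{500k}$ exceeds $M_s^{-1/4}$ for all $M_s \geq M_{k,\varepsilon}$, and that the auxiliary time below can be accommodated. For given $M_s, \height \geq M_{k,\varepsilon}$, produce the Rokhlin tower $\cR_1$ with base $B \subset \T$ and height $\height_1 \in [\height^{1/2}, \height^{3/2}]$ via \Cref{cor:towers3}, and set $t_{\height, M_s} := \max\{M_s + \height^{3/2}, t(\varepsilon, M_s)\}$. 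Given a $(k+1)$-tuple $\tb$ with $\Deltat > t_{\height, M_s}$, apply the hypothesis to each time $t_i$ ($i = 1, \dots, k$) together with an auxiliary time $t^\# \in (t_{\height, M_s}, \Deltat)$ chosen large enough that $\sup_j |I_j^{t^\#}|$ is tiny; let $\cP_\tb$ be the common refinement of these partitions (via \Cref{lem:intersection_almost_partitions}), with atoms failing the ratio threshold (see (G4) below) removed.

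Conditions (G2) and (G3) follow directly. (G2) is immediate since the common refinement is a good $\varepsilon$-almost partition of $\T$, and hence of $B$. For (G3) with $i \in \{1, \dots, k\}$ fixed, each atom $P_a \subset I_{j(i)}^{t_i}$, and the hypotheses (P1)--(P3) put $I_{j(i)}^{t_i}$ in the scope of \Cref{lem:nm1} with $R = M_s/2$: the lemma gives $\flow_{t_i}(P_a) \subset \bigcup_{|r| \leq M_s/2 + 1}\flow_r(T^{N(x, t_i)} P_a)$, and the right-hand side is a tower, so $S^{+}(a, t_i, x) - S^{-}(a, t_i, x) < M_s$.

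Condition (G1) is analogous, but on the orbit segments $\flow_t(T^{N(x, t_i)} P_a)$ for $t \in [0, M_s + \height^{3/2}]$. We apply \Cref{lem:nm1} to the forward-iterated intervals, with the auxiliary partition $\{I_j^{t^\#}\}$ controlling Birkhoff sum fluctuations over the required time range via (P3) and the monotonicity portion of (P5). The spatial diameter of $P_a$ is bounded by $\sup_j |I_j^{t^\#}| < \varepsilon$, while the vertical deformation is made $< \varepsilon$ by combining uniform stretching on $P_a$ (a small subinterval of $I_j^{t^\#}$) with the a priori bound $< M_s$ on the full $I_j^{t^\#}$ together with the monotonicity in $n$ coming from the positivity of $\Phi$ and (P5).

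For (G4), define $i_a \in \{1, \dots, k\}$ to maximize $|P_a|/|I_{j(i)}^{t_i}|$, and discard those atoms $P_a$ for which this maximum falls below $\delta = 6\varepsilon^{500k}$; the total $\nu$-mass of discarded atoms is bounded by $\varepsilon$ via a pigeonhole-type count using $M_s \geq \varepsilon^{-2005k}$ and the $\varepsilon$-slack in each of the $k$ input partitions. For the surviving atoms, applying the uniform-stretching portion of (P5) on the subinterval $P_a \subset I_{j(i_a)}^{t_{i_a}}$ gives that $S_{N(\cdot, t_{i_a})}$ is $(\varepsilon, \Delta S_a)$-uniformly stretching on $P_a$ with $\Delta S_a \geq M_s |P_a|/(6 |I_{j(i_a)}^{t_{i_a}}|) > \varepsilon^{1000k} M_s$. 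A point $x_a \in P_a$ satisfying $\flow_{t_{i_a}}(x_a, 0) \in \T \times \{0\}$ exists by the intermediate value theorem applied to the return-time function, and \Cref{lem:boun}, whose isometry hypothesis on $P_a$ is guaranteed by (P1), translates the spatial uniform stretching into the flow-direction uniform distribution estimate required in (G4). The principal obstacle is the balancing act between (G1) and (G4): (G4) demands atoms whose size is a non-negligible fraction of a containing interval $I_{j(i_a)}^{t_{i_a}}$, while (G1) demands atoms small enough in absolute size that vertical Birkhoff sum fluctuations over a long orbit segment remain below $\varepsilon$. The coordination of the exponents $\varepsilon^{-2005k}$, $\varepsilon^{500k}$, $\varepsilon^{100}$, and $\varepsilon^{1000k}$, together with the freedom to make $t^\#$ arbitrarily large (shrinking $\sup_j |I_j^{t^\#}|$), is what enables this balance to be struck.
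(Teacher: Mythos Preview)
Your plan has the right overall architecture, but the verification of (G1) and (G2) contains genuine gaps.

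For (G1) you must show $\diam(\flow_r(T^{N(x,t_i)}P_a)) < \varepsilon$ for every $r \in [0, M_s + \height^{3/2}]$ and every $i = 0, \dots, k$. This reduces to bounding $|\Phi'|$ along the orbit segment $\{T^{N(x,t_i)+m}(P_a) : 0 \le m \le c^{-1}(M_s + \height^{3/2})\}$, which in turn requires these iterates to sit inside some $V_D$. When $i \ge 1$, property (P2) at time $t_i$ with parameter $M_s$ only places $T^{N(x,t_i)+m}(P_a)$ in $V_{M_s}$ for $m \le c^{-1}M_s$; it says nothing once $m$ is of order $c^{-1}\height^{3/2}$. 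Your auxiliary partition at $t^\# \in (t_{\height,M_s}, \Deltat)$ cannot fill this gap: since $t^\# < t_1 \le t_i$, the window in which (P2) at $t^\#$ gives $V_{M_s}$-control lies around $N(\cdot,t^\#)$, which is far below $N(\cdot,t_i)$. The ``monotonicity'' you invoke from (P5) is monotonicity of $x \mapsto S_n\Phi(x)$ for $n$ in a narrow window, not monotonicity of $m \mapsto |S_m(y)-S_m(z)|$; so the a priori bound $< M_s$ on the full $I_j^{t^\#}$ does not propagate to intermediate $m$. The paper's remedy is different: it calls the hypothesis with the \emph{adjusted} parameters $\varepsilon' = \varepsilon^{100k}\height^{-100k}$ and an enlarged $M$-parameter (of order $M^2+\height^2$), so that (P2) now covers iterates up to $c^{-1}(M^2+\height^2)$ past each $N(\cdot,t_i)$; for $i=0$ it additionally discards atoms violating $T^nI \cap V_R \neq \emptyset$ for all $n < \lfloor c^{-1}(M^2+\height^2)\rfloor$, where $R$ is chosen so that the discarded set has small measure. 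Neither device appears in your sketch.

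For (G2), an $\varepsilon$-almost partition of $\T$ is not automatically one of the base $B$: the definition requires $\nu\big(B \,\triangle\, \bigcup\{P_a : P_a \cap B \neq \emptyset\}\big) < \varepsilon\,\nu(B)$, and $\nu(B)$ is of order $\height_\varrho^{-1}$, so the error tolerance is tiny. The paper first replaces $B$ by a finite union of $s$ intervals and then forces $\sup_j |I_j^{t_k}| < \varepsilon^{100k}/(s\,\height^{100k}M^4)$ by raising $t_{\height,M}$ (this is another reason for the drastically shrunken $\varepsilon'$). Your ``hence of $B$'' skips this entirely. Relatedly, applying the hypothesis at the literal value $\varepsilon$ rather than at $\varepsilon'$ leaves no room after the $k$-fold refinement and the ratio-threshold trimming of \Cref{lem:par}; the exponents $\varepsilon^{-2005k}$, $\varepsilon^{100}$, $\varepsilon^{1000k}$ in \Cref{def:good_sf} are calibrated against $\varepsilon' = \varepsilon^{100k}\height^{-100k}$, not against $\varepsilon$ itself.
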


%We will prove Proposition \ref{prop:part1} below. Let us first show how it implies the main result.   
  
\begin{proof}%[Proof of Proposition \ref{prop:part1}:]
Fix $\varepsilon>0$, and $k\in \N$. Let $M_{k,\varepsilon}:= \max\left\{M(\varepsilon^{100k}), e^{\varepsilon^{-100k}}\right\}$, where $M(\varepsilon)$ is given by the assumptions. Take $M_s\geq M_{k,\varepsilon}$; to simplify notation, we denote $M=M_s$. For $\height \geq M_{k,\varepsilon}$ let $\cR$ be any Rokhlin tower with base $B\subset \T$ and height $\height$ so that $\mu^\Phi(\cR)\geq 1-\varepsilon^2$. Since $B$ is measurable, there are disjoint intervals $B'_1,\ldots B'_s$ such that $\mu(B\triangle \bigcup_{\ell=1}^sB'_\ell)<\varepsilon^2\mu(B)$. So by considering the tower $\cR'$ over $\bigcup B'_\ell$ we will assume without loss of generality that $B$ is a union of $s$ intervals.

Let $t_{\height,M}:=t(\height^{-100k}\varepsilon^{100k},M^2+\height^2)$ where $t(\cdot,\cdot)$ comes from the assumptions. By increasing $t_{\height,M}$ if necessary we can assume that $\sup_j |I_j^t|<\min\Big\{\frac{ \varepsilon^{100k}}{s \height^{100k}M^4},\kappa(\varepsilon^{100k},M)\Big\}$ for any $t\geq t_{\height,M}$. We will also assume that $t_{\height,M}\geq t_{M/2}$, where $t_R$ comes from Lemma \ref{lem:nm1}.

Consider a $k$-tuple of times ${\bf t}$, $t_1<\ldots<t_k$, with $\Delta {\bf t}\geq t_{\height,M}$.  Let $(I_j^{t_i})$  be the $\height^{-100k}\varepsilon^{100k}$-almost partitions given by Proposition \ref{prop:part1} and consider the common refinement (i.e., all intersections) of the partitions $\{I_{j}^{t_i}\}_{i=1,\ldots k}$. Each atom of this common refinement is of the form $\cap_{i=1}^k I_{j_{\ell_i}}^{t_i}$. Let now $\overline{\cP}_{\bf t}$ be a family consisting only of those atoms in the common refinement  for which  $\Leb(\cap_{i=1}^k I_{j_{\ell_i}}^{t_i})\geq \varepsilon^{4k} \min_i \Leb(I^{t_i}_{j_{\ell_i}})$. Note that   by applying inductively Lemma \ref{lem:par} $k$ times to the partitions $\{I_{j}^{t_i}\}$ it follows that $\mu(\T\setminus \overline{\cP}_{\bf t})<\varepsilon^{10}$.
We also note that, for all $I \in \overline{\cP}_{\bf t}$, it follows from (P1) and (P2) that $T^n(I) \cap \{\beta_i\}_i = \emptyset$ for all $0 \leq n \leq c^{-1}(M^2+\height^2)$. In particular, for all such $n$, the restriction $T^n|_{I}$ is an isometry.

Let $R:= (c^{-1}\varepsilon^{-10}(M^2 + \height^2))^{1/\gamma}$ and $N:= \lfloor c^{-1}(M^2+\height^2)\rfloor$, and let $\cP_{\bf t}\subset \overline{\cP}_{\bf t}$ be the subfamily of intervals which satisfy $T^nI \cap V_R \neq \emptyset$ for all $0 \leq n < N$.
By definition, since we are removing all intervals which are entirely contained in $T^{-n}(\T \setminus V_R)$ for some  $n < N$, we have $\Leb(\T \setminus \cP_{\tb}) \leq \varepsilon^{10}+ N\Leb(\T \setminus V_R) \leq \varepsilon^{10}+ NR^{-\gamma} \leq 2 \varepsilon^{10}$. We furhter restrict to the subfamily of intervals which additionally intersect $B$.  We will now show that the partition $\cP_{\bf t}$ satisfies the assumptions (G1)--(G5) in the definition of good special flows. 

Let $P_a$ be an atom of $\cP_{\bf t}$ (note that this is an interval).   
For property (G1), we have to show that, for every $t\in \{ t_0 = 0, t_1,\ldots, t_k\}$, we have
\begin{equation}\label{eq:claim_prove_G1}
\max_{x\in P_a}\sup_{0\leq r\leq M+\height^{3/2}} \text{diam}\Big(\flow_r(T^{N(x,t)}P_a)\Big)<\varepsilon.
\end{equation}
Let us start from the case $t=t_0 = 0$.
Since $r\leq M+\height^{3/2}$, it follows that $N(\cdot,r)<c^{-1}(M+\height^{3/2})<N$. Then, by the mean-value theorem, 
\[
\diam (\flow_rP_a) \leq \Leb(P_a) \cdot \sup_{x \in P_a} \sup_{0\leq n <N} |S_n(\roof')(x)|\leq \Leb(P_a) \cdot N \cdot \sup_{x \in P_a} \sup_{0\leq n <N} |\roof'(T^nx)|.
\]
Since $T^n$ is an isometry on $P_a$, we have $\dist(T^nP_a,\{\beta_i\}) \geq R^{-(1+\zeta)} - \Leb(P_a) \geq R^{-(1+\zeta)}/2$, which implies that $|\roof'(T^nx)| \leq 4R^2$. Using again that $\Leb(P_a) \leq \sup_j \Leb(I_j^t)$, we deduce our claim $\diam (\flow_rP_a) < \varepsilon$.

The case $t=t_i$ for $i\geq 1$ is treated similarly.
Since $\cP_{\bf t}$ is a common refinement of the partitions  $\{I_{j}^{t_i}\}_{i=1,\ldots k}$, it is enough to show \eqref{eq:claim_prove_G1} for $P_a$ replaced by $I_{j}^{t_i}$.
Take $x\in I_j^t$ and denote $n=N(x,t)$. Note that $n\in [\min_{x\in I_j^t}N(x,t), \max_{x\in I_j^t}N(x,t)]$. 
Let $y,z\in T^n(I^t_j)$. Then by (P2) with $M^2+\height^2$ it follows that for every $m\leq N$, defined as above,  $T^m[y,z]\in V_{M^2+\height^2}$. In particular this implies that for every $m\leq N$, 
$$
|S_m(y)-S_m(z)|\leq \sum_{i=0}^m|\Phi(T^iy)-\Phi(T^iz)|\leq N \cdot \sup_{w\in V_{M^2+\height^2}} |\Phi'(w)| |I^t_j|\leq (M^2+\height^2)^4 |I^t_j|<\varepsilon^2,
$$
by the bound on $|I_j^t|$. Using this for $m=N(y,r)$ it follows that $|S_{N(y,r)}(y)-S_{N(y,r)}(z)|<\varepsilon^2$. This implies (G1). Moroever, we claim that (which will be useful in showing (G3))
\begin{equation}\label{eq:added}
 \text{ for any }x\in I_j^t\text{ any }r,r'\in [-M_s,M_s],\;\;\; \flow_r(T^{N(x,t)}I_j^t)\cap \flow_{r'}(T^{N(x,t)}I_j^t)= \emptyset.
\end{equation}
Indeed, this just follows from the fact that, by assumptions, the set $T^{N(x,t)}I_j^t$ is an interval of length going to $0$ with $t$ (since $T^{N(x,t)}$ is an isometry on it) and so, for any $y\in T^{N(x,t)}I_j^t$, the minimum over all $|\bar{r}|\leq 2M_s$ of the horizontal distances of $\flow_{\bar{r}} y$ to $y$ is bounded below by a function depending on $M_s$ and in particular is $\geq 2|I_j^t|$ for large enough $t$.

 For property (G2), we have 
$$  
\mu\Big(\bigcup_{a\in \mathscr{A}}P_a\triangle B\Big)= \mu\Big(B\setminus \bigcup_{a\in \mathscr{A}}P_a\Big)+ \mu\Big(\bigcup_{a\in \mathscr{A}}P_a\setminus B\Big).
$$
Since $B$ is a union of $s$ intervals and $\{P_a\}$ are atoms which intersect $B$, $\mu(\bigcup_{a\in \mathscr{A}}P_a\setminus B)\leq 2s \cdot \sup_j |I_j^{t_k}|<\frac{2 \varepsilon^{100k}}{\height^{100k}M^4}$. Moreover $\mu\Big(B\setminus \bigcup_{a\in \mathscr{A}}P_a\Big)\leq \mu(\T\setminus \overline{\cP}_{\bf t})<\varepsilon^{10}$. This shows (G2).

 For property (G3) we  notice that, for every $i=1,\ldots, k$, $P_a\subset I^{t_i}_{j_a}$ (as the partition was the common refinement).
We will use Lemma \ref{lem:nm1} for $R=M$ and $I=I^{t_i}_{j_a}$ where $i=1,\ldots k$ and $J=P_a\subset I$. We will check that the assumptions of the lemma are satisfied and, for simplicity, denote $t=t_i$. Note that $|I|<M^{-4}$ by the definition of $t_{\height,M}$. Moreover (P1) and (P2) together immediately imply \eqref{eq:nsin} (remember that $N(x,t)\leq c^{-1}t$ as $\Phi\geq c$). Note also that,
by P3, 
\begin{multline*}
|S_{N(x,t)-N(a_j,t)}(T^{N(a_j,t)}(x))|=|S_{N(x,t)}(x)-S_{N(a_j,t)}(x)| \\ \leq 
|S_{N(x,t)}(x)-t| +|S_{N(a_j,t)}(a_j)-t|+|S_{N(a_j,t)}(a_j)-S_{N(a_j,t)}(x)|\leq 3M, 
\end{multline*}
as $|S_{N(x,t)}(x)-t|\leq \Phi(T^{N(x,t)}(x))\leq M$ by (P2). This implies that for any $x\in P_a\subset I^t_{j_a}$,
$$
|N(x,t)-N(a_j,t)|\leq 3c^{-1}M;
$$
therefore, $|N(x,t)-N(y,t)|<6c^{-1}M$. Finally,  (P3) implies the bound on the difference of ergodic sums in  Lemma \ref{lem:nm1} and so all assumptions of the lemma are satisfied. Finally by \eqref{eq:added} the set on the RHS is indeed a tower. Therefore (G3) follows.

 For property (G4), we will apply Lemma \ref{lem:boun}. For this, we will first show that there exists $x\in P_a$ and $i\in \{1,\ldots, k\}$ such that $\flow_{t_i}(x,0)\in \T$ and such that $T^{N(x,t_i)}\vert_{P_a}$ is an isometry. By the definition of $P_a=[c,d]$, there is $i\in \{1,\ldots, k\}$ such that  $\Leb([c,d])\geq \varepsilon^{4k} |I_{j_a}^{t_i}|$. Let $I_{j_a}^{t_i}=[a,b]=I$, and we denote $t=t_i$. 
 We claim that 
 \begin{equation}\label{eq:ml1}
|S_{N(c,t)}(c)-S_{N(c,t)}(d)| \leq M+1.
 \end{equation}
Indeed,  from (P3), we get
 \begin{multline*}
 |S_{N(c,t)}(c)-S_{N(c,t)}(d)|\\ \leq
 |S_{N(c,t)}(c)-S_{N(c,t)}(d)- (S_{N(a,t)}(c)-S_{N(a,t)}(d))|+|S_{N(a,t)}(c)-S_{N(a,t)}(d)|\\ \leq
 M + |S_{N(c,t)-N(a,t)}(T^{N(a,t)}c)-S_{N(c,t)-N(a,t)}(T^{N(a,t)}d)|.
 \end{multline*}
  By (P2), it follows that $T^{N(a,t)+i}(I)\subset V_{M}$ for every $i\in \{0,\ldots, N(c,t)-N(a,t)\}$. Moreover, (P2) and the bound $|N(x,t)-N(y,t)|<6c^{-1}M$ yield 
  \[
  |S_{N(c,t)-N(a,t)}(T^{N(a,t)}c)-S_{N(c,t)-N(a,t)}(T^{N(a,t)}d)|\leq 6c^{-1}M \sup_{x\in V_M} |\Phi'(x)|(d-c)\leq 1,
  \]  
  since $\sup_{x\in V_M} |\Phi'(x)|<M^2$ and $d-c\leq b-a\leq M^{-4}$. This proves the claim \eqref{eq:ml1}.

By (P5), we know that  $S_{N(x,t)}(\cdot)$ is $(\varepsilon, M/6)$-uniformly stretching (and monotone) on $[a,b]$. This means in particular that if $J\subset [a,b]$ is an interval, then 
$S_{N(c,t)}(J)$ is an interval with length at least $\frac{M}{6} (1-\varepsilon)\Leb(J)(b-a)^{-1}$. Applying this fact to $J=[c,d]$, along with monotonicity of $S_{N(c,t)}(\cdot)$, we get by \eqref{eq:ml1} that 
$ M+1>|S_{N(c,t)}(c)-S_{N(c,t)}(d)|\geq \varepsilon^{4k} M/7$. The lower bound and (P2)  by Lemma \ref{lem:nm2} with $R=M$ imply that $N(\cdot,t)$ is not constant on $[c,d]$ (recall that $M$ is sufficiently large in terms of $\varepsilon^{-1}$). This however means that there exists $u\in [c,d]$ such that $f_{t}(u,0)=(T^{N(u,t)}u,0)$, i.e. $S_{N(u,t)}(u)=t$.
 We have 
\begin{multline*}
|S_{N(u,t)}(u)-S_{N(u,t)}(x)- (S_{N(c,t)}(u)-S_{N(c,t)}(x))| \\ \leq |S_{N(u,t)-N(c,t)}(T^{N(c,t)}u)-S_{N(u,t)-N(c,t)}(T^{N(c,t)}x)|.
\end{multline*}
Analogously to the reasoning to prove \eqref{eq:ml1}, we get that 
\[
|S_{N(u,t)-N(c,t)}(T^{N(c,t)}u)-S_{N(u,t)-N(c,t)}(T^{N(c,t)}x)|<1.
\]
Moreover by monotonicity of $S_{N(c,t)}(\cdot)$ it follows that  $|S_{N(c,t)}(u)-S_{N(c,t)}(x)|<|S_{N(c,t)}(c)-S_{N(c,t)}(d)|<M+1$. On the other hand $\varepsilon^{4k}M/7\leq |S_{N(c,t)}(c)-S_{N(c,t)}(d)|\leq |S_{N(c,t)}(u)-S_{N(c,t)}(d)|+|S_{N(c,t)}(u)-S_{N(c,t)}(d)|$, which implies that $\max_{x\in [c,d]}|S_{N(c,t)}(u)-S_{N(c,t)}(x)|\geq  \varepsilon^{4k}M/14$. Summarizing, we obtained
$$
\varepsilon^{4k}M/14< \max_{x\in [c,d]} |S_{N(u,t)}(u)-S_{N(u,t)}(x)|< M+2.
 $$
We will now apply Lemma \ref{lem:boun} with $\overline{M}= M$, $Z=P_a=[c,d]$ and $x=u$ (recall that $\flow_t(u,0)\in X$). By this lemma, for any $|L|\leq 2M$, the set $
\{x\in P_a: f_tx \in \bigcup_{r\in [L,L+\varepsilon]} f_r(T^{N(u,t)}P_a)\}$ contains the set $\{x\in P_a: |S_{N(u,t)}(u)-S_{N(u,t)}(x)|\in [L,L+\varepsilon]\}$ and is contained in the set $\{x\in P_a: |S_{N(u,t)}(u)-S_{N(u,t)}(x)|\in [L-\varepsilon^2,L+\varepsilon+\varepsilon^2]\}$. By (P5), the function $S_{N(u,t)}(\cdot)$ is $(\varepsilon, \varepsilon^{4k}M/6)$-uniformly stretching on $P_a=[c,d]$ and monotone it follows that the measure of the first set is at least 
$$
(1-\varepsilon) \frac{L+\varepsilon-L}{S_{N(u,t)}(c)-S_{N(u,t)}(d)} (d-c).
$$
On the other hand the measure of the latter set is at most 
$$
(1+\varepsilon) \frac{L+\varepsilon+\varepsilon^2- L}{S_{N(u,t)}(c)-S_{N(u,t)}(d)} (d-c).
$$ 
By monotonicity of $S_{N(u,t)}(\cdot)$ and Lemma \ref{lem:boun}, it follows that  $S_{N(u,t)}(c)=S^-({a,t,u})$ and $S_{N(u,t)}(d)S=^+({s,t,u})$. Finally, $S_{N(u,t)}(d)-S_{N(u,t)}(c)\geq \max_{x\in [c,d]} |S_{N(u,t)}(u)-S_{N(u,t)}(x)|\geq \varepsilon^{4k}M/14$. This indeed finishes the proof of (G5) and completes the proof of \Cref{prop:part1}.
\end{proof}

The main mechanism in the proof of \Cref{prop:part1} is the mixing via stretching mechanism. More precisely, we have the following result.

\begin{lemma}\label{lem:mvs} Assume that for any $\varepsilon>0$ there is $t_\varepsilon>0$ such that for every $t>t_\varepsilon$, there exists an $\varepsilon$-almost partition $\cP_t=\{J^t_i\}$ of $\T$ satisfying the following properties
\begin{enumerate}
\item[(J1)] $\sup_i|J_i^t| \to 0$ and $T^n(J_i)\cap \{\beta_i\}_{i=1}^d=\emptyset$ for every $n\in [0, \max_{x\in J_i}N(x,t)]$;
\item[(J2)] $\min_{x,y\in J_i} |S_{N(x,t)}(\Phi')(y)| |J_i|\geq K(t)$;
\item[(J3)] $\max_{x,y\in J_i} |S_{N(x,t)}(\Phi'')(y)| |J_i|\leq K(t)^{-1} \min_{x,y\in J_i} |S_{N(x,t)}(\Phi')(y)|$.
\end{enumerate}
for some function $K(t)\to \infty$, as $t\to \infty$. Then (P1)--(P5) in \Cref{prop:part1} hold. 
\end{lemma}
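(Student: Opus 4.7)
The plan is to refine the almost partition $\{J_i^t\}$ provided by (J1)--(J3) by subdividing each $J_i^t$ into pieces on which the Birkhoff sums of $\Phi$ oscillate by approximately $M_s$, and then discarding those pieces whose orbit intrudes into the high-roof region $\T\setminus V_{M_s}$. The key tool for the subdivision step is \Cref{lem:fay2}: conditions (J2)--(J3) imply that each $S_{N(x,t)}\Phi$ is $(K(t)^{-1},K(t))$-uniformly stretching, and in particular strictly monotone, on $J_i^t$. Since on an interval $I\subset J_i^t$ with oscillation of $S_{N(a,t)}\Phi$ bounded by $M_s$ we have $|N(x,t)-N(y,t)|\leq 2M_s/c$ for $x,y\in I$, the same uniform-stretching and monotonicity properties persist with $S_n\Phi$ in place of $S_{N(x,t)}\Phi$ for every $n$ in a window of size $O(M_s)$ around $N(x,t)$; the correction terms are negligible because iterates land in $V_{M_s}$, on which $|\Phi'|$ and $|\Phi''|$ are controlled polynomially in $M_s$ and $|I|$ is very small.

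First I would apply the hypotheses with a parameter $\varepsilon'\ll \varepsilon^{100}$ (say $\varepsilon'=\varepsilon^{1000}$) and $t$ so large that $\sup_i|J_i^t|$ is smaller than any threshold needed below. On each $J_i^t=[a_i,b_i]$, letting $\Omega_i=|S_{N(a_i,t)}\Phi(b_i)-S_{N(a_i,t)}\Phi(a_i)|$ (which is $\geq K(t)$), I would cut $J_i^t$ into $\lfloor \Omega_i/M_s\rfloor$ consecutive subintervals on which $S_{N(a_i,t)}\Phi$ varies by exactly $(1-\varepsilon')M_s$, plus a leftover piece. The total leftover Lebesgue measure is $\sum_i|J_i^t|\cdot M_s/\Omega_i=O(M_s/K(t))$, which can be made $<\varepsilon$ by taking $t$ large. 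I would then discard those subintervals $I$ for which $T^nI\not\subset V_{M_s}$ for some $n\in[\min_{x\in I}N(x,t),\max_{x\in I}N(x,t)+c^{-1}M_s]$. Since this range of $n$ has length at most $3M_s/c$, the total Lebesgue measure of discarded intervals is bounded by $(3M_s/c)\cdot\Leb(\T\setminus V_{M_s})\lesssim M_s\cdot M_s^{-(1+\zeta/2)}=M_s^{-\zeta/2}$, which is $<\varepsilon$ once $M_s$ is large enough; this fixes the threshold $M(\varepsilon)$.

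The surviving intervals form the desired almost partition $\{I_j^t\}$. Verification of (P1) is immediate from $I_j^t\subset J_i^t$ and (J1); (P2) is exactly the content of the discarding step; (P3) follows since the oscillation of $S_{N(a_j,t)}\Phi$ on $I_j^t$ differs from that of $S_{N(a_i,t)}\Phi$ by $|N(a_i,t)-N(a_j,t)|\cdot\sup_{V_{M_s}}|\Phi'|\cdot|I_j^t|$, which is $o(1)$ for $t$ large because $|I_j^t|\to 0$; (P4) holds because the oscillation on $I_j^t$ is $(1-\varepsilon')M_s>\varepsilon^{100}M_s$. The delicate point is (P5): starting from $(\varepsilon',K(t))$-uniform stretching on $J_i^t$, one checks that on any $[c,d]\subset I_j^t$ with $|d-c|\geq M_s^{-1/4}|I_j^t|$ the image $S_n\Phi([c,d])$ has length at least $(1-\varepsilon')K(t)(d-c)/|J_i^t|$, which using the identity $|I_j^t|/|J_i^t|\approx M_s/K(t)$ (forced by the construction) equals $(1-\varepsilon')M_s(d-c)/|I_j^t|$, exceeding $M_s(d-c)/(6|I_j^t|)$ for $\varepsilon'$ small. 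The main obstacle I anticipate is the bookkeeping in (P5), namely tracking the rescaling of the stretching constants through the refinement and simultaneously arguing that uniform stretching persists when $S_{N(x,t)}\Phi$ is replaced by $S_n\Phi$ for all $n\in[\min N,\max N]$; the latter requires the distortion estimate (J3) to absorb the extra $O(M_s)$ iterates, using the polynomial bounds on $\Phi'$ and $\Phi''$ available on $V_{M_s}$.
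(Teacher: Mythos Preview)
Your overall architecture—refine $\{J_i^t\}$ into subintervals of controlled Birkhoff-sum oscillation, then discard pieces whose post-$N(x,t)$ iterates leave $V_{M_s}$—is natural, but the specific subdivision you propose creates a reference-point problem that your verification of (P3)–(P4) does not handle.

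You cut each $J_i^t=[a_i,b_i]$ according to the level sets of $S_{N(a_i,t)}\Phi$, so that on each piece $I_j^t=[a_j,b_j]$ the oscillation of $S_{N(a_i,t)}\Phi$ equals $(1-\varepsilon')M_s$. But (P3) and (P4) concern $S_{N(a_j,t)}\Phi$, with $a_j$ the endpoint of the \emph{small} interval. Your correction term is
\[
\bigl[S_{N(a_j,t)}\Phi(y)-S_{N(a_j,t)}\Phi(x)\bigr]-\bigl[S_{N(a_i,t)}\Phi(y)-S_{N(a_i,t)}\Phi(x)\bigr]
=\int_x^y S_{\Delta N}(\Phi')(T^{N_1}u)\,du,
\]
with $N_1=\min(N(a_i,t),N(a_j,t))$ and $\Delta N=|N(a_i,t)-N(a_j,t)|$. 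You bound this by $\Delta N\cdot\sup_{V_{M_s}}|\Phi'|\cdot|I_j^t|$, but nothing in (J1)–(J3) or in your discarding step forces the iterates $T^{N_1+k}u$ for $0\le k<\Delta N$ to lie in $V_{M_s}$: your discarding step only covers a window of length $O(M_s)$ around the values $N(x,t)$ for $x\in I_j^t$, whereas $\Delta N$ can be of order $\Omega_i/c\gg M_s$. More fundamentally, the hypotheses (J2)–(J3) give a uniform \emph{lower} bound and near-constancy of each $S_{N(x,t)}(\Phi')$ in the variable $y$, but no comparison between $S_{N(a_j,t)}(\Phi')$ and $S_{N(a_i,t)}(\Phi')$; their ratio could be any constant, so the oscillation of $S_{N(a_j,t)}\Phi$ on $I_j^t$ need not be close to $(1-\varepsilon')M_s$. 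The same issue undermines your claim that the window $[\min_I N,\max_I N]$ has length $\le 2M_s/c$ before discarding.

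The paper sidesteps this entirely by subdividing $J$ according to jumps of $N(\cdot,t)$: one takes $a_{r+1}$ to be the first point with $|N(a_{r+1},t)-N(a_r,t)|=M/2$. This guarantees $|N(x,t)-N(y,t)|\le M$ on each piece by construction, so the discarding window really is $O(M)$. Then (P3)–(P4) are obtained not by transferring from $S_{N(a_i,t)}$ but via the identity
\[
S_{N(c,t)}(x)-S_{N(c,t)}(c)=S_{N(c,t)-N(x,t)}\Phi(T^{N(x,t)}x)+O\bigl(\Phi(T^{N(c,t)}c)+\Phi(T^{N(x,t)}x)\bigr),
\]
together with an ergodic-theorem input (the set $U_M$) ensuring $S_n\Phi(T^{N(x,t)}x)\approx n$ on good pieces; this converts the $N$-jump of size $M/2$ into a Birkhoff-sum oscillation between $(1-\varepsilon)M/2$ and $(1+\varepsilon)M$. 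If you want to rescue your plan, you would need to replace the $S_{N(a_i,t)}$-based cuts by $N(\cdot,t)$-based cuts and bring in the ergodic theorem as above.
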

Properties (J1)--(J3) are the core of the mixing via shearing argument: it asserts that there is a family of curves (intervals) that almost foliate the whole space and whose length goes to $0$, (J1). Moreover these curves are stretched (sheared) (J2) and the shear is uniform~(J3).

\begin{proof}
Given $\varepsilon$, we will consider $(M_k)$  where $k$ is sufficiently large in terms of $\varepsilon$. For sufficiently large $t$ in terms of $M=M_k$ we will construct the $\varepsilon$-almost partition $\{I_j^t\}$ satisfying (P1)-(P5) by refining the partition $\cP_t=\{J^t_i\}$. Since $t$ is fixed, we will omit it from the notation. Let $\cP_t=\{J_i\}$ be the $\varepsilon^{100}$-almost partition coming from the assumptions, and let $J=[a,b]\in \cP_t$. Let $a_1>a$ be the smallest such that $|N(a_1,t)-N(a,t)|=M/2$. Define $I^J_1=[a,a_1]$; now inductively we define $a_{i+1}>a_i$ to be the smallest for which $|N(a_{i+1},t)-N(a_i,t)|=M/2$ and $I^J_{i+1}=[a_i,a_{i+1}]$. 
We continue until for some $i$ we have $|N(a_i,t)-N(x,t)|<M/2$ for all $x\in [a_i,b]$ and then we take $I^J_b=[a_i,b]$. The collection $\{I^J_i\}$ and $I^J_b$  forms a partition of $J$, and so the union over all $J\in \cP_t$ of these new intervals form an $\varepsilon^{100}$-almost partition of $\T$. We will discard those intervals $\{I^J_i\}$ which go to close to singularity in a short time after $t$. More precisely, let $Z_M^\Phi=\{(x,s): T^ix\in V_{M/2} \text{ for every } i\in [-2\varepsilon^{-1}M,2\varepsilon^{-1}M] \}$. Note that $\mu(Z_M^\Phi)\geq 1-\varepsilon^{100}$ if $M$ is large enough, since $\Phi\in L^1(\T)$. Moreover, let
\[
U_M:=\{x\in \T\, :\, (1-\varepsilon^2)n < S_n(x) < (1+\varepsilon^2)n \text{ for } n\geq M/4\} \ \ \text{and} \ \ U_M^\Phi=\{(x,s)\, :\, x\in U_M\}.
\]
By the ergodic theorem it follows that $\mu^\Phi(U_M^\Phi)\geq 1-\varepsilon^{100}$ if $M$ is large enough.

We say that  $I_i^J$ is {\em good} if $\bigcup_{r<\varepsilon}\flow_r(I_i^J)\cap (Z_M^\Phi\cap \flow_{-t}U_M^\Phi)\neq \emptyset$. Since the intervals $\{I_i^j\}$ form an $\varepsilon^{100}$-almost partition it follows that $\mu^\Phi\Big(\bigcup_{i,J}\bigcup_{r<\varepsilon}f_r(I_i^J)\Big)\geq \varepsilon(1-\varepsilon^{100})$. Thus, good intervals form an $\varepsilon^{50}$-almost partition of $X$.

 Note that each interval $J$ has one special interval $I^J_b$ where the function $N(\cdot,t)$  varies by less than $M/2$ and in fact might be constant. We have the following:
\begin{equation}\label{eq:bad}
\text{ if } I^J_b\text{ is good then }|I^J_b|\leq \varepsilon^{100} |J|
\end{equation}
Before we prove the above statement, let us show how it implies (P1)--(P5). The partition consists of good interval $\{I^J_i\}$ where we additionally discard all good intervals of the form $I^J_b$. By \eqref{eq:bad} and the above estimates on the measure of good intervals, it follows that the collection $\{I_i^t\}$ consisting of all good $\{I^J_i\}$ over all $J$ forms an $\varepsilon^{10}$-almost partition of $\T$. Note that (P1) follows from the fact that $I_i=I^J_s\subset J$ for some $J$ and we use (J1). 

For (P2) we note that since $I_i$ s good it follows that there exists $x\in I_i$ and $r<\varepsilon$ such that $\flow_{t+r}(x)\in Z_M^\Phi\cap U_M^\Phi$. By the definition, this means that $T^{N(x,t)+i}x\in V_{M/2}$ for $i\in  [-\varepsilon^{-1}M,\varepsilon^{-1}M]$. Since by definition $|N(x,t)-N(a,t)|\leq M$  for every $x\in I_i$, we have that  $T^nx\in V_{M/2}$ for every $n\in [\min_{x\in I_i}N(x,t), \max_{x\in I_i}N(x,t)+c^{-1}M]$. Moreover, since $r<\varepsilon$ this also means that $T^{w}x\in U_M$ for $w\in \{N(x,t)-1,N(x,t)\}$. In both cases, $|S_n(T^w(x))-S_n(T^{N(x,t)}(x))|<\varepsilon^2 M$ for $n\geq \varepsilon M$. In particular, for $n \geq \varepsilon M$,
\begin{equation}\label{eq:bet2}
S_n(T^{N(x,t)}(x))\in [(1-\varepsilon)n, (1+\varepsilon)n].
\end{equation}
 Since $I_i\to 0$ as $t\to \infty$ (as $I_i\subset J^t_i$), it follows that if $t$ is large enough then $T^n(I_i)\subset V_M$, by (J1). This gives (P2). 
 
 In order to verify (P3), let $I_i=[c,d]$. Notice that 
\begin{equation}\label{eq:ct}
S_{N(c,t)}(x)-S_{N(c,t)}(c)= S_{N(c,t)-N(x,t)}(T^{N(x,t)}x)+(t-S_{N(c,t)}(c))+(S_{N(x,t)}(x)-t).
\end{equation}
By (P2), we have $|t-S_{N(c,t)}(c)|<\Phi(T^{N(c,t}c)\leq \varepsilon^{100}M$ with the same bound for $S_{N(x,t)}(x)-t$.  By \eqref{eq:bet2}, \[
S_{N(c,t)-N(d,t)}(T^{N(x,t)}x)\geq S_{M/2}(T^{N(x,t)}x)\geq (1-\varepsilon)M/2,
\]
and, also by (P2), 
\[
S_{N(c,t)-N(x,t)}(T^{N(x,t)}x)\leq S_{M}(T^{N(x,t)}x)\leq (1+\varepsilon)M.
\]
Since $|N(c,t)-N(d,t)=M/2$ and $|N(c,t)-N(x,t)|\leq M/2$, the above inequalities immediately imply (P3) and (P4). 

Let us verify (P5), for which we will use Lemma \ref{lem:fay2}. Let  $x_0\in [c,d]$ be such that $\inf_{x\in [c,d]} |S_{N(c,t)}(\Phi')(x)|=|S_{N(c,t)}(\Phi')(x_0)|$. Since $|S_{N(c,t)}(\Phi)(d)-S_{N(c,t)}(\Phi)(c)|\geq M/2$ it follows that one of $|S_{N(c,t)}(\Phi)(x_0)-S_{N(c,t)}(\Phi)(c)|$ and $|S_{N(c,t)}(\Phi)(x_0)-S_{N(c,t)}(\Phi)(d)|$ is at least $M/4$. Without loss of generality, we assume it is the first one; then,
\begin{multline*}
M/4\leq |S_{N(c,t)}(\Phi)(x_0)-S_{N(c,t)}(\Phi)(c)|\leq |S_{N(c,t)}(\Phi')(x_0)|(x_0-c)+ |S_{N(c,t)}(\Phi'')(\theta)|(x_0-c^2) \\ \leq (1+K(t)^{-1})|S_{N(c,t)}(\Phi')(x_0)|(x_0-c).
\end{multline*}
This implies that $\inf_{x\in [c,d]} |S_{N(c,t)}(\Phi')(x)| |I_i|\geq M/5$ and, furthermore, using (P2), we get $\inf_{x\in [c,d]} |S_{N(y,t)}(\Phi')(x)|\geq \inf_{x\in [c,d]} |S_{N(c,t)}(\Phi')(x)|+{\rm O}(M^2)$. Summarizing, 
\[
\inf_{x,y\in [c,d]} |S_{N(y,t)}(\Phi')(x)| |I_i|\geq M/6.
\]
Moreover, if $[u,w]\subset [c,d]$, then $\inf_{x,y\in [u,w]} |S_{N(y,t)}(\Phi')(x)|(w-u)\geq \inf_{x,y\in [c,d]} |S_{N(y,t)}(\Phi')(x)| |I_i|\cdot \frac{w-u}{|I_i|}\geq \frac{M}{6}\cdot \frac{w-u}{|I_i|}$. Finally notice that (J3) holds in particular on subintervals of $J_i$ and so, in particular, it holds on $I_i$ or the interval $[u,w]\subset I_i$. This implies (P5). \\

It remains to show \eqref{eq:bad}. Let $I^J_b=[u,v]$. Then analogoulsy to \eqref{eq:ct} and the reasoning below it, 
$$
|S_{N(u,t)}(x)-S_{N(u,t)}(u)- S_{N(u,t)-N(x,t)}(T^{N(x,t)}x)|<2\varepsilon^{100}M,
$$
where $|S_{N(u,t)-N(x,t)}(T^{N(x,t)}x)|<(1+\varepsilon)M/2$ by (P2) (see the reasoning below \eqref{eq:ct}). Then, the mean value theorem implies that $\inf_{x\in[u,v]}|S_{N(u,t)}(\Phi')(x)| (v-u)\leq M$. This means that $|I^J_b|\leq \Big(\min_{x\in[u,v]}|S_{N(u,t)}(\Phi')(x)|\Big)^{-1}M$. 
\sloppy On the other hand $|J|\geq K(t)\Big(\min_{x\in[u,v]}|S_{N(u,t)}(\Phi')(x)|\Big)^{-1}$. If $t$ is sufficiently large in terms of $M$, then $\varepsilon^{100}K(t)\geq M$. This finishes the proof of \eqref{eq:bad}.
\end{proof}

It remains to show that the assumptions of \Cref{lem:mvs} hold in the classes of mixing  flows under consideration. For most examples, the conditions of \Cref{lem:mvs} have already been shown to hold and we will just quote and comment below. The one exception are Kochergin flows. In \cite{Koch}, the author shows a result of similar flavor, but the partitions are constructed for a fixed length of ergodic sums. We will therefore show how to construct these partitions for Kochergin flows in a separate subsection (see Section \ref{sec:koch}). This is one of the more technical parts of the paper. Before we do that let us comment on existence of stretching partitions for other flows on surfaces (with logarithmic singularities).

\subsection{Mixing flows on surfaces.}

In this short subsection, we discuss proofs of \Cref{thm:arn} and \Cref{prop:ChW}. We need to show that in those cases the assumptions of \Cref{lem:mvs} are fulfilled, i.e. there are partitions $\{I_j^t\}$ which satisfy (J1)--(J3).
Let us first consider Arnol'd flows that were considered in \cite{Ulc} and \cite{Rav}. 
 Properties (J1)--(J3) follow from 4 in Proposition 5 and Proposition 6 in \cite{Ulc} (see also Lemma 6.5. and Proposition 6.4. in \cite{Rav}). This proves \Cref{thm:arn}. For \Cref{prop:ChW}, Properties (J1)--(J3) are shown to hold in Theorem 6.1. of \cite{ChW}.\\

\section{QLUS flows on surfaces}\label{sec:QLUS_on_surfaces}

In this section we will make the arguments from the previous section quantitative. The goal will be to give conditions depending on ergodic sums of derivatives, that will guarantee the (polynomial) QLUS property and, as a consequence, also polynomial mixing of higher orders. We start with a quantitative version of \Cref{lem:boun}. We will assume in this section that the special flow $\flowR$ satisfies a diophantine condition, i.e. for every $\eta>0$ there exists $\eta'>0$ such that for any sufficiently large $t$,
\begin{equation}\label{eq:dioph}
	\sup_{x\in X^\Phi}\sup_{t^{-2\eta}<r<4t^{\eta}} \dist(\flow_rx,x)>t^{-\eta'}.
\end{equation}
The above condition is satisfied for example for special flows over a diophantine irrational rotation.

\begin{lemma}\label{lem:qboun} For every $t>0$ sufficiently large, for any $Z\subset X$ with $\diam(Z)\leq t^{-\eta'}$, and for any $x\in Z$ for which $\flow_t(x,0)\in \T$ and such that $T^{N(x,t)}\vert_{Z}$ is an isometry, we have: if $z\in Z$, $|L|\leq t^{\eta}$ and $\varepsilon\geq t^{-\eta}$ satisfies $S_{N(x,t)}(x)-S_{N(x,t)}(z)\in [L,L+\varepsilon]$ then $\flow_tz\in \bigcup_{r\in [L,L+\varepsilon]}\flow_r(T^{N(x,t)}Z)$. On the other hand if $\flow_tz\in \bigcup_{r\in [L,L+\varepsilon]}\flow_r(T^{N(x,t)}Z)$ and 
	$|S_{N(x,t)}(x)-S_{N(x,t)}(z)|<t^{\eta'}$ then
	$$
	S_{N(x,t)}(x)-S_{N(x,t)}(z)\in [L-\varepsilon^2,L+\varepsilon+\varepsilon^2].
	$$
\end{lemma}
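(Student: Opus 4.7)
My plan is to mirror the proof of Lemma~\ref{lem:boun} verbatim, substituting the qualitative non-recurrence input (used there to rule out small return times via piecewise continuity and absence of periodic orbits) with the quantitative Diophantine hypothesis~\eqref{eq:dioph}. The first direction requires no new ingredient: since $\flow_t(x,0)\in X$ we have $t=S_{N(x,t)}(x)$, so
\[
\flow_t(z) \;=\; \flow_{\,t-S_{N(x,t)}(z)}\!\bigl(T^{N(x,t)}z,0\bigr) \;=\; \flow_{\,S_{N(x,t)}(x)-S_{N(x,t)}(z)}\!\bigl(T^{N(x,t)}z,0\bigr),
\]
and the shift time lies in $[L,L+\varepsilon]$ by hypothesis, so $\flow_t z$ lies in the claimed union.

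For the second direction I would fix $y\in Z$ and $r\in[L,L+\varepsilon]$ with $\flow_t(z)=\flow_r(T^{N(x,t)}y)$, then rearrange exactly as in Lemma~\ref{lem:boun} to obtain, using the isometry property of $T^{N(x,t)}|_Z$,
\[
\dist\!\bigl(\flow_{\rho}(T^{N(x,t)}z),\,T^{N(x,t)}z\bigr) \;\leq\; \diam(Z) \;\leq\; t^{-\eta'}, \qquad \rho \;:=\; S_{N(x,t)}(x)-S_{N(x,t)}(z)-r.
\]
The assumption $|S_{N(x,t)}(x)-S_{N(x,t)}(z)|<t^{\eta'}$ combined with $|L|\leq t^{\eta}$ and $\varepsilon\leq t^{\eta}$ (which one may assume, else the conclusion is vacuous) ensures $|\rho|<4t^{\eta}$. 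Now I invoke~\eqref{eq:dioph} in its contrapositive form: choosing the exponent pair $(\eta,\eta')$ so that the Diophantine bound holds for our given $\eta'$, the inequality above forces $|\rho|\leq t^{-2\eta}$; since $\varepsilon\geq t^{-\eta}$ we get $|\rho|\leq \varepsilon^2$. Combined with $r\in[L,L+\varepsilon]$ this yields the desired $S_{N(x,t)}(x)-S_{N(x,t)}(z)\in[L-\varepsilon^2,L+\varepsilon+\varepsilon^2]$.

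The only real obstacle is bookkeeping: the statement contains two exponents $\eta,\eta'$ whose roles must be reconciled with those appearing in~\eqref{eq:dioph}. Concretely, one needs to verify that the $\eta'$ governing the diameter bound on $Z$ and the modulus bound on $S_{N(x,t)}(x)-S_{N(x,t)}(z)$ is exactly (or at least compatible with) the $\eta'$ produced by the Diophantine assumption from the given $\eta$. Once this calibration is made explicit and $t$ is taken large enough that $\rho$ falls inside the window $(t^{-2\eta},4t^{\eta})$, the argument reduces to the same algebraic manipulation as in Lemma~\ref{lem:boun}, with the tolerance $\varepsilon^2$ arising naturally from the lower endpoint $t^{-2\eta}$ of the Diophantine window.
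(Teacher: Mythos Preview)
Your proposal is correct and follows essentially the same argument as the paper: both directions are handled identically, with the first being the direct computation $\flow_t(z)=\flow_{S_{N(x,t)}(x)-S_{N(x,t)}(z)}(T^{N(x,t)}z,0)$ and the second obtaining the distance bound $\dist(\flow_\rho(T^{N(x,t)}z),T^{N(x,t)}z)\leq\diam(Z)<t^{-\eta'}$ via the isometry, then invoking the Diophantine condition~\eqref{eq:dioph} to force $|\rho|\leq t^{-2\eta}\leq\varepsilon^2$. Your remark about calibrating the exponents $\eta,\eta'$ is apt; the paper treats this implicitly.
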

\begin{proof} The proof follows the same lines as the proof of Lemma \ref{lem:boun}. 
	Note  that $\flow_t(z)=\flow_{t-S_{N(x,t)}(z)}(T^{N(x,t)}z,0)\subset \bigcup_{r\in [L,L+\varepsilon]}\flow_r(T^{N(x,t)}Z)$, as $t-S_{N(x,t)}(z)=S_{N(x,t)}(x)-S_{N(x,t)}(z)\in [L,L+\varepsilon]$ and so the first part follows. On the other hand, assume  $\flow_tz=f_{r}(T^{N(x,t)}(y))$, for some $y\in Z$ and $r\in [L,L+\varepsilon]$. Note that 
	$$
	\flow_{S_{N(x,t)}(x)-S_{N(x,t)}(z)-r}(T^{N(x,t)}z)=\flow_{t-r}(\flow_{-S_{N(x,t)}(z)}(T^{N(x,t)}z))=\flow_{t-r}(z)=T^{N(x,t)}(y).
	$$
	Since $T^{N(x,t)}\vert_{Z}$ is an isometry, this implies that 
	$$
	\dist(\flow_{S_{N(x,t)}(x)-S_{N(x,t)}(z)-r}(T^{N(x,t)}z),T^{N(x,t)}(z))=\dist(y,z)\leq \diam(Z)<t^{-\eta'}.
	$$
	However, $|S_{N(x,t)}(x)-S_{N(x,t)}(z)-r|<L+t^{\eta}+2t^{-\eta}<3t^{\eta}$. By \eqref{eq:dioph} the only way the above inequality can hold is if  $|S_{N(x,t)}(x)-S_{N(x,t)}(z)-r|\leq t^{-2\eta}$. Since $r\in [L,L+\varepsilon]$, the statement follows. This finishes the proof.
\end{proof}

\begin{proposition}\label{prop:qpart1} If there exists $0<\xi<1/100$ such that, for any $t$ sufficiently large and any $1\leq M\leq \height\leq  t^\xi$, there are finitely many  disjoint Rokhlin towers $\cR_1\ldots, \cR_d\subset \T^\Phi$ with bases (which are intervals) $B_1,\ldots, B_d\subset \T$ satisfying $ |B_i|\in [\height^{-1-\xi},\height^{-1+\xi}]$ and with $\mu^\Phi(\T^\Phi\setminus \bigcup_i\cR_i)<M^{-\xi}$ and, moreover, for every $t$ sufficiently large there exists a $t^{-10\xi}$-almost partition  $\{I_j^t=[a_j,b_j]\}$ of $\T$ with $\lim_{t\to \infty}\sup_j t^{1-10\xi} |I_j^t|=0$  satisfying the following:
	\begin{itemize}
		\item[(QP1).] $T^n(I_j^t)\cap \{\beta_i\}=\emptyset$ for every $n\in [0, \max_{x\in I_j^t}N(x,t)]$;
		\item[(QP2)] for $D\in\{M,t^\xi\}$, we have $T^n(I^t_j)\subset V_{D}$ for every $n\in [\min_{x\in I_j}N(x,t), \max_{x\in I_j}N(x,t)+c^{-1}D]$;
		\item[(QP3)] $|S_{N(a_j,t)}(y)-S_{N(a_j,t)}(x)|<M$ for every $x,y\in I_j$;
		\item[(QP4)] $\sup_{x\in I_j}|S_{N(a_j,t)}(a_j)-S_{N(a_j,t)}(x)|>M^{1-\xi}$;
		\item[(QP5)] \sloppy for $n\in [\min_{x\in I_j}N(x,t), \max_{x\in I_j}N(x,t)]$ the function $S_n\Phi(\cdot)$ is monotone and $\Big(M^{-1},\frac{M^{1-\xi}(d-c)}{6|I_j|}\Big)$-uniformly stretching on every $[c,d]\subset I_j$  with $|d-c|\geq  M^{-1/4}|I_j|$;
	\end{itemize}
	then, the flow $\flowR$ is a quantitatively good special flow. 
\end{proposition}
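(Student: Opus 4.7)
The proof will closely follow the structure of the proof of Proposition 8.2 in Section 8.2, replacing qualitative error bounds by polynomial ones of the form $M^{-\varepsilon_k'}$ derived from $\xi$. The plan is to first fix $\varepsilon_k < \xi/(200k)$, chosen small enough so that all polynomial losses below are absorbed into a single $M^{-\varepsilon_k}$. The hypothesis $\Deltat^{\varepsilon_k} \geq 4\height$ ensures $\height \leq t_i^{\xi}$ for each $t_i \geq \Deltat$, so the structural assumptions of the proposition apply at every time scale appearing in a given $(k+1)$-tuple $\tb$.

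The construction of $\cP_{\tb}$ proceeds exactly as in Section 8.2: take the $t_i^{-10\xi}$-almost partitions $\{I_j^{t_i}\}$, form their common refinement, and retain only atoms $P_a$ satisfying $\Leb(P_a) \geq M^{-10k\varepsilon_k} \min_i \Leb(I_{j_{\ell_i}}^{t_i})$. A quantitative version of \Cref{lem:intersection_almost_partitions} shows this yields an $M^{-\varepsilon_k/2}$-almost partition. Then discard atoms whose $T$-orbit meets $\T \setminus V_R$ within $N = c^{-1}(M^2+\height^2)$ iterations, where $R$ is set so that $NR^{-\gamma} \ll M^{-\varepsilon_k}$; this is exactly the same trimming as before but with the explicit polynomial bookkeeping. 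Finally restrict to atoms intersecting each base $B_\varrho$.

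Properties (QG1), (QG2), and (QG3) follow from the arguments already given in Section 8.2, simply tracking polynomial dependencies. Namely, (QG1) is obtained from the mean-value theorem applied to the Birkhoff sums together with (QP1)--(QP2) and the bound $|\Phi'| \ll M^{(1+\zeta)(1+\gamma)}$ on $V_M$, combined with the polynomially small diameter of atoms; (QG2) is a direct measure computation using that $B_\varrho$ is (up to an $M^{-\varepsilon_k}$-error) a finite union of intervals whose boundaries contribute negligibly; (QG3) applies \Cref{lem:nm1} with $R=M$, whose conclusion is already a polynomial statement.

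Property (QG4) is where the main quantitative work lies and where \Cref{lem:qboun} replaces \Cref{lem:boun}. Given $P_a = [c,d] \subseteq I_{j_a}^{t_i}$ of relative length at least $M^{-10k\varepsilon_k}$, the $(M^{-1}, M^{1-\xi}(d-c)/6|I_{j_a}^{t_i}|)$-uniform stretching from (QP5) gives
\[
|S_{N(c,t_i)}(d) - S_{N(c,t_i)}(c)| \geq \frac{M^{1-\xi-10k\varepsilon_k}}{6} \geq M^{1-\varepsilon_k}.
\]
Combined with monotonicity of $S_{N(c,t_i)}$ and \Cref{lem:nm2}, this forces $N(\cdot,t_i)$ to be nonconstant on $P_a$, producing $x_a$ with $\flow_{t_i}(x_a,0) \in \T$ and $\Delta S_a \geq M^{1-\varepsilon_k}$. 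Then \Cref{lem:qboun}, applied with $\overline{M} = M$ and $\varepsilon = M^{-\varepsilon_k}$, transfers the uniform-stretching estimate (QP5) on subintervals into the two-sided measure bound on the set $\{x \in P_a : \flow_{t_{i_a}}(x) \in \cup_{r \in [L, L+M^{-\varepsilon_k}]} \flow_r(T^{N(x_a,t_{i_a})}P_a)\}$ required by (QG4), with multiplicative error $1 \pm M^{-\varepsilon_k}$.

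The main obstacle is precisely the bookkeeping of exponents in this last step. One needs to check that the diameter of $P_a$ is $\leq t_i^{-\eta'}$ (where $\eta'$ is the diophantine exponent corresponding to $\eta = \varepsilon_k$), that $|L| \leq t_i^\eta$, and that the stretching $\Delta S_a \leq t_i^{\eta'}$, all simultaneously, so that \Cref{lem:qboun} is applicable. The condition $\Deltat^{\varepsilon_k} \geq 4\height \geq 4M$ together with $\varepsilon_k \ll \xi/k$ yields enough slack to make these compatible; once the choice of $\varepsilon_k$ is pinned down (depending only on $k$ and the diophantine/structural exponents), the remaining computations proceed as in the qualitative proof with $\varepsilon$ replaced throughout by $M^{-\varepsilon_k}$.
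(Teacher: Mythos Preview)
Your outline follows the paper's proof essentially step for step: same refinement of the $\{I_j^{t_i}\}$ to build $\cP_{\tb}$, same trimming near the singularity, same verification of (QG1)--(QG3) via \Cref{lem:nm1} and the mean-value bounds on $V_M$, and the same use of \Cref{lem:qboun} together with (QP5) to get (QG4). Two small corrections are worth flagging. First, the lemma you invoke for the measure of the retained refinement is not \Cref{lem:intersection_almost_partitions} (which only controls the \emph{full} common refinement) but \Cref{lem:par}, which is precisely about discarding atoms of small relative size; the paper applies \Cref{lem:par} inductively $k$ times. Second, your displayed inequality $M^{1-\xi-10k\varepsilon_k}/6 \geq M^{1-\varepsilon_k}$ fails for your choice $\varepsilon_k<\xi/(200k)$, since the left exponent is already below $1-\xi$. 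The paper handles this by taking the relative-size threshold $M^{-\xi'}$ with $\xi'=\xi'(\xi,k)$ \emph{large}, not small, and setting $\varepsilon_k=\xi/100^{k^2}$; the point is that the $\varepsilon_k$ appearing in (QG4) has to absorb both $\xi$ and the refinement loss, so it must sit above them, not below. Once you reorder the exponent hierarchy accordingly, the rest of your sketch goes through unchanged.
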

Notice that the two parts, namely the one concerning Rokhlin towers and the one on conditions (QP1)--(QP5), are independent. We can apply the two parts of the proposition at different times. This is what we will do in the proof.
\begin{proof}
	Let us define $\varepsilon_k=\frac{\xi}{100^{k^2}}$. Given $1<M\leq \height$, we apply the first part of the assumptions (the one on Rokhlin towers) for $t, \height, M$: let $\{\cR_i\}$ be the corresponding Rokhlin towers. Then, if $h_i$ is the height of $\cR_i$, either $h_i |B_i|\geq \height^{-\xi}$, or we can discard the tower $\cR_i$ from the collection. If $h_i|B_i|\geq \height^{-\xi}$ then $h_i\in [\height^{1-2\xi},\height^{1+\xi}]$, and so these towers satisfy the corresponding property in the definition of quantitatively good special flows.
	
	Consider a $k$-tuple of times ${\bf t}$, $t_1<\ldots<t_k$, with $\Deltat^{\varepsilon_k}\geq \height$.  In fact we will only use the fact that $t_1^{\frac{\xi}{100k^2}}\geq \height$. Let $(I_j^{t_i})$  be the $t_i^{-10\xi}$-almost partitions given by the second part of the assumptions of Proposition \ref{prop:qpart1} applied to $t_i$ with $i=1,\ldots, k$.  Consider the common refinement (i.e., all intersections) of the partitions $\{I_{j}^{t_i}\}_{i=1,\ldots k}$. Each atom of this common refinement is of the form $\cap_{i=1}^k I_{j_{\ell_i}}^{t_i}$. Let now $\overline{\cP}_{\bf t}$ be a family consisting only of those atoms in the common refinement  for which  $\Leb(\cap_{i=1}^k I_{j_{\ell_i}}^{t_i})\geq M^{-\xi'} \min_i \Leb(I^{t_i}_{j_{\ell_i}})$, where $\xi'=\xi'(\xi,k)$ is sufficiently large in terms of $\xi$ and $k$. Note that  by applying inductively Lemma \ref{lem:par} $k$ times to the partitions $\{I_{j}^{t_i}\}$ it follows that $\mu(\T\setminus \overline{\cP}_{\bf t})<M^{-4k\xi}$ (using the fact that $\xi'$ is sufficiently large in terms of $\xi$ and $k$). 
	Furthermore, for all $I \in \overline{\cP}_{\bf t}$, it follows from (QP1) and (QP2) that $T^n(I) \cap \{\beta_i\}_i = \emptyset$ for all $0 \leq n \leq 2\height^2$. In particular, for all such $n$, the restriction $T^n|_{I}$ is an isometry.

	Let $R:= (2c^{-1}\height^{2+4k\xi})^{1/\gamma}$ and $N= \lfloor 2c^{-1}\height^2\rfloor$, and let $\cP_{\bf t}\subset \overline{\cP}_{\bf t}$ be the subfamily of intervals which satisfy $T^nI \cap V_R \neq \emptyset$ for all $0 \leq n < N$.
	By definition, since we are removing all intervals which are entirely contained in $T^{-n}(\T \setminus V_R)$ for some  $n < N$, we have $\Leb(\T \setminus \cP_{\tb}) \leq M^{-4k\xi}+ N\Leb(\T \setminus V_R) \leq M^{-4k\xi}+ NR^{\gamma} \leq 2 M^{-4k\xi}$. We furhter restrict to the subfamily of intervals which additionally intersect at least one of the $B_i$, $i=1,\ldots,d$.  We will now show that the partition $\cP_{\bf t}$ satisfies the assumptions (QG1)--(QG4) in the definition of good special flows. 
		
		Let $P_a$ be an atom of $\cP_{\bf t}$, which is an interval.   
		For property (QG1), we have to show that, for every $t\in \{ t_0 = 0, t_1,\ldots, t_k\}$, we have
		\begin{equation}\label{eq:claim_prove_QG1}
			\max_{x\in P_a}\sup_{0\leq r\leq 2\height^{3/2}} \text{diam}\Big(\flow_r(T^{N(x,t)}P_a)\Big)<M^{-\varepsilon_k}.
		\end{equation}
		Let us start from the case $t=t_0 = 0$.
		Since $r\leq 2\height^{3/2}$, it follows that $N(\cdot,r)<2c^{-1}\height^{3/2}<N$. Then, by the mean-value theorem, 
		\[
		\diam (\flow_rP_a) \leq \Leb(P_a) \cdot \sup_{x \in P_a} \sup_{0\leq n <N} |S_n(\roof')(x)|\leq \Leb(P_a) \cdot N \cdot \sup_{x \in P_a} \sup_{0\leq n <N} |\roof'(T^nx)|.
		\]
		Since $T^n$ is an isometry on $P_a$, we have $\dist(T^nP_a,\{\beta_i\}) \geq R^{-(1+\zeta)} - \Leb(P_a) \geq R^{-(1+\zeta)}/2$, which implies that $|\roof'(T^nx)| \leq 4R^2$. Using again that $\Leb(P_a) \leq \sup_j \Leb(I_j^t) < t^{-1+20\xi}$, we deduce our claim $\diam (\flow_rP_a) < M^{-\varepsilon_k}$.

	The case $t=t_i$ for $i\geq 1$ is treated similarly.
		Since $\cP_{\bf t}$ is a common refinement of the partitions  $\{I_{j}^{t_i}\}_{i=1,\ldots k}$, it is enough to show \eqref{eq:claim_prove_QG1} for $P_a$ replaced by $I_{j}^{t_i}$.
	Take $x\in I_j^t$ and denote $n=N(x,t)$. Note that $n\in [\min_{x\in I_j^t}N(x,t), \max_{x\in I_j^t}N(x,t)]$. Since $r\leq 2\height^2$ it follows that $N(\cdot,r)<2c^{-1}\height^2$. Let $y,z\in T^n(I^t_j)$. Then by (QP2) we have that $T^m[y,z]\notin V_{t^\xi}$ for every $m\leq c^{-1}t^\xi$. In particular, this implies that for every $m\leq c^{-1}t^{\xi}$
	$$
	|S_m(y)-S_m(z)|\leq \sum_{i\leq m}|\Phi(T^iy)-\Phi(T^iz)|\leq c^{-1}t^{\xi} \sup_{w\in V_{t^\xi}} |\Phi'(w)| |I^t_j|\leq c^{-1}t^{\xi}t^{(1+\gamma)\xi} |I^t_j|<t^{-\xi},
	$$
	by the bound  $|I_j^t|<t^{-1+10\xi}$. Using this for $m=N(y,r)$, we obtain $|S_{N(y,r)}(y)-S_{N(y,r)}(z)|<t^{-\xi}$, which implies (QG1).

Moreover we claim that (which will be useful in proving (QG3)),	
	\begin{equation}\label{eq:qadded}
 \text{ for any }x\in I_j^t\text{ any }r,r'\in [-M,M],\;\;\; \flow_r(T^{N(x,t)}I_j^t)\cap \flow_{r'}(T^{N(x,t)}I_j^t)= \emptyset.
\end{equation}
Indeed, this just follows from the fact that, by assumptions, the set $T^{N(x,t)}I_j^t$ is an interval of length $t^{-1+10\xi'}$ (since $T^{N(x,t)}$ is an isometry on it) and so, for any $y\in T^{N(x,t)}I_j^t$, the minimum over all $|\bar{r}|\leq 2M$ of the horizontal distances of $\flow_{\bar{r}} y$ to $y$ is bounded below by $M^{-1-\xi}$ and in particular is $\geq 2|I_j^t|$ for large enough $t$.

For property (QG2), for every $B=B_i$ with $i=1,\ldots, d$, we have that $B$ is an interval of length $\geq \height^{-1-\xi}$ and $\{P_a\}$ are also intervals of length $\leq t^{-1+10\xi}$. Therefore 
\begin{multline*}
	\mu\Big(\bigcup \{P_a : P_a\cap B \neq \emptyset\}\triangle B\Big) \leq 	\mu\Big(\bigcup_{a\in \mathscr{A}}(P_a\cap B)\triangle B\Big)+2 \cdot \sup_j |I_j^{t_k}| \\ \leq \mu\Big(B\setminus \bigcup_{a\in \mathscr{A}}(P_a\cap B)\Big)+ \mu\Big(\bigcup_{a\in \mathscr{A}}(P_a\cap B)\setminus B\Big)+2 \cdot \sup_j |I_j^{t_k}|
\end{multline*}
The second summand is trivially equal to $0$. 
	Moreover, since, $\mu(B \setminus \bigcup_{a\in \mathscr{A}}(P_a\cap B))\leq M^{-4k\xi} + 2 \cdot \sup_j |I_j^{t_k}|< M^{-4k\xi} + 2t^{-1+10\xi}$, we get $\mu\Big(\bigcup \{P_a : P_a\cap B \neq \emptyset\}\triangle B\Big)< M^{-4k\xi} + 4t^{-1+10\xi}<M^{-\varepsilon_k}$.
This shows (QG2).

	For property (QG3), we notice that for every $i=1,\ldots, k$, $P_a\subset I^{t_i}_{j_a}$ (as the partition was the common refinement).
	We will use Lemma \ref{lem:nm1} for $R=M$, $I=I^{t_i}_{j_a}$ where $i=1,\ldots k$ and $J=P_a\subset I$. We will check that the assumptions of the lemma are satisfied and for simplicity denote $t=t_i$. Note that $|I|<t^{-1+10\xi}$. Moreover (QP1) and (QP2) together immediately imply \eqref{eq:nsin}. Note also that,
	by (QP3), 
	\begin{multline*}
|S_{N(x,t)-N(a_j,t)}(T^{N(a_j,t)}(x))|=|S_{N(x,t)}(x)-S_{N(a_j,t)}(x)| \\ \leq
|S_{N(x,t)}(x)-t| +|S_{N(a_j,t)}(a_j)-t|+|S_{N(a_j,t)}(a_j)-S_{N(a_j,t)}(x)|\leq 3M,
	\end{multline*}
	as $|S_{N(x,t)}(x)-t|\leq \Phi(T^{N(x,t)}(x))\leq M$ by (QP2). This implies that, for any $x\in P_a\subset I^t_{j_a}$, we have
	$$
	|N(x,t)-N(a_j,t)|\leq 3c^{-1}M,
	$$
	so that $|N(x,t)-N(y,t)|<6c^{-1}M$. Finally, (QP3) implies the bound on the difference of ergodic sums in  Lemma \ref{lem:nm1} and so all assumptions of the lemma are satisfied. Moreover the fact that the set on the RHS of (QG3) is a tower follows from \eqref{eq:qadded}. Therefore (QG3) follows.
	
	For property (QG4), we will apply Lemma \ref{lem:qboun}. For this, we will first show that there exists $x\in P_a$ and $i\in \{1,\ldots, k\}$ such that $\flow_{t_i}(x,0)\in \T$ and such that $T^{N(x,t_i)}\vert_{P_a}$ is an isometry. By the definition of $P_a=[c,d]$, there is $i\in \{1,\ldots, k\}$ such that  $\Leb([c,d])\geq M^{-\xi'} |I_{j_a}^{t_i}|$. Let $I_{j_a}^{t_i}=[a,b]=I$, and we denote $t=t_i$.  Using (QP4), we get
	\begin{multline}\label{eq:qml1}
		|S_{N(c,t)}(c)-S_{N(c,t)}(d)|\leq\\ 
		|S_{N(c,t)}(c)-S_{N(c,t)}(d)- (S_{N(a,t)}(c)-S_{N(a,t)}(d))|+|S_{N(a,t)}(c)-S_{N(a,t)}(d)|\leq\\
		M + |S_{N(c,t)-N(a,t)}(T^{N(a,t)}c)-S_{N(c,t)-N(a,t)}(T^{N(a,t)}d)|.
	\end{multline}
	By (QP2), it follows that $T^{N(a,t)+i}(I)\subset V_{M}$ for every $i\in \{0,\ldots, N(c,t)-N(a,t)\}$. Moreover, by (QP2) and the bound $|N(x,t)-N(y,t)|<6c^{-1}M$, we also have that 
	\[
	|S_{N(c,t)-N(a,t)}(T^{N(a,t)}c)-S_{N(c,t)-N(a,t)}(T^{N(a,t)}d)|\leq 6c^{-1}M \sup_{x\in V_{M}} |\Phi'(x)|(d-c)\leq t^{-\xi},
	\]
	since $\sup_{x\in V_{M}} |\Phi'(x)|<M^{2}$ and $d-c\leq b-a\leq t^{-1+10\xi}$.

	By (QP5), we know that  $S_{N(x,t)}(\cdot)$ is $(M^{-1}, M^{1-\xi}/6)$-uniformly stretching (and monotone) on $[a,b]$. This means in particular that if $J\subset [a,b]$ is an interval, then 
	$S_{N(c,t)}(J)$ is an interval with length at least $\frac{M^{1-\xi}}{6} (1-M^{-1})\Leb(J)(b-a)^{-1}$. Using this with $J=[c,d]$ along with monotonicity of $S_{N(c,t)}(\cdot)$ we get by \eqref{eq:qml1} that 
	$ M+1>|S_{N(c,t)}(c)-S_{N(c,t)}(d)|\geq \frac{M^{1-\xi-\xi'}}{7}$. The lower bound and (QP2) imply, using  Lemma \ref{lem:nm2} with $R=M$ (if $\xi$ is small enough),  that $N(\cdot,t)$ is not constant on $[c,d]$. This however means that there exists $u\in [c,d]$ such that $\flow_{t}(u,0)=(T^{N(u,t)}u,0)$, i.e. $S_{N(u,t)}(u)=t$.
	We have 
	\begin{multline*}
	|S_{N(u,t)}(u)-S_{N(u,t)}(x)- (S_{N(c,t)}(u)-S_{N(c,t)}(x))| \\ \leq  |S_{N(u,t)-N(c,t)}(T^{N(c,t)}u)-S_{N(u,t)-N(c,t)}(T^{N(c,t)}x)|.
	\end{multline*}
	Analogously to the reasoning below \eqref{eq:qml1}, we obtain
	\[
	|S_{N(u,t)-N(c,t)}(T^{N(c,t)}u)-S_{N(u,t)-N(c,t)}(T^{N(c,t)}x)|<1.
	\] 
	Moreover the monotonicity of $S_{N(c,t)}(\cdot)$ implies that  $|S_{N(c,t)}(u)-S_{N(c,t)}(x)|<|S_{N(c,t)}(c)-S_{N(c,t)}(d)|<M+1$. On the other hand $\frac{M^{1-\xi-\xi'}}{7}\leq |S_{N(c,t)}(c)-S_{N(c,t)}(d)|\leq |S_{N(c,t)}(u)-S_{N(c,t)}(d)|+|S_{N(c,t)}(u)-S_{N(c,t)}(d)|$, which gives us $\max_{x\in [c,d]}|S_{N(c,t)}(u)-S_{N(c,t)}(x)|\geq  \frac{M^{1-\xi-\xi'}}{14}$. Summarizing, 
	$$
	\frac{M^{1-\xi-\xi'}}{14}< \max_{x\in [c,d]} |S_{N(u,t)}(u)-S_{N(u,t)}(x)|< M+1.
	$$
	We will now apply Lemma \ref{lem:qboun} with $Z=P_a=[c,d]$ and $x=u$ (recall that $\flow_t(u,0)\in X$). By this lemma, for any $|L|\leq 2M<t^\xi$, the set $
	\{x\in P_a: f_tx \in \bigcup_{r\in [L,L+M^{-\varepsilon_k}]} f_r(T^{N(u,t)}P_a)\}$, contains the set $\{x\in P_a: |S_{N(u,t)}(u)-S_{N(u,t)}(x)|\in [L,L+\varepsilon_k(M)]\}$ and is contained in the set $\{x\in P_a: |S_{N(u,t)}(u)-S_{N(u,t)}(x)|\in [L-M^{-2\varepsilon_k},L+M^{-\varepsilon_k}+M^{-2\varepsilon_k}]\}$. By (QP5), the function $S_{N(u,t)}(\cdot)$ is $(M^{-1},\frac{M^{1-\xi-100k\xi}}{7})$-uniformly stretching on $P_a=[c,d]$ and monotone; thus, it follows that the measure of the first set is at least 
	$$
	(1-M^{-\varepsilon_k}) \frac{L+M^{-\varepsilon_k}-L}{S_{N(u,t)}(c)-S_{N(u,t)}(d)} (d-c).
	$$
	On the other hand the measure of the second set is at most 
	$$
	(1+M^{-\varepsilon_k}) \frac{L+M^{-\varepsilon_k}+M^{-2\varepsilon_k}- L}{S_{N(u,t)}(c)-S_{N(u,t)}(d)} (d-c).
	$$ 
	By monotonicity of $S_{N(u,t)}(\cdot)$ and Lemma \ref{lem:boun} it follows that  $S_{N(u,t)}(c)=S^-({a,t,u})$ and $S_{N(u,t)}(d)S=^+({s,t,u})$. Finally, $S_{N(u,t)}(d)-S_{N(u,t)}(c)\geq \max_{x\in [c,d]} |S_{N(u,t)}(u)-S_{N(u,t)}(x)|\geq \frac{M^{1-\xi-100k\xi}}{7} $. This indeed finishes the proof of (QG4) and completes the proof.
\end{proof}

\begin{lemma}\label{lem:qmvs} If there exists $\xi'>0$ such that for any $t$ sufficiently large there exists   finitely many  disjoint Rokhlin towers $\cR_1\ldots, \cR_d\subset \T^\Phi$ with bases (which are intervals) $B_1,\ldots, B_d\in \T$ satisfying $ |B_i|\in [t^{-\xi'},t^{-\xi'+\xi'^2}]$ with $\mu^\Phi(\T^\Phi\setminus \bigcup_i\cR_i)<t^{-\xi'}$ and a $t^{-100\xi'}$-almost partition $\cP_t=\{J^t_i\}$ of $\T$ where the intervals $J_i$ satisfy:
	\begin{enumerate}
		\item [(QJ0)] for every $M$ sufficiently large, if $T^ix\notin V_M$ for $i\leq M$, then $S_M(x)\in [(1-\varepsilon)M,(1+\varepsilon)M]$;
		\item[(QJ1)] $\sup_i t^{1-\xi'} |J_i^t| \to 0$ and $T^n(J_i)\cap \{\beta_i\}_{i=1}^d=\emptyset$ for every $n\in [0, \max_{x\in J_i}N(x,t)]$;
		\item[(QJ2)] $\min_{x,y\in J_i} |S_{N(x,t)}(\Phi')(y)| |J_i|\geq t^{\xi'}$;
		\item[(QJ3)] $\max_{x,y\in J_i} |S_{N(x,t)}(\Phi'')(y)| |J_i|\leq t^{-\xi'}\min_{x,y\in J_i} |S_{N(x,t)}(\Phi')(y)|$.
	\end{enumerate}
	Then (QP1)--(QP5) in Proposition \ref{prop:qpart1} hold. 
\end{lemma}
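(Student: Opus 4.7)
The plan is to mimic the proof of \Cref{lem:mvs} while carefully tracking the quantitative exponents so that every qualitative \lq\lq small\rq\rq\ or \lq\lq large\rq\rq\ is replaced by an explicit power of $t$ or $M$. Pick $\xi = \xi'/C$ for a sufficiently large absolute constant $C$ (to be determined at the end so that all polynomial losses fit inside $t^{-10\xi}$ and $M^{-\xi}$). The Rokhlin towers required by \Cref{prop:qpart1} are provided directly by the hypothesis on $\{\cR_1,\ldots,\cR_d\}$, so I only have to construct the partition $\{I_j^t\}$ refining $\cP_t = \{J_i^t\}$.

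Fix $1\leq M\leq \height\leq t^{\xi}$. For each $J=[a,b]\in\cP_t$ I subdivide $J$ greedily along the level sets of $N(\cdot,t)$ exactly as in \Cref{lem:mvs}: set $a_0=a$, and inductively let $a_{i+1}>a_i$ be minimal with $|N(a_{i+1},t)-N(a_i,t)|=M/2$, defining $I_i^J=[a_i,a_{i+1}]$ and a residual last piece $I_b^J$. I then discard those $I_i^J$ that go too close to the singularities in a short time, by keeping only the ones that meet the quantitative \lq\lq good set\rq\rq\
\[
Z_M^\Phi\cap \flow_{-t}(U_M^\Phi), \qquad \mu^\Phi(Z_M^\Phi)\geq 1-M^{-\xi'/2},\ \mu^\Phi(U_M^\Phi)\geq 1-M^{-\xi'/2},
\]
where $Z_M^\Phi$ asks $T^ix\in V_{M/2}$ for $|i|\leq 2\varepsilon^{-1}M$ and $U_M^\Phi$ uses (QJ0) in place of the ergodic theorem (the measure bounds follow by Chebyshev together with $\Phi\in L^p$ and (QJ0)). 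Discarding the bad $I_i^J$'s loses a set of measure $\ll M^{-\xi'/4}$, and discarding the residual $I_b^J$'s loses $\ll M/(t^{\xi'}|J|)\cdot|J|\leq Mt^{-\xi'}$ per $J$ by the analog of \eqref{eq:bad} below; summing and combining with the $t^{-100\xi'}$-almost partition property of $\cP_t$ gives $\Leb(\T\setminus\bigcup I_i^J)\leq t^{-10\xi}$.

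To verify (QP1)--(QP5) I essentially rerun the proof of \Cref{lem:mvs} with quantitative replacements. (QP1) is immediate from $I_i\subset J_i^t$ and (QJ1). For (QP2) with $D=M$, since $I_i$ is good, I can pick $x\in I_i$ and $r\leq t^{-\xi'}$ with $\flow_{t+r}(x)\in Z_M^\Phi\cap U_M^\Phi$; the definition of $Z_M^\Phi$ plus $|I_i|\ll t^{-1+\xi'}$ then gives $T^nI_i\subset V_{M/2}\subset V_M$ for every $n\in[\min N(\cdot,t),\max N(\cdot,t)+c^{-1}M]$; the case $D=t^\xi$ is identical with $t^\xi$ playing the role of $M/2$ (and uses that $r$ is so small $T^{N(x,t)}I_i$ remains in $V_{t^\xi}$ for $c^{-1}t^\xi$ steps). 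For (QP3)--(QP4), the telescoping identity \eqref{eq:ct} together with (QJ0) gives $S_n(T^{N(x,t)}x)\in[(1-\varepsilon)n,(1+\varepsilon)n]$ for $n\geq \varepsilon M$, so on $I_i=[c,d]$ one obtains $|S_{N(c,t)}(c)-S_{N(c,t)}(d)|\in[(1-M^{-\xi'/2})M/2,(1+M^{-\xi'/2})M]$, which yields (QP3) with $M$ and (QP4) with $M^{1-\xi}$ provided $\xi$ is chosen small enough. Finally, (QP5) follows from \Cref{lem:fay2}: on $I_i$ (or any subinterval of length $\geq M^{-1/4}|I_i|$) one has, by (QJ2) and the mean value argument used below \eqref{eq:bad}, $\inf|S_{N(c,t)}(\Phi')|\cdot|I_i|\geq M/6$, and by (QJ3), $\sup|S_{N(c,t)}(\Phi'')|\cdot|I_i|\leq M^{-1}\inf|S_{N(c,t)}(\Phi')|$, which upgrades to the $(M^{-1},M^{1-\xi}(d-c)/(6|I_i|))$-uniform stretching property on $[c,d]\subset I_i$ with $(d-c)\geq M^{-1/4}|I_i|$.

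It remains to establish the quantitative analog of \eqref{eq:bad}: if $I_b^J$ is good, then $|I_b^J|\leq t^{-\xi'/2}|J|$. Indeed, on a good $I_b^J=[u,v]$ the above telescoping argument gives $|S_{N(u,t)}(u)-S_{N(u,t)}(x)-S_{N(u,t)-N(x,t)}(T^{N(x,t)}x)|\leq M^{-\xi'/4}M$, so by (QJ0) and the mean value theorem $\inf_{[u,v]}|S_{N(u,t)}(\Phi')|(v-u)\leq M$, whereas (QJ2) on $J$ gives $|J|\geq t^{\xi'}(\inf_J|S_{N(\cdot,t)}(\Phi')|)^{-1}$; dividing yields the claim since $\inf$ over $[u,v]$ and over $J$ differ by at most a constant factor thanks to (QJ3). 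The principal obstacle is the bookkeeping: one must arrange $\xi\ll\xi'$ so that all error terms of the forms $M^{-\xi'/4}$, $t^{-\xi'/4}$, $M^{-\xi'/2}$ are bounded by $t^{-10\xi}$ or $M^{-\xi}$ as required by \Cref{prop:qpart1}; setting $\xi = \xi'/100$ (say) is more than enough.
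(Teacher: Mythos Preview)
Your sketch is correct and follows essentially the same route as the paper's proof: refine each $J\in\cP_t$ greedily along the level sets of $N(\cdot,t)$ into pieces where $N$ varies by $M/2$, discard the pieces that fail a quantitative goodness test, bound the residual $I_b^J$ via the ratio $M/t^{\xi'}$ from (QJ2), and then verify (QP1)--(QP5) exactly as in \Cref{lem:mvs} with (QJ0) replacing the ergodic-theorem input. Two small points where the paper is slightly more explicit: first, the Rokhlin towers are not literally the ones at time $t$ --- the paper applies the tower hypothesis at the auxiliary time $\height^{1/\xi'}$ so that $|B_i|\in[\height^{-1},\height^{-1+\xi'}]$ matches what \Cref{prop:qpart1} demands; second, the paper defines goodness via $Z_M^\Phi$ alone (with range $[-M^{1+\xi'},M^{1+\xi'}]$) and invokes (QJ0) directly for the ergodic-sum bounds, rather than packaging it into a set $U_M^\Phi$ as you do --- but this is purely cosmetic.
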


Similarly to Proposition \ref{prop:qpart1} the lemma has two parts, namely the one concerning Rokhlin towers and the one on conditions (QJ0)--(QJ3), which are independent. We can apply the two parts of the lemma at different times. This is what we will do in the proof.

\begin{proof}Given $1\leq M\leq \height \leq t^{\xi'}$, let us first apply the first part of the lemma for $t=\height^{\frac{-1}{\xi'}}$. This gives us Rohklin towers $\{\cR_i\}$ with $|B_i|\in [\height^{-1},\height^{-1+\delta}]$ and with $\mu^\Phi(\T^\Phi\setminus \bigcup_i\cR_i)<\height^{-1}$. So these towers satisfy the corresponding conditions in Proposition \ref{prop:qpart1}.\\
	
	We will now construct a the partition $\{I_j^t\}$ satisfying (QP1)--(QP5) by refining the partition $\cP_t=\{J^t_i\}$. Since $t$ is fixed, we will omit it from the notation. Let $\cP_t=\{J_i\}$ be the $t^{-100\xi'}$-almost partition coming from the assumptions. Let $J=[a,b]\in \cP_t$. Let $a_1>a$ be the smallest such that $|N(a_1,t)-N(a,t)|=M/2$. Define $I^J_1=[a,a_1]$. Now inductively we define $a_{i+1}>a_i$ to be the smallest for which $|N(a_{i+1},t)-N(a_i,t)|=M/2$ and $I^J_{i+1}=[a_i,a_{i+1}]$. 
	We continue until for some $i$ we have $|N(a_i,t)-N(x,t)|<M/2$ for all $x\in [a_i,b]$, and then we take $I^J_b=[a_i,b]$. The collection $\{I^J_i\}$ and $I^J_b$  forms a partition of $J$, and so the union over all $J\in \cP_t$ of these new intervals form an $t^{-100\xi'}$-almost partition of $\T$. We will discard those intervals $\{I^J_i\}$ which go to close to singularity in a short time after $t$. More precisely, let $Z_M^\Phi=\{(x,s): T^ix\in V_{M/2} \text{ for every } i\in [-M^{1+\xi'},M^{1+\xi'}] \}$. Note that by the assumptions on $\Phi$ it follows that $\mu(Z_M^\Phi)\geq 1-M^{1-\xi'}$ if $\xi'$ is small enough. We say that  $I_i^J$ is {\em good} if $I_i^J\cap Z_M^\Phi\neq \emptyset$. Note that each interval $J$ has one special interval $I^J_b$ where the function $N(\cdot,t)$  varies by less than $M/2$ and in fact might be constant. We have the following:
	\begin{equation}\label{eq:qbad}
		\text{ if } I^J_b\text{ is good then }|I^J_b|\leq M^{-10\xi'} |J|
	\end{equation}
	Before we prove the above statement let us show how it implies (QP1)--(QP5). The partition consists of good interval $\{I^J_i\}$ where we additionally discard all good intervals of the form $I^J_b$. By \eqref{eq:bad} and the above estimates on the measure of good intervals it follows that the collection $\{I_i^t\}$ consisting of all good $\{I^J_i\}$ over all $J$ forms an $M^{-9\xi'}$-almost partition of $\T$. Note that (QP1) follows from the fact that $I_i=I^J_s\subset J$ for some $J$ and we use (QJ1). 
	
	For (QP2) we note that, since $I_i$ is good, it follows that there exists $x\in I_i$ such that $\flow_tx\in Z_M^\Phi$. By the definition, this means that $T^{N(x,t)+i}x\in V_{M/2}$ for $i\in [-M^{1+\xi'},M^{1+\xi'}]$; since  $|N(x,t)-N(a,t)|\leq M$  for every $x\in I_i$, it follows that  $T^nx\in V_{M/2}$ for every $n\in [\min_{x\in I_i}N(x,t), \max_{x\in I_i}N(x,t)+c^{-1}M]$. Since $t^{-1+\xi'}|I_i|\to 0$ as $t\to \infty$ (as $I_i\subset J^t_i$), we have that if $t$ is large enough, then (QJ1) implies that $T^n(I_i)\subset V_M$. This gives (QP2). 
	
	In order to show (QP3), let $I_i=[c,d]$. Notice that 
	\begin{equation}\label{eq:qct}
		S_{N(c,t)}(x)-S_{N(c,t)}(c)= S_{N(c,t)-N(x,t)}(T^{N(x,t)}x)+(t-S_{N(c,t)}(c))+(S_{N(x,t)}(x)-t).
	\end{equation}
	By (QP2), it follows that $|t-S_{N(c,t)}(c)|<\Phi(T^{N(c,t}c)\leq M^{1-\xi'}$ with the same bound for $S_{N(x,t)}(x)-t$.
	Then, (QJ0) implies that 
	\[
	\begin{split}
		&S_{N(c,t)-N(d,t)}(T^{N(x,t)}x)\geq S_{M/2}(T^{N(x,t)}x)\geq (1-\varepsilon)M/2 \qquad \text{and} \\
		&S_{N(c,t)-N(x,t)}(T^{N(x,t)}x)\leq S_{M}(T^{N(x,t)}x)\leq (1+\varepsilon)M.
	\end{split}
	\]
	Since $|N(c,t)-N(d,t)|=M/2$ and $|N(c,t)-N(x,t)|\leq M/2$, the above inequalities immediately imply (QP3) and (QP4). 
	
	To show (QP5), we will use Lemma \ref{lem:fay2}. Let  $x_0\in [c,d]$ be such that $\inf_{x\in [c,d]} |S_{N(c,t)}(\Phi')(x)|=|S_{N(c,t)}(\Phi')(x_0)|$. \sloppy Since $|S_{N(c,t)}(\Phi)(d)-S_{N(c,t)}(\Phi)(c)|\geq M/2$ it follows that one of $|S_{N(c,t)}(\Phi)(x_0)-S_{N(c,t)}(\Phi)(c)|$ and $|S_{N(c,t)}(\Phi)(x_0)-S_{N(c,t)}(\Phi)(d)|$ is at least $M/4$. Without loss of generality, we assume it is the first one. Then, we get
\begin{multline*}
	M/4\leq |S_{N(c,t)}(\Phi)(x_0)-S_{N(c,t)}(\Phi)(c)|\leq |S_{N(c,t)}(\Phi')(x_0)|(x_0-c)+ |S_{N(c,t)}(\Phi'')(\theta)|(x_0-c^2) \\ \leq (1+K(t)^{-1})|S_{N(c,t)}(\Phi')(x_0)|(x_0-c).
\end{multline*}
	This implies that $\inf_{x\in [c,d]} |S_{N(c,t)}(\Phi')(x)| |I_i|\geq M/5$. By (QP2), we also deduce that $\inf_{x\in [c,d]} |S_{N(y,t)}(\Phi')(x)|\geq \inf_{x\in [c,d]} |S_{N(c,t)}(\Phi')(x)|+{\rm O}(M^2)$. Summarizing, 
	\[
	\inf_{x,y\in [c,d]} |S_{N(y,t)}(\Phi')(x)| |I_i|\geq M/6.
	\]
Furthermore, if $[u,w]\subset [c,d]$, then 
\[
\inf_{x,y\in [u,w]} |S_{N(y,t)}(\Phi')(x)|(w-u)\geq \inf_{x,y\in [c,d]} |S_{N(y,t)}(\Phi')(x)| |I_i|\cdot \frac{w-u}{|I_i|}\geq \frac{M}{6}\cdot \frac{w-u}{|I_i|}.
\]
Finally notice that (QJ3) holds on subintervals of $J_i$ and so, in particular, it holds on $I_i$ or the interval $[u,w]\subset I_i$. This implies (QP5). \\
	
	It remains to show \eqref{eq:bad}. Let $I^J_b=[u,v]$; then, analogoulsy to \eqref{eq:qct} and the reasoning below it, we have
	$$
	|S_{N(u,t)}(x)-S_{N(u,t)}(u)- S_{N(u,t)-N(x,t)}(T^{N(x,t)}x)|<2M,
	$$
	where $|S_{N(u,t)-N(x,t)}(T^{N(x,t)}x)|<M$ by (QP2) (see the reasoning below \eqref{eq:qct}). Together with the mean value theorem, this implies that $\inf_{x\in[u,v]}|S_{N(u,t)}(\Phi')(x)| (v-u)\leq M$. This means that $|I^J_b|\leq \Big(\min_{x\in[u,v]}|S_{N(u,t)}(\Phi')(x)|\Big)^{-1}M$. On the other hand $|J|\geq t^{\xi'}\Big(\min_{x\in[u,v]}|S_{N(u,t)}(\Phi')(x)|\Big)^{-1}$. This finishes the proof of \eqref{eq:qbad}.
\end{proof}

By the above \Cref{lem:qmvs}, \Cref{prop:qpart1}, and \Cref{prop:good_implies_lus}, in order to show that a flow is polynomially QLUS, it is enough to construct partitions $\cP_t$ that satisfy the assumptions of the above Lemma. We will do it for Kochergin flows on $\T^2$ under a diophantine condition in the next section.

\section{Kochergin flows on the torus}\label{sec:KFlows}

This section focuses on Kochergin flows on the torus; we are going to prove \Cref{thm:koch1} and \Cref{thm:koch2}. After all the work we have done so far, the crucial part that is left to do is to verify the assumptions of \Cref{lem:mvs} and \Cref{lem:qmvs} respectively.

\subsection{Stretching partitions for Kochergin flows}\label{sec:koch}

The goal of this subsection is to construct partitions satisfying (J1)--(J3) for Kochergin flows on $\T^2$ with a singularity at $0$. It follows from \cite{Koch} that all such flows are mixing. By construction, the aformentioned partitions we will then show that all these flows are mixing of all orders. Recall that $R_\alpha$ is a rotation by $\alpha$ on $\T$ which has the sequence of denominators $(q_n)_{n \in \N}$. 
We will now construct the $\varepsilon$-almost partitions of $\T$ which will have the properties (J1)--(J3). To start, let $\kappa(t)$ be a decreasing positive function with $\lim_{t\to +\infty }\kappa(t)=0$ (sufficiently slowly). The function $\kappa$ will depend on $\alpha$ and the roof function $\Phi$ only, and we will make it more precise in the course of the proof. Fix $t>0$ and  let  $n\in \N$ be unique such that $q_n\leq (1+\kappa(t))t\leq q_{n+1}$. \\

{\bf Preliminary partitions of $\T$:}\\
We will consider $2$ types of partitions, depending on the relation between $q_n$ and $q_{n+1}$:
\begin{itemize}
\item[R1.] If $\kappa(t)^{1/10}q_{n+1}\leq q_n$ we  partition $\T$  by the points $\{-i\alpha\}_{i=0}^{q_{n+1}-1}$. This partitions $\T$ into intervals of size $\sim\frac{1}{q_{n+1}}$. If $q_{n+1}<2\varepsilon^{-200}q_n$, we further refine this partition so that the new intervals have length $\in [\frac{\varepsilon^{200}}{q_n},\frac{2\varepsilon^{200}}{q_n}]$. If $q_{n+1}\geq 2\varepsilon^{-200}q_n$ we stick to the initial intervals. In both cases we call this partition $\cR_t$.

\item[R2.] If $\kappa(t)^{1/10}q_{n+1}> q_n$, then let $k$ be the smallest number such that $kq_n> (1+\kappa(t))t$. \sloppy We then consider the partition $\cR_t$ given by the points $\{-i\alpha\}_{i=0}^{K_n-1}$, where $K_n=\min(kq_n,q_{n+1})$. 
Assume, without loss of generality, that $q_n\alpha>0$.
If $K_n=q_{n+1}$, then clearly the partition is given by $\{-i\alpha\}_{i=0}^{q_{n+1}-1}$.  Otherwise, if $K_n=kq_n$, then the partition $\cR_t$ consists of $(k-1)q_n$ intervals of size $\|q_n\alpha\|\sim \frac{1}{q_{n+1}}$ and $q_n$ intervals of size $\|q_{n-1}\alpha\|-(k-1)\|q_n\alpha\|\sim \frac{1}{q_n}- \frac{(k-1)}{q_{n+1}}$. If $K_n=kq_n$ with $k\leq \varepsilon^{10} \frac{q_{n+1}}{q_n}$, then the union of  all the short intervals of size $\sim \frac{1}{q_{n+1}}$ has measure $\leq 2\varepsilon^{10}$ and we discard all of them. We denote the new  partition consisting only of the $q_n$ long intervals still by $\cR_t$. If $K_n=kq_n$ with $k>\varepsilon^{10} \frac{q_{n+1}}{q_n}$, then we discard all the atoms of the partition $\cR_t$ whose right endpoint is $-i\alpha$ with $i\in [(1-\varepsilon^5)kq_n,kq_n]$. Note that the total measure of such discarded intervals is $\leq \varepsilon^5 k q_n \frac{2}{q_{n+1}}\leq 2\varepsilon^5$. Moreover if the $q_n$ long intervals have length $\leq \varepsilon^{5}q_n^{-1}$ then we also discard them. In this case we also denote the new partial partition by $\cR_t$. \\

\end{itemize}

For $I=[a,b]\in \cR_t$ we consider $I'=[a+\varepsilon^{5}(b-a), b-\varepsilon^{5}(b-a)]$ and we call this new partial partition $\cR'_t$ note that $|I'|\geq (1-2\varepsilon^5)|I|$, and so the collection $\cR'_t$ still forms an $\varepsilon^4$-almost partition of $\T$.

Both in R1 and R2, the main point of the constructed partitions is \eqref{eq:ndis} below, i.e. for the time $(1+\kappa(t))t$ these intervals don't see the singularity. To explain the dichotomy in case R2, it is best to think about $q_{n+1}$ being a Liouvillean time, i.e. $q_{n+1}$ much larger than $q_n$. Then for $t\sim Kq_n$ we have two types of intervals: $Kq_n-q_n$ intervals of size $\sim 1/q_{n+1}$  and $q_n$ intervals of size $\sim \frac{1}{q_n}-\frac{K}{q_{n+1}}$. Now depending on the size of $K$, the collection of short intervals is negligible or not. These are the two sub-cases for R2.

One crucial property of the initial partitions (both in cases R1 and R2) is the following:
\begin{equation}\label{eq:ndis}
\text{ for every } I\in \cR_t\;\; \text{ and every }\;\; n\leq (1+\kappa(t))t,\;\;\; 0\notin R_\alpha^n I.
\end{equation}
Moreover let $\Erg(\kappa):=\{x\in \T\;:\; \frac{1}{N}S_N(x)\in (1-\kappa^2(N),1+\kappa^2(N))\}$. By the ergodic theorem, there exists a function $\kappa$ with $\kappa(t)\to 0$ as $t\to\infty$ and such that $\mu(\Erg)\geq 1-\varepsilon^{10}$. In what follows, we will only consider intervals $I\in \cR'_t$ for which $I\cap \Erg\neq \emptyset$. By definition this means that for sufficiently large $N$, there exists $z\in I$ such that 
\begin{equation}\label{eq:ergs}
S_N(\Phi)(z)\in (1-\kappa^2(N),1+\kappa^2(N))\cdot N.
\end{equation} 
Notice that, by the lower bound on the measure of $\Erg$, it follows that those intervals in $\cR'_t$ for which \eqref{eq:ergs} hold form an $\varepsilon^2$-almost partition of $\T$.\\

{\bf Finer partial partitions of $\T$:}\\
We will refine the above partitions $\cR'_t$ to discard sets where the first derivative is too small. To make the presentation clearer, we will split it into three cases:\\

{\it Case 1: R1 holds}\\
Recall that in this case $\cR'_t$ is given by $q_{n+1}$ intervals coming from the partition $\{-i\alpha\}_{i=0}^{q_{n+1}-1}$. Moreover from each such interval we trimmed the $\varepsilon^{5}$ proportion coming from the boundaries, also  if $q_{n+1}$ was small with respect to $q_n$, then we further partitioned each of these intervals so that the length is $\sim \frac{\varepsilon^{200}}{q_{n}}$.

 Let $\ell\in \N$ be unique such that $q_\ell \leq 2\kappa(t) t\leq q_{\ell+1}$. Note that, since we are in case R1, it follows that $\ell<n$. Consider the sets $B=B_t=B_1\cup B_2$, where 
\begin{equation}\label{eq:b12}
B_1=R_\alpha^{-(1+\kappa(t))t}\Big(\bigcup_{i=-q_{\ell}}^{q_{\ell}-1}[-\frac{\varepsilon^{100}}{q_\ell},\frac{\varepsilon^{100}}{q_\ell}]\Big) \;\;\text{ and  }\;\; B_2= R_\alpha^{-(1+\kappa(t))t}\Big(\bigcup_{i=-q_{\ell+1}}^{q_{\ell+1}-1}[-\frac{\varepsilon^{100}}{q_{\ell+1}},\frac{\varepsilon^{100}}{q_{\ell+1}}]\Big)
\end{equation}
Note that $B$ is a union of intervals and $\mu(B)\leq 8\varepsilon^{100}$. We will consider only those $I\in \cR'_t$ for which $I\cap B^c\neq \emptyset$. Such intervals from $\cR'_t$ still give us $\varepsilon^2$-almost partition of $\T$. 
Since we are in case R1 and we discarded the boundaries of the intervals, by ES1 in Lemma \ref{lem:ergs} we have that, 
for every $\theta\in \T$,
\begin{equation}\label{eq:sec}
\frac{1}{8} q_n^{2+\gamma}- Cq_n<S_{(1+\kappa(t))t}(\Phi'')(\theta)<2\varepsilon^{-15}q_{n+1}^{2+\gamma}
\end{equation}
 \sloppy Take $I\in \cR'_t$, so that $I\cap B^c\neq \emptyset$. Let $x_I$ be such that $\inf_{x\in I} |S_{(1+\kappa(t))t}(\Phi')(x)|=  S_{(1+\kappa(t))t}(\Phi')(x_I)$ and consider $I\setminus J=I_o\cup I_s$, where $J$ is an interval centered at $x_I$ and of length $2\varepsilon^5|I|$. By the lower bound on the second derivative, the function $S_{(1+\kappa(t))t}(\Phi')(\cdot)$ is increasing on $I$ and, if $S_{(1+\kappa(t))t}(\Phi')(x_I)\neq 0$, then $x_I$ is the left endpoint of $I$ and $I_o=\emptyset$. By the mean value theorem, for $x\in I$,
 $$
 S_{(1+\kappa(t))t}(\Phi')(x)=S_{(1+\kappa(t))t}(\Phi')(x_I)+S_{(1+\kappa(t))t}(\Phi'')(x-x_I).
$$
This, by the lower bound on the second derivative, implies that if $|x-x_I|\geq \varepsilon^5|I|$, then  
$|S_{(1+\kappa(t))t}(\Phi')(x)|\geq \frac{1}{9}q_n^{2+\gamma}\varepsilon^5 |I|$. Since $|I|\geq \frac{1}{2}\min(\frac{1}{2q_{n+1}},\frac{\varepsilon^{200}}{q_n} )$, we get
$$
\inf_{x\in I_o\cup I_s}|S_{(1+\kappa(t))t}(\Phi')(x)|\geq  \frac{1}{9}q_n^{2+\gamma}\varepsilon^5 \frac{\kappa(t)^{1/10}}{q_n}\geq \varepsilon^6 q_n^{1+\gamma}\kappa(t)^{1/10}.
$$

We will show next that 
\begin{equation}\label{eq:lboun}
\inf_{x,y\in I_o\cup I_s}|S_{N(y,t)}(\Phi')(x)|\geq  q_n^{1+\gamma}\kappa(t).
\end{equation}
Assume by contradiction that this is not the case; namely, there exists $x,y\in I_o\cup I_s$ such that $|S_{N(y,t)}(\Phi')(x)|< q_n^{1+\gamma} \kappa(t)$. Denote $N=N(y,t)$. Notice that, for every $z\in I$, 
 $$
|S_{N}(\Phi)(z)- S_N(\Phi)(x)|\leq |S_N(\phi')(x)||I|+|S_N(\phi'')(\theta)||I|^2<q_n^{\gamma+\varepsilon}.
$$
Therefore, using this for $z\in I$ which satisfies \eqref{eq:ergs}, we get
$$
(1-\kappa^2(N))N+q_n^{\gamma+\varepsilon}\geq S_{N}(\Phi)(z) +q_n^{\gamma+\varepsilon} \geq S_N(\Phi)(x)\geq S_{N(y,t)}(\Phi)(y)- q_n^{\gamma+\varepsilon}\geq t-2q_n^{\gamma+\varepsilon},
$$
which implies in particular that $N\geq (1-\kappa(t)/2)t$.
Let us remark that 
\begin{multline*}
	S_N(\Phi')(x)=S_{(1+\kappa(t))t}(\Phi')(x)+ S_{N-(1+\kappa(t))t}(\Phi')(R_\alpha^{(1+\kappa(t))t}x) \\ \geq \varepsilon^6 q_n^{1+\gamma}\kappa(t)^{1/10}-S_{N-(1+\kappa(t))t}(\Phi')(R_\alpha^{(1+\kappa(t))t}x),
\end{multline*}
and $|N-(1+\kappa(t))t|<2\kappa(t)t$. Recall that $2\kappa(t)t\in [q_{\ell},q_{\ell+1}]$. If $q_{\ell+1}<\kappa(t)^{-1/5}q_n$, then we use the first part of ES3 and $I\cap B_2\neq \emptyset$, to get that  $|S_{N-(1+\kappa(t))t}(\Phi')(R_\alpha^{(1+\kappa(t))t}x)|<\varepsilon^{-200}q_{\ell+1}^{1+\gamma}\leq \varepsilon^7 q_n^{1+\gamma}\kappa(t)^{1/10}$. If $q_{\ell+1}\geq \kappa(t)^{-1/5}q_n$, then $2\kappa(t)t\leq \varepsilon^{200}q_{\ell+1}$ and then using the second part of ES3 and  $I\cap B_1\neq \emptyset$,  we obtain 
$$
|S_{N-(1+\kappa(t))t}(\Phi')(x)|< \varepsilon^{-200} (N-(1+\kappa(t))t)q_{\ell}^\gamma\leq \kappa(t)^{1/2} q_n^{1+\gamma}.
$$
In both cases we get a contradiction, which implies that \eqref{eq:lboun} holds.

Note that by \eqref{eq:lboun} it follows that by subdividing $I_o$ and $I_s$ into intervals $\{K^I_\ell\}$ of size $\sim \frac{1}{q_{n}^{1+\gamma/2}}$, we get 
$$
\inf_{x,y\in K_\ell}|S_{N(y,t)}(\Phi')(x)|\cdot |K_\ell|\geq \frac{1}{2}q_n^{\gamma/2} \kappa(t)\geq K(t), 
$$
and by \eqref{eq:sec}
$$
\inf_{x,y\in K_\ell}|S_{N(y,t)}(\Phi'')(x)|\cdot |K_\ell|\leq K(t)^{-1}
\inf_{x,y\in K_\ell}|S_{N(y,t)}(\Phi')(x)|,
$$
if $K(t)$ goes sufficiently slowly to $\infty$. This means that the collection of intervals $\{K_\ell^{I}\}$ forms an $\varepsilon^2$-almost partition of $\T$ which satisfies (J1)--(J3) and finishes the proof in case R1.\\

{\it Case 2: R2 holds and  $k\leq \varepsilon^{10}q_{n+1}/q_n$.}\\
In this case the partition $\cR'_t$ consists of $q_n$ intervals whose right endpoint in of the form $-i\alpha$, for $i=0,\ldots, q_n-1$. Moreover, we have trimmed the $\varepsilon^{5}$-neighborhood of the boundaries of these intervals.  Let $\ell\in \N$ be unique such that $q_\ell \leq 2\kappa(t) t\leq q_{\ell+1}$. If $\ell<n$,  consider the sets $B=B_t=B_1\cup B_2$, where 
\begin{equation}\label{eq:b123}
B_1=R_\alpha^{-(1+\kappa(t))t}\Big(\bigcup_{i=-q_{\ell}}^{q_{\ell}-1}[-\frac{\varepsilon^{100}}{q_\ell},\frac{\varepsilon^{100}}{q_\ell}]\Big) \;\;\text{ and  }\;\; B_2= R_\alpha^{-(1+\kappa(t))t}\Big(\bigcup_{i=-q_{\ell+1}}^{q_{\ell+1}-1}[-\frac{\varepsilon^{100}}{q_{\ell+1}},\frac{\varepsilon^{100}}{q_{\ell+1}}]\Big)
\end{equation}
Notice that $B$ is a union of intervals and $\mu(B)\leq 8\varepsilon^{100}$. We will consider only those $I\in \cR'_t$ for which $I\cap B^c\neq \emptyset$. If $\ell=n$, we take $B=\emptyset$. Since we have discarded $\varepsilon^{5}$-neighborhood of the boundaries of the intervals and since $k\leq \varepsilon^{10}q_{n+1}/q_n$, it follows that  $\{x+i\alpha\}_{i=0}^{kq_n} \cap [-\frac{\varepsilon^5}{2q_n},\frac{\varepsilon^5}{2q_n}]=\emptyset$. Using the fact that $S_m(\Phi'')$ is positive for large enough $m$, ES1 and ES2 (where we split $kq_n$ into orbits of length $q_n$) imply that
\begin{equation}\label{eq:up}
\frac{1}{10}k q_n^{2+\gamma}<S_{(1+\kappa(t))t}(\Phi'')(\theta)<2k\varepsilon^{-15}q_n^{2+\gamma}.
\end{equation}
The proof is now similar to the analogous reasoning in case R1.  Take $I\in \cR'_t$ such that $I\cap B^c\neq \emptyset$. Let $x_I$ be such that $\inf_{x\in I} |S_{(1+\kappa(t))t}(\Phi')(x)|=  S_{(1+\kappa(t))t}(\Phi')(x_I)$ and consider $I\setminus J=I_o\cup I_s$, where $J$ is an interval centered at $x_I$ and length $2\varepsilon^5|I|$. Notice that, by the lower bound on the second derivative, the function $S_{(1+\kappa(t))t}(\Phi')(\cdot)$ is increasing on $I$ and if $S_{(1+\kappa(t))t}(\Phi')(x_I)\neq 0$, then $x_I$ is the left endpoint of $I$ and $I_o=\emptyset$. By the mean value theorem, for $x\in I$,
 $$
 S_{(1+\kappa(t))t}(\Phi')(x)=S_{(1+\kappa(t))t}(\Phi')(x_I)+S_{(1+\kappa(t))t}(\Phi'')(x-x_I).
$$
This, by the lower bound on the second derivative, implies that, if $|x-x_I|\geq \varepsilon^5|I|$, then  
$|S_{(1+\kappa(t))t}(\Phi')(x)|\geq \frac{1}{10}k q_n^{2+\gamma}\varepsilon^5 |I|$. Since $|I|\geq \frac{1}{4q_n}$,  we get
$$
\inf_{x\in I_o\cup I_s}|S_{(1+\kappa(t))t}(\Phi')(x)|\geq  \frac{1}{40}kq_n^{1+\gamma}\varepsilon^5\geq \varepsilon^6 k q_n^{1+\gamma}.
$$
We will show next that 
\begin{equation}\label{eq:lboun2}
\inf_{x,y\in I_o\cup I_s}|S_{N(y,t)}(\Phi')(x)|\geq  \varepsilon^{8} kq_n^{1+\gamma}.
\end{equation}
The proof is very similar to the proof of \eqref{eq:lboun}. Assume by contradiction that this is not the case; i.e., there exists $x,y\in I_o\cup I_s$ such that $|S_{N(y,t)}(\Phi')(x)|< \varepsilon^8 kq_n^{1+\gamma}$. Denote $N=N(y,t)$. For every $z\in I$, we have
 $$
|S_{N}(\Phi)(z)- S_N(\Phi)(x)|\leq |S_N(\phi')(x)||I|+|S_N(\phi'')(\theta)||I|^2<4\varepsilon^{-15}kq_n^\gamma 
$$
Therefore, using this for $z\in I$ which satisfies \eqref{eq:ergs}, we get
\begin{multline*}
	(1-\kappa^2(N))N+4\varepsilon^{-15}kq_n^\gamma\geq S_{N}(\Phi)(z) +4\varepsilon^{-15}kq_n^\gamma \geq S_N(\Phi)(x) \\ \geq S_{N(y,t)}(\Phi)(y)- 4\varepsilon^{-15}kq_n^\gamma\geq t-q_n^{\gamma+\varepsilon},
\end{multline*}
which implies in particular that $N\geq (1-\kappa(t)/2)t$.
Notice that 
\begin{multline*}
S_N(\Phi')(x)=S_{(1+\kappa(t))t}(\Phi')(x)+ S_{N-(1+\kappa(t))t}(\Phi')(R_\alpha^{(1+\kappa(t))t}x) \\ \geq \varepsilon^6 kq_n^{1+\gamma}-S_{N-(1+\kappa(t))t}(\Phi')(R_\alpha^{(1+\kappa(t))t}x),
\end{multline*}
and $|N-(1+\kappa(t))t|<2\kappa(t)t$. Recall that $2\kappa(t)t\in [q_{\ell},q_{\ell+1}]$. 

If $\ell<n$ and $q_{\ell+1}<\kappa(t)^{-1/5}q_n$, then we use the first part of ES3 and $I\cap B_2\neq \emptyset$, to get that  $|S_{N-(1+\kappa(t))t}(\Phi')(R_\alpha^{(1+\kappa(t))t}x)|<\varepsilon^{-200}q_{\ell+1}^{1+\gamma}\leq \varepsilon^{-200}q_n^{1+\gamma}$. If $\ell=n$ or $q_{\ell+1}\geq \kappa(t)^{-1/5}q_n$, then $2\kappa(t)t\leq \varepsilon^{200}q_{n+1}$ and then using the second part of ES3 and  $I\cap B_1\neq \emptyset$,  it follows that 
$$
|S_{N-(1+\kappa(t))t}(\Phi')(R_\alpha^{(1+\kappa(t))t}x)|< \varepsilon^{-200} (N-(1+\kappa(t))t)q_n^\gamma\leq \kappa(t)^{1/2} kq_n^{1+\gamma}.
$$
In both cases we get a contradiction which implies that \eqref{eq:lboun2} holds.

By  the upper bound in \eqref{eq:up} and monotonicity, subdividing $I_o$ and $I_s$ into intervals $\{K^I_\ell\}$ of size $\sim \frac{1}{q_{n}^{1+\gamma/2}}$, we get 
$$
\inf_{x,y\in K_\ell}|S_{N(y,t)}(\Phi')(x)|\cdot |K_\ell|\geq \varepsilon^{8}k q_n^{\gamma/2}\geq K(t), 
$$
and 
$$
\inf_{x,y\in K_\ell}|S_{N(y,t)}(\Phi'')(x)|\cdot |K_\ell|\leq K(t)^{-1}
\inf_{x,y\in K_\ell}|S_{N(y,t)}(\Phi')(x)|,
$$
if $K(t)$ goes sufficiently slowly to $\infty$. This means that the collection of intervals $\{K_\ell^{I}\}$ forms an $\varepsilon^2$-almost partition of $\T$ which satisfies (J1)--(J3) and finishes the proof in case R2.\\

{\it Case 3: R2 holds and  $k\geq \varepsilon^{10}q_{n+1}/q_n$.}\\
In this case, by the construction of $\cR'_t$, we have two types of intervals in the partition for which the considerations will be different. The first type of intervals are those with length $\sim \frac{1}{q_{n+1}}$ and we have $(k-\varepsilon^5k) q_n$ of them. Their right endpoint is at $-i\alpha$ with $i\leq (1-\varepsilon^5)kq_n$. The second type are  $q_n$ intervals of length $\sim \frac{1}{q_{n}}-\frac{k-1}{q_{n+1}}$. Let $I\in\cR'_t$ be an interval of the first type. For every $\theta\in I$, we get 
$S_{(1+\kappa(t))t}(\Phi'')(\theta)<S_{q_{n+1}}(\Phi'')(\theta)<\varepsilon^{-15}q_{n+1}^{2+\gamma}$, where the last inequality follows from ES2 and the fact that we trimmed the $\varepsilon^5$-boundaries of the initial partition $\cR_t$. Let the right endpoint of $I$ be $-i\alpha$ with $i<(1-\varepsilon^5)kq_n$. Since $(1+\kappa(t))t\geq (k-1)q_n\geq (1-\varepsilon^5)kq_n$, by the definition of $k$, we get $S_{(1+\kappa(t))t}(\Phi'')(\theta)\geq \Phi''(\theta+ i\alpha)\geq \frac{1}{4}q_{n+1}^{2+\gamma}$. Summarizing,
\begin{equation}\label{eq:up2}
\frac{1}{4}q_{n+1}^{2+\gamma}<S_{(1+\kappa(t))t}(\Phi'')(\theta)<\varepsilon^{-15}q_{n+1}^{2+\gamma}.
\end{equation}
Let $x_I$ be such that $\inf_{x\in I} |S_{(1+\kappa(t))t}(\Phi')(x)|=  S_{(1+\kappa(t))t}(\Phi')(x_I)$ and consider $I\setminus J=I_o\cup I_s$, where $J$ is an interval centered at $x_I$ and length $2\varepsilon^5|I|$. Notice that, by the lower bound on the second derivative, the function $S_{(1+\kappa(t))t}(\Phi')(\cdot)$ is increasing on $I$ and, if $S_{(1+\kappa(t))t}(\Phi')(x_I)\neq 0$, then $x_I$ is the left endpoint of $I$ and $I_o=\emptyset$. By the mean value theorem, for $x\in I$,
 $$
 S_{(1+\kappa(t))t}(\Phi')(x)=S_{(1+\kappa(t))t}(\Phi')(x_I)+S_{(1+\kappa(t))t}(\Phi'')(x-x_I).
$$
This, by the lower bound on the second derivative, implies that, if $|x-x_I|\geq \varepsilon^5|I|$, then  
$|S_{(1+\kappa(t))t}(\Phi')(x)|\geq \frac{1}{4}q_{n+1}^{2+\gamma}\varepsilon^5 |I|$. Since $|I|\geq \frac{1}{4q_{n+1}}$,  it follows that 
$$
\inf_{x\in I_o\cup I_s}|S_{(1+\kappa(t))t}(\Phi')(x)|\geq  \frac{1}{16}\varepsilon^5 q_{n+1}^{1+\gamma}.
$$
We will show next that 
\begin{equation}\label{eq:lboun3}
\inf_{x,y\in I_o\cup I_s}|S_{N(y,t)}(\Phi')(x)|\geq  \varepsilon^8 q_{n+1}^{1+\gamma}.
\end{equation}
We will do it at the end of the proof.
Note that by  the upper bound in \eqref{eq:up2} and monotonicity, subdividing $I_o$ and $I_s$ into intervals $\{K^I_\ell\}$ of size $\sim \frac{1}{q_{n+1}^{1+\gamma/2}}$, we get 
$$
\inf_{x,y\in K_\ell}|S_{N(y,t)}(\Phi')(x)| \cdot |K_\ell|\geq \varepsilon^{8}k q_{n+1}^{\gamma/2}\geq K(t), 
$$
and 
$$
\inf_{x,y\in K_\ell}|S_{N(y,t)}(\Phi'')(x)| \cdot |K_\ell|\leq K(t)^{-1}
\inf_{x,y\in K_\ell}|S_{N(y,t)}(\Phi')(x)|,
$$
if $K(t)$ goes sufficiently slowly to $\infty$.\\

It remains to deal with the second type of intervals; i.e., the $q_n$ intervals of length $\sim \frac{1}{q_{n}}-\frac{k-1}{q_{n+1}}$. Note that by the definition of $\cR_t$ and $\cR'_t$ we only consider them if their length is $\geq \varepsilon^5q_n^{-1}$. Hence, let $I'\in \cR'_t$ be an interval with $|I'|\geq \varepsilon^5 q_n$. Here, the reasoning is analogous to the corresponding one in R2. We present the full proof for completeness. 
Since we have discarded $\varepsilon^{5}$-neighborhood of the boundaries of the intervals, we have $\{x+i\alpha\}_{i=0}^{kq_n} \cap [-\varepsilon^{10}q_n^{-1},\varepsilon^{10}q_n^{-1}]=\emptyset$.  Therefore, using the fact that $S_m(\Phi'')$ is positive for large enough $m$, it follows by ES1 and ES2 (where we split $kq_n$ into orbits of length $q_n$), that for $\theta\in I'$
\begin{equation}\label{eq:up4}
\frac{1}{10}k q_n^{2+\gamma}<|S_{(1+\kappa(t))t}(\Phi'')(\theta) | <2k\varepsilon^{-30}q_n^{2+\gamma}.
\end{equation}
Let $x_I$ be such that $\inf_{x\in I'} |S_{(1+\kappa(t))t}(\Phi')(x)|=  S_{(1+\kappa(t))t}(\Phi')(x_{I'})$ and consider $I'\setminus J=I_o\cup I_s$, where $J$ is an interval centered at $x_I$ and length $2\varepsilon^5|I'|$. Notice that by the lower bound on the second derivative, the function $S_{(1+\kappa(t))t}(\Phi')(\cdot)$ is increasing on $I'$ and, if $S_{(1+\kappa(t))t}(\Phi')(x_{I'})\neq 0$, then $x_{I'}$ is the left endpoint of $I$ and $I_o=\emptyset$. By the mean value theorem, for $x\in I$,
 $$
 S_{(1+\kappa(t))t}(\Phi')(x)=S_{(1+\kappa(t))t}(\Phi')(x_{I'})+S_{(1+\kappa(t))t}(\Phi'')(x-x_{I'}).
$$
This, by the lower bound on the second derivative, implies that if $|x-x_{I'}|\geq \varepsilon^5|I'|$, then  
$|S_{(1+\kappa(t))t}(\Phi')(x)|\geq \frac{1}{10}k q_n^{2+\gamma}\varepsilon^5 |I'|$. Since $|I|\geq \frac{\varepsilon^5}{q_n}$,  we get
$$
\inf_{x\in I_o\cup I_s}|S_{(1+\kappa(t))t}(\Phi')(x)|\geq  \frac{1}{40}kq_n^{1+\gamma}\varepsilon^5\geq \varepsilon^6 k q_n^{1+\gamma}.
$$
We will show next that 
\begin{equation}\label{eq:lboun4}
\inf_{x,y\in I_o\cup I_s}|S_{N(y,t)}(\Phi')(x)|\geq  \varepsilon^{8} kq_n^{1+\gamma}.
\end{equation}
Note that by  the upper bound in \eqref{eq:up4} and monotonicity  it follows that by subdividing $I_o$ and $I_s$ into intervals $\{L^{I'}_\ell\}$ of size $\sim \frac{1}{q_{n}^{1+\gamma/2}}$, we get 
$$
\inf_{x,y\in K_\ell}|S_{N(y,t)}(\Phi')(x)| \cdot |K_\ell|\geq \varepsilon^{8}k q_n^{\gamma/2}\geq K(t), 
$$
and 
$$
\inf_{x,y\in K_\ell}|S_{N(y,t)}(\Phi'')(x)| \cdot |K_\ell|\leq K(t)^{-1}
\inf_{x,y\in K_\ell}|S_{N(y,t)}(\Phi')(x)|,
$$
if $K(t)$ goes sufficiently slowly to $\infty$. To finish, the collections $\{K^I_\ell\}$ and $\{L^{I'}_\ell\}$ considered for all $I$ and $I'$ in the partition  form an $\varepsilon^2$-almost partition of $\T$ which satisfies (J1)--(J3) and this finishes the proof in case R2. 

It remains to show \eqref{eq:lboun3} and \eqref{eq:lboun4}. First, analogously to \eqref{eq:lboun} and \eqref{eq:lboun2}  we show that $N=N(y,t)\geq (1-\kappa(t))t$. Then we proceed analogously to the proof of \eqref{eq:lboun} and \eqref{eq:lboun2}. We only need to bound $S_{N-(1+\kappa(t))t}(\Phi')(R^{(1+\kappa(t))t}x)$. Here we do not have the set $B$, but for both type of intervals, we notice that the orbit 
$\{R^{(1+\kappa(t))t}x\}_{N-(1+\kappa(t))t}^0$ is disjoint with the set $[-\frac{\varepsilon^{500}}{q_n},\frac{\varepsilon^{500}}{q_n}]$. This implies using the second part of ES3 that 
$$
 S_{N-(1+\kappa(t))t}(\Phi')(R^{(1+\kappa(t))t}x)\leq \varepsilon^{-1000}2\kappa(t)t q_n^\gamma\leq 4\varepsilon^{-1000}\kappa(t)kq_n^{1+\gamma},
$$
which is enough to get a contradiction with \eqref{eq:lboun3} and \eqref{eq:lboun4}. 
This finishes the proof.

\subsection{Polynomially stretching partitions for a class of Kochergin flows on the torus.}
In this section we will construct Rokhlin towers and partitions that appear in Lemma \ref{lem:qmvs}. Recall that, by assumption of \Cref{thm:koch2}, $\alpha$ satisfies $q_{n+1}\leq Cq_n (\log ^2q_n)$. We will first contruct the Rokhlin towers. In fact we will do it with $d=2$. Let $t>0$ and let $n$ be unique with $q_n\leq t^{4\xi'}<q_{n+1}$

 Recall that for every rotation there is a sequence of disjoint towers $\cT^1_n, \cT^2_n$ that covers $\T$ given by the following (we assume in what follows that $n$ is even):  $\cT^1_n$ is a tower of height $q_n$ and base $B^1_n=[0,R_\alpha^{q_{n-1}}(0)]$ and $\cT^2_n$ is a tower of height $q_{n-1}$ and base $B^2_n=[R_{\alpha}^{q_n}(0),0]$. Let $\tilde{B}_1=[\frac{1}{q_n^{1+\eta}}, R_\alpha^{q_{n-1}}(0)-\frac{1}{q_n^{1+\eta}}]$ and $\tilde{B}_2=[R_{\alpha}^{q_n}(0)+\frac{1}{q_n^{1+\eta}},-\frac{1}{q_n^{1+\eta}}]$. We will now consider towers $\cR_1$, $\cR_2$ for the flow $\flowR$ with bases $\tilde{B}_1$ and  $\tilde{B}_2$, and heights $h_1=q_{n}-q_n^{\gamma+\eta}$ and $h_2=q_{n-1}-Cq_n^{\gamma+\eta}$. Note that $\min_{i=1,2} |\tilde{B}_i|\geq t^{-5\xi'}$. 
 
 To show that these are indeed towers (namely, different levels are disjoint), it is enough to show that $h_1\leq \min_{x\in \tilde{B}_1} S_{q_n}(x)$  and  $h_2\leq \min_{x\in \tilde{B}_2} S_{q_{n-1}}(x)$. Note that, 
by ES4 in Lemma \ref{lem:ergs}, we have $ |S_{q_n}(x)-q_n|\leq \Phi(x_{min}^{q_n})+ Cq_n^{\gamma}$. Since we trimmed the boundaries of $B^1_n$, it follows that $x_{min}^{q_n}\geq \frac{1}{q_n^{1+\eta}}$. Therefore, $|S_{q_n}(x)-q_n|\leq q_n^{\gamma+\eta}$. The reasoning for $h_2$ is analogous. It remains to notice that $\mu(\cR_1\cup \cR_2)\geq 1- q_n\frac{4}{q_n^{1+\eta}}=1-q_n^{-\eta}$, which satisfies the assumptions from the lemma for $\eta>\xi'$. This finishes the proof of the first part. Note that the first and second part of the lemma are independent so we now move to the second part. First note that (QJ0) holds by ES0 and the second part of ES4 in Lemma \ref{lem:ergs}. So we will now construct partitions $\cP_t$ which satisfy (QJ1)--(QJ3).

Given $t>0$ and a small $\xi>0$ let $n$ be unique with $q_n\leq t+ Ct^{\gamma+\xi}<q_{n+1}$. Let $\cP''_t$ be the partition given by the points $\{-i\alpha\}_{i=0}^{q_{n+1}-1}$. Let $I'=[a',b']$ be an interval of this partition (recall that $|I'|\sim \frac{1}{q_{n+1}}$). Let $I=[a,b]:=[a'+\frac{1}{q_{n+1}^{1+\xi}},-\frac{1}{q_{n+1}^{1+\xi}}+b']$ and let $\cP'_t$ be the partition given by the intervals $I$.  First notice that $\mu(\bigcup_{I\in \cP_t}I)\geq 1-q_{n+1}^{-\xi}$ so if $\xi>200\xi'$ then $\cP'_t$ is indeed an $t^{-200\xi'}$ almost partition of $\cT$. Moreover, $t^{1-\xi'}|I|\leq 4q_n^{1-\xi'}\frac{1}{3q_{n+1}}< \frac{1}{q_{n+1}^{\xi'}}\to 0$ as $t\to \infty$
 by ES0 in Lemma \ref{lem:ergs} and $\max_{x\in I} N(x,t)< t+ Ct^{\gamma}<q_{n+1}$. This implies that (QJ1) holds by the definition of the partition $\cP'_t$.  Let $k$ be unique such that $q_k<Ct^\gamma<q_{k+1}$. We will only consider those intervals $I\in \cP_t$  which satisfy
\begin{equation}\label{eq:nc}
R_{\alpha}^{t+s}I\cap \T\setminus \left[-\frac{1}{q_{k+1}^{1+\xi}},\frac{1}{q_{k+1}^{1+\xi}}\right], \qquad \text{ for every } s\leq q_{k+1}.
 \end{equation}

Note that the measure of such good intervals is at least $1-q_{k+1}^{-\xi}\geq 1-t^{-\xi\gamma/2}$ and so indeed we can only consider such intervals

Our final partition $\cP_t$ is be a refinement (up to a negligible set) of the partition $\cP'_t$ build as follows:  take $I\in \cP'_t$ and let  $x_I\in I$ be such that $\min_{x\in I}S_{t}(\Phi')(x)=S_{t}(\Phi')(x_I)$ (here we write $S_t$ with the understanding $S_{[t]}$). Consider $I\setminus J=I_o\cup I_s$, where $J$ is an interval centered at $x_I$ and length $\frac{1}{q_{n+1}^{1+\xi}}$. By ES2. in Lemma \ref{lem:ergs},  for every $k\leq q_{n+1}$
\begin{equation}\label{eq:qsec}
S_k(\Phi'')(x)\leq 8q_n^{2+\gamma+4\xi},
\end{equation}
since we trimmed the boundaries of the intervals $I'$ and so $x_{min}^{q_{n+1}}\geq \frac{1}{q_{n+1}^{1+\xi}}$. 
 Notice that, since $S_t(\Phi'')(\cdot)>0$,  the function $S_{t}(\Phi')(\cdot)$ is increasing on $I$ and if $S_{t}(\Phi')(x_I)\neq 0$, then $x_I$ is the left endpoint of $I$ and $I_o=\emptyset$. By the mean value theorem, for $x\in I$,
 $$
 S_{t}(\Phi')(x)=S_{t}(\Phi')(x_I)+S_{t}(\Phi'')(x-x_I).
$$
This, by ES1 in Lemma \ref{lem:ergs}, implies that if $|x-x_I|\geq \frac{1}{q_{n+1}^{1+\xi}}$, then  
$|S_{t}(\Phi')(x)|\geq \frac{1}{9}q_n^{2+\gamma-\xi}\frac{1}{q_{n+1}^{1+\xi}}$. It follows that $\inf_{x\in I_o\cup I_s}|S_{t}(\Phi')(x)|\geq  \frac{1}{9}q_n^{1+\gamma-3\xi}$.
We will show next that 
\begin{equation}\label{eq:qlboun}
\inf_{x,y\in I_o\cup I_s}|S_{N(y,t)}(\Phi')(x)|\geq \frac{1}{9}q_n^{1+\gamma-4\xi}.
\end{equation}
Note that \eqref{eq:qlboun} holds then  we can further subdivide the intervals $I_o$ and $I_s$ into intervals of length $J_i$ of length $\sim \frac{1}{q_n^{1+\gamma-5\xi}}$ so that 
$\inf_{x,y\in J_i}|S_{N(y,t)}(\Phi')(x)||J_i|\geq \frac{1}{9}q_n^{1+\gamma-4\xi} |J_i|\geq q_n^\xi\geq t^{\xi/2}$ and 
$\sup_{x,y\in J_i}|S_{N(y,t)}(\Phi'')(x)|J_i|\leq t^{-\xi/2} \inf_{x,y\in I_o\cup I_s}|S_{N(y,t)}(\Phi')(x)|$. This means that the intervals $J_i$ satisfy (QJ2) and (QJ3), and hence finish the proof.\\

We only need to show \eqref{eq:qsec}. We have 
$$
S_{N(y,t)}(\Phi')(x)=S_{t}(\Phi')(x)+S_{N(y,t)-t}(\Phi')(R_\alpha^tx).
$$
Note that, by ES0, we have $|N(y,t)-t|\leq Ct^\gamma<q_{k+1}$. Therefore, using ES3 in Lemma \ref{lem:ergs} along with \eqref{eq:nc}, it follows that $|S_{N(y,t)-t}(\Phi')(R_\alpha^tx)|\leq q_{k+1}^{(1+\gamma)(1+\xi)}|\leq t^{\gamma(1+\gamma)(1+\xi)+3\xi}\leq q_n^{\gamma(1+\gamma)(1+\xi)+4\xi}$. This together with the lower bound on $S_{t}(\Phi')(x)$ implies that \eqref{eq:qlboun} holds. This finishes the proof.

\subsection{Polynomial mixing for a class of Kochergin flows on $\T^2$}

The results in the previous subsection, together with \Cref{prop:good_implies_lus}, imply that Kochergin flows satisfying the assumption of \Cref{thm:koch2} are QLUS. In order to apply our main quantitative abstract result \Cref{thm:main_abstract_result_quantitative}, and hence prove \Cref{thm:koch2}, it remains to show that these flows are polynomially mixing. We do it in this section.

We let $M= \T^2 \setminus \{0\}$ and we denote by $\mu$ the Lebesgue measure on $M$. We recall two results on polynomial mixing for Kochergin flows on the torus with one singularity at $0$. 

\begin{theorem}[{\cite[Theorem 1.1]{Fay2}}] Let $\gamma\leq 2/5$ and let $\alpha$ satisfy $q_{n+1}\leq C_\alpha q_n^{1+\gamma/100}$.  Let $(K_t)_{t\in \R}$ be the corresponding Kochergin flow. There exists $\eta>0$ such that for any two rectangles $A,B\subset M$, we have $|\mu(A\cap K_{t}B)-\mu(A)\mu(B)|\leq t^{-\eta}$, for sufficiently large $t$.
\end{theorem}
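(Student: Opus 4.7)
The plan is to establish polynomial $2$-mixing by making Kochergin's ``mixing via shearing'' argument quantitative, using exactly the partitions already constructed in Section 8.2 of this paper. The key observation is that, under the Diophantine assumption $q_{n+1}\leq C_\alpha q_n^{1+\gamma/100}$, all of the Birkhoff sum estimates of the relevant ergodic-sum lemma become polynomial in $t$ rather than only qualitative. First I would reduce the statement to proving polynomial decorrelation $|\int \phi \cdot \psi \circ K_t\, d\mu - \mu(\phi)\mu(\psi)| \ll \|\phi\|_{\mathrm{Lip}} \|\psi\|_{\mathrm{Lip}}\, t^{-\eta'}$ for Lipschitz test functions: approximating indicators of rectangles by Lipschitz functions with Lipschitz constant $t^{\eta'/2}$ introduces an error $\ll t^{-\eta'/2}$, which is absorbed after optimization.

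The heart of the proof is the shearing mechanism, carried out on the Rokhlin towers $\cR_1,\cR_2$ and the polynomially stretching partition $\cP_t=\{J_i\}$ constructed in Section 8.2. On each atom $J_i$ contained in some tower base $\tilde B_\varrho$, the Birkhoff sum $S_{N(\cdot,t)}(\Phi')$ is polynomially large, $|S_{N(x,t)}(\Phi')(y)||J_i|\gtrsim t^{\xi'}$, while the distortion $|S_{N(\cdot,t)}(\Phi'')||J_i|$ is negligible compared to $|S_{N(\cdot,t)}(\Phi')|$ by (QJ3). This means that the image under $K_t$ of the vertical segment $\{x\}\times[0,h_\varrho]$, for $x\in J_i$, well-approximates (in Hausdorff distance) a long orbit arc of $K_t$ of flow-length $L\sim t^{\xi'}$, parametrized almost uniformly with respect to arclength; this is precisely the content of the $(\varepsilon,K)$-uniform stretching lemma combined with Lemma 7.2 here.

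The second ingredient is a polynomial rate of equidistribution for short orbit arcs of $K_t$ of length $L$, which follows from the same Diophantine condition through a Denjoy-Koksma estimate applied to the ergodic integrals of $\psi$ along fibers: if $\psi$ is Lipschitz with $\mu(\psi)=0$, then $|\int_0^L \psi\circ K_r(y)\,dr|\ll L^{1-\eta''}$ for $y$ outside a set of $\mu$-measure $\ll L^{-\eta''}$, with $\eta''>0$ explicit in $\gamma$ and in the exponent $\gamma/100$ in the Diophantine condition. Combining this with the uniformly sheared description of $K_t(J_i\times[0,h_\varrho])$, writing $\int_{\cR_\varrho} \phi \cdot \psi\circ K_t\,d\mu$ as a sum over atoms $J_i$ and, on each atom, changing variables from the transverse direction to the flow direction, bounds each summand by $\|\phi\|_{\mathrm{Lip}}\|\psi\|_{\mathrm{Lip}} L^{-\eta''}\mu(J_i\times[0,h_\varrho])$ plus a boundary term $\ll \|\phi\|_\infty \|\psi\|_\infty L^{-1}$. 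Summing over $i$ and $\varrho$ yields the desired polynomial decay.

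The main obstacle is the bookkeeping: one must track the polynomial rates through the three independent pieces (the measure of atoms not satisfying the good partition properties, the polynomial stretching rate $t^{\xi'}$, and the polynomial equidistribution rate $L^{-\eta''}$), verify that each subpolynomial loss (from the trimmed boundaries of atoms, from the Lipschitz approximation of indicators, and from the set where the ergodic integrals are atypical) is strictly smaller than the main polynomial gain, and then choose $\xi,\xi',\eta',\eta''$ so that the optimized exponent $\eta=\eta(\gamma,C_\alpha)>0$ is uniform in $t$. The assumption $\gamma\leq 2/5$ presumably enters to ensure that the distortion exponent $1+\gamma$ and the loss $q_n^\gamma$ in the Denjoy-Koksma-type bound are both dominated by the main stretching factor $q_n^{1+\gamma-O(\xi)}$.
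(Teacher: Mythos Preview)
This theorem is not proved in the paper at all: it is a citation of \cite[Theorem 1.1]{Fay2}. The paragraph immediately following it in the paper explicitly says that the authors do \emph{not} pursue this route, and instead they use \Cref{thm:FFK} (from \cite{FFK}) together with \Cref{lem:L2_decay_koch} to obtain the polynomial mixing they need. So there is no ``paper's own proof'' to compare against.

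Your sketch is a plausible outline of how one could prove such a statement by a quantitative shearing argument, and the broad architecture (partition into atoms, polynomial lower bound on shearing, change of variables to the flow direction, polynomial equidistribution of orbit segments) is indeed the strategy in Fayad's original paper. Two remarks, however. First, you invoke the partitions of Section~8.2, but those are constructed under the hypothesis $q_{n+1}\leq C q_n(\log q_n)^2$, which is strictly stronger than the Diophantine condition $q_{n+1}\leq C_\alpha q_n^{1+\gamma/100}$ assumed in the cited theorem; you would need to rework that construction under the weaker assumption, and the polynomial exponents change. Second, your equidistribution step (``$|\int_0^L \psi\circ K_r(y)\,dr|\ll L^{1-\eta''}$ outside a small set'') is exactly the content of \Cref{lem:L2_decay_koch} in this paper, which is proved only under the stronger logarithmic Diophantine condition; establishing it under the weaker polynomial condition requires additional work (essentially the Denjoy--Koksma analysis in \cite{Fay2}).
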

Using this theorem and an approximation argument, one could deduce a statement on decay of correlations for smooth functions. However, by using the above theorem as it is, one wouldn't be able to get an explicit dependence on the $\mathscr{C}^k$-norms of the smooth functions. Nonetheless, it seems possible to strengthen the conclusion of the theorem above to say that it holds uniformly for all rectangles with side lengths $\geq t^{-\eta'}$, and then deduce the stronger statement (with control on the $\mathscr{C}^k$-norms). We do not pursue this strategy here. The main result that is useful in our setting is the following.
\begin{theorem}[{\cite[Proposition 3.2]{FFK}}]\label{thm:FFK} 
	Let $\alpha$ satisfy $q_{n+1}\leq C_\alpha q_n(\log q_n)^{1+\xi}$ and let $\gamma=1-\eta$ with $\eta$ sufficiently small. There exists a constant $C_0>0$ so that, for any $\phi_0, \phi_1\in \mathscr{C}_c^2(M)$ with $\phi_0$ being a smooth coboundary for the flow $(K_t)_{t\in \R}$, we have 
$$
|\langle \phi_0, \phi_1 \circ K_t\rangle - \mu(\phi_0)\mu(\phi_0)|<C_0 \, \|\phi_0\|_{\mathscr{C}^2} \, \|\phi_1\|_{\mathscr{C}^2} \, t^{-1/3}.
$$
\end{theorem}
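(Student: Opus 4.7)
The plan is to combine the coboundary structure $\phi_0 = X\psi$ (with $X$ the generator of $(K_t)_{t\in \R}$ and $\psi \in \mathscr{C}^2_c(M)$) with the polynomial shearing estimates of §9.2, via a Van der Corput type averaging argument. The starting identity, by flow invariance of $\mu$, is
\begin{equation*}
\langle \phi_0, \phi_1 \circ K_t\rangle = \frac{1}{S}\int_0^S \langle \phi_0 \circ K_{-s}, \phi_1 \circ K_{t-s}\rangle\, ds,
\end{equation*}
for an auxiliary window $S>0$ to be optimized later. The key observation is that if one could legitimately replace $\phi_1 \circ K_{t-s}$ by $\phi_1 \circ K_t$ in the integrand, then the $s$-integral would telescope via $\int_0^S X\psi \circ K_{-s}\, ds = \psi - \psi\circ K_{-S}$, producing a quantity of size $O(\|\psi\|_\infty \|\phi_1\|_\infty / S)$, with $\|\psi\|_\infty \lesssim \|\phi_0\|_{\mathscr{C}^2}$.

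I would then execute this replacement using the shearing mechanism on the special flow representation of $(K_t)$ over $R_\alpha$ with roof $\roof$. On each atom $P_a$ of the partition $\cP_t$ constructed in §9.2 for the Diophantine class $q_{n+1} \leq C_\alpha q_n(\log q_n)^{1+\xi}$, the map $K_{-s}$ decomposes, up to controlled error, as a horizontal shift $\widetilde{\mathsf{g}}_s$ in the base followed by a flow-direction shift $\tau_a(s)$, as a consequence of the uniform stretching estimate $\min_y |S_{N(y,t)}\roof'(y)|\,|P_a| \gtrsim t^{\xi'}$. Substituting this decomposition factors out the $s$-dependence from $\phi_1 \circ K_{t-s}$ up to remainders controlled by $\|\phi_1\|_{\mathscr{C}^2}$ times a polynomial power of $S/t$ coming from the second-derivative bound in §9.2, after which the coboundary telescoping can be applied as above.

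The main obstacle is the quantitative optimization of $S$. The telescoping gain scales like $S^{-1}\|\phi_0\|_{\mathscr{C}^2}\|\phi_1\|_{\mathscr{C}^2}$, whereas the residual from the shear decomposition grows like $(S/t)^{\beta}\|\phi_1\|_{\mathscr{C}^2}$ for some $\beta>0$ determined by the stretching rate $\sim q_n^{1+\gamma}$ on the timescale $q_n \sim t$. Balancing the two contributions selects $S \sim t^{2/3}$ and produces the announced rate $t^{-1/3}$. A secondary difficulty is to discard the atoms of $\cP_t$ whose orbits pass close to the singularity of $\roof$: their total measure, which must be bounded using the Diophantine condition on $\alpha$, has to contribute an error $o(t^{-1/3})\,\|\phi_0\|_{\mathscr{C}^2}\|\phi_1\|_{\mathscr{C}^2}$, which requires the cutoff scale in $\cP_t$ to be tuned compatibly with the target exponent $1/3$ and the constraint $\gamma = 1-\eta$.
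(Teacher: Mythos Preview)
This theorem is not proved in the present paper: it is quoted from \cite[Proposition 3.2]{FFK}, with only a remark on how the exponent $1/3$ is extracted from the stronger estimate there (by trivially bounding the contribution of the ``bad set'' of measure $q_n^{1/2-6\eta}$). There is thus no in-paper proof to compare against; what follows is an assessment of your outline on its own terms.

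The central step does not work as written. The identity $\langle \phi_0, \phi_1 \circ K_t\rangle = \frac{1}{S}\int_0^S \langle \phi_0 \circ K_{-s}, \phi_1 \circ K_{t-s}\rangle\, ds$ is vacuous: by flow invariance every term of the integrand equals the left-hand side exactly, so nothing has been averaged. More seriously, the assertion that ``$K_{-s}$ decomposes, up to controlled error, as a horizontal shift $\widetilde{\mathsf g}_s$ followed by a flow-direction shift'' is false: $K_{-s}$ is a pure flow-direction map and has no horizontal component. The stretching estimates you cite say that a \emph{horizontal} interval $P_a$ is sent by $K_t$ to approximately a long flow-orbit segment; they do not decompose $K_{-s}$. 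What is actually true is the dual statement $K_{t-s}|_{P_a} \approx K_t \circ \widetilde{\mathsf g}_s$ with $\widetilde{\mathsf g}_s(x) = x + s/S_{N}\Phi'(x)$, but then after the change of variables by $\widetilde{\mathsf g}_s^{-1}$ the first factor becomes $\phi_0 \circ K_{-s} \circ \widetilde{\mathsf g}_s^{-1}$, not $\phi_0 \circ K_{-s}$, so the coboundary telescoping no longer applies cleanly; and you have not accounted for the Jacobian of $\widetilde{\mathsf g}_s$, which contributes an error of order $s\, S_N\Phi'' / (S_N\Phi')^2$ on each atom.

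The argument in \cite{FFK} bypasses the averaging-in-$s$ detour: on each $P_a$ one changes variables directly from $x$ to the flow-time coordinate $\sigma$ on the image curve $K_t(P_a)$, and then integrates by parts in $\sigma$ using the coboundary relation $\phi_0 = X\psi$ (equivalently its adjoint on $\phi_1$). This yields a boundary contribution of size $|P_a|/L_a$ with $L_a \sim |S_N\Phi'|\cdot|P_a|$, and a bulk term governed by $S_N\Phi''/(S_N\Phi')^2$. Summing over the partition and discarding the bad set near the singularity (using the Diophantine condition and $\gamma=1-\eta$) produces the polynomial rate.
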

In \cite[Proposition 3.2]{FFK} there is a \lq\lq bad set\rq\rq\ of measure $q_n^{1/2-6\eta}$ and for our purposes we just estimate correlations trivially on this bad set. In \cite{FFK}, the authors wanted to push the correlation exponent to $1/2$ to get square integrability. In our case, any positive exponent suffices and so we just use $1/3$. In fact, the methods of \cite{FFK} should apply to any $\gamma>0$ to get (under the diophantine condition) the above result with some $\eta=\eta(\gamma)>0$ instead of $1/3$. In our case it is important that we have polynomial decay of correlations for all smooth functions. We will use the above theorem to deduce polynomial decay (with a smaller exponent for all smooth functions); the second ingredient is the following result on the decay of ergodic averages in $L^2$. 

\begin{lemma}\label{lem:L2_decay_koch} 
	Under the assumptions of Theorem \ref{thm:FFK}, there exist  $C_1,\eta_1>0$ such that, for any $\phi \in \mathscr{C}_c^1(M)$ and for any $t\geq 1$, we have
\[
\left\| \frac{1}{t} \int_{0}^{t} \phi \circ K_r \diff r- \mu(\phi) \right\|_2 \leq C_1 \, \|\phi\|_{\mathscr{C}^1} \, t^{-\eta_1}.
\]
\end{lemma}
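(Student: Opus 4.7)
Assume without loss of generality that $\mu(\phi)=0$, and set $G(t) := \int_0^t \phi \circ K_r \diff r$, so that the goal becomes $\|G(t)\|_2 \leq C_1 \|\phi\|_{\mathscr{C}^1} t^{1-\eta_1}$ for some $\eta_1 > 0$. Standard Fubini manipulations, combined with the $K_r$-invariance of $\mu$, give
\[
\|G(t)\|_2^2 = 2\,\mathrm{Re} \int_0^t (t-u)\, C(u) \diff u, \qquad C(u) := \langle \phi, \phi \circ K_u\rangle.
\]
Once a polynomial decay of the form $|C(u)| \leq C_\phi\, u^{-\eta}$ is established (for $u \geq 1$ and some $\eta > 0$, with $C_\phi$ depending polynomially on $\|\phi\|_{\mathscr{C}^1}$), the desired $L^2$-bound follows from an optimized split-at-$R$ argument completely analogous to the derivation of the $L^2$-estimate \eqref{eq:poly_averages} in the proof of \Cref{prop:QPar1}. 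So the heart of the matter is to leverage \Cref{thm:FFK} to obtain such a polynomial decay for the autocorrelation $C(u)$.

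After reducing to $\phi \in \mathscr{C}^2_c(M)$ (by truncating away from the singularity at $0$ and mollifying, tracking the dependence on $\|\phi\|_{\mathscr{C}^1}$), the key observation is that, for any $\tau > 0$, the function $\phi \circ K_\tau - \phi$ is a smooth coboundary for the flow, with potential $\int_0^\tau \phi \circ K_s \diff s$. Applying \Cref{thm:FFK} with $\phi_0 = \phi \circ K_\tau - \phi$ and $\phi_1 = \phi$, and using the identity $\langle \phi \circ K_\tau, \phi \circ K_u\rangle = C(u-\tau)$ that follows from $K_\tau$-invariance of $\mu$, we obtain the modulus-of-continuity bound
\[
|C(u) - C(u-\tau)| \leq C_\tau \, u^{-1/3},
\]
with $C_\tau$ growing at most polynomially in $\tau$. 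This polynomial growth must be controlled using the derivative estimates on $\Phi$ and $\Phi'$ away from the singularity, together with the Diophantine condition $q_{n+1} \leq C_\alpha q_n (\log q_n)^{1+\xi}$.

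Integrating the previous inequality in $\tau$ over $[0,T]$ and using
\[
\int_0^T C(u-\tau) \diff \tau = \int_{u-T}^u C(v) \diff v = \langle \phi, G(T) \circ K_{u-T}\rangle,
\]
yields a self-referential estimate of the shape
\[
T\, |C(u)| \leq \|\phi\|_2\, \|G(T)\|_2 + C\, T^{1+\sigma} \|\phi\|_{\mathscr{C}^2}^2\, u^{-1/3}
\]
for some $\sigma \geq 0$. Plugging this back into the Fubini expression for $\|G(t)\|_2^2$ and choosing $T = t^\theta$ for a suitable $\theta \in (0,1)$ produces a self-improving recursion: seeded by the trivial bound $\|G(T)\|_2 \leq T\|\phi\|_\infty$ (or, equivalently, by the qualitative mean ergodic theorem), this recursion converges after finitely many iterations to the sought polynomial estimate $\|G(t)\|_2 \leq C\, \|\phi\|_{\mathscr{C}^1}\, t^{1-\eta_1}$ for some explicit $\eta_1 > 0$.

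\textbf{Main obstacle.} The core technical difficulty is that \Cref{thm:FFK} provides polynomial decay of correlations only when at least one of the two test functions is a smooth coboundary, while the quantity we must estimate --- the autocorrelation $\langle \phi, \phi \circ K_u\rangle$ --- has no such structure. The workaround (replacing $\phi$ by $\phi \circ K_\tau - \phi$ and closing the loop via a bootstrap) succeeds only insofar as the $\mathscr{C}^2$-distortion of the Kochergin flow near $0$ can be controlled polynomially in $\tau$; verifying this, and then ensuring that the resulting recursion actually converges to a positive exponent $\eta_1$, is the most delicate part of the argument.
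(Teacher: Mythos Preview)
Your bootstrap does not get off the ground. With the trivial seed $\|G(T)\|_2\leq T\|\phi\|_\infty$, the self-referential inequality you derive,
\[
|C(u)|\;\leq\;\frac{\|\phi\|_2\,\|G(T)\|_2}{T}\;+\;C\,T^{\sigma}\,u^{-1/3},
\]
has a first term that is a \emph{constant} in $u$. Plugging this into $\|G(t)\|_2^2=2\int_0^t(t-u)C(u)\diff u$ and setting $T=t^\theta$ returns a bound of order $t^2$, i.e.\ exactly the trivial estimate again; the recursion is a fixed point at exponent $\alpha=0$ and never improves. Replacing the trivial seed by the qualitative mean ergodic theorem does not help either: it gives $\|G(T)\|_2/T\to 0$ with no rate, so after one pass you only reprove $\|G(t)\|_2=o(t)$, not a polynomial bound. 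In short, the scheme needs a positive initial exponent to move, and producing that exponent is precisely the content of the lemma. (A secondary issue, which you flag but do not resolve, is the growth of $\|\phi\circ K_\tau\|_{\mathscr{C}^2}$: for Kochergin flows with $\gamma$ close to $1$ the transverse derivative of $K_\tau$ is governed by $S_{N(\cdot,\tau)}(\Phi')$, which is typically of order $\tau^{1+\gamma}$, so $\sigma$ is large; but even $\sigma=0$ would not rescue the recursion above.)

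The paper's proof is completely different and avoids \Cref{thm:FFK} altogether. It exploits the special-flow structure: writing $\int_0^t\phi\circ K_r(\bar x)\diff r=\sum_{i=0}^{N(x,w,t)-1}\psi(x+i\alpha)+O(1)$ with $\psi(x)=\int_0^{f(x)}\phi\circ K_r(x,0)\diff r$, one notes that $\psi\in\mathscr{C}^1(\T)$ because $\phi$ has compact support. The Diophantine condition on $\alpha$ then gives a Denjoy--Koksma bound $|\sum_{i<N}\psi(x+i\alpha)-N\mu(\phi)|=O(t^\epsilon)$, and the remaining work is a pointwise estimate $|N(x,w,t)-t|\ll t^{1-\eta}$ off a set of polynomially small measure, obtained from the Birkhoff-sum control of the roof function (ES4 in \Cref{lem:ergs}). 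This yields a pointwise bound on a large set, hence the $L^2$ estimate directly---no recursion and no appeal to correlation decay. The role of \Cref{thm:FFK} in the paper is only in the \emph{subsequent} proposition (upgrading the $L^2$ lemma to full polynomial mixing), which is why your attempt to reverse the logical order runs into circularity.
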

\begin{proof} Let $f$ denote the roof function, which, for simplicity, we assume satisfies $\Leb(f)=1$. The proof then follows from the considerations in \cite{DFK}. First, using the decomposition (4.5) in \cite{DFK} for  $\bar{x}=(x,w)\in \T^2$ and the fact the $\phi$ has compact support, 
$$
 \int_{0}^{t} \phi \circ K_r(\bar{x}) \diff r=\sum_{i=0}^{N(x,w,t)-1}\psi(x+i\alpha)+ {\rm O}(1), 
$$ 
where $\psi(x)=\int_0^{f(x)} \phi \circ K_r(x,0) \diff r$. Moreover, since $\phi$ has compact support, $\psi\in \mathscr{C}^1(\T)$. In particular, by the diophantine assumption on $\alpha$, $|\sum_{i=0}^{N(x,w,t)-1}\psi(x+i\alpha)- N(x,w,t)\Leb(\psi)|={\rm O}(t^{\epsilon})$. Notice that, by Fubini, $\Leb(\psi)=\mu(\phi)$. By \cite[Lemma 8]{DFK}, it follows that $|S_{N(x,w,t)}(x)-N(x,w,t)|\ll f(x_{min,N(x,w,t)})+{\rm O}(N^\gamma\log^5 N)$. This implies that 
\[
|N(x,w,t)-t|<f(x+N(x,w,t)\alpha)+w+ f(x_{min,N(x,w,t)})+{\rm O}(N^\gamma\log^5 N).
\]
Consider only the points $x$ such that  $x+i\alpha\cap [-\frac{1}{t^{1+\delta}},\frac{1}{t^{1+\delta}}]=\emptyset$ for $i\leq (\min f)^{-1}t$. Then the right hand side of the last equation is $\ll t^{1-\eta}$ (for $\delta$ sufficiently small). It remains to notice that the set of $x$ which  do not satisfy this has measure $\ll t^{-\delta}$. This finishes the proof.
\end{proof}

We can now show that the Kochergin flows we consider are polynomially mixing.

\begin{proposition}
	Under the assumptions of Theorem \ref{thm:FFK}, there exist $C_2,\eta_2>0$ such that, for any $\phi_0, \phi_1 \in \mathscr{C}_c^2(M)$ and for any $t\geq 1$, we have
	\[
	|\langle \phi_0, \phi_1 \circ K_t\rangle - \mu(\phi_0)\mu(\phi_1)|<C_2 \, \|\phi_0\|_{\mathscr{C}^2} \, \|\phi_1 \|_{\mathscr{C}^2} \, t^{-\eta_2}.
	\]
\end{proposition}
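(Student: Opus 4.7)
The plan is to bootstrap \Cref{thm:FFK}, which gives polynomial decay of correlations only when $\phi_0$ is a smooth coboundary, to arbitrary test functions in $\mathscr{C}^2_c(M)$, using \Cref{lem:L2_decay_koch} as the complementary ingredient that controls the non-coboundary part. Without loss of generality we may assume $\mu(\phi_0) = 0$. Fix a scale parameter $T \geq 1$, to be optimized later, let $\mathcal{L}$ denote the generator of the Kochergin flow, and define the primitive
$$G_T := -\frac{1}{T}\int_0^T (T-s)\, \phi_0 \circ K_s \, ds.$$
An integration by parts shows that $\mathcal{L} G_T = \phi_0 - A_T\phi_0$, where $A_T\phi_0 := \frac{1}{T}\int_0^T \phi_0 \circ K_s \, ds$ is the ergodic average. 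This gives the decomposition $\phi_0 = \mathcal{L}G_T + A_T\phi_0$ as a smooth, mean-zero coboundary plus a remainder which is small in $L^2$.

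Correspondingly we split $\langle \phi_0, \phi_1 \circ K_t\rangle = \langle \mathcal{L}G_T, \phi_1 \circ K_t\rangle + \langle A_T\phi_0, \phi_1 \circ K_t\rangle$. For the first summand, \Cref{thm:FFK} applied to the coboundary $\mathcal{L}G_T$ yields
$$\left| \langle \mathcal{L}G_T, \phi_1 \circ K_t\rangle \right| \leq C_0\, \|\mathcal{L}G_T\|_{\mathscr{C}^2}\, \|\phi_1\|_{\mathscr{C}^2}\, t^{-1/3},$$
and $\|\mathcal{L}G_T\|_{\mathscr{C}^2} = \|\phi_0 - A_T\phi_0\|_{\mathscr{C}^2} \leq \|\phi_0\|_{\mathscr{C}^2} + \|A_T\phi_0\|_{\mathscr{C}^2}$. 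For the second summand, Cauchy--Schwarz combined with \Cref{lem:L2_decay_koch} (applied to $\phi_0$, which has mean zero) gives
$$\left| \langle A_T\phi_0, \phi_1 \circ K_t\rangle \right| \leq \|A_T\phi_0\|_2 \cdot \|\phi_1\|_2 \leq C_1\, \|\phi_0\|_{\mathscr{C}^1}\, T^{-\eta_1}\, \|\phi_1\|_{\mathscr{C}^2}.$$

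The main obstacle is to bound $\|A_T\phi_0\|_{\mathscr{C}^2}$ polynomially in $T$, which reduces to controlling $\|\phi_0 \circ K_s\|_{\mathscr{C}^2}$ for $s \in [0,T]$. Since $\phi_0$ has compact support in $M = \T^2 \setminus \{0\}$, on the support of $\phi_0 \circ K_s$ the derivatives of the flow are expressible in terms of Birkhoff sums of $\Phi'$ and $\Phi''$ along orbits that remain in a fixed compact subset of $M$. The estimates ES1--ES3 of \Cref{lem:ergs}, together with the diophantine condition on $\alpha$, yield a polynomial bound $\|\phi_0 \circ K_s\|_{\mathscr{C}^2} \leq C\, \|\phi_0\|_{\mathscr{C}^2}\, (1+|s|)^c$ for some explicit $c = c(\gamma) > 0$, and averaging gives $\|A_T\phi_0\|_{\mathscr{C}^2} \leq C\, \|\phi_0\|_{\mathscr{C}^2}\, T^c$. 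Verifying this polynomial growth is the one essentially routine but technically delicate point; the fact that $\phi_0$ vanishes near the singularity is crucial, as is the fact that orbit points in the support of $\phi_0 \circ K_s$ stay away from $0$.

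Combining the two bounds we obtain
$$\left| \langle \phi_0, \phi_1 \circ K_t\rangle \right| \leq C\, \|\phi_0\|_{\mathscr{C}^2}\, \|\phi_1\|_{\mathscr{C}^2}\, \bigl( T^c\, t^{-1/3} + T^{-\eta_1}\bigr),$$
and optimizing by choosing $T = t^{1/(3(c+\eta_1))}$ produces the desired polynomial rate with $\eta_2 = \eta_1/(3(c+\eta_1)) > 0$. This completes the proof of the proposition and, combined with \Cref{thm:main_abstract_result_quantitative} and the QLUS property established in the previous subsection, completes the proof of \Cref{thm:koch2}.
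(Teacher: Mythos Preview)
Your overall strategy---decompose $\phi_0$ as a coboundary $\mathcal{L}G_T$ plus an $L^2$-small ergodic average $A_T\phi_0$, then feed these into \Cref{thm:FFK} and \Cref{lem:L2_decay_koch} respectively---is natural, but there is a genuine gap at the step where you bound $\|A_T\phi_0\|_{\mathscr{C}^2}$ polynomially in $T$. Your justification, that the relevant Birkhoff sums of $\Phi',\Phi''$ are taken ``along orbits that remain in a fixed compact subset of $M$'', is false: for $x\in\operatorname{supp}(\phi_0\circ K_s)=K_{-s}(\operatorname{supp}\phi_0)$ only the \emph{endpoint} $K_s(x)$ lies in $\operatorname{supp}\phi_0$; the orbit segment from $x$ to $K_s(x)$ can (and typically will) pass within distance of order $s^{-1/\gamma}$ of the singularity. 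The terms $\Phi'(x_{\min})$, $\Phi''(x_{\min})$ appearing in ES1--ES3 are then of size $s^{(1+\gamma)/\gamma}$ and $s^{(2+\gamma)/\gamma}$, not bounded. One might still hope to extract a polynomial bound $\|\phi_0\circ K_s\|_{\mathscr{C}^2}\le C\|\phi_0\|_{\mathscr{C}^2}\,s^c$ from the time constraint $S_{N(x,s)}\Phi(x)\le s+O(1)$ combined with the Denjoy--Koksma estimates, but this is precisely a statement that the flow is ``polynomially parabolic'' and it requires a real argument; a priori Gronwall gives only exponential growth of $DK_s$. You have not supplied this argument, and the compact support of $\phi_0$ alone does not give it.

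The paper's proof circumvents this difficulty entirely by a different and cleaner mechanism. It argues by contradiction: setting $P(t)=\langle\phi_0,\phi_1\circ K_t-\mu(\phi_1)\rangle$, the time derivative $P'(t)=-\langle U\phi_0,\phi_1\circ K_t\rangle$ already features the coboundary $U\phi_0$, whose $\mathscr{C}^2$ norm is bounded by a \emph{fixed} higher norm of $\phi_0$ with no time parameter entering. Hence \Cref{thm:FFK} gives $|P'(t)|\lesssim t^{-1/3}$ directly, so if $P(T)$ were large then $P$ would stay large on the window $[T,T+T^{1/4}]$; but the average of $P$ over that window equals $\langle\phi_0\circ K_{-T},\,T^{-1/4}\int_0^{T^{1/4}}\phi_1\circ K_t\,dt-\mu(\phi_1)\rangle$, which is small by Cauchy--Schwarz and \Cref{lem:L2_decay_koch}. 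The key advantage over your decomposition is that differentiating the correlation in $t$ produces the needed coboundary for free, with norm independent of any auxiliary scale, so no growth estimate on $\|\phi_0\circ K_s\|_{\mathscr{C}^2}$ is ever required.
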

\begin{proof}
	We proceed by contradiction: let us fix $C_2 = 4\max\{C_0, C_1\}$ and $\eta_2 = \min\{1/12, \eta_1/8\}$; then, there exist $\phi_0, \phi_1 \in \mathscr{C}_c^2(M)$ and $T\geq 1$ so that 
	\[
	|\langle \phi_0, \phi_1 \circ K_T \rangle - \mu(\phi_0)\mu(\phi_1)| \geq C_2 \, \|\phi_0\|_{\mathscr{C}^2} \, \|\phi_1 \|_{\mathscr{C}^2} \, T^{-\eta_2}.
	\]
	Up to replacing $\phi_{0}$ with $-\phi_{0}$, we can assume that $\langle \phi_0, \phi_1 \circ K_T \rangle - \mu(\phi_0)\mu(\phi_1)$ is positive. We define $P(t) : = \langle \phi_0, \phi_1 \circ K_t - \mu(\phi_1) \rangle$ and we notice that, since the flow is smooth, the function $P(t)$ is differentiable and $P'(t) = \langle \phi_{0}, U\phi_1 \circ K_t\rangle = -\langle U\phi_{0}, \phi_1 \circ K_t\rangle$, where we denoted by $U$ the derivative in the flow direction. By \Cref{thm:FFK} and our choices of $C_2$ and $\eta_2$, for all $t\in [T,T+T^{1/4}]$, we deduce
	\[
	P(t) \geq P(T) - T^{1/4} \sup_{\theta \in [T,T+T^{1/4}]}|P'(\theta)| \geq \frac{C_2}{2} \, \|\phi_0\|_{\mathscr{C}^2} \, \|\phi_1 \|_{\mathscr{C}^2} T^{-\eta_2}.
	\]
	Therefore, 
	\begin{multline*}
	\frac{C_2}{2} \, \|\phi_0\|_{\mathscr{C}^2} \, \|\phi_1 \|_{\mathscr{C}^2} T^{-\eta_2} \leq \frac{1}{T^{1/4}} \int_{0}^{T^{1/4}} P(T+t)\diff t \\ 
	= \Big\langle \phi_0 \circ K_{-T}, \frac{1}{T^{1/4}} \int_{0}^{T^{1/4}} \phi_1 \circ K_t \diff t - \mu(\phi_1) \Big\rangle \leq \|\phi_{0}\|_2 \cdot \left\| \frac{1}{T^{1/4}} \int_{0}^{T^{1/4}} \phi_1 \circ K_t \diff t- \mu(\phi_1) \right\|_2. 
	\end{multline*}
By \Cref{lem:L2_decay_koch}, the right hand side above is bounded by $C_1 \|\phi_{0}\|_2 \|\phi_1 \|_{\mathscr{C}^1} T^{-\eta_1/4}$. Since $C_1 \leq C_2/4$, we then obtain the inequality $T^{-\eta_2} \leq T^{-\eta_1/4}/2$, which contradicts $T\geq 1$.
\end{proof}

\appendix

\section{Denjoy-Koksma type inequalities and a combinatorial lemma}\label{sec:appendix_a}
Let $x_{min}^N=\min_{0\leq j<N}\|x+j\alpha\|$. Let $\kappa(t)$ be a monotone positive function that goes to $0$ as $t\to \infty$. In what follows $\kappa(\cdot)$ depends on $\alpha$ and $\Phi$ only.
\begin{lemma}\label{lem:ergs} The following estimates on ergodic sums hold:
\begin{enumerate}
\item[ES0] For $N$ sufficiently large, $S_N(x)\geq (1-\kappa(N))N$. Moreover if $\alpha$ satisfies  satisfies $q_{n+1}\leq Cq_n(\log q_n)^2$, then $N(x,t)<t+Ct^\gamma.$
\item[ES1] for every $x\in \T$ and every $N\in [q_n,q_{n+1}]$,
$$
\frac{1}{8} q_n^{2+\gamma}- Cq_n<S_N(\Phi'')(x)<8q_{n+1}^{2+\gamma}+ \Phi''(x_{min}^{q_{n+1}}).
$$
\item[ES2] for every $x\in \T$ and every $n\in \N$,
$$
\frac{1}{8} q_n^{2+\gamma}<|S_{q_n}(\Phi'')(x)-\Phi''(x_{min}^{q_{n}})|<8q_{n}^{2+\gamma}.
$$
\item[ES3] Let $|N|\in [q_n,q_{n+1}]$,. Then for every $x\in \T$,
$$
|S_N(\Phi')(x)|< C\left(q_{n+1}^{1+\gamma}+ \Phi'(x^{q_{n+1}}_{min})\right)
$$
Moreover if $N\leq \varepsilon^{200}q_{n+1}$ and $x\notin \Big(\bigcup_{i=-q_{n}}^{q_{n}-1}[-\frac{\varepsilon^{100}}{q_n},\frac{\varepsilon^{100}}{q_n}]\Big)$, then 
$$
|S_N(\Phi')(x)|< \varepsilon^{-200} Nq_n^\gamma.
$$
\item[ES4] for every $n\in \N$ and every $x\in X$, $|S_{q_n}(\Phi)(x)-q_n|< \Phi(x_{min}^{q_n})+Cq_n^{\gamma}$. Moreover if $\alpha$ satisfies $q_{n+1}\leq Cq_n(\log q_n)^2$, then for $M\in [q_n,q_{n+1}]$.
$$|S_{M}(\Phi)(x)-M|< \Big(\Phi(x_{min}^{M})+CM^\gamma\Big)\log ^2M.$$
\end{enumerate}
\end{lemma}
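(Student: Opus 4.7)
\section*{Proof proposal for Lemma \ref{lem:ergs}}

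The cornerstone is ES2, from which ES1, ES3, ES4 (and hence ES0) will follow by block decomposition and summation. The key structural fact is the Denjoy separation: for any $x\in\T$ the $q_n$ points $\{x+j\alpha\}_{j=0}^{q_n-1}$ have pairwise distances $\geq \|q_{n-1}\alpha\|\sim 1/q_n$, so exactly one of them, call it $x_{\min}^{q_n}$, lies at distance $<\|q_{n-1}\alpha\|$ from $0$, and the remaining $q_n-1$ points can be indexed $y_1,\ldots,y_{q_n-1}$ with $c k/q_n\leq |y_k|\leq C k/q_n$. To prove ES2, I split
\[
S_{q_n}(\Phi'')(x)=\Phi''(x_{\min}^{q_n})+\sum_{k=1}^{q_n-1}\Phi''(y_k),
\]
and apply the asymptotics $\Phi''(y)\sim N_{\pm}|y|^{-2-\gamma}$ (together with the fact that $\Phi''$ is bounded on any compact set away from $0$) to bound $\sum_k \Phi''(y_k)$ above and below by Riemann sums against $\int_{1/q_n}^{1}|y|^{-2-\gamma}\diff y\asymp q_n^{1+\gamma}$. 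Multiplying by $q_n$ gives the two-sided bound of order $q_n^{2+\gamma}$ in ES2. Note that since $\Phi''>0$ near $0$, the closest-approach term $\Phi''(x_{\min}^{q_n})$ only adds to the upper bound, matching the form of ES2.

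For ES1, decompose any $N\in[q_n,q_{n+1}]$ into $\lfloor N/q_n\rfloor\leq q_{n+1}/q_n$ consecutive blocks of length $q_n$ (plus a remainder of length $<q_n$), apply ES2 to each block using base point $R_\alpha^{\ell q_n}x$, and observe that at most one of the returns $x_{\min}^{q_n}$ taken over these blocks is closer to $0$ than $\|q_n\alpha\|$; all such \lq\lq closest\rq\rq\ terms except one can therefore be absorbed into the $q_n^{2+\gamma}$ Riemann-sum contribution of their own block. Summing yields the upper bound $8q_{n+1}^{2+\gamma}+\Phi''(x_{\min}^{q_{n+1}})$, while the lower bound $\frac{1}{8}q_n^{2+\gamma}-Cq_n$ follows from the contribution of the very first block (the $-Cq_n$ accounts for the bounded part of $\Phi''$ away from $0$).

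ES3 is handled by the same block decomposition applied to $\Phi'(y)\sim \mp \frac{N_\pm}{1+\gamma}\mathrm{sgn}(y)|y|^{-1-\gamma}$: integrating gives $\sum_{k=1}^{q_n-1}|\Phi'(y_k)|\ll q_n\cdot q_n^{\gamma}=q_n^{1+\gamma}$, so $|S_{q_n}\Phi'(x)|\ll q_n^{1+\gamma}+|\Phi'(x_{\min}^{q_n})|$. Summing $N/q_n\leq q_{n+1}/q_n$ blocks of this estimate and again absorbing all but one closest-approach term gives the first bound in ES3. The refined bound assumes $R_\alpha^{i}x\notin [-\varepsilon^{100}/q_n,\varepsilon^{100}/q_n]$ for all $|i|\leq q_n$, so that over any block of length $q_n$ the minimum distance to $0$ exceeds $\varepsilon^{100}/q_n$; then $|\Phi'|$ on the block is at most $(\varepsilon^{100}/q_n)^{-1-\gamma}$, and summing the Riemann-sum bound over $N/q_n$ blocks produces $\ll \varepsilon^{-200}\cdot(N/q_n)\cdot q_n^{1+\gamma}=\varepsilon^{-200}Nq_n^{\gamma}$.

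Finally, ES4 follows from the Denjoy–Koksma inequality for BV functions: restricted to $\T\setminus[-x_{\min}^{q_n}/2,x_{\min}^{q_n}/2]$ the function $\Phi$ has total variation $\ll\Phi(x_{\min}^{q_n})+Cq_n^{\gamma}$ (using $\Phi(y)\sim|y|^{-\gamma}$), so one application of DK gives $|S_{q_n}\Phi(x)-q_n|\ll \Phi(x_{\min}^{q_n})+Cq_n^{\gamma}$. For general $M\in[q_n,q_{n+1}]$, I would write $M$ in its Ostrowski expansion with digits $a_j\leq q_{j+1}/q_j$, apply the $q_n$-bound to each piece, and absorb the at most $\log q_{n+1}$ summands of geometric type together with the single dominating $\Phi(x_{\min}^M)$ term; under the hypothesis $q_{n+1}\leq Cq_n\log^2 q_n$, the number of such pieces is $\ll\log^2 M$, explaining the $\log^2 M$ factor. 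ES0 is then an immediate corollary: $S_N\Phi\geq N-(\Phi(x_{\min}^N)+CN^{\gamma})\log^2 N$, and for a full-measure set of $x$ the closest approach satisfies $x_{\min}^N\gg N^{-1-\epsilon}$ for any $\epsilon>0$, giving $S_N\Phi\geq (1-\kappa(N))N$ with $\kappa(N)\to 0$; the bound $N(x,t)<t+Ct^{\gamma}$ then follows by inverting the relation $S_{N(x,t)}\Phi(x)\in[t-\Phi(\cdot),t]$ using ES4. The main technical nuisance will be the bookkeeping of the \lq\lq closest-approach\rq\rq\ terms across different blocks, which requires verifying that only one of them can actually be extremely close to $0$.
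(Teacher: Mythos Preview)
Your proposal is correct and follows essentially the same route as the paper: Denjoy--Koksma/Riemann-sum estimates at the scale $q_n$ (what the paper delegates to \cite[Lemma~3.1]{FFK}), block decomposition into pieces of length $q_n$ for ES1 and ES3, and Ostrowski expansion for ES4 and ES0. The only substantive difference is that for the refined bound in ES3 the paper splits $\Phi'$ into its two monotone branches before applying Denjoy--Koksma, whereas you bound $|\Phi'|$ directly via the Riemann sum; both work.

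Two small points to tighten. First, in the refined ES3 you need the avoidance condition not just for the initial block but for every shifted block starting at $R_\alpha^{\ell' q_n}x$; this follows because $\ell'\|q_n\alpha\|\leq (N/q_n)\cdot q_{n+1}^{-1}\leq \varepsilon^{200}$, so the base point moves by less than $\varepsilon^{100}/(2q_n)$ and the avoidance persists with a halved radius --- you should say this explicitly. Second, your derivation of ES0 goes through the second part of ES4 (which needs the Diophantine hypothesis) and then restricts to a full-measure set of $x$, but ES0 is stated for \emph{every} $x$ and \emph{every} irrational $\alpha$. The clean fix is truncation: for $\Phi_M:=\min(\Phi,M)$ one has $S_N\Phi\geq S_N\Phi_M$ and, by unique ergodicity of $R_\alpha$ applied to the bounded function $\Phi_M$, $S_N\Phi_M(x)/N\to\int\Phi_M$ uniformly in $x$; letting $M\to\infty$ gives the uniform lower bound $(1-\kappa(N))N$. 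The paper's own Ostrowski argument for ES0 amounts to the same thing once one observes $S_{q_j}\Phi(x)\geq q_j-Cq_j^{\gamma}$ for every $x$.
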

\begin{proof} Most of these properties are standard and follow from truncating the function close to the singularity and the using the Denjoy-Koksma inequality. ES1, ES2, first part of ES3 and first part of ES4 follow from Lemma 3.1 in \cite{FFK}. 
For the second part of ES3 if $N<q_n$, then we just use the first part of ES3 (since then $N\in [q_k,q_{k+1}]$ with $k<n$). Let $\ell$ be the smallest with $N<\ell q_n$. By the assumption $N\leq \varepsilon^{200}q_{n+1}$ it follows that $\ell\leq 2\varepsilon^{200}\frac{q_{n+1}}{q_n}$. By splitting $\Phi'$ into two monotone branches $\Phi'_1$ and $\Phi'_2$, we can bound $|S_N(\Phi')(x)|\leq |S_{\ell q_n}(\Phi_1')(x)|+|S_{\ell q_n}(\Phi_2')(x)|$. Since $x\notin \Big(\bigcup_{i=-q_{\ell}}^{q_{\ell}-1}[-\frac{\varepsilon^{100}}{q_\ell},\frac{\varepsilon^{100}}{q_\ell}]\Big)$, it follows that for every $\ell'\leq \ell$, $R_\alpha^{\ell'q_n}x\notin \Big(\bigcup_{i=-q_{\ell}}^{q_{\ell}-1}[-\frac{\varepsilon^{100}}{2q_\ell},\frac{\varepsilon^{100}}{2q_\ell}]\Big)$. So using this and splitting the above sums into sums of length $q_n$, we get by Lemma 3.1. in \cite{FFK} that
$$
S_{\ell q_n}(\Phi_i')(x)|\leq \ell q_n^{1+\gamma} +\ell \varepsilon^{-200+\delta} q_n^{1+\gamma}.
$$
Using the fact that $(\ell-1)q_n<N$ the second part of ES3 follows.\\
Second part of ES4 follows from using the Ostrovski expansion, i.e. writing $M=\sum_{j\leq K(M)} b_jq_j$, with $b_j\leq\frac{q_{j+1}}{q_j}$. Then we split the ergodic sums into pieces of length $q_j$ use the first part of ES4 and the diophantine assumption on $\alpha$. 

For the first part of ES0. we use Ostrovski expansion to write $N=\sum b_jq_j$, we then use the fact the $S_{q_n}(x)\geq q_n+\Phi(x_{min}^{q_n})-Cq_n^{\gamma}>q_n-Cq_n^{\gamma}$. The statement follows by splitting the ergodic sums into pieces of length $q_j$. Moreover note that if $\alpha$ satisfies $q_{n+1}\leq Cq_n(\log q_n)^2$, then using the above expansion, in fact $S_N(\Phi)(x)\geq N-CN^{\gamma}$. This implies that $t\geq S_{N(x,t)}(\Phi)(x)\geq N(x,t)-C'N(x,t)^\gamma$, which means that $N(x,t)\leq t+Ct^\gamma$.  This finishes the proof.
\end{proof}

A disjoint collection of sets $\{P_i\}\subset \T$ will be called a partial partition. In the lemma below we assume that all the atoms of the relevant partitions are intervals.
\begin{lemma}\label{lem:par} Let $\cP=\{P_i\}$ and $\cQ=\{Q_j\}$ be finite partial partitions of $\T$ into intervals with $\Leb(\bigcup_i P_i), \Leb(\bigcup_j Q_j)>1-\delta$. Consider  $\cR=\{R_i\}$ to be the common refinement of $\cP$ and $\cQ$ consisting only of atoms $P_i\cap Q_j$ for which $\Leb(P_i\cap Q_j)>\varepsilon_0\min (\Leb(P_i), \Leb(Q_j))$ . Then $\Leb(\bigcup_{i} R_i)>(1-4\sqrt{\delta})(1-3\varepsilon_0)$.
\end{lemma}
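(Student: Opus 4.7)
The proof is a two-step Markov-type pruning argument that crucially exploits the fact that both $\cP$ and $\cQ$ consist of \emph{intervals}.

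\textbf{Step 1 (good parents).} Call an atom $P_i \in \cP$ \emph{good} if $\Leb\bigl(P_i \setminus \bigcup_j Q_j\bigr) \leq \sqrt{\delta}\,\Leb(P_i)$. Since $\sum_i \Leb\bigl(P_i \setminus \bigcup_j Q_j\bigr) \leq \Leb\bigl(\T \setminus \bigcup_j Q_j\bigr) \leq \delta$, a Markov-type estimate shows that the bad $P_i$ have total measure at most $\sqrt{\delta}$. Hence the good $P_i$ cover at least $(1-\delta)-\sqrt{\delta}\geq 1-2\sqrt{\delta}$.

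\textbf{Step 2 (local pruning inside a good $P_i$).} Here I would use the interval structure. If $Q_j$ is an interval that meets the interval $P_i$, then either $Q_j \subseteq P_i$, or $Q_j \supseteq P_i$, or $Q_j$ contains exactly one endpoint of $P_i$; the last case occurs for at most two values of $j$. For the first case, $P_i\cap Q_j = Q_j$ and $\min(\Leb P_i,\Leb Q_j)=\Leb Q_j$, so the atom is automatically retained (since $\varepsilon_0<1$). The second case is similar (with $\min=\Leb P_i$ and $\Leb(P_i\cap Q_j)=\Leb P_i$). Only the at most two endpoint $Q_j$ can be discarded, and in that case $\Leb(P_i\cap Q_j)\leq \varepsilon_0\Leb(P_i)$. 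Combining this with the at most $\sqrt{\delta}\,\Leb(P_i)$ mass that falls outside $\bigcup_j Q_j$, we obtain for every good $P_i$
\[
\Leb\Bigl(P_i\cap \bigcup_R R\Bigr) \geq \bigl(1-2\varepsilon_0-\sqrt{\delta}\bigr)\Leb(P_i).
\]

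\textbf{Step 3 (summation).} Summing over good $P_i$ yields
\[
\Leb\Bigl(\bigcup_R R\Bigr) \geq \bigl(1-2\varepsilon_0-\sqrt{\delta}\bigr)\bigl(1-2\sqrt{\delta}\bigr),
\]
and an elementary computation shows that the right-hand side is at least $(1-3\varepsilon_0)(1-4\sqrt{\delta})$ for $\varepsilon_0,\delta\in(0,1)$, which is the claimed bound.

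The only subtle point — and really the whole reason the lemma is true with a constant independent of the number of atoms — is the observation in Step 2 that the interval structure forces all but at most two of the $Q_j$'s meeting a given $P_i$ to be \emph{nested} with $P_i$, and for nested intersections the threshold condition is automatic. Without the interval hypothesis one could in principle have arbitrarily many small chipped-off pieces, and no such universal bound would hold.
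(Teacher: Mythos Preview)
Your proof is correct and follows essentially the same strategy as the paper's: a Markov-type selection of ``good'' $P_i$ (those well covered by $\bigcup_j Q_j$), followed by the key interval observation that at most two $Q_j$ can overlap $P_i$ without being nested, while nested intersections automatically survive the $\varepsilon_0$-threshold. Your presentation is in fact slightly cleaner than the paper's on the nested case (the paper asserts $\Leb(P_i\cap Q_j)\geq \varepsilon_0\Leb(P_i)$ for all surviving $j$, which is false for small $Q_j\subset P_i$, though the conclusion is saved by the $\min$ in the threshold, exactly as you note). The only minor slip is your claim that $(1-2\varepsilon_0-\sqrt{\delta})(1-2\sqrt{\delta})\geq(1-3\varepsilon_0)(1-4\sqrt{\delta})$ holds for all $\varepsilon_0,\delta\in(0,1)$; the difference equals $\sqrt{\delta}+\varepsilon_0-8\varepsilon_0\sqrt{\delta}+2\delta$, which can be negative for large parameters, but it is positive in the only relevant regime $\varepsilon_0<1/3$, $\sqrt{\delta}<1/4$ where the stated bound is nontrivial.
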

\begin{proof} For $\{P_i\}_{i\in I}$ let $I_1$ be the set of indices for which $\Leb(\bigcup_j Q_j\cap P_i)\geq (1-2\sqrt{\delta}) \Leb(P_i)$ for $i\in I_1$.  Notice that
$1-2\delta\leq \Leb(\bigcup_j Q_j\cap \bigcup_iP_i)=\sum_{i\in I_1} \Leb(\bigcup_j Q_j\cap P_i)+\sum_{i\in I\setminus I_1}\Leb(\bigcup_j Q_j\cap P_i)\leq \sum_{i\in I_1}\Leb(P_i)+(1-2\sqrt{\delta}) \sum_{i\in I \setminus I_1}\Leb(P_i)=  2\sqrt{\delta} \sum_{i\in I_1}\Leb(P_i)+1-2\sqrt{\delta}$, which implies that 
$\sum_{i\in I_1}\Leb(P_i)\geq 1-\sqrt{\delta}$.  For a fixed $i\in I_1$, we can divide the $\{Q_j\}$ that intersect $P_i$ into two groups: those $j$ for which $Q_j\subset P_i$  and then trivially $\Leb(P_i\cap Q_j)\geq \Leb(Q_j)$ and those for which $Q_j\cap P_i\neq \emptyset$ but $Q_j$ is not a subset of $P_i$. Since the $\{Q_j\}$ and $\{P_i\}$ are intervals it follows that there are at most $2$ $j's$ which satisfy the latter. Call them $j_1^i=j_1$ and $j_2^i=j_2$. Take $L_i\subset \{ j_1,j_2\}$ be such that  for which $\Leb(P_i\cap Q_{\ell})<\varepsilon_0 \Leb(P_i)$ for $\ell\in L_i$. This set might be $\empty$ or a one point set or a two element set. Then $\Leb(\bigcup_{j\notin L_i} Q_j\cap P_i)\geq (1-2\sqrt{\delta}-2\varepsilon_0)\Leb(P_i)$. Every $Q_j$ with $j\notin L_i$ that is  intersecting $P_i$, in fact satisfies $\Leb(P_i\cap Q_{\ell})\geq \varepsilon_0 \Leb(P_i)$. So if $R_{ij}=P_i\cap Q_j$ with $j\neq L_i$, then $\mu(\bigcup_{i,j}R_{ij})\geq  (1-2\sqrt{\delta}-2\varepsilon_0) (1-2\sqrt{\delta})\geq (1-4\sqrt{\delta})(1-3\varepsilon_0).$ Summing over these $\{P_i\}$ we get the result.
\end{proof}

\section{The Van der Corput Inequality and other technical lemmas}\label{sec:vdc}

We recall a version of the Van der Corput's Inequality that we used in the proof of \Cref{prop:park_implies_park+1}.

\begin{lemma}[Van der Corput Inequality]\label{lem:vdc} 
	Let $N \in \N$, and let $(\phi_n)_{n =1}^N$ be a sequence of vectors in a (real) Hilbert space $H$. Assume that there exists $A >0$ such that $\|\phi_n\|\leq A$ for every $n\in \{1,\dots, N\}$.
Then, for every $1 \leq L \leq K \leq N$ for which $K+L \leq N$, we have 
\begin{equation}\label{eq:vdc}
	\left\|\frac{1}{K}\sum_{n=1}^K \phi_n \right\| \leq  \left[ \frac{2}{K} \sum_{n=1}^K \left(\frac{1}{L}\sum_{l=0}^{L-1} |\langle \phi_n, \phi_{n+l}\rangle| \right) \right]^{1/2}+ 4 A \left(\frac{L}{K}\right)^{1/2}.
\end{equation}
\end{lemma}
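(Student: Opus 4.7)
The plan is to use a standard averaging/shift trick, followed by Cauchy--Schwarz and an expansion of the inner norm squared.

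First I would exploit the identity $L\sum_{n=1}^K \phi_n = \sum_{l=0}^{L-1}\sum_{m=l+1}^{K+l} \phi_m + E$, where the error $E$ collects the boundary terms that are present in one sum but not the other. A careful bookkeeping of how many times each $\phi_m$ appears on each side shows that the coefficient of $\phi_m$ on the right minus $L$ equals $-(L-m)$ for $1\leq m\leq L-1$, equals $0$ for $L\leq m\leq K$, and equals $-(m-K)$ for $K+1\leq m\leq K+L-1$. Hence, using $\|\phi_m\|\leq A$, one obtains $\|E\| \leq A L(L-1) \leq A L^2$. After changing the index to $n=m-l$ and dividing by $LK$, this yields
\[
\left\|\frac{1}{K}\sum_{n=1}^K \phi_n - \frac{1}{K}\sum_{n=1}^K \frac{1}{L}\sum_{l=0}^{L-1}\phi_{n+l}\right\| \leq \frac{AL}{K}.
\]

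Next, I would apply the Cauchy--Schwarz Inequality (equivalently, Jensen) to move the outer average inside the squared norm:
\[
\left\|\frac{1}{K}\sum_{n=1}^K \frac{1}{L}\sum_{l=0}^{L-1}\phi_{n+l}\right\|^2 \leq \frac{1}{K}\sum_{n=1}^K \left\|\frac{1}{L}\sum_{l=0}^{L-1}\phi_{n+l}\right\|^2.
\]
Expanding the inner square and using $|\langle \phi_{n+l}, \phi_{n+l'}\rangle| = |\langle \phi_{n+l'},\phi_{n+l}\rangle|$, I would split the sum over $l,l'\in\{0,\dots,L-1\}$ into the diagonal (contributing at most $A^2/L$ per $n$) and the off-diagonal, obtaining after reindexing $m=n+\min(l,l')$ and bounding the multiplicities that
\[
\frac{1}{K}\sum_{n=1}^K \left\|\frac{1}{L}\sum_{l=0}^{L-1}\phi_{n+l}\right\|^2 \leq \frac{A^2}{L} + \frac{2}{LK}\sum_{n=1}^{K+L-1}\sum_{l=0}^{L-1}|\langle \phi_n, \phi_{n+l}\rangle|.
\]
The extra contribution to the outer sum from $n\in[K+1,K+L-1]$ (which is absent from the right-hand side of the claimed inequality) is bounded crudely by $(L-1)\cdot L\cdot A^2$, adding an $O(A^2 L/K)$ term; combining the diagonal $l=0$ bound with this, and recognizing that the remaining expression is exactly $(2/K)\sum_{n=1}^K (1/L)\sum_{l=0}^{L-1}|\langle \phi_n, \phi_{n+l}\rangle|$ plus admissible $O(A^2 L/K)$ error, one obtains the estimate with a lower order term of the form $A^2 L/K$.

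Finally, using $\|a\|\leq \|a-b\| + \|b\|$ together with the bound $\sqrt{u+v}\leq \sqrt{u}+\sqrt{v}$, I combine the two estimates to arrive at
\[
\left\|\frac{1}{K}\sum_{n=1}^K \phi_n\right\| \leq \left[\frac{2}{K}\sum_{n=1}^K \frac{1}{L}\sum_{l=0}^{L-1}|\langle \phi_n,\phi_{n+l}\rangle|\right]^{1/2} + A\sqrt{\frac{L}{K}} + \frac{AL}{K},
\]
and the factor $4A(L/K)^{1/2}$ in \eqref{eq:vdc} absorbs both remaining error terms since $L\leq K$. The main obstacle is the careful combinatorial bookkeeping of the multiplicities when reindexing the double sum so that the resulting expression matches the one in the statement up to the allowed $(L/K)^{1/2}$ error; otherwise the argument is routine.
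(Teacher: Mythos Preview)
Your proof is correct and follows essentially the same route as the paper: shift-and-average to replace $\frac{1}{K}\sum_n\phi_n$ by $\frac{1}{K}\sum_n\frac{1}{L}\sum_l\phi_{n+l}$ at cost $O(AL/K)$, then Jensen/Cauchy--Schwarz, then expand the square and reindex. One small point to tighten: in your displayed bound you carry a separate $A^2/L$ from the diagonal \emph{and} include $l=0$ in the double sum, which double-counts; as written, the stray $A^2/L$ would produce an $A/\sqrt{L}$ after taking square roots, and that is \emph{not} dominated by $4A\sqrt{L/K}$ when $K>L^2$. The fix is exactly what your phrase ``combining the diagonal $l=0$ bound'' suggests: the diagonal contribution $\frac{1}{KL^2}\sum_{n,l}\|\phi_{n+l}\|^2$ is already bounded by the $l=0$ part of $\frac{2}{KL}\sum_{m}\sum_{l=0}^{L-1}|\langle\phi_m,\phi_{m+l}\rangle|$, so the extra $A^2/L$ should simply be dropped, after which your error is genuinely $O(A^2L/K)$ and the conclusion follows.
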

\begin{proof}
	Let us start by noticing that
	\[
	\left\|\frac{1}{K}\sum_{n=1}^K \phi_n - \frac{1}{K} \sum_{n=1}^K \phi_{n+l} \right\| \leq  2 A \frac{l}{K},
	\]
	for all $l$ such that $K+l \leq N	$, hence
	\begin{equation}\label{eq:vdc_step1}
		\left\|\frac{1}{K} \sum_{n=1}^K \phi_n  - \frac{1}{K} \sum_{n=1}^K \frac{1}{L} \sum_{l=0}^{L-1}\phi_{n+l}   \right\| \leq 2 A \frac{L}{K}.
	\end{equation}
	We now focus on the second term in the left hand side above.
	
	Since $v \mapsto \|v\|^2$ is a convex function on $H$, Jensen's Inequality yields
	\[
	\begin{split}
		\left\|\frac{1}{K} \sum_{n=1}^K \frac{1}{L} \sum_{l=0}^{L-1}\phi_{n+l} \right\|^2 &\leq  \frac{1}{K} \sum_{n=1}^K \left\| \frac{1}{L} \sum_{l=0}^{L-1}\phi_{n+l}   \right\|^2  = \frac{1}{K} \sum_{n=1}^K  \left( \frac{1}{L^2}\sum_{l_1,l_2=0}^{L-1} |\langle \phi_{n+l_1},\phi_{n+l_2}\rangle| \right).
	\end{split}
	\]
	We separate the sum above into the sets $\{l_1 \leq l_2\}$ and $\{l_2<l_1\}$, we obtain
	\[
	\begin{split}
		& \left\| \frac{1}{K} \sum_{n=1}^K \frac{1}{L} \sum_{l=0}^{L-1}\phi_{n+l}  \right\|^2 \\
		& \ \leq \frac{1}{L} \sum_{l_1=0}^{L-1} \left(  \frac{1}{K} \sum_{n=l_1+1}^{l_1+K}   \frac{1}{L} \sum_{l_2=l_1}^{L-1}   | \langle \phi_{n},\phi_{n+l_2-l_1}\rangle|\right) +  \frac{1}{L} \sum_{l_2=0}^{L-1} \left(  \frac{1}{K} \sum_{n=l_2+1}^{l_2+K}  \frac{1}{L} \sum_{l_1=l_2+1}^{L-1}  |  \langle \phi_{n+l_1-l_2},\phi_{n}\rangle| \right) \\
		& \ \leq \frac{2}{L} \sum_{l_1=0}^{L-1} \left(  \frac{1}{K} \sum_{n=l_1+1}^{l_1+K}   \frac{1}{L} \sum_{l_2=l_1}^{L-1}  | \langle \phi_{n},\phi_{n+l_2-l_1}\rangle| \right)
		\leq \frac{2}{L} \sum_{l_1=0}^{L-1} \left(  \frac{1}{K} \sum_{n=l_1+1}^{l_1+K}   \frac{1}{L} \sum_{l=0}^{L-1}  | \langle \phi_{n},\phi_{n+l}\rangle| \right) \\
		& \ \leq \frac{2}{K} \sum_{n=1}^{K}   \frac{1}{L} \sum_{l=0}^{L-1}  | \langle \phi_{n},\phi_{n+l}\rangle|+ 4A^2 \frac{L}{K},
	\end{split}
	\]
	where in the last line we used the Cauchy-Scwarz Inequality.
	Combining the above inequality with \eqref{eq:vdc_step1}, we conclude
	\[
	\left\|\frac{1}{K} \sum_{n=1}^K \phi_n \right\| \leq 2 A \frac{L}{K} + \left[ \frac{2}{K} \sum_{n=1}^K  \frac{1}{L} \sum_{l=0}^{L-1}   | \langle \phi_{n},\phi_{n+l}\rangle|+ 4A^2 \frac{L}{K}\right]^{1/2},
	\]
	from which \eqref{eq:vdc} follows immediately, since $L/K \leq 1$.
\end{proof}

We need the following lemma. 
\begin{lemma}\label{lem:phi_phi_orthogonal}
Assume that $\flowR$ is polynomially mixing. Let $\phi \in \mathscr{C}^{\infty}_c(X)$ and let $\varepsilon\in (0,1)$. There exists $\phi^{\perp} \in \mathscr{C}^{\infty}_c(X)$, with $\mu(\phi^{\perp}) = 0$ and $ \cN_{k}(\phi^{\perp}) \ll \varepsilon^{-8\sigma(k)/\eta_2} \cN_{k}(\phi)$,
		so that 
		\[
		\| \phi - \mu(\phi) - \phi^{\perp}\|_{L^2}\ll \cN_{d_2}(\phi) \varepsilon.
		\]
\end{lemma}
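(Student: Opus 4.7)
The plan is to take $\phi^{\perp}$ to be the difference between $\phi$ and its flow-average over a suitably chosen time window. Concretely, set
\[
\phi_T := \frac{1}{T}\int_0^T \phi \circ \flow_s \, \diff s, \qquad \phi^{\perp} := \phi - \phi_T,
\]
where $T = \varepsilon^{-2/\eta_2}$. Since $\phi \in \mathscr{C}^{\infty}_c(X)$ and $\flowR$ is smooth, the image $\flow_{[-T,0]}(\mathrm{supp}\,\phi)$ is compact, so $\phi_T \in \mathscr{C}^{\infty}_c(X)$ and hence $\phi^{\perp} \in \mathscr{C}^{\infty}_c(X)$. By invariance of $\mu$ we have $\mu(\phi_T) = \mu(\phi)$, so $\mu(\phi^{\perp}) = 0$.

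For the $L^2$ estimate, note that $\phi - \mu(\phi) - \phi^{\perp} = \phi_T - \mu(\phi)$, and
\[
\| \phi_T - \mu(\phi)\|_{L^2}^2 = \frac{1}{T^2} \int_0^T\int_0^T \Bigl( \int \phi \cdot \phi \circ \flow_{s-r} \diff \mu - \mu(\phi)^2 \Bigr) \diff s \, \diff r.
\]
Splitting the domain of integration according to whether $|s-r|<1$ or $|s-r|\geq 1$: in the first regime the inner expression is bounded by $\|\phi\|_{\infty}^2 + \mu(\phi)^2 \ll \cN_{d_2}(\phi)^2$, and in the second we invoke polynomial $2$-mixing to bound it by $\cN_{d_2}(\phi)^2 |s-r|^{-\eta_2}$. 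Integrating, the first contribution is $\ll \cN_{d_2}(\phi)^2 /T$ and the second is $\ll \cN_{d_2}(\phi)^2 T^{-\eta_2}$, so that $\| \phi_T - \mu(\phi)\|_{L^2}^2 \ll \cN_{d_2}(\phi)^2 T^{-\eta_2}$. With the chosen value of $T$ this gives $\| \phi_T - \mu(\phi)\|_{L^2} \ll \cN_{d_2}(\phi) \varepsilon$, as required.

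For the Sobolev norm estimate, the triangle inequality (applied under the integral, i.e.\ by Bochner integration) together with property (3) of the norms $\cN_k$ yields
\[
\cN_k(\phi_T) \leq \frac{1}{T} \int_0^T \cN_k(\phi \circ \flow_s) \, \diff s \leq \frac{C_k}{\sigma(k)+1} \cN_k(\phi) \, T^{\sigma(k)}.
\]
Hence $\cN_k(\phi^{\perp}) \leq \cN_k(\phi) + \cN_k(\phi_T) \ll \cN_k(\phi) \, \varepsilon^{-2\sigma(k)/\eta_2}$. Since $\varepsilon \in (0,1)$ and $\sigma(k) \geq 1$, we have $\varepsilon^{-2\sigma(k)/\eta_2} \leq \varepsilon^{-8\sigma(k)/\eta_2}$, so the desired bound follows.

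The only non-routine step is the $L^2$ estimate, which is a variance-type computation reducing to the $2$-mixing assumption; the split at the scale $|s-r|=1$ is what prevents the small-time singularity of the mixing bound from causing trouble, and this is the main technical point to verify carefully.
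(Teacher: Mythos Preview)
Your proof is correct and follows essentially the same approach as the paper: both take $\phi^{\perp}$ to be $\phi$ minus a time-average of $\phi$ along the flow, with the $L^2$ bound coming from the variance computation driven by polynomial $2$-mixing and the Sobolev bound coming from property~(3) of the norms. The only cosmetic differences are that the paper uses a discrete average $\frac{1}{R}\sum_{r=0}^{R-1}\phi\circ\flow_r$ with $R=\varepsilon^{-8/\eta_2}$ and cites the $L^2$ estimate from the proof of \Cref{prop:QPar1}, whereas you use the continuous average and carry out the variance computation directly (yielding the slightly sharper window $T=\varepsilon^{-2/\eta_2}$).
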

\begin{proof}
Let $\phi \in \mathscr{C}^{\infty}_c(X)$ and $ \varepsilon \in (0,1)$ be fixed.
Let $R \in \N$, to be chosen later; we define
	\[
	\phi^{\perp} = \phi - \frac{1}{R} \sum_{r=0}^{R-1} \phi \circ \flow_r.
	\]
	Then, $\phi^{\perp}$ has the same smoothness as $\phi$ and satisfies $\|\phi^{\perp}\|_{\infty} \leq 2 \|\phi\|_{\infty}$. 
	If $K$ is a compact set containing the support of $\phi$, then the support of $\phi^{\perp}$ is contained in the finite union $\cup_{r=0}^{R-1} \flow_{-r}(K)$, hence $\phi^{\perp} \in \mathscr{C}^{\infty}_c(X)$.
	
	From polynomial mixing, it is possible to deduce an $L^2$ bound on the ergodic integrals, see the proof of the claim in \Cref{prop:QPar1}; in particular,
	\[
	\| \phi - \mu(\phi) - \phi^{\perp}\|_{L^2}= \left\| \frac{1}{R}\sum_{r=0}^{R-1} \phi \circ \flow_r - \mu(\phi)\right\|_{L^2}\leq \cN_{d_2}(\phi) R^{-\eta_2/8}.
	\]
	We choose $R=\varepsilon^{-8/\eta_2}$. Finally, for any $k\geq 1$, by the properties of the Sobolev norms, we have
	\[
	\cN_k(\phi^{\perp}) \leq 2(1+C_k)\cN_k(\phi) \varepsilon^{-8\sigma(k)/\eta_2},
	\]
	which completes the proof.
\end{proof}

The following lemma says that the intersection of two almost partitions is again an almost partition.

\begin{lemma}\label{lem:intersection_almost_partitions}
Let $\cP = \{P_a : a \in \mathscr{A}\}$ and $\cQ = \{Q_b : b \in \mathscr{B} \}$ be two $\varepsilon$-almost partitions of $A \subseteq Y$. Then, $\cP \wedge \cQ =\{P_a \cap Q_b : a \in \mathscr{A}, \  b \in \mathscr{B}\}$ is a $2\varepsilon$-almost partition of $A$.
\end{lemma}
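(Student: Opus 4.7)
The plan is to verify the two defining properties of an $\varepsilon$-almost partition for $\cP \wedge \cQ$: pairwise disjointness of its atoms, and that the union of those atoms meeting $A$ approximates $A$ in symmetric difference. Disjointness is immediate: if $(a_1,b_1) \neq (a_2,b_2)$, then either $P_{a_1} \cap P_{a_2} = \emptyset$ or $Q_{b_1} \cap Q_{b_2} = \emptyset$, so in either case the two product atoms $(P_{a_1} \cap Q_{b_1}) \cap (P_{a_2} \cap Q_{b_2})$ is empty.

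For the measure estimate, I would abbreviate
\[
\widetilde P = \bigcup\{P_a : P_a \cap A \neq \emptyset\}, \quad \widetilde Q = \bigcup\{Q_b : Q_b \cap A \neq \emptyset\}, \quad \widetilde R = \bigcup\{P_a \cap Q_b : (P_a \cap Q_b) \cap A \neq \emptyset\}.
\]
The key set-theoretic observation to prove first is the two-sided inclusion $A \cap \widetilde P \cap \widetilde Q \subseteq \widetilde R \subseteq \widetilde P \cap \widetilde Q$. The left inclusion is immediate: any $x \in A \cap P_a \cap Q_b$ with $P_a, Q_b$ each meeting $A$ itself witnesses that $(P_a \cap Q_b) \cap A$ is nonempty, so $P_a \cap Q_b \subseteq \widetilde R$. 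The right inclusion is clear, since each constituent $P_a \cap Q_b$ of $\widetilde R$ has $P_a \subseteq \widetilde P$ and $Q_b \subseteq \widetilde Q$. From these, one obtains $A \setminus \widetilde R \subseteq (A \setminus \widetilde P) \cup (A \setminus \widetilde Q)$ and $\widetilde R \setminus A \subseteq (\widetilde P \setminus A) \cap (\widetilde Q \setminus A)$.

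Writing $\alpha = \mu(A \setminus \widetilde P)$, $\beta = \mu(\widetilde P \setminus A)$, $\gamma = \mu(A \setminus \widetilde Q)$, $\delta = \mu(\widetilde Q \setminus A)$, the almost-partition hypotheses read $\alpha + \beta < \varepsilon \mu(A)$ and $\gamma + \delta < \varepsilon \mu(A)$, and the inclusions above give $\mu(A \triangle \widetilde R) \leq \alpha + \gamma + \min(\beta,\delta)$. The mild subtlety — essentially the only nonroutine point — is that bounding $\alpha$ and $\beta$ individually by $\varepsilon \mu(A)$ would only yield a $3\varepsilon$ bound; instead, one must exploit the joint constraint. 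Assuming $\beta \leq \delta$ (the other case is symmetric), one rearranges $\alpha + \gamma + \min(\beta,\delta) = (\alpha+\beta) + \gamma \leq (\alpha+\beta) + (\gamma+\delta) < 2\varepsilon \mu(A)$, which yields $\mu(A \triangle \widetilde R) < 2\varepsilon \mu(A)$ and completes the proof.
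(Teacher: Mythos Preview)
Your proof is correct and follows essentially the same route as the paper's: both establish the inclusions $A \cap \widetilde P \cap \widetilde Q \subseteq \widetilde R \subseteq \widetilde P \cap \widetilde Q$ and deduce the $2\varepsilon$ bound. The paper packages this slightly more directly as the single inclusion $A \triangle \widetilde R \subseteq (A \triangle \widetilde P) \cup (A \triangle \widetilde Q)$ and applies the union bound, which sidesteps the ``mild subtlety'' you flag --- your sharper intermediate estimate $\mu(\widetilde R \setminus A) \leq \min(\beta,\delta)$ is true but unnecessary for the stated conclusion.
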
	
	\begin{proof}
%Properties 2 and 3 in the definition of almost partitions are immediate. Let us verify condition 1. 
Clearly, the sets $P_a \cap Q_b$ are pairwise disjoint.
We claim that
\[
A \triangle \bigcup \{ P_a \cap Q_b : P_a \cap Q_b \cap A \neq \emptyset \} \subseteq \left(A \triangle \bigcup \{ P_a : P_a \cap A \neq \emptyset \} \right) \cup \left(A \triangle \bigcup \{  Q_b : Q_b \cap A \neq \emptyset \}\right).
\]
Indeed, if $x \notin A$ but $x \in \bigcup \{ P_a \cap Q_b : P_a \cap Q_b \cap A \neq \emptyset \}$, then in particular $x \notin A$ and $x \in \bigcup \{ P_a : P_a \cap A \neq \emptyset \}$, which implies that $x \in A \triangle \bigcup \{ P_a : P_a \cap A \neq \emptyset \}$. On the other hand, if $x \in A$ and $x \notin \bigcup \{ P_a \cap Q_b : P_a \cap Q_b \cap A \neq \emptyset \}$, then it follows that $x \notin P_a \cap Q_b$ for all $P_a \cap A \neq \emptyset$ and all $Q_b \cap A \neq \emptyset$. Thus, $x \in A \triangle \big(\bigcup \{ P_a : P_a \cap A \neq \emptyset \}  \cap \bigcup  \{  Q_b : Q_b \cap A \neq \emptyset \}\big) \subseteq \big(A \triangle \bigcup \{ P_a : P_a \cap A \neq \emptyset \} \big) \cup \big(A \triangle \bigcup \{  Q_b : Q_b \cap A \neq \emptyset \}\big)$, which proves the claim.

By the definition of almost partitions, we deduce
\begin{multline*}
 \mu \left(A \triangle \bigcup \{ P_a \cap Q_b : P_a \cap Q_b \cap A \neq \emptyset \}\right) \\ \leq \mu \left(A \triangle \bigcup \{ P_a : P_a \cap A \neq \emptyset \} \right) + \mu \left(A \triangle \bigcup \{  Q_b : Q_b \cap A \neq \emptyset \}\right) \leq 2\varepsilon \mu(A),
\end{multline*}
which proves the result.
	\end{proof}
	
%%%%%%%%%%

\section{Shearing estimates}\label{sec:shearing}

In this appendix, we are going to prove a general version of \Cref{prop:tc_geo_commute}.

Let $Y,V$ be two smooth vector fields on $M$, and let us denote $(h^Y_t)_{t\in \R}$ and $\flowR$ the flow they generate, respectively. Let us assume that there exists a smooth function $\ell \colon M \to \R$ so that 
\[
\mathscr{L}_V(Y) = [V,Y] = -\ell V.
\]
In the setting of \Cref{sec:tc_unip_flows}, $V=U_\alpha = \alpha^{-1} U$, the vector field $Y$ satisfies $[Y,U] = U$, and $\ell = 1-X\alpha/\alpha$.
We start with a couple of technical lemmas.

\begin{lemma}\label{lemma:formulas_derivatives}
	Let $\phi\in \mathscr{C}^1(M)$. For any $x\in M$, $t \in \R$, and for any $s \in \R$, 
	we have
	\[
	\begin{split}
		&X(\phi \circ \flow_t)(x) = (X\phi)\circ \flow_t(x) + \left(\int_{0}^{t}\ell \circ \flow_r \diff r\right) \cdot (V\phi)\circ \flow_t(x),\\
		&V(\phi \circ h^Y_s)(x) = \exp\left(-\int_0^{s} \ell \circ h^Y_r(x) \diff r\right) (V\phi) \circ h^Y_{s}(x).
	\end{split}
	\]
\end{lemma}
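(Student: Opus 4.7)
\textbf{Plan for the proof of \Cref{lemma:formulas_derivatives}.}
The plan is to reinterpret each identity as a statement about the differential of a flow applied to a tangent vector, and then recognise both sides as solutions of the same linear ODE along an orbit. The only algebraic input needed is the commutation relation $[V,Y]=-\ell V$, i.e.\ $[Y,V]=\ell V$, together with the trivial fact that $V$ is preserved by its own flow.

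I would first handle the second identity, which is slightly cleaner. Writing
\[
V(\phi \circ h^Y_s)(x) = \bigl( dh^Y_s|_x (V|_x) \bigr)\, \phi,
\]
I would set $Z(s) := dh^Y_s|_x(V|_x)$, a tangent vector based at $h^Y_s(x)$. In local coordinates around the orbit $s\mapsto h^Y_s(x)$, the map $dh^Y_s|_x$ is the fundamental solution of the linearisation of $Y$, so $Z$ satisfies
\[
\dot Z(s) = DY\bigl(h^Y_s x\bigr)\, Z(s), \qquad Z(0)=V|_x.
\]
The ansatz $Z(s)=f(s)\,V|_{h^Y_s x}$ together with the identity $\frac{d}{ds}V|_{h^Y_s x}=DV(h^Y_s x)\,Y(h^Y_s x)$ reduces the vector ODE to the scalar equation
\[
\dot f(s)\, V = f(s)\bigl(DY\cdot V - DV\cdot Y\bigr) = f(s)[V,Y] = -f(s)\,\ell\bigl(h^Y_s x\bigr) V,
\]
i.e.\ $\dot f = -\ell\circ h^Y_s(x)\, f$ with $f(0)=1$. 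The unique solution is $f(s)=\exp\bigl(-\int_0^s \ell\circ h^Y_r(x)\, dr\bigr)$, and uniqueness of solutions of the linear ODE on $Z$ forces $Z(s)=f(s)V|_{h^Y_s x}$, yielding the second formula.

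For the first identity, written with $Y$ in place of $X$ (which appears to be a typo in the statement), the approach is identical with the roles of $V$ and $Y$ swapped. I would set $W(t):=d\flow_t|_x(Y|_x)=(\flow_t)_*Y|_{\flow_t x}$, so that $Y(\phi \circ \flow_t)(x)=W(t)\,\phi$. The vector $W$ satisfies the linearisation of $V$ along $\flow_t x$:
\[
\dot W(t) = DV(\flow_t x)\,W(t), \qquad W(0)=Y|_x.
\]
The ansatz $W(t)=Y|_{\flow_t x}+c(t)\,V|_{\flow_t x}$: the $c\cdot V$ contribution is compatible with the equation because $V$ is $\flow_t$-invariant, and substitution collapses the remaining terms to $\dot c(t)\,V = DV\cdot Y - DY\cdot V = [Y,V] = \ell\,V$. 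Thus $\dot c(t)=\ell(\flow_t x)$ with $c(0)=0$, giving $c(t)=\int_0^t \ell\circ \flow_r(x)\, dr$, and uniqueness finishes the argument.

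The main obstacle, if any, is bookkeeping: correctly identifying the linearised equations satisfied by the pushed tangent vectors, and using the Lie bracket convention consistently so that $[Y,V]=\ell V$ appears with the right sign. No analytic subtleties arise --- once the vector ODEs are written down, the entire argument is a one-parameter ODE computation, and applying the resulting vectors to $\phi\in\mathscr{C}^1(M)$ produces the two stated scalar identities.
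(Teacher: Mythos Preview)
Your proposal is correct and takes essentially the same approach as the paper: both write the pushed-forward vector in the frame $\{Y,V\}$, derive a linear ODE for the coefficients along the orbit from $[V,Y]=-\ell V$, and solve explicitly. The paper differentiates $(\flow_t)_\ast X = a_t X + b_t V$ via the Lie derivative to get ODEs for both $a_t$ and $b_t$, whereas you posit the ansatz and appeal to uniqueness of solutions of the variational equation, but this is only a cosmetic difference; your observation that $X$ should read $Y$ is also correct, and the paper's own proof carries the same notational slip.
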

\begin{proof}
	Let $(\flow_t)_\ast$ denote the push-forward by $\flow_t$, and write 
	\begin{equation}\label{eq:push-fwd1}
		(\flow_t)_\ast(X) = a_t X + b_t V,
	\end{equation}
	for some smooth functions $a_t,b_t \colon M \to \R$. By definition, we have
	\[
	X(\phi \circ \flow_t)(x) = [(\flow_t)_\ast(X)]_{\flow_tx}\phi(\flow_tx) = (a_t \circ \flow_t)(x) X\phi(\flow_tx) + (b_t \circ \flow_t)(x) V\phi(\flow_tx),
	\]
	so that, in order to prove the formula, we need to compute $a_t \circ \flow_t$ and $b_t \circ \flow_t$.
	
	Differentiating \eqref{eq:push-fwd1}, we get
	\[
	\begin{split}
		a_t' X + b_t' V &= \frac{\diff}{\diff r} \Big\vert_{r=0} (\flow_r)_\ast(a_t X + b_t V) = \frac{\diff}{\diff r} \Big\vert_{r=0}[(a_t \circ \flow_{-r})(\flow_r)_\ast(X) + (b_t \circ \flow_{-r})(\flow_r)_\ast(V)] \\
		&= -Va_t X - a_t \mathscr{L}_V(X) - Vb_t V,
	\end{split}
	\]
	since $\mathscr{L}_V(V)=0$. Since $(a_t \circ \flow_t)' = a_t' \circ \flow_t + Va_t \circ \flow_t$, we can rewrite the previous equation as
	\[
	(a_t \circ \flow_t)' X + (b_t \circ \flow_t)' V = (a_t \cdot \ell) \circ \flow_t V, \qquad \text{ with } a_0 = 1,\ b_0 = 0.
	\]
	The solutions of this ODE are
	\[
	a_t \circ \flow_t = 1, \qquad \text{and} \qquad b_t \circ \flow_t = \int_0^t \ell \circ \flow_r \diff r,
	\]
	which proves the expression for $X(f \circ \flow_t)$.
	
	For the second part, one follows an analogous argument replacing $X$ with $V$ and $\flow_t$ with $h^Y_s$.
	Denoting $(h^Y_s)_\ast(V) = c_s X + d_s V$, the expression for $V(\phi \circ h^Y_s)(x) $ is obtained by solving the ODE
	\[
	(c_s \circ h^Y_s)' X + (d_s \circ h^Y_s)' V = - (d_s \cdot \ell) \circ h^Y_s V, \qquad \text{ with } c_0 = 0,\ d_0 = 1.
	\]
\end{proof}
For any smooth function $\phi \in \mathscr{C}^1(M)$ and any $x \in M$, let us denote 
\[
I_t\phi(x):=\int_{0}^{t} \phi \circ \flow_r(x) \diff r.
\]
\begin{lemma}\label{lemma:deriv_l_X}
	For any $t,s \in \R$, we have
	\[
	\frac{\partial}{\partial s} (I_t\ell \circ h^Y_s)(x) = [ I_t(X\ell - \ell^2) + I_t\ell \cdot (\ell \circ \flow_t)]\circ h^Y_s(x).
	\]
\end{lemma}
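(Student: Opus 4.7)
The plan is to recognize that differentiation with respect to $s$ amounts to applying the vector field $Y$, and then to compute that derivative using the first identity in \Cref{lemma:formulas_derivatives} combined with an integration by parts along the flow $\flowR$. Concretely, since $Y$ generates the flow $(h^Y_s)$, we have
\[
\frac{\partial}{\partial s}(I_t\ell \circ h^Y_s)(x) = \big(Y(I_t\ell)\big)(h^Y_s(x)),
\]
so the entire task reduces to establishing the identity
\[
Y(I_t\ell)(x) = I_t(X\ell - \ell^2)(x) + I_t\ell(x)\cdot (\ell \circ \flow_t)(x)
\]
at an arbitrary point $x \in M$; the statement of the lemma then follows by evaluating both sides at $h^Y_s(x)$.

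To compute the left-hand side, I would pass the derivation $Y$ inside the integral defining $I_t\ell$ (which is legitimate by smoothness and standard differentiation under the integral sign), obtaining
\[
Y(I_t\ell)(x) = \int_0^t Y(\ell\circ \flow_r)(x) \diff r.
\]
Applying the first formula of \Cref{lemma:formulas_derivatives} with $\phi = \ell$ (and with $t$ replaced by the integration variable $r$) gives
\[
Y(\ell \circ \flow_r) = (X\ell)\circ \flow_r + (I_r\ell)\cdot (V\ell)\circ \flow_r.
\]
The first summand, integrated over $r\in [0,t]$, yields exactly $I_t(X\ell)(x)$, which accounts for the $I_t(X\ell)$ piece of the target expression.

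The remaining step, which is the main (modest) obstacle, is to rewrite
\[
\int_0^t (I_r\ell)(x)\cdot (V\ell)\circ \flow_r(x) \diff r
\]
in the required form. The key observation is that, since $V$ is the infinitesimal generator of $\flowR$, one has $(V\ell)\circ \flow_r(x) = \frac{\diff}{\diff r}(\ell\circ \flow_r)(x)$, while $\frac{\diff}{\diff r}(I_r\ell)(x) = \ell\circ \flow_r(x)$ by the fundamental theorem of calculus. An integration by parts in the variable $r$ therefore gives
\[
\int_0^t (I_r\ell)\cdot \frac{\diff}{\diff r}(\ell\circ \flow_r) \diff r = I_t\ell \cdot (\ell\circ \flow_t) - \int_0^t (\ell\circ \flow_r)^2 \diff r = I_t\ell \cdot (\ell\circ \flow_t) - I_t(\ell^2),
\]
evaluated at $x$; notice that the boundary term at $r=0$ vanishes because $I_0\ell = 0$. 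Collecting these contributions produces precisely $I_t(X\ell - \ell^2)(x) + I_t\ell(x)\cdot (\ell\circ \flow_t)(x)$, as required.
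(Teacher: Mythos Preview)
Your proof is correct and follows essentially the same route as the paper: differentiate under the integral, apply the first identity of \Cref{lemma:formulas_derivatives} to $\ell\circ\flow_r$, and integrate by parts using $(V\ell)\circ\flow_r=\frac{\diff}{\diff r}(\ell\circ\flow_r)$. The only cosmetic difference is that you first reduce to computing $Y(I_t\ell)$ at a single point and then substitute $h^Y_s(x)$, whereas the paper keeps the composition with $h^Y_s$ throughout; the computations are otherwise identical.
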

\begin{proof}
	By Lemma \ref{lemma:formulas_derivatives}, we have
	\[
	\begin{split}
		\frac{\partial}{\partial s} (I_t\ell \circ h^Y_s)(x) &= \int_0^t \frac{\partial}{\partial s} \ell \circ \flow_r \circ h^Y_s(x) \diff r = \int_0^t X(\ell \circ \flow_r) \circ h^Y_s(x) \diff r \\
		&=\int_0^t (X\ell) \circ \flow_r \circ h^Y_s(x) \diff r  + \int_0^t I_r\ell \circ h^Y_s(x) \cdot (V\ell) \circ \flow_r \circ h^Y_s(x) \diff r.
	\end{split}
	\]
	Integrating by parts, we can rewrite the second summand as
	\[
	\int_0^t I_r\ell \circ h^Y_s(x) \cdot (V\ell) \circ \flow_r \circ h^Y_s(x) \diff r = I_t\ell \circ h^Y_s(x) \cdot \ell \circ \flow_t \circ h^Y_s(x) - \int_0^t \ell^2 \circ \flow_r \circ h^Y_s(x) \diff r. 
	\]
	Hence we conclude
	\[
	\frac{\partial}{\partial s} (I_t\ell \circ h^Y_s)(x) = I_t(X\ell - \ell^2) \circ h^Y_s(x) + [I_t\ell \cdot (\ell \circ \flow_t)]\circ h^Y_s(x),
	\]
	which proves the lemma.
\end{proof}

We now prove the commutation relation in \Cref{prop:tc_geo_commute}.

\begin{lemma}\label{lem:tc_geo_commutation_relation}
	There exists a smooth function $z \colon M \times \R \times \R \to \R$ such that 
	\[
	\flow_t \circ h^Y_s(x) = h^Y_s \circ \flow_{z(x,s,t)}(x),
	\]
	for all $x \in M$ and $t,s \in \R$ and satisfies
	\[
	\frac{\partial}{\partial s} z(x,s,t) = \exp\left(-\int_0^{-s} \ell \circ h^Y_r(\flow_t\circ h^Y_s(x)) \diff r\right) \cdot I_t \ell(h^Y_s(x)).
	\]
\end{lemma}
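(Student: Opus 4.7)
The plan is to first establish the existence and smoothness of the reparametrization $z$, and then derive the ODE by differentiating the defining relation in $s$ and matching the $V$-components using the two identities of \Cref{lemma:formulas_derivatives}.

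First, I would observe that the hypothesis $[V,Y] = -\ell V$ means that $(h^Y_s)_\ast V$ is everywhere proportional to $V$. Indeed, the second identity of \Cref{lemma:formulas_derivatives} reads
\[
V(\phi\circ h^Y_s)(x) = \exp\!\left(-\int_0^{s} \ell \circ h^Y_r(x) \diff r\right) (V\phi)(h^Y_s(x)),
\]
which translates into $[(h^Y_s)_\ast V]_{h^Y_s(x)} = \exp(-\int_0^s \ell\circ h^Y_r(x)\diff r)\, V(h^Y_s(x))$. Consequently, the one-parameter family $(h^Y_s)$ sends $V$-orbits onto $V$-orbits; in particular, for each fixed $x,s$, the curve $t \mapsto h^Y_{-s}\circ \flow_t\circ h^Y_s(x)$ has velocity proportional to $V$ and hence stays on the $V$-orbit through $x$. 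Thus there is a unique smooth $z(x,s,t)\in\R$ with $\flow_{z(x,s,t)}(x)=h^Y_{-s}\circ\flow_t\circ h^Y_s(x)$, equivalently $\flow_t \circ h^Y_s(x) = h^Y_s \circ \flow_{z(x,s,t)}(x)$; smoothness and the initial condition $z(x,0,t)=t$ are immediate.

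Next, I would differentiate both sides of the commutation in $s$ at a generic point, setting $p:=\flow_t(h^Y_s(x))$. On the left, the chain rule gives $\partial_s[\flow_t\circ h^Y_s(x)] = [(\flow_t)_\ast Y]_p$, and the first identity of \Cref{lemma:formulas_derivatives} (applied with $X=Y$) identifies this vector as
\[
[(\flow_t)_\ast Y]_p = Y(p) + I_t\ell(h^Y_s(x))\, V(p).
\]
On the right, the chain rule for $s\mapsto h^Y_s(\flow_{z(x,s,t)}(x))$ yields
\[
Y(p) + \partial_s z(x,s,t)\cdot [(h^Y_s)_\ast V]_p.
\]
Comparing, the $Y$-components cancel, and the $V$-components must agree.

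Finally, to match coefficients I would express $[(h^Y_s)_\ast V]_p$ as a scalar times $V(p)$ using the formula above, with $y:=h^Y_{-s}(p)$. Substituting $y$ and performing the change of variables $u=r-s$ in the integral gives
\[
[(h^Y_s)_\ast V]_p = \exp\!\left(\int_0^{-s} \ell\circ h^Y_r(p)\diff r\right) V(p).
\]
Equating $V$-coefficients and solving for $\partial_s z$ then produces exactly the stated identity
\[
\partial_s z(x,s,t) = \exp\!\left(-\int_0^{-s} \ell\circ h^Y_r(p)\diff r\right) I_t\ell(h^Y_s(x)).
\]
The argument is conceptually short; the one mildly delicate step is the change of variables in the exponential, where a careful bookkeeping of signs is needed to turn the integral $\int_0^{s}\ell\circ h^Y_r(y)\diff r$ based at $y$ into $-\int_0^{-s}\ell\circ h^Y_r(p)\diff r$ based at $p=h^Y_s(y)$.
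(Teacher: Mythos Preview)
Your proposal is correct and uses the same ingredients as the paper's proof, namely the two push-forward identities from \Cref{lemma:formulas_derivatives}. The organization differs slightly: the paper differentiates the single curve $s\mapsto h^Y_{-s}\circ\flow_t\circ h^Y_s(x)$ in $s$, showing directly that its tangent is a multiple of $V$ (which yields existence of $z$ and the formula for $\partial_s z$ in one stroke), whereas you first establish existence by differentiating in $t$ and then obtain the formula by differentiating both sides of the commutation relation in $s$ and matching the $V$-coefficients at $p$. Both routes amount to the same computation, and your change-of-variables step converting the exponential integral from the basepoint $y=h^Y_{-s}(p)$ to $p$ is exactly what the paper's direct use of the second identity of \Cref{lemma:formulas_derivatives} (with $-s$ in place of $s$) accomplishes implicitly.
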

\begin{proof}
	By assumption, for every $t\in \R$, the curve
	\[
	s \mapsto h^Y_{-s}\circ \flow_t \circ h^Y_s(x), \qquad \text{ for }s \in \R
	\]
	is well-defined and smooth. In order to prove the lemma, we compute its tangent vectors.
	Using Lemma \ref{lemma:formulas_derivatives}, for any smooth function $\phi$ we have 
	\[
	\begin{split}
		\frac{\diff}{\diff s} &(\phi \circ h^Y_{-s}\circ \flow_t \circ h^Y_s)(x) = -(X\phi)\circ h^Y_{-s}\circ \flow_t \circ h^Y_s(x) + X(\phi \circ h^Y_{-s}\circ \flow_t)(h^Y_s(x)) \\
		&= -(X\phi)\circ h^Y_{-s}\circ \flow_t \circ h^Y_s(x) +  X(\phi \circ h^Y_{-s})\circ \flow_t \circ h^Y_s(x) \\
		& \quad +I_t \ell(h^Y_s(x)) V(\phi \circ h^Y_{-s})\circ \flow_t \circ h^Y_s(x) \\
		&=I_t \ell(h^Y_s(x))  \exp\left(-\int_0^{-s} \ell \circ h^Y_r(\flow_t\circ h^Y_s(x)) \diff r\right) (V\phi) \circ h^Y_{-s}\circ \flow_t \circ h^Y_s(x).
	\end{split}
	\]
	Since the tangent vectors of the curve $s \mapsto h^Y_{-s}\circ \flow_t \circ h^Y_s(x)$ are parallel to $V$, we can write $\flow_{z(x,s,t)} =h^Y_{-s}\circ \flow_t \circ h^Y_s(x)$. The expression for the derivative of $z$ follows immediately from the computation above.
\end{proof}
\begin{corollary}\label{corollary:z}
	For all $t,s \in \R$, there exists $\theta \in [0,s]$ such that
	\[
	z(x,s,t) - t = s \, I_t \ell(x) + \frac{s^2}{2} \left(\frac{\partial^2}{\partial r^2} \Big\vert_{r=\theta}z(x,r,t) \right).
	\]
\end{corollary}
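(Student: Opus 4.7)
The plan is to recognize the claimed formula as the second-order Taylor expansion of $s \mapsto z(x,s,t)$ around $s=0$, with Lagrange remainder. So the proof reduces to computing $z(x,0,t)$ and $\frac{\partial}{\partial s}|_{s=0} z(x,s,t)$.

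First, I would observe that $z(x,0,t) = t$: this follows immediately from the defining relation $\flow_t \circ h^Y_s(x) = h^Y_s \circ \flow_{z(x,s,t)}(x)$ in \Cref{lem:tc_geo_commutation_relation} by setting $s=0$, which gives $\flow_t(x) = \flow_{z(x,0,t)}(x)$, so $z(x,0,t)=t$ (invoking freeness of the flow away from fixed points, or simply by continuity in $s$ and the smoothness of $z$).

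Next, I would plug $s=0$ into the formula for $\frac{\partial}{\partial s} z(x,s,t)$ derived in \Cref{lem:tc_geo_commutation_relation}: the exponential factor becomes $\exp(0) = 1$ and $h^Y_0(x) = x$, so
\[
\frac{\partial}{\partial s}\Big|_{s=0} z(x,s,t) = I_t \ell(x).
\]
Since $z$ is smooth in $s$ (being a composition of smooth flows), Taylor's theorem with Lagrange remainder applied to $s \mapsto z(x,s,t)$ on $[0,s]$ yields the existence of $\theta \in [0,s]$ with
\[
z(x,s,t) = z(x,0,t) + s\cdot \frac{\partial}{\partial s}\Big|_{s=0} z(x,s,t) + \frac{s^2}{2}\frac{\partial^2}{\partial r^2}\Big|_{r=\theta} z(x,r,t),
\]
and substituting the two values computed above gives exactly the desired identity. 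I do not foresee any real obstacle here: the content lies entirely in \Cref{lem:tc_geo_commutation_relation}, and this corollary is just the Taylor formula applied to that lemma.
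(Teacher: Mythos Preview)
Your proposal is correct and follows exactly the same approach as the paper: apply Taylor's formula with Lagrange remainder to $s\mapsto z(x,s,t)$ and read off $z(x,0,t)=t$ and $\partial_s|_{s=0}z=I_t\ell(x)$ from \Cref{lem:tc_geo_commutation_relation}. Your write-up is in fact slightly more explicit than the paper's, which simply says ``the claim follows from \Cref{lem:tc_geo_commutation_relation}'' after stating the Taylor expansion.
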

\begin{proof}
	By Taylor's formula, we have
	\[
	z(x,s,t) - z(x,0,t) = s \left( \frac{\partial}{\partial r} \Big\vert_{r=0}z(x,r,t) \right) + \frac{s^2}{2} \cdot \left( \frac{\partial^2}{\partial r^2} \Big\vert_{r=\theta}z(x,r,t) \right),
	\]
	and the claim follows from \Cref{lem:tc_geo_commutation_relation}. 
\end{proof}

We now want to express $z(x,s,t) - t = s I_t\ell(x_0) + o(1)$, where the point $x_0$ is of the form $h^Y_d(x)$ for some small $d \in \R$. 
In order to do that, we need to be able to bound the function $\ell$ and its derivatives, as well as the integrals of $\ell$ and of $X\ell - \ell^2$.

\begin{lemma}\label{lem:estimates_z}
	Let us assume that $\ell$, $X\ell$, and $V\ell$ are uniformly bounded on $M$. Then, there exists $C_\ell\geq 1$ so that for all $t\geq 0$ and for all $s,d \in [0,1]$ we have
		\[
		\left\lvert z(x,s,t) - t \right\rvert \leq C_{\ell} s \, t, \qquad \text{and} \qquad
		\left\lvert(z(x,s,t) - t )  -  s \, I_t \ell(\overline x)\right\rvert \leq C_{\ell} s \, t \, ( d + s + s^2t).
	\]
\end{lemma}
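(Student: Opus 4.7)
The plan is to start from the ODE for $z$ provided by \Cref{lem:tc_geo_commutation_relation} and bound integrands term by term, then use Taylor's formula from \Cref{corollary:z} to get the second estimate, and finally absorb the shift from $x$ to $\overline x = h^Y_d(x)$ via \Cref{lemma:deriv_l_X}. Throughout, $C$ will denote a constant depending only on the uniform bounds for $\ell$, $X\ell$ and $V\ell$ (here $X$ is the generator of $h^Y$), whose value may change.

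First I would write $\partial_s z(x,s,t) = A(s) B(s)$, where
\[
A(s) = \exp\left(-\int_0^{-s} \ell \circ h^Y_r(\flow_t \circ h^Y_s(x))\diff r\right), \qquad B(s) = I_t\ell(h^Y_s(x)),
\]
as in \Cref{lem:tc_geo_commutation_relation}. Since $|\ell| \leq C$, for $|s|\leq 1$ we have $|A(s)| \leq e^C$ and $|B(s)| \leq Ct$, so that integrating $\partial_s z$ from $0$ to $s$ (using $z(x,0,t)=t$) yields
\[
|z(x,s,t)-t| \leq \int_0^s |A(r)B(r)|\diff r \leq Ce^C \, s\, t,
\]
which gives the first bound with $C_\ell := Ce^C$.

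For the second bound, I would apply \Cref{corollary:z} to obtain
\[
z(x,s,t)-t = s\, I_t\ell(x) + \tfrac{s^2}{2}\, \partial_r^2 z(x,r,t)\big|_{r=\theta}, \qquad \theta \in [0,s],
\]
and estimate the second derivative. Since $\partial_r z = A(r)B(r)$, we get $\partial_r^2 z = A'(r)B(r) + A(r)B'(r)$. By \Cref{lemma:deriv_l_X} applied to $B(r) = I_t\ell(h^Y_r(x))$, we have $|B'(r)| \leq C t$. The delicate factor is $A'(r)$: its logarithmic derivative equals
\[
\frac{A'(r)}{A(r)} = \ell\big(h^Y_{-r}(\flow_t h^Y_r(x))\big) - \int_0^{-r} d\ell\big(p'_r(u)\big)\diff u,
\]
where $p'_r(u)$ is the tangent vector to the curve $r \mapsto h^Y_u(\flow_t(h^Y_r(x)))$. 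Using \Cref{lemma:formulas_derivatives} (for $(\flow_t)_*Y = Y + I_t\ell \cdot V$) and the bracket $[Y,V]=\ell V$ (which bounds $(h^Y_u)_*V$ by $e^{C|u|}$), we can decompose $p'_r(u) = Y + \mathrm{O}(t) V$ in the frame $\{Y,V\}$, so that $|d\ell(p'_r(u))| \leq C(1 + t)$. For $|r|\leq s \leq 1$ this gives $|A'(r)/A(r)| \leq C(1 + st)$, hence $|A'(r)| \leq C(1+st)$. Plugging in:
\[
\big|\partial_r^2 z(x,r,t)\big| \leq C(1+st)\cdot Ct + Ce^C\cdot Ct \leq C\, t\,(1+st).
\]
Therefore
\[
\big|z(x,s,t) - t - s\, I_t\ell(x)\big| \leq \tfrac{C}{2}\, s^2 t\,(1+st) = \tfrac{C}{2}\,s\,t\,(s + s^2 t).
\]

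Finally, to replace $I_t\ell(x)$ by $I_t\ell(\overline x)$ with $\overline x = h^Y_d(x)$, I would apply \Cref{lemma:deriv_l_X}, which gives $|\partial_d(I_t\ell\circ h^Y_d)| \leq |I_t(X\ell-\ell^2)| + |I_t\ell|\,|\ell\circ \flow_t| \leq Ct$ uniformly; integrating in $d\in[0,1]$ yields $|I_t\ell(x) - I_t\ell(\overline x)| \leq Cdt$, so that $s\,|I_t\ell(x) - I_t\ell(\overline x)| \leq C\, s\, t\, d$. Combining with the previous estimate gives
\[
\big|z(x,s,t) - t - s\, I_t\ell(\overline x)\big| \leq C_\ell\, s\, t\,(d + s + s^2 t),
\]
as claimed. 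The main technical obstacle is keeping track of the correct $t$-dependence of $A'(r)$: a naive bound would give $|A'(r)| \leq C(1+t)$, producing a final error of order $s^2 t^2$ which is worse than allowed when $s$ is small; the refinement to $|A'(r)| \leq C(1+st)$ for $|r|\leq s$ is what yields the optimal shape $st(s + s^2 t)$ and matches \Cref{prop:tc_geo_commute} in the case $d=0$.
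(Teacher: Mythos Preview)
Your argument is correct and follows essentially the same route as the paper: the first bound by integrating $\partial_s z = A(s)B(s)$, the second via the Taylor expansion of \Cref{corollary:z}, bounding $|B'(r)|\leq Ct$ through \Cref{lemma:deriv_l_X} and $|A'(r)|\leq C(1+st)$ through \Cref{lemma:formulas_derivatives} (the key point being that the inner integral has length $|r|\leq s$), and finally absorbing the shift $x\mapsto \overline x$ via \Cref{lemma:deriv_l_X}. Your explicit remark that the refinement $|A'(r)|\leq C(1+st)$ rather than $C(1+t)$ is what produces the correct shape $st(s+s^2t)$ is exactly the crux of the computation.
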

\begin{proof}
	The first claim follows from the Mean-Value Theorem and \Cref{lem:tc_geo_commutation_relation}.
	
	Let us denote $\overline{x} = h^Y_d(x)$. 
	By \Cref{corollary:z}, we have
	\[
	\left\lvert(z(x,s,t) - t ) -  s \, I_t \ell(\overline x)\right\rvert  \leq s \left\lvert I_t \ell( x) - I_t \ell(\overline x)\right\rvert + \frac{s^2}{2} \cdot \sup_{r \in [0,s]}\left\lvert \frac{\partial^2}{\partial r^2} z(x,r,t) \right\rvert,
	\]
	and we now estimate the two summands in the right hand-side above.
	By Lemma \ref{lemma:deriv_l_X}, there exists a constant $C_0 >0$ such that 
	\[
	\left\lvert I_t \ell( x) - I_t \ell(\overline x)\right\rvert \leq C_0 \, t \, d.
	\]
	We now focus on the second one. Using \Cref{lem:tc_geo_commutation_relation}, we can find a constant $C_1>0$ so that 
	\[
	\begin{split}
		\left\lvert \frac{\partial^2}{\partial r^2} z(x,r,t) \right\rvert \leq & \left\lvert \frac{\partial}{\partial r} \exp\left(-\int_0^{-r} \ell \circ h^Y_u(\flow_t\circ h^Y_r(x)) \diff u\right)\right\rvert \cdot |I_t \ell( h^Y_r(x))| \\
		&+ e^{C_1r} \left\lvert \frac{\partial}{\partial r} I_t \ell( h^Y_r(x))\right\rvert.
	\end{split}
	\]
	By \Cref{lemma:formulas_derivatives}, we have
	\begin{multline*}
		\left\lvert \frac{\partial}{\partial r} \exp\left(-\int_0^{-r} \ell \circ h^Y_u(\flow_t\circ h^Y_r(x)) \diff u\right)\right\rvert  \\
		\leq e^{C_1r} \left(|\ell \circ h^Y_{-r} \circ \flow_t \circ h^Y_r(x)| + \left\lvert \int_0^{-r} X(\ell \circ h^Y_u \circ \flow_t)( h^Y_r(x)) \diff u\right\rvert \right)  \leq C_2 \, (1+s \, t),
	\end{multline*}
	for some constant $C_2\geq 1$, since $r\leq s\leq 1$.
	Combining the previous two estimates, we deduce
	\[
		\sup_{r \in [0,s]}\left\lvert \frac{\partial^2}{\partial r^2} z(x,r,t) \right\rvert \leq C_3 (t+ s\, t^2).
	\]
	for some constant $C_3\geq 1$. The proof of the lemma is complete.
\end{proof}

\Cref{prop:tc_geo_commute} in \Cref{sec:tc_unip_flows} follows from \Cref{lem:tc_geo_commutation_relation} and \Cref{lem:estimates_z} above, by choosing $d=0$ and recalling that $\ell = 1-X\alpha / \alpha$.

\end{document}